\documentclass[a4paper,10pt]{article}
\usepackage[utf8]{inputenc}
\usepackage{amsthm}
\usepackage{amsmath}
\usepackage{amsfonts,amssymb,latexsym}
\usepackage{mathtools}
\usepackage{enumerate}
\usepackage{bbm}
\usepackage{relsize}
\usepackage{wasysym}
\usepackage{tikz}
\usepackage{ mathrsfs }
\usepackage[french,english]{babel}
\usepackage{forest}
\usepackage{pdflscape}
\usepackage[hyperindex,breaklinks]{hyperref}

\usepackage{subfiles} 

\newtheorem{thm}{Theorème}
\newtheorem{lem}{Lemme}

\newtheorem{defi}{Définition}
\newtheorem{prop}{Proposition}

\newcommand{\N}{{\mathbb N}}
\newcommand{\Z}{{\mathbb Z}}
\newcommand{\Q}{{\mathbb Q}}
\newcommand{\R}{{\mathbb R}}
\newcommand{\C}{{\mathbb C}}
\newcommand{\F}{{\mathbb F}}

\newcommand{\bo}{{\mathcal{O}}}

\newcommand{\RR}{{\mathcal R}}
\newcommand{\Na}{{\mathcal N_{a,\ell}}}
\newcommand{\Pa}{{\mathcal P_a}}
\newcommand{\Ma}{{\mathcal M_a}}
\newcommand{\ord}{{\rm ord}}

\newcommand{\Card}{{\rm Card}}
\newcommand{\Ker}{{\rm Ker}}
\newcommand{\Gal}{{\rm Gal}}

\newcommand{\id}{{\mathbbm{1}}}
\newcommand{\qroot}{{\mathfrak{R}}}

\newcommand{\Mod}[1]{\ (\mathrm{mod}\ #1)}
\newcommand{\lquote}{\text{\textquotedblleft}}
\newcommand{\rquote}{\text{\textquotedblright}}
\DeclareMathOperator{\Li}{\mathrm{Li}}

\title{Sur une généralisation de la conjecture d'Artin parmi les presque-premiers}
\author{Paul Péringuey\footnote{Department of Mathematics, University of British Columbia, Vancouver, British Columbia, Canada.\\ Pacific Institute for the Mathematical Sciences, Vancouver, British Columbia, Canada.\\ peringuey@math.ubc.ca}}
\date{}
\setcounter{tocdepth}{4}
\begin{document}
\selectlanguage{french}
\maketitle

\selectlanguage{english}
\begin{abstract}
An integer is a primitive root modulo a prime $p$ if it generates the whole multiplicative group $(\Z/p\Z)^*$. In 1927 Artin conjectured that an integer $a$ which is not $-1$ or a square is a primitive root for infinitely many primes, and that the set of those primes has a positive asymptotic density among all primes. This conjecture was proved, under the generalized Riemann hypothesis (GRH), in 1967 by Hooley.

More generally, an integer is called a generalized primitive root modulo $n$ if it generates a subgroup of $(\Z/n\Z)^*$ of maximal size. Li and Pomerance showed, under GRH, that the set of integers for which a given integer is a generalized primitive root doesn't have an asymptotic density among all integers.
 
 We study here the set of the $\ell$-almost primes, i.e. integers with at most $\ell$ prime factors, for which a given integer $a\in\Z\backslash\{-1\}$, which is not a square, is a generalized primitive root, and we prove, under GRH, that this set has an asymptotic density among all the $\ell$-almost primes.
\end{abstract}
\selectlanguage{french}
\begin{abstract}
 Un entier est une racine primitive modulo un premier $p$ s'il génère le sous-groupe multiplicatif $(\Z/p\Z)^*$. En 1927 Artin conjecture qu'un nombre $a$ qui n'est ni $-1$ ni un carré parfait est racine primitive pour une infinité de nombres premiers, et que l'ensemble de ces premiers a une densité positive parmi tous les premiers. Cette conjecture a été démontrée, sous l'hypothèse de Riemann généralisée (GRH), en 1967 par Hooley.
 
 Plus généralement, on dit  qu'un entier est une racine primitive généralisée modulo $n$ s'il génère un sous-groupe de taille maximale dans $(\Z/n\Z)^*$. Li et Pomerance ont montré, sous GRH, que l'ensemble des entiers pour lesquels un entier est racine primitive généralisée n'admet pas de densité parmi tous les entiers.
 
 On s'intéresse ici à l'ensemble des entiers $\ell$-presque premiers, c'est-à-dire les entiers ayant au plus $\ell$ facteurs premiers, pour lesquels un entier $a\in\Z\backslash\{-1\}$ donné et différent d'un carré est racine primitive généralisée, et nous montrons, sous GRH, que cet ensemble admet une densité parmi tous les $\ell$-presque premiers. 
\end{abstract}
\newpage
\tableofcontents
\section{Introduction}

Soient $a$ un entier et $p$ un nombre premier, on dit que $a$ est une racine primitive modulo $p$ si $a$ engendre le groupe multiplicatif $(\Z/p\Z)^*$. Une question qui émerge alors est de quantifier le nombre de nombres premiers pour lesquels un entier $a$ est racine primitive. Notons $N_a(x)$ le nombre de premiers plus petits que $x$ pour lesquels $a$ est racine primitive, on cherche alors à apprécier $N_a(x)$ lorsque $x$ tend vers l'infini.

En 1927 Emil Artin conjecture que tout entier $a$ différent de $-1$ et d'un carré est racine primitive modulo une infinité d'autres entiers. Il stipule qu'un tel entier $a$ serait générateur pour environ 37\% des premiers, c'est-à-dire $N_a(x)\sim C_A\pi(x)$, où $C_A=\prod\limits_{p}\left(1-\frac{1}{p(p-1)}\right)$ est la constante d'Artin. Une démonstration conditionnelle est fournie par Hooley \cite{Hooley} en 1967, en supposant l'hypothèse de Riemann pour certains corps de nombres. Plus précisément il démontre que $N_a(x)\sim C_A(a)\pi(x)$, la constante dépend donc du nombre $a$ considéré et est la constante conjecturée par Heilbronn (d'après \cite{Hooley}). Concernant des résultats inconditionnels Heath-Brown \cite{Heath}, améliorant un résultat de Gupta et Ram Murty \cite{Gupta}, a démontré qu'au plus deux nombres premiers ne sont pas racines primitives pour une infinité de nombres premiers. Plus précisément si $(q,r,s)$ est un triplet d'entiers multiplicativement indépendants tel que aucun élément de $\{q,r,s,-3qr,-3rs,-3qs,qrs\}$ ne soit un carré, alors l'ensemble des premiers pour lesquels au moins un entier parmi $q,r$ et $s$ est racine primitive est asymptotiquement $\gg \frac{x}{(\log x)^2}$. Pour un état de l'art concernant la conjecture d'Artin et les racines primitives généralisées, le lecteur pourra se référer aux articles de synthèse de Moree \cite{Moree} et de Li et Pomerance\cite{PomeranceLi}.\\

On étend la notion de racine primitive au sous-groupe multiplicatif $(\Z/n\Z)^*$, où $n$ est un entier quelconque, en définissant les racines primitives généralisées modulo $n$ comme étant les éléments de $(\Z/n\Z)^*$ générant des sous-groupes de taille maximale, c'est-à-dire $\{m\in (\Z/n\Z)^*,\ |\langle m\rangle|=\lambda(n)\}$, où $\lambda$ est la fonction lambda de Carmichael \cite{Carmichael}, définie telle que $\lambda(n)$ est la taille maximale atteinte par les sous-groupes cycliques de $(\Z/n\Z)^*$. Il est alors naturel de se demander si des résultats similaires à la conjecture d'Artin existent dans le cadre des racines primitives généralisées. Notons  $N'_a(x)$ le nombre d'entiers plus petits que $x$ pour lesquels $a$ est racine primitive généralisée, on espère alors que $N'_a(x)\sim C(a) x$. Malheureusement ce n'est pas le cas. Soit $\mathtt{E}$ l'ensemble des entiers qui sont soit une puissance d'exposant strictement plus grand que 1, soit un carré multiplié par $-1$ ou $\pm 2$. Li \cite{Li} a démontré que pour tout $a$, $\lim \text{inf} \frac{N'_a(x)}{x}=0$ et Li et Pomerance \cite{PomeranceLi} ont démontré que pour les entiers $a$ n'appartenant pas à $\mathtt{E}$ on a, sous l'hypothèse de Riemann généralisée,  $\lim \text{sup} \frac{N'_a(x)}{x}>0$. Ainsi le nombre d'entiers pour lesquels un certain entier $a$ est racine primitive généralisée n'admet pas de densité parmi tous les entiers. Comme pour le cas classique de la conjecture d'Artin d'autres problèmes surgissent naturellement comme par exemple l'estimation de l'ordre de grandeur de la plus petite racine primitive généralisée pour un entier, dont Martin\cite{Martin} fournit une majoration pour presque tout entier.\\

Soit $\ell$ un entier plus grand que $1$. Dans cet article, on étudie une situation intermédiaire, celle de l'ensemble des nombres $\ell$-presque premiers pour lesquels un entier $a$ est racine primitive généralisée, et nous montrons qu'il admet une densité parmi tous les $\ell$-presque premiers.

Pour le reste de l'article $p$ et $q$ représenteront toujours des nombres premiers, et nous noterons $\ord_n(b)$ l'ordre de $b$ dans $(\Z/n\Z)^*$, $(b,c)$ le pgcd de $b$ et $c$, $[b,c]$ le ppcm de $b$ et $c$, $\nu_q(b)$ la valuation $q$-adique de $b$. On note $\mathscr{P}^*(A)$ l'ensemble des parties non-vides d'un ensemble $A$. On fixe $a\in\Z\backslash\{-1\}$ un entier différent d'un carré, et on écrit $a$ sous la forme $a=a_1a_2^2$ où $a_1$ est sans facteur carré et éventuellement négatif. De plus, on définit $h$ comme le plus grand entier tel que $a$ soit une $h$-ième puissance :
\begin{equation}
 \label{def_h}h:=\max\{\nu,\ \exists b,\ a=b^\nu\}.
\end{equation}

Un nombre $\ell$-presque premier est un nombre ayant au plus $\ell$ diviseurs premiers comptés avec multiplicité. Landau \cite{Landau_fr} a montré que le nombre de $\ell$-presque premiers plus petit que $x$ est asymptotiquement $\frac{x(\log\log x)^{\ell -1}}{(\ell-1)!\log x}$. On s'intéresse ici au comportement asymptotique du nombre de $\ell$-presque premiers pour lesquels $a$ est une racine primitive généralisée. Nous obtenons sous l'hypothèse de Riemann généralisée (GRH) le théorème suivant :

\begin{thm}[GRH]\label{thm_principal}
 Soient $\ell\geq1$, $E=\{1,\ldots,\ell\}$, $a$ un entier qui n'est ni $-1$, ni un carré parfait et $\Na (x)$ le nombre de $\ell$-presque premiers plus petits que $x$ et qui ont $a$ comme racine primitive généralisée. On a :
 \[\Na (x)=\frac{x(\log\log x)^{\ell-1}}{(\ell-1)!\log x}\prod\limits_{\substack{p}}\left(1-W_\ell(p)\right)\left(1+V_\ell(a_1)\right)\left(1+o(1)\right),\]
 avec les notations suivantes :
 \begin{enumerate}
 \item $W_\ell(p):=\sum\limits_{i=1}^{\ell}\binom{\ell}{i}\sum\limits_{j=0}^{\ell-i}\binom{\ell-i}{j}\frac{(-1)^jp^{i+j}(h,p)^i}{p^{2i}(p-1)^j(p^{i+j}-1)}$, est la contribution ne dépendant que de $h$,
 \item $V_\ell(a_1):=\mu(2\tilde{a_1})\frac{H_2(\ell,a_1)}{1-W_\ell(2)}\prod\limits_{\substack{p\\ p|a_1\\ p\geq 3}}\left(1-W_\ell(p)\right)^{-1}$, est la contribution spécifique dépendant de $a$, où $\tilde{a_1}:=\frac{a_1}{(2,a_1)}$,
 \item 
 \begin{equation*}\hspace*{-1cm}
 H_2(\ell,a_1):=\sum\limits_{k=1}^\ell  \binom{\ell}{k} 2^{-\ell-k}\delta_\ell(k)\mu(\tilde{a_1})^{k}\prod\limits_{\substack{p|\tilde{a_1}\\ p\geq 3}}\left(\frac{(p-2)^{\ell -k}}{(p-1)^{\ell }}\right)\sum\limits_{\prod\limits_{\{i,j\}\in \mathcal{D}}a_{i,j}=\tilde{a_1}}\prod\limits_{\{i,j\}\in \mathcal{D}}G_{i,j}^{k}({a_{i,j}})\end{equation*} 
 où
 \[\delta_\ell(k)=\sigma(a_1,k)+2^{\ell-2k}\sum\limits_{m=0}^{\ell-k}\binom{\ell-k}{m}2^{-3m}\sum\limits_{r=0}^{\ell -k-m}\binom{\ell -k-m}{r}\frac{(-1)^r2^{-r}}{2^{k+m+r}-1},\]
  avec $\sigma(a_1,k):=\left\{\begin{array}{cl}
                          2^{-\ell+k}+2^{-2\ell+k}5^{\ell-k}& \text{ si } a_1\equiv 1\Mod{4},\\
                          (-1)^k2^{-\ell+k}+2^{-2\ell+k}5^{\ell-k}& \text{ si } a_1\equiv 3\Mod{4},\\
                          (-1)^k2^{-2\ell+k}5^{\ell-k}& \text{ sinon. }
                         \end{array}\right.$,\\
$\mathcal{D}=[0,k]\times[0,\ell-k]\backslash\{0,0\}$, et pour $(i,j)\in\mathcal{D}$, $G_{i,j}^k$ est la fonction multiplicative définie pour les nombres premiers impairs par :
 \[G_{i,j}^k(p)=\binom{k}{i}\binom{\ell-k}{j}\left(\frac{(p-1)(h,p)}{p^2(p-2)}\right)^{i+j}(2-p)^{i}\left(1+R_p(k+j)\right),\]
 avec $R_p(m):=\sum\limits_{r=0}^{\ell -m}\binom{\ell -m}{r}\frac{(-1)^r(p-1)^{\ell -m-r}}{(p^{m+r}-1)(p-2)^{\ell -m}}$.
 
\end{enumerate}
\end{thm}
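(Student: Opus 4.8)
The plan is to run the Hooley--Chebotarev machinery under GRH, but through a two--scale decomposition of the modulus $n$ into one large prime factor and a small $(\ell-1)$-almost-prime part. \textbf{First}, I would show that the main term is carried by squarefree $n$ with exactly $\ell$ distinct prime factors, and indeed by those of the form $n=mp$ with $p$ a prime exceeding a threshold $y=y(x)$ and $m$ an $(\ell-1)$-almost-prime built from primes below $y$. The remaining integers (non-squarefree, those with a repeated prime, or those all of whose prime factors are small) contribute $o\!\left(\frac{x(\log\log x)^{\ell-1}}{(\ell-1)!\log x}\right)$ by an upper-bound sieve together with Mertens-type estimates, and are thrown into the error. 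This is also where the Landau count $\frac{x(\log\log x)^{\ell-1}}{(\ell-1)!\log x}$ enters, since the distribution of the small prime factors of $m$ is what eventually produces the $\frac{(\log\log x)^{\ell-1}}{(\ell-1)!}$ factor.

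\textbf{Next}, I would give the arithmetic characterisation of the generalized primitive root condition for $n=mp$. Writing $\lambda(n)=[\lambda(m),p-1]$ and $\ord_n(a)=[\ord_m(a),\ord_p(a)]$, the requirement $\ord_n(a)=\lambda(n)$ is equivalent, prime by prime $q$, to $\max\!\big(\nu_q(\ord_m(a)),\nu_q(\ord_p(a))\big)=\max\!\big(\nu_q(\lambda(m)),\nu_q(p-1)\big)$. Putting $\ind_p(a)=(p-1)/\ord_p(a)$, so that $\nu_q(\ord_p(a))=\nu_q(p-1)-\nu_q(\ind_p(a))$, this localises cleanly: for each $q$ either the small part already realises the maximal $q$-valuation of $\lambda(n)$, or else the large prime $p$ must simultaneously satisfy a congruence pinning $\nu_q(p-1)$ and the Artin-type condition $\nu_q(\ind_p(a))=0$, i.e.\ $a$ is not a $q$-th power residue modulo $p$. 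Detecting \lquote$a$ is a $q^t$-th power and $q^s\,\|\,p-1$\rquote\ as a Frobenius condition for $p$ in the compositum $\Q(\zeta_{q^{s}},a^{1/q^{t}})$ then reduces each local requirement to an Artin-symbol condition, exactly as in the classical setting.

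\textbf{Then} I would expand the product over $q$ of these indicators by inclusion--exclusion, obtaining a sum over squarefree moduli of counts of primes $p\leq x/m$ with prescribed splitting in the relevant Kummer towers. Under GRH the effective Chebotarev density theorem gives each such count as its natural density times $\pi(x/m)$ with a square-root-type error, which may be summed over moduli up to about $\sqrt{x}$, the large moduli being discarded by Brun--Titchmarsh. The density attached to each $q$ is governed by the degree $[\Q(\zeta_{q^{s}},a^{1/q^{t}}):\Q]$, which is explicit in terms of $q$ and of $h$ (the largest exponent with $a$ a perfect power), and this is precisely what is packaged into the local factor $W(p)$ of the Euler product. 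Summing the resulting local densities over all admissible small parts $m$, the multiplicativity over $q$ turns the sum into $\prod_p\big(1-W(p)\big)$; the combinatorics of which of the $\ell$ factors carries the maximal $q$-valuation --- indexed by $k$, $i$, $j$, $m$, $r$ --- is the bookkeeping that produces $1+V(E,a_1)$, $H_2(E,a_1)$, $\delta_\ell(k)$ and the multiplicative functions $G_{i,j}^{k}$.

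\textbf{The main obstacle} is the prime $q=2$. Since $(\Z/2^{k}\Z)^*$ is not cyclic, and since the radical extensions $\Q(\zeta_{2^{k}},a^{1/2^{j}})$ are entangled with the quadratic field $\Q(\sqrt{a_1})$, the $2$-adic density does not factorise in the naive way: the degree of the relevant $2$-power Kummer extension drops according to the ramification of $\sqrt{a_1}$ at $2$, that is, according to $a_1 \bmod 4$. This is exactly what forces the separate treatment giving $\sigma(a_1,k)$ and the isolated correction $\frac{1}{1-W(2)}$ inside $V(E,a_1)$, as well as the factor $\mu(2\tilde{a_1})$ recording the discriminant sign. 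A secondary difficulty, and the place where GRH is indispensable, is keeping the Chebotarev error uniform over the doubly-indexed family of fields $\Q(\zeta_{q^{s}},a^{1/q^{t}})$ while the small part $m$ ranges over its whole range, so that the error survives the summation over $m$ and remains of lower order than the main term.
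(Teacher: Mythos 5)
Your skeleton matches the paper's in its essential stages: reduction to squarefree $n=p_1\cdots p_\ell$; the characterisation of generalized primitive roots through the primes attaining the maximal $q$-adic valuation of $\lambda(n)$ (the paper's Lemme 3 and the sets $M_q(p_1,\ldots,p_\ell)$); inclusion--exclusion over squarefree moduli $k$; Hooley's multi-range treatment of large $q$ with GRH reserved for the intermediate range; and the Kummer--cyclotomic reinterpretation of the conditions, with the degree drop $\epsilon(m',m)$ governed by $a_1\bmod 4$ and $a_1\mid m$ as the source of $\mu(2\tilde{a_1})$, $\sigma(a_1,k)$ and the factor $(1-W(2))^{-1}$. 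Where you genuinely diverge is the evaluation of the main term: you propose a two-scale hyperbola-type argument (one large prime $p>y$ counted by effective Chebotarev under GRH for each fixed small part $m$, then Mertens-type summation over the $\ell-1$ small primes), whereas the paper treats all $\ell$ primes symmetrically by a multivariate Selberg--Delange method, comparing the $\ell$-variable series $F(s,z)=\prod_i\prod_{p\in P_i}(1+z_ip^{-s})$ with fractional powers of Dedekind zeta functions and extracting the coefficient of $\prod_i z_i$ by Cauchy's formula. The paper itself notes that an elementary route of your kind works for $\ell\le 2$ (méthode de l'hyperbole) and that Selberg--Delange was preferred for $\ell\ge 3$; your route avoids contour analysis, while the paper's produces the factor $(\log\log x)^{\ell-1}/(\ell-1)!$ in one stroke with errors uniform in the bounded parameters $C_1,C_5$. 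Note also that in the paper GRH is confined to bounding $\Ma(x,C_1,C_2)$ via Hooley's lemma, the main-term analysis being unconditional, which is what yields the unconditional upper bound of Théorème \ref{thm_inconditionnel}; your placement of GRH inside the main-term Chebotarev counts would forfeit that byproduct unless similarly isolated.

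Two concrete gaps. First, the range $q>\sqrt{x}\log x$ cannot be \emph{discarded by Brun--Titchmarsh}: for $q$ of that size the theorem gives nothing useful. The paper, following Hooley, handles $\Ma(x,C_3,x-1)$ by the divisibility trick: any counted $p_1$ divides the product of the $a^{2m}-1$ for $m<\sqrt{x}\log^{-1}x\,\prod_{i\geq 2}p_i^{-1}$, whence at most $O\bigl(x/(\log^2x\,\prod_{i\geq2}p_i^{2})\bigr)$ such primes; this step is absent from your sketch. Second, and more centrally, your appeal to \emph{bookkeeping} conceals the actual difficulty: for fixed $m$ the condition on the large prime is coupled to $m$ through exact-valuation congruences, and the condition retained by $m$ itself is again of generalized-primitive-root type, so your sum over $m$ re-embeds the whole problem for $\ell-1$ primes. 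To make the sum factor you must fix, for each $q\leq C_1$, which subset of the $\ell$ prime factors attains the maximal valuation and with which exact exponents --- precisely the paper's decomposition $k=\prod_{L\in\mathscr{P}^*(E)}k_L$ with the auxiliary parameters $g_L$, $c_{iL}$, $d_i$ --- together with a Rankin-type truncation of the exponents (the constant $C_5$ in Proposition \ref{prop_controle_gL}) so that the accumulated errors stay summable. Relatedly, in the boundary case where small and large parts share the maximal $q$-valuation, the requirement is a union over which factor realises it, not a single congruence-plus-residue condition on $p$; this inclusion--exclusion is exactly what generates the $\binom{\ell}{i}$ structure inside $W(q)$, and only squarefree power-residue exponents ($t\in\{0,1\}$ in your notation) ever arise, higher $q$-powers entering solely through the cyclotomic conductor. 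None of this is fatal to your plan, but without it neither the density attached to a given $m$ nor the asserted factorisation into $\prod_p(1-W(p))(1+V(E,a_1))$ is even defined, and it is where the bulk of the paper's work (its Sections 6 and 8--9) actually lies.
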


La complexité des termes impliqués dans le résultat ci-dessus provient, en grande partie, du fait qu'un entier peut être racine primitive généralisée modulo $p_1\cdots p_\ell$ sans pour autant être racine primitive modulo un des $p_i$ (par exemple $1636 $ est une racine primitive généralisée modulo $ 4054051=1801\times 2251$ mais n'est pas une racine primitive modulo $ 1801$ ou $ 2251$). Nous montrons dans la section suivante que pour être racine primitive généralisée modulo $p_1\cdots p_\ell$ un entier doit vérifier des critères plus faibles que celui d'être racine primitive mais cela simultanément modulo chacun des $p_i$, et donc le résultat ne découle pas immédiatement du résultat de Hooley\cite{Hooley}, dont on retrouve ci-dessus le terme principal en prenant $\ell=1$. En effet ce dernier ramène le problème à compter les idéaux premiers d'un certain corps de nombres, ce qu'il peut alors faire en étudiant, sous GRH, la fonction zeta de Dedekind associée à ce corps. Dans notre cas, nous ramenons le problème à compter simultanément les idéaux premiers de plusieurs corps de nombres, ce qui n'est pas possible en appliquant classiquement la formule de Perron et en déformant le contour. Pour surmonter cette difficulté, nous démontrons le résultat inconditionnel suivant, qui découle d'une application atypique de la méthode de Selberg-Delange :

\begin{thm}\label{thm_finale_Selberg_Delange}
Soient $\ell$ un entier non nul, $a$ un entier non nul qui n'est ni $-1$ ni un carré, $C$ une constante. Soient $\upsilon_1,\ldots,\upsilon_\ell$ des entiers plus petits que $C$ et $\kappa_1,\ldots,\kappa_\ell$ des entiers sans facteur carré tels que pour tout $1\leq i\leq \ell$, $\kappa_i|\upsilon_i$, alors :
 \[\sum\limits_{\substack{p_1\cdots p_\ell\leq x\\ (a,p_1\cdots p_\ell)=1\\ \forall i,\ p_i\equiv 1(\upsilon_i)\\a\in \qroot\left(\kappa_i, p_i\right)}}1=\frac{\ell x}{\log x \prod\limits_i n_i}(\log\log x)^{\ell-1}+\bo_{C}\left(\frac{x}{\log x}(\log\log x)^{\ell-2}\right),\]
 où $n_i:=[\Q\left(\sqrt[\kappa_i]{a},\xi_{\upsilon_i}\right):\Q]$, $\xi_{\upsilon_i}$ est une racine primitive $\upsilon_i$-ième de l'unité et $\qroot\left(\kappa_i, p_i\right)$ est l'ensemble des entiers dont la classe $\Mod{p_i}$ est une puissance $\kappa_i$-ième.
\end{thm}

De plus, inconditionnellement, on a la majoration suivante :

\begin{thm}\label{thm_inconditionnel}
 Avec les mêmes hypothèses que pour le Théorème \ref{thm_principal}, on a :
 \[\Na (x)\leq\frac{x(\log\log x)^{\ell-1}}{(\ell-1)!\log x}\prod\limits_{\substack{p}}\left(1-W_\ell(p)\right)\left(1+V_\ell(a_1)\right)\left(1+o(1)\right).\]
\end{thm}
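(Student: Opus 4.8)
\textbf{Proof proposal for Theorem~\ref{thm_inconditionnel}.}

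The plan is to derive the upper bound by a truncation argument that, contrary to the proof of Theorem~\ref{thm_principal}, uses no input from GRH. The starting point is that the property of $a$ being a generalized primitive root modulo a squarefree $n=p_1\cdots p_\ell$ factorizes over the rational primes $q$: writing $\lambda(n)=\operatorname{lcm}_i(p_i-1)$ and $\ord_n(a)=\operatorname{lcm}_i\ord_{p_i}(a)$, the condition $\ord_n(a)=\lambda(n)$ is equivalent to
\[
\max_{1\le i\le \ell}\nu_q\bigl(\ord_{p_i}(a)\bigr)=\max_{1\le i\le \ell}\nu_q(p_i-1)\qquad\text{for every prime }q.
\]
(The $\ell$-almost primes that are not squarefree, or that have fewer than $\ell$ prime factors, contribute only to the error term, exactly as in Landau's theorem, so one may restrict to this case.) Since imposing this identity for every $q$ is more restrictive than imposing it only for $q\le z$, for any threshold $z$ the indicator that $a$ is a generalized primitive root modulo $n$ is bounded above by $\prod_{q\le z}\mathbf{1}\bigl[\max_i\nu_q(\ord_{p_i}a)=\max_i\nu_q(p_i-1)\bigr]$. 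Summing over $n\le x$ yields $\Na(x)\le S_z(x)$, where $S_z(x)$ denotes the sum of this truncated indicator.

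The second step is to evaluate $S_z(x)$ unconditionally for each \emph{fixed} $z$. Expanding each local factor by inclusion--exclusion turns $S_z(x)$ into a finite signed sum indexed by tuples of prime-power moduli supported on $\{q\le z\}$; each resulting condition on a prime $p_i$ is the Chebotarev/splitting condition that $p_i$ split in a suitable Kummer extension $\Q(\zeta_{q^{k}},a^{1/q^{j}})$, precisely the fields arising in the proof of Theorem~\ref{thm_principal}. The crucial difference is that the moduli are now \emph{bounded independently of $x$}: there are only finitely many terms, and in each the number fields involved are fixed. Consequently the plain (ineffective) Chebotarev density theorem, combined with the Landau/Selberg--Delange method for counting $\ell$-almost primes whose prime factors lie in prescribed Chebotarev classes of a fixed compositum, gives each term an asymptotic of the shape $c\,\dfrac{x(\log\log x)^{\ell-1}}{(\ell-1)!\log x}$ with relative error $o(1)$ as $x\to\infty$. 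No GRH, and not even an effective zero-free region, is needed, because $z$ is held fixed while $x\to\infty$.

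Assembling the finitely many terms, the main terms organize into the partial Euler product, so that for each fixed $z$
\[
\Na(x)\le S_z(x)=\frac{x(\log\log x)^{\ell-1}}{(\ell-1)!\log x}\,P_{\le z}\,\bigl(1+o_z(1)\bigr),\qquad x\to\infty,
\]
where $P_{\le z}$ is the truncation to the primes $q\le z$ of the constant $\prod_p(1-W(p))(1+V(E,a_1))$ of Theorem~\ref{thm_principal} (the same combinatorial identity already carried out under GRH, now read off only for $q\le z$). Taking the limit superior in $x$ gives $\limsup_{x}\Na(x)\big/\frac{x(\log\log x)^{\ell-1}}{(\ell-1)!\log x}\le P_{\le z}$ for every $z$, and letting $z\to\infty$ produces the stated bound: the Euler product converges absolutely since $W(p)=O(p^{-2})$, while the correction factor $1+V(E,a_1)$ depends only on the finitely many primes dividing $2a_1$, so $P_{\le z}\to \prod_p(1-W(p))(1+V(E,a_1))$.

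The main obstacle is the joint counting in the second step: one must control the simultaneous distribution of $(p_1,\dots,p_\ell)$ under several Chebotarev conditions at once, and check that, after summing the small primes against Mertens-type sums $\sum 1/p$ and the largest prime against the prime number theorem in the relevant class, the density weights multiply out to the claimed partial product with the correct exponent $\ell-1$ on $\log\log x$. This is technically delicate, but it is precisely the calculation underlying Theorem~\ref{thm_principal} restricted to bounded moduli, so the unconditional argument inherits that bookkeeping wholesale; the only genuinely new—and easy—ingredients are the truncation inequality and the interchange of the limits in $x$ and in $z$.
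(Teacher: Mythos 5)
Your proposal is correct and follows essentially the same route as the paper: your truncation inequality $\Na(x)\le S_z(x)$ is exactly the paper's bound \eqref{majoration_inconditionnelle}, $\Na(x)\le\Na(x,C_1)$, with $z=C_1$ a fixed arbitrarily large constant, and the unconditional evaluation of the truncated count (inclusion--exclusion over squarefree $C_1$-friable $k$, splitting conditions in the Kummer extensions $\Q(\sqrt[k_i']{a},\xi_{u_id_i})$, Selberg--Delange with Dedekind zeta functions of the finitely many fixed fields) is precisely the machinery of the paper's Propositions \ref{prop_controle_gL}--\ref{prop_plus_densembles}, where GRH enters only in the lower-bound term $\Ma(x,C_1,C_2)$ needed for Théorème \ref{thm_principal}. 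Your final passage $z\to\infty$ corresponds to the paper's completion of the sum over $k$ with tail $E_{C_1}\ll x(\log\log x)^{\ell-1}/(C_1^{1-\varepsilon}(\ell-1)!\log x)$ and the arbitrariness of $C_1$, so the two arguments coincide in substance.
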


Posons $C_\ell(a)=\lim\limits_{x\rightarrow\infty}{\mathcal N_{a,\ell}} (x)/\frac{x(\log\log x)^{\ell-1}}{(\ell-1)!\log x}$. Nous avons calculé numériquement plusieurs valeurs particulières, reprises dans le tableau suivant :

\begin{tabular}{|c|c|c|c|c|c|}
\hline
$ \ell $& $\prod\limits_{\substack{p}}\left(1-W_\ell(p)\right),\ h=1$ & $C_\ell(2)$ & $C_\ell(3)$ & $C_\ell(5)$ & $C_\ell(10)$ \\
\hline
1 & $\simeq 0.3739$ & $\simeq 0.3739$ & $\simeq 0.3739$ & $\simeq 0.3936$ & $\simeq 0.3739$\\
2 & $\simeq 0.3759$ & $\simeq 0.3222$ & $\simeq 0.3950$ & $\simeq 0.3878$ & $\simeq 0.3775$\\
5 & $\simeq 0.3261$ & $\simeq 0.1318$ & $\simeq 0.3252$ & $\simeq 0.3278$ & $\simeq 0.3272$\\
10 & $\simeq 0.3051$ & $\simeq 0.0293$ & $\simeq 0.3053$ & $\simeq 0.3046$ & $\simeq 0.3047$\\
20 & $\simeq 0.2919$ & $\simeq 0.0015$ & $\simeq 0.2918$ & $\simeq 0.2920$ & $\simeq 0.2920$\\
50 & $\simeq 0.2807$ & $\simeq 2\times 10^{-7}$ & $\simeq 0.2807$ & $\simeq 0.2807$ & $\simeq 0.2807$\\
\hline
\end{tabular}
\ \\
On retrouve sur la première ligne la constante d'Artin, ainsi que la légère déviation pour $a=5$. Dans les lignes suivantes, toutes les valeurs présentent une déviation, qui semble s'estomper quand $\ell$ grandit, sauf pour $a=2$ auquel cas $C_\ell(2)$ semble tendre vers $0$, ce qui est en accord avec le fait que $2$ est dans l'ensemble $\mathtt{E}$ décrit par Li et Pomerance \cite{PomeranceLi}.
\ \\
\ \\
Nous présentons ensuite une approche heuristique analogue à celle du cas classique. Nous transformons le problème en l'évaluation du nombre de $\ell$-presque premiers ne vérifiant pas une certaine propriété pour tout nombre premier $q$. En utilisant l'Hypothèse de Riemann Généralisée nous ramenons le problème à l'évaluation du nombre de $\ell$-presque premiers ne vérifiant pas ces propriétés pour $q$ petit. Puis nous montrons que les $p_1\cdots p_\ell$ comptés sont tels que chaque $p_i$ se décompose comme produit d'idéaux premiers distincts dans un certain corps de nombres. En utilisant la méthode de Selberg-Delange nous évaluons le nombre de $p_1\cdots p_\ell\leq x$ vérifiant ces propriétés simultanément. Enfin, après avoir contrôlé les termes d'erreurs, nous utilisons des méthodes combinatoires pour obtenir l'expression du terme principal.

\section{Caractérisation des racines primitives généralisées.}\label{section_caract_prim_root}
Nous donnons dans ce paragraphe un critère caractérisant les racines primitives généralisées. Ce critère est énoncé dans le lemme \ref{lemme_caracterisation_racines_primitives_generalisees}.

Nous commençons par introduire des notations qui nous serviront tout au long de l'article.
Pour $l\in \N$ et $n\in \N$ donnés, on notera $\qroot(l,n)$ l'ensemble des entiers dont la classe $\Mod{n}$ est une puissance $l$-ième:
\begin{equation}
 \label{def_Rln}\qroot(l,n)=\{c\in\Z,\exists b\in\Z, c\equiv b^l\Mod{n}\}.
\end{equation}


Pour $q,p_1,\ldots,p_\ell$ des nombres premiers, on notera $M_q(p_1,\ldots,p_\ell)$ l'ensemble des $p\in \{p_1,\ldots,p_\ell\}$ pour lesquels la valuation $q$-adique de $p-1$ est maximale parmi les $p_i$ :
\begin{equation}
 \label{def_Mq} M_q(p_1,\ldots,p_\ell)=\left\{p\in\{p_1,\ldots,p_\ell\}, \nu_q(p-1)=\nu_q(\lambda(p_1\cdots p_\ell)) \right\}.
\end{equation}
Cet ensemble est toujours non vide. En effet la fonction lambda de Carmichael vérifie les propriétés suivantes : 
\begin{enumerate}
  \item $\lambda\big(p^r\big)=\left\{\begin{array}{cl}
                            \frac{1}{2}\varphi\big(p^r\big)&\ \text{si}\ p=2\ \text{et}\ r\geq3 \\
                             \varphi\big(p^r\big)&\ \text{sinon}
                           \end{array}
\right.$,
\item Si $n=\prod\limits_{i=1}^k p_i^{v_i}$ avec $p_i\neq p_j$, alors $\lambda(n)=[\lambda( p_1^{v_1}),\ldots,\lambda( p_k^{v_k})]$.
 \end{enumerate}
\ \\
On commence par donner un résultat classique sur les ordres des éléments dans $(\Z/n\Z)^*$.
\begin{lem}\label{ordreppcm}
Pour tous $a,n_1,n_2$ dans $\N$, $(a,n_1n_2)=1$ :
\[\ord_{[n_1,n_2]}(a)=[\ord_{n_1}(a),\ord_{n_2}(a)]\]
\end{lem}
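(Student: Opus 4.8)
The plan is to reduce everything to the elementary fact that, for a fixed modulus $m$ with $(a,m)=1$, the set of exponents $k\in\N$ satisfying $a^k\equiv 1\Mod{m}$ is exactly the set of multiples of $\ord_m(a)$ (a consequence of Euclidean division of $k$ by $\ord_m(a)$). Writing $d_1=\ord_{n_1}(a)$ and $d_2=\ord_{n_2}(a)$, the coprimality hypothesis $(a,n_1n_2)=1$ guarantees that $a$ is invertible modulo $n_1$, modulo $n_2$ and modulo $[n_1,n_2]$, so that all three orders appearing in the statement are well defined.

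The key step is the equivalence
\[ a^k\equiv 1\Mod{[n_1,n_2]} \iff a^k\equiv 1\Mod{n_1}\ \text{and}\ a^k\equiv 1\Mod{n_2}. \]
For the forward direction I would use that $n_1$ and $n_2$ both divide $[n_1,n_2]$, so a congruence modulo $[n_1,n_2]$ immediately forces the two congruences modulo $n_1$ and $n_2$. For the reverse direction — the only point requiring a little care — I would argue directly on divisibility rather than invoke the Chinese Remainder Theorem: if $n_1\mid a^k-1$ and $n_2\mid a^k-1$, then $a^k-1$ is a common multiple of $n_1$ and $n_2$, hence divisible by $[n_1,n_2]$, which is exactly $a^k\equiv 1\Mod{[n_1,n_2]}$. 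It is worth stressing that no assumption $(n_1,n_2)=1$ is made, which is precisely why one must work with the lcm $[n_1,n_2]$ together with this divisibility argument instead of with a CRT decomposition.

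Combining this equivalence with the elementary fact recalled above, an exponent $k$ satisfies $a^k\equiv 1\Mod{[n_1,n_2]}$ if and only if $d_1\mid k$ and $d_2\mid k$, i.e. if and only if $[d_1,d_2]\mid k$. Since $\ord_{[n_1,n_2]}(a)$ is by definition the least positive such $k$, and the least positive common multiple of $d_1$ and $d_2$ is $[d_1,d_2]$, we conclude that $\ord_{[n_1,n_2]}(a)=[d_1,d_2]=[\ord_{n_1}(a),\ord_{n_2}(a)]$. There is essentially no genuine obstacle here; the statement is a standard structural fact, and the main thing to watch is to avoid an unwarranted appeal to CRT in the reverse implication, since $n_1$ and $n_2$ are not assumed coprime.
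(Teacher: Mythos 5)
Your proof is correct and follows essentially the same route as the paper: the paper's Euclidean-division step is exactly your ``set of exponents killing $a$ modulo $m$ equals the multiples of $\ord_m(a)$'' fact, and its final step is your observation that $n_1\mid a^k-1$ and $n_2\mid a^k-1$ force $[n_1,n_2]\mid a^k-1$. Your remark about avoiding the Chinese Remainder Theorem is well taken, and indeed the paper makes no use of it either.
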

\begin{proof}
 Commençons par montrer que $\ord_{n_1}(a)|\ord_{[n_1,n_2]}(a)$.\\
 Écrivons $\ord_{[n_1,n_2]}(a)=k\ord_{n_1}(a)+r$ avec $k\in\N$ et $0\leq r<\ord_{n_1}(a)$.\\
 alors 
 \begin{align*}
  a^{\ord_{[n_1,n_2]}(a)}\equiv 1(n_1)&\Leftrightarrow a^{k\ord_{n_1}(a)}a^r\equiv 1(n_1)\\
  &\Leftrightarrow a^r\equiv 1(n_1)
 \end{align*}
Or $r<\ord_{n_1}(a)$ ainsi $r=0$. Ainsi $\ord_{n_1}(a)|\ord_{[n_1,n_2]}(a)$, de même $\ord_{n_2}(a)|\ord_{[n_1,n_2]}(a)$ et donc $[\ord_{n_1}(a),\ord_{n_2}(a)]|\ord_{[n_1,n_2]}(a)$.\\
Reste à montrer que $a^{[\ord_{n_1}(a),\ord_{n_2}(a)]}\equiv 1([n_1,n_2])$, ce qui est immédiat car $n_1|a^{[\ord_{n_1}(a),\ord_{n_2}(a)]}-1$ et $n_2|a^{[\ord_{n_1}(a),\ord_{n_2}(a)]}-1$ donc $[n_1,n_2]|a^{[\ord_{n_1}(a),\ord_{n_2}(a)]}-1$.
\end{proof}

Nous cherchons à compter les nombres $m$ $\ell$-presque premiers c'est-à-dire tels que $\Omega(m)\leq\ell$, où $\Omega(m)$ est le nombre de facteurs premiers de $m$ comptés avec multiplicité, pour lesquels un nombre $a$ donné est racine primitive généralisée. Il est immédiat que si $a$ est un carré ou $a=-1$ alors $a$ ne peut être racine primitive généralisée que dans le groupe trivial $(\Z/2\Z)^*$. 

Nous allons montrer que quand $a$ n'est pas un carré parfait et $a\neq -1$ cet ensemble est infini et admet une densité positive parmi tous les $\ell$-presque premiers. 

De plus on a trivialement que $\sum\limits_{\substack{p_1\cdots p_{\ell-1}\leq x}}1\ll \frac{x}{\log x}(\log\log x)^{\ell-2}$ et $\sum\limits_{\substack{p_1\cdots p_\ell\leq x\\ p_1=p_2}}1\ll \frac{x}{\log x}(\log\log x)^{\ell-2}$. On ne s'intéresse dans la suite qu'aux $a$ qui ne sont pas des carré et aux $m\leq x$ sans facteur carré ayant exactement $\ell$ facteurs premiers.

\begin{lem}\label{lemme_caracterisation_racines_primitives_generalisees}
 Soient $p_1,\ldots,p_\ell$, $\ell$ nombres premiers distincts et $a$ un entier tel que $(a,p_1\cdots p_\ell)=1$.\\
 Alors $a$ est une racine primitive généralisée modulo $p_1\cdots p_\ell$ si et seulement si pour tout $q|\lambda(p_1\cdots p_\ell)$, il existe $p\in M_q(p_1,\ldots,p_\ell)$ tel que  $a$ ne soit pas le résidu d'une $q$-ième puissance modulo $p$.
\end{lem}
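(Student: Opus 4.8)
Le plan est de ramener la condition globale $\ord_n(a)=\lambda(n)$, où $n=p_1\cdots p_\ell$, à une famille de conditions locales indexées par les premiers $q$ divisant $\lambda(n)$, en comparant les valuations $q$-adiques. Je commencerais par appliquer le Lemme \ref{ordreppcm} de façon itérée pour obtenir $\ord_n(a)=[\ord_{p_1}(a),\ldots,\ord_{p_\ell}(a)]$; comme $n$ est sans facteur carré, on a par ailleurs $\lambda(n)=[p_1-1,\ldots,p_\ell-1]$. Par définition, $a$ est racine primitive généralisée modulo $n$ si et seulement si $\ord_n(a)=\lambda(n)$. Puisque $\ord_{p_i}(a)\mid p_i-1$ pour chaque $i$, on a toujours $\ord_n(a)\mid\lambda(n)$, si bien que l'égalité de ces deux entiers équivaut à l'égalité de leurs valuations $q$-adiques pour tout premier $q$.

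Ensuite, pour $q$ fixé, la valuation $q$-adique d'un ppcm étant le maximum des valuations, la condition $\nu_q(\ord_n(a))=\nu_q(\lambda(n))$ se réécrit
\[\max_{1\leq i\leq\ell}\nu_q\bigl(\ord_{p_i}(a)\bigr)=\max_{1\leq i\leq\ell}\nu_q(p_i-1).\]
Si $q\nmid\lambda(n)$, les deux membres sont nuls et la condition est automatiquement satisfaite; il suffit donc de considérer les $q\mid\lambda(n)$. Comme $\nu_q(\ord_{p_i}(a))\leq\nu_q(p_i-1)\leq\nu_q(\lambda(n))$ pour tout $i$, l'égalité des maxima a lieu exactement lorsqu'il existe un indice $i$ tel que $\nu_q(\ord_{p_i}(a))=\nu_q(\lambda(n))$. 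Un tel indice impose alors à la fois $\nu_q(p_i-1)=\nu_q(\lambda(n))$, c'est-à-dire $p_i\in M_q(p_1,\ldots,p_\ell)$, et $\nu_q(\ord_{p_i}(a))=\nu_q(p_i-1)$.

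Il resterait à traduire cette dernière égalité en termes de résidus de puissances. Pour $p_i\in M_q$ et $q\mid\lambda(n)$ on a $\nu_q(p_i-1)\geq1$, donc $q\mid p_i-1$, et comme $(\Z/p_i\Z)^*$ est cyclique d'ordre $p_i-1$, l'entier $a$ est résidu d'une $q$-ième puissance modulo $p_i$ si et seulement si $a^{(p_i-1)/q}\equiv1\Mod{p_i}$, soit $\ord_{p_i}(a)\mid(p_i-1)/q$, c'est-à-dire $\nu_q(\ord_{p_i}(a))<\nu_q(p_i-1)$. Par contraposée, et puisque l'inégalité $\nu_q(\ord_{p_i}(a))\leq\nu_q(p_i-1)$ est toujours vérifiée, $a$ n'est pas résidu d'une $q$-ième puissance modulo $p_i$ si et seulement si $\nu_q(\ord_{p_i}(a))=\nu_q(p_i-1)$. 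En recollant avec le paragraphe précédent, j'obtiendrais que $a$ est racine primitive généralisée modulo $n$ si et seulement si, pour tout $q\mid\lambda(n)$, il existe $p\in M_q(p_1,\ldots,p_\ell)$ tel que $a$ ne soit pas résidu d'une $q$-ième puissance modulo $p$, ce qui est l'énoncé recherché.

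Le point véritablement crucial, et la seule étape qui ne soit pas purement formelle, est l'observation que réaliser le maximum de la valuation $q$-adique dans le ppcm $\ord_n(a)$ force simultanément l'appartenance de $p_i$ à $M_q$ et la condition locale de non-résidu; tout le reste n'est qu'une traduction mécanique via le Lemme \ref{ordreppcm} et la structure cyclique de $(\Z/p_i\Z)^*$. Je n'anticipe pas d'obstacle profond, la seule difficulté étant l'articulation soigneuse des inégalités de valuations et le traitement séparé des $q$ ne divisant pas $\lambda(n)$.
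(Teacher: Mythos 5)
Votre démonstration est correcte et suit essentiellement la même voie que celle de l'article : réduction via le Lemme \ref{ordreppcm} à la comparaison des valuations $q$-adiques de $\ord_{p_1\cdots p_\ell}(a)=[\ord_{p_1}(a),\ldots,\ord_{p_\ell}(a)]$ et de $\lambda(p_1\cdots p_\ell)$, puis traduction de la condition de non-résidu $q$-ième par la cyclicité de $(\Z/p_i\Z)^*$ (critère $a^{(p_i-1)/q}\equiv 1\Mod{p_i}$, que l'article établit avec une racine primitive $\gamma$). La seule différence est de présentation : vous enchaînez des équivalences de valuations là où l'article sépare les deux implications, ce qui ne change pas le fond de l'argument.
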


\begin{proof}
Supposons qu'il existe $q|\lambda(p_1\cdots p_\ell)$ premier tel que pour tout $p\in M_q(p_1,\ldots,p_\ell)$, $a$ soit le résidu d'une $q$-ième puissance modulo $p$ et donc $\ord_{p}(a)|\frac{p-1}{q}$. 

Ainsi d'après le lemme \ref{ordreppcm}, \[\nu_q({\ord_{p_1\cdots p_\ell}(a)})=\max\limits_{p\in\{p_1,\ldots,p_\ell\}}\nu_q({\ord_{p}(a)})<\max\limits_{p\in\{p_1,\ldots,p_\ell\}}\nu_q({p-1})=\nu_q(\lambda(p_1\cdots p_\ell))\] et donc $a$ n'est pas une racine primitive généralisée modulo $p_1\cdots p_\ell$.\\

Réciproquement, supposons que $a$ ne soit pas une racine primitive généralisée modulo $p_1\cdots p_\ell$. Alors il existe $q|\lambda(p_1\cdots p_\ell)$ tel que $a^{\frac{\lambda(p_1\cdots p_\ell)}{q}}\equiv1(p_1\cdots p_\ell)$.\\
Soit $p\in M_q(p_1,\ldots,p_\ell)$, alors d'après le lemme \ref{ordreppcm}, $\ord_{p}(a)|\frac{p-1}{q}$.

Soit $\gamma$ une racine primitive modulo $p$, alors il existe $\alpha$ tel que $\gamma^\alpha\equiv a(p)$ et donc $\gamma^{\alpha\frac{p-1}{q}}\equiv 1\Mod{p}$.\\
Alors $p-1|\alpha\frac{p-1}{q}$, i.e $q|\alpha$. Ainsi $a$ est bien le résidu d'une $q$-ième puissance modulo $p$.\\

\end{proof}
Nous allons maintenant définir un critère pour caractériser les $\ell$-presque premiers pour lesquels $a$ n'est pas une racine primitive généralisée.
\begin{defi}
 On dit que $a$ vérifie $\RR(q,p_1\cdots p_\ell)$ si $(a,p_1\cdots p_\ell)=1$ et $a$ est le résidu d'une $q$-ième puissance modulo $p$ pour tout $p\in M_q(p_1,\ldots,p_\ell)$.
\end{defi}

Ainsi $a$ est racine primitive généralisée modulo $p_1\cdots p_\ell$ si $(a,p_1\cdots p_\ell)=1$ et s'il ne vérifie pas $\RR(q,p_1\cdots p_\ell)$ pour tout $q$. Pour $\ell=1$ on retrouve la propriété $\RR(q,p)$ introduite par Hooley \cite[\S 3]{Hooley}. La section suivante étend une approche heuristique de la conjecture d'Artin à notre problème.

\section{Approche heuristique expliquant le terme $W_\ell(p)$ des Théorèmes \ref{thm_principal} et \ref{thm_inconditionnel}}

Dans un premier temps nous rappelons l'approche heuristique de Heilbronn \cite{PomeranceLiSurvey} pour la conjecture d'Artin, c'est-à-dire quand $\ell=1$. Dans ce cas $a$ est une racine primitive $\Mod{p}$ si et seulement si $(a,p)=1$ et $a$ ne vérifie pas $\RR(q,p)$ pour tout $q$. Fixons $q$ un nombre premier et examinons la probabilité qu'un nombre premier $p$ premier avec $a$ soit tel que $a$ ne vérifie pas $\RR(q,p)$.
\[\mathbb{P}(\lquote a \text{ ne vérifie pas } \RR(q,p)\rquote)=1-\mathbb{P}(p\equiv 1 \Mod{q})\mathbb{P}(\lquote \exists b \text{ tel que }a\equiv b^q\Mod{p}\rquote|\lquote p\equiv 1 \Mod{q}\rquote),\]
où la dernière probabilité est une probabilité conditionnelle. 
Alors, d'après le Théorème de Dirichlet sur les nombres premiers en progression arithmétique on a $\mathbb{P}(p\equiv 1 \Mod{q})=\frac{1}{q-1}$. Rappelons que $h$ est l'entier défini par \eqref{def_h}, c'est-à-dire le plus grand entier tel que $a$ soit une $h$-ième puissance . Alors trivialement si $q|h$, \[\mathbb{P}(\lquote \exists b \text{ tel que }a\equiv b^q\Mod{p}\rquote|\lquote p\equiv 1 \Mod{q}\rquote)=1.\] Si $q\nmid h$, alors 
\[\mathbb{P}(\lquote\exists b \text{ tel que }a\equiv b^q\Mod{p}\rquote|\lquote p\equiv 1 \Mod{q}\rquote)=\mathbb{P}(\lquote a^{\frac{p-1}{q}}\equiv 1\Mod{p} \rquote|\lquote p\equiv 1 \Mod{q}\rquote).\]
D'après le petit théorème de Fermat, la classe de $a^{\frac{p-1}{q}}$ appartient à $\{\bar{x}\in(\Z/p\Z)^*,\ \bar{x}^q=\bar{1}\}$. Or cet ensemble possède $q$ éléments, on peut donc supposer que la probabilité que $a^{\frac{p-1}{q}}\equiv 1\Mod{p}$ est $\frac{1}{q}$ (voir \cite{Moree}).

Un argument plus rigoureux (\cite{Li} lemme 3.2) consiste à observer que $p\equiv 1\Mod{q}$ et $\lquote a \text{ vérifie }\RR(q,p)\rquote$ si et seulement si $p$ est totalement décomposé dans le corps de Kummer $L:=\Q\left(\xi_q, a^{\frac{1}{q}}\right)$, où $\xi_q$ est une racine primitive $q$-ième de l'unité. Cela revient à dire que les éléments de Frobenius $\sigma_{\mathfrak{p}}$ pour $\mathfrak{p}\in\mathcal{O}_L$ au dessus de $p$, sont bien définis et sont dans la classe de conjugaison de l'identité dans $\Gal(L/\Q)$.

Or $L/\Q$ est un extension abélienne, donc cette classe de conjugaison est réduite à un singleton. D'après le théorème de Tchebotarev la proportion de tels $p$ est $\frac{1}{[L:\Q]}$, qui vaut $\frac{1}{q(q-1)}$ sauf si $a$ est un carré.

Ainsi $\mathbb{P}(\lquote a \text{ ne vérifie pas } \RR(q,p)\rquote)=1-\frac{(h,q)}{q(q-1)}$. Alors en supposant l'indépendance selon les $q$ des événements $\lquote a$ vérifie $\RR(q,p)\rquote$ on obtient
\[\mathbb{P}(\lquote a \text{ est racine primitive } \Mod{p}\rquote)=\prod\limits_{q}\left(1-\frac{(h,q)}{q(q-1)}\right)=:C_a(h),\]
où $C_a(1)$ est la constante d'Artin. Cependant, cette hypothèse d'indépendance est bien trop forte, par exemple $\Q(\sqrt{5})$ est un sous-corps de $\Q(\xi_5,\sqrt[5]{5})$ (car $\sqrt{5}=-2(e^{4i\pi/5}+e^{-4i\pi/5})-1$), et donc la condition $\lquote 5$ vérifie $\RR(5,p)\rquote$ implique $\lquote 5$ vérifie $\RR(2,p)\rquote$. Un terme correctif dépendant de $a$ devra alors être ajouté pour tenir compte de ce genre d'éventualités (voir\cite{Stevenhagen}). \\

Passons maintenant au problème pour les $\ell$-presque premiers. Soient $p_1,\ldots,p_\ell$ des nombres premiers, ne divisant pas $a$. Notons 
\[W_\ell(q):=\mathbb{P}(\lquote a \text{ vérifie } \RR(q,p_1\cdots p_\ell)\rquote ).\]
Découpons $W_\ell(q)$ suivant la valuation $q$-adique de $\lambda(p_1\cdots p_\ell)$, et suivant la taille de $M_q(p_1,\ldots,p_\ell)$.
\begin{align*}
 W_\ell(q)&=\sum\limits_{m=1}^{\infty}\sum\limits_{i=1}^{\ell}\mathbb{P}(\lquote a \text{ vérifie } \RR(q,p_1\cdots p_\ell)\rquote,\ \nu_q(\lambda(p_1\cdots p_\ell))=m,\ |M_q(p_1,\ldots,p_\ell)|=i)\\
 &=\sum\limits_{m=1}^{\infty}\sum\limits_{i=1}^{\ell}\binom{\ell}{i}\mathbb{P}\left(\begin{array}{ll}\nu_q(p_j-1)<m & \forall i<j\leq \ell\\ \nu_q(p_j-1)=m & \forall 1\leq j\leq i\\ \exists b \text{ tel que }a\equiv b^q\Mod{p_j} & \forall 1\leq j\leq i
\end{array}
\right).\end{align*}
Les événements impliquant les $p_j$ sont maintenant indépendants :
\begin{equation*}
W_\ell(q)=\sum\limits_{m=1}^{\infty}\sum\limits_{i=1}^{\ell}\binom{\ell}{i}\mathbb{P}(\nu_q(p-1)<m)^{\ell-i}\mathbb{P}(\nu_q(p-1)=m,\ \lquote\exists b \text{ tel que }a\equiv b^q\Mod{p}\rquote)^{i}.
\end{equation*}

Alors d'après le Théorème de Dirichlet $\mathbb{P}(\nu_q(p-1)<m)=1-\frac{1}{\varphi(q^m)}$. Puis
\begin{align*}
        \mathbb{P}(\nu_q(p-1)=m,\ \lquote\exists b \text{ tel que }a\equiv b^q\Mod{p}\rquote)&=\mathbb{P}(p\equiv 1\Mod{q^m},\ \lquote\exists b \text{ tel que }a\equiv b^q\Mod{p}\rquote)\\
        &-\mathbb{P}(p\equiv 1\Mod{q^{m+1}},\ \lquote\exists b \text{ tel que }a\equiv b^q\Mod{p}\rquote)\\
        &=\frac{(h,q)}{q}\left(\frac{1}{\varphi(q^m)}-\frac{1}{\varphi(q^{m+1})}\right)=\frac{(h,q)(q-1)}{q^2\varphi(q^m)}.
\end{align*}

On reporte dans $W_\ell(q)$ :

\[W_\ell(q)=\sum\limits_{m=1}^{\infty}\sum\limits_{i=1}^{\ell}\binom{\ell}{i}\left(1-\frac{1}{\varphi(q^m)}\right)^{\ell-i}\frac{(h,q)^i(q-1)^i}{q^{2i}\varphi(q^m)^i}.\]
On développe $\left(1-\frac{1}{\varphi(q^m)}\right)^{\ell-i}$ en utilisant le la formule du binôme de Newton, puis on inverse les sommes finies et la somme infinie, on obtient alors,
\[W_\ell(q)=\sum\limits_{i=1}^{\ell}\binom{\ell}{i}\sum\limits_{j=0}^{\ell-i}\binom{\ell-i}{j}\frac{(-1)^jq^{i+j}(h,q)^i}{q^{2i}(q-1)^j(q^{i+j}-1)}.\]
Alors, en supposant l'indépendance des $\RR(q,p_1\cdots p_\ell)$, on a que
\[\mathbb{P}(a \text{ est racine primitive généralisée }\Mod{p_1\cdots p_\ell})=\prod\limits_{q}(1-W_\ell(q)).\]
Cependant on verra par la suite qu'il n'y a pas indépendance, et donc qu'un coefficient correctif dépendant de $a_1$, $h$ et $\ell$ devra être ajouté.
\\

La principale difficulté pour résoudre ce genre de problème est d'exclure les $\ell$-presque premiers tels que $a$ vérifie $\RR(q,p_1\cdots p_\ell)$ pour un $q$ grand, la section suivante est un premier pas dans ce sens.

\section{Un premier découpage : le découpage initial de Hooley} Nous adaptons les notations du paragraphe puis procédons à un découpage de $\Na(x)$ analogue à celui effectué par Hooley \cite{Hooley} pour le cas $\ell=1$.

On a vu dans la section \ref{section_caract_prim_root} qu'un entier $p_1\cdots p_\ell$ était compté dans $\Na(x)$ si $a$ ne vérifie pas $\RR(q,p_1\cdots p_\ell)$ pour tout $q$.

Pour $\eta,\ \eta_1,\ \eta _2\in\R$, $\eta_1<\eta_2$, et $k$ un entier sans facteur carré on considère les cardinaux suivants :
\begin{equation}\label{def_Na_xeta}
 \Na(x,\eta)=\#\{p_1\cdots p_\ell\leq x|\ a\ \text{ne vérifie pas }\RR(q,p_1 \ldots p_\ell),\ \forall q\leq\eta\},
\end{equation}

\begin{equation}\label{def_Pa}
 \Pa(x,k)=\#\{p_1\cdots p_\ell\leq x|\ a\ \text{vérifie }\RR(q,p_1 \ldots p_\ell),\ \forall q|k\}.
\end{equation}

Enfin il nous reste à introduire l'analogue des $M_a(x,\eta_1,\eta_2)$ de Hooley que nous noterons $\Ma(x,\eta_1,\eta_2)$. Pour $\eta_1<\eta_2$, $\Ma(x,\eta_1,\eta_2)$ est le nombre de $p_1\cdots p_\ell\leq x$ pour lesquels $a$ vérifie $ \RR(q,p_1 \ldots p_\ell)$ pour au moins un $q$ dans l'intervalle $]\eta_1,\eta_2]$.

On découpe $]0,x-1[$ en 4 parties suivant les bornes suivantes :
\begin{align}\label{def_bornes_Ci}
 \boldsymbol{\cdot}\ &\ C_1\text{ une constante arbitrairement grande;}\nonumber\\
 \boldsymbol{\cdot}\ & C_2=x^{\frac{1}{2}}\log^{-8}x;\\
 \boldsymbol{\cdot}\ & C_3=x^{\frac{1}{2}}\log x .\nonumber
\end{align}

Tout d'abord,
\begin{equation}\label{majoration_inconditionnelle}
\Na(x)\leq\Na(x,C_1).
\end{equation}
On minore $\Na(x)$ à l'aide des quantités $\Ma(x,\eta_1,\eta_2)$,
\[\Na(x)\geq\Na(x,C_1)-\Ma(x,C_1,x-1).\]
On en déduit
\[\Na(x)=\Na(x,C_1)+\bo(\Ma(x,C_1,x-1)).\]
On obtient alors l'équation fondamentale :
\begin{equation}\label{equationfondav1}
\Na(x)=\Na(x,C_1)+\bo(\Ma(x,C_1,C_2))+\bo(\Ma(x,C_2,C_3))+\bo(\Ma(x,C_3,x-1)).
\end{equation}

Dans la section suivante nous fournissons des majorations de $\Ma(x,C_2,C_3)$ et $\Ma(x,C_3,x-1)$. Le terme $\Ma(x,C_1,C_2)$ est ensuite majoré à l'aide de l'Hypothèse de Riemann Généralisée.


\section{Une deuxième série de découpages et distinction de la majoration nécessitant l'Hypothèse de Riemann Généralisée}

Afin de majorer le terme d'erreur dans \eqref{equationfondav1} nous allons procéder en effectuant des découpages successifs, d'abord suivant les valeurs de $q$ puis suivant les valeurs de $p_2\cdots p_\ell$. Par souci de clarté, ces découpages sont synthétisés dans l'arbre ci-dessous, où les nœuds sont les différents découpages, et les commentaires sur les arrêtes indiquent la méthode utilisée pour parvenir à la majoration.

\newpage
 \scriptsize

\rotatebox{90}{
\begin{minipage}{\textheight}\begin{forest} for tree={l sep = 30mm,
    edge path={\noexpand\path[\forestoption{edge}] (\forestOve{\forestove{@parent}}{name}.parent anchor) -- +(0,-12pt)-| (\forestove{name}.child anchor)\forestoption{edge label};}
}
[{$\sum\limits_{p_1\cdots p_\ell\leq x}\id\left(\exists q\in]C_1,x],\ a \text{ vérifie }\RR(q,p_1\ldots p_\ell)\right)$} [{$C_1<q\leq \frac{\sqrt{x}}{\log^8 x}$}
[{$p_1> \log^{64}x$} [{$p_2\cdots p_\ell\leq \frac{x}{q^2 \log^6 x}$} [{$\ll\frac{x(\log\log x)^{\ell -1}}{C_1\log x}$},edge label={node[align=left,midway,right]{\ \\ \ \\ \ \\ \ \\ \ \\ \ \\ \ \\ GRH,\\ Lemme de Hooley}}]] [{$\frac{x}{q^2 \log^6 x}<p_2\cdots p_\ell\leq \frac{x}{q^2}\log^2 x$} [{$\ll\frac{x(\log\log x)^{\ell -2}}{\log x}$},edge label={node[align=left,midway,right]{\ \\ \ \\ \ \\ \ \\ \ \\ \ \\ \ \\ Théorème de\\Brun-Titchmarsh,\\ Formule de Mertens,\\ Lemme de Landau}}]] [{$p_2\cdots p_\ell> \frac{x}{q^2}\log^2 x$} [{$\ll\frac{x(\log\log x)^{\ell -1}}{\log C_1\log^2 x}$},edge label={node[align=left,midway,right]{\ \\ \ \\ \ \\ \ \\ \ \\ \ \\ \ \\ Combinatoire}}]]] [{$p_1\leq \log^{64}x$} [{$\ll \frac{x(\log\log x)^{\ell-2}}{\log x}\log\log\log x$},edge label={node[align=left,midway,right]{\ \\ \ \\ \ \\ \ \\ \ \\ \ \\ \ \\ Trivial}}]]]
[{$\frac{\sqrt{x}}{\log^8 x}<q\leq\sqrt{x}\log x$} [{$p_2\cdots p_\ell \leq \sqrt{x}^{1-\varepsilon}$} [{$\ll \frac{x}{\log^2 x}(\log\log x)^\ell$},edge label={node[align=left,midway,right]{\ \\ \ \\ \ \\ \ \\ \ \\ \ \\ \ \\ Théorème de\\ Brun-Titchmarsh}}]] [{$\sqrt{x}^{1-\varepsilon}<p_2\cdots p_\ell \leq \sqrt{x}^{1+\varepsilon}$}  [{$\ll \frac{x}{\log x}(\log\log x)^{\ell-2}$},edge label={node[align=left,midway,right]{\ \\ \ \\ \ \\ \ \\ \ \\ \ \\ \ \\ Théorème des\\ nombres premiers,\\ Formule de Mertens}}]] [{$p_2\cdots p_\ell > \sqrt{x}^{1+\varepsilon}$} [{$\ll 1$},edge label={node[align=left,midway,right]{\ \\ \ \\ \ \\ \ \\ \ \\ \ \\ \ \\ $p_1<\sqrt{x}^{1+\varepsilon}$\\ or $q>\frac{\sqrt{x}}{\log^8 x}$}}]]
]
[{$\sqrt{x}\log x<q\leq x$} [{$\ll \frac{x}{\log^2 x}$},edge label={node[align=left,midway,right]{\ \\ \ \\ \ \\ \ \\ \ \\ \ \\ \ \\ Combinatoire}}]
]
]
\end{forest}\end{minipage}}


\normalsize
\subsection{Majoration de $\Ma(x,C_2,C_3)$ et $\Ma(x,C_3,x-1)$}
\begin{prop}\label{prop_maj_C2C3(x-1)}
 En reprenant les définitions de $C_2$ et $C_3$ de \eqref{def_bornes_Ci}, on a :
 \begin{enumerate}[i)]
\item \[\Ma(x,C_2,C_3)=\bo\left(\frac{x}{\log x}(\log\log x)^{\ell-2}\right)\]
\item \[\Ma(x,C_3,x-1)=\bo\left(\frac{x}{\log^2 x}\right)\]
 \end{enumerate}

\end{prop}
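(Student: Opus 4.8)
\emph{Plan de démonstration.} Je compte adapter le traitement par Hooley des intervalles $]\xi_2,\xi_3]$ et $]\xi_3,x-1[$, la difficulté nouvelle étant la présence du produit $m$ des $\ell-1$ facteurs premiers restants ; les deux majorations sont inconditionnelles. \emph{Réduction commune :} si $a$ vérifie $\RR(q,p_1\cdots p_\ell)$, alors $q\mid\lambda(p_1\cdots p_\ell)$, donc $M_q(p_1,\ldots,p_\ell)$ est non vide, et en choisissant $p_i\in M_q(p_1,\ldots,p_\ell)$ on obtient $q\mid p_i-1$ et $a\equiv b^q\Mod{p_i}$ pour un certain $b$. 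Par symétrie des indices et en sommant sur $q\in\,]\eta_1,\eta_2]$ (ce qui ne fait que surcompter), on majore $\Ma(x,\eta_1,\eta_2)$ par $\ell$ fois le nombre d'entiers $n=mp\le x$ où $p$ est un facteur premier vérifiant $q\mid p-1$ et $a\equiv b^q\Mod p$, et où $m$ est le produit des $\ell-1$ autres premiers, $\Omega(m)=\ell-1$.

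\emph{Majoration (ii).} Ici je conserve la condition de résidu. Un tel $p$ vérifie $a^{(p-1)/q}\equiv1\Mod p$, soit $p\mid a^{d}-1$ avec $d:=(p-1)/q$ ; comme $q>C_3$ et $p\le x$, on a $d<x/C_3=x^{1/2}/\log x$. Les diviseurs premiers de $a^d-1$ qui dépassent $dC_3$ ont un produit au plus $|a^d-1|<|a|^d$, donc leur nombre est au plus $d\log|a|/\log(dC_3)\ll d/\log x$, puisque $dC_3\ge C_3$ et $\log C_3\ge\tfrac12\log x$. En fixant $m$ (avec $m<x/C_3$) puis en regroupant les $p\le x/m$ admissibles selon $d<(x/m)/C_3$, on obtient
\[
\#\{p\le x/m\ \text{admissible}\}\ll\frac{1}{\log x}\sum_{d<\frac{x}{mC_3}}d\ll\frac{1}{\log x}\Big(\frac{x}{mC_3}\Big)^2\ll\frac{x}{m^2\log^3x}.
\]
La sommation sur $m$, via $\sum_{\Omega(m)=\ell-1}m^{-2}\le\zeta(2)<\infty$, donne alors $\Ma(x,C_3,x-1)\ll x/\log^3x=\bo(x/\log^2x)$.

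\emph{Majoration (i).} J'abandonne ici la condition de résidu (ce qui préserve le caractère inconditionnel) pour ne garder que $p\equiv1\Mod q$, et j'applique l'inégalité de Brun--Titchmarsh $\#\{p\le y:\,p\equiv1\Mod q\}\ll y/(\varphi(q)\log(y/q))$ pour $q<y$. L'ingrédient décisif est l'économie mertensienne $\sum_{C_2<q\le C_3}q^{-1}=\log\log C_3-\log\log C_2+o(1)\ll(\log\log x)/\log x$, conséquence de $\log C_2,\log C_3=\tfrac12\log x+O(\log\log x)$. Jointe à $\sum_{\Omega(m)=\ell-1,\,m\le x}m^{-1}\ll(\log\log x)^{\ell-1}$, elle conduit, après sommation, à une borne de l'ordre de $x(\log\log x)^{\ell}/\log^2x$, qui est $o\big(x(\log\log x)^{\ell-2}/\log x\big)$ et établit (i).

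\emph{Obstacle principal.} C'est le dénominateur $\log(y/q)$ de Brun--Titchmarsh, qui dégénère lorsque $m$ (donc $y=x/m$) approche $x/q$. Je le contournerai par une coupure selon les tailles relatives : lorsque $m<\sqrt{x/q}$ on somme d'abord sur $p$ et $\log(x/(mq))\gg\log x$ ; lorsque $m\ge\sqrt{x/q}$, c'est $p\le\sqrt{xq}$ qui est petit, et l'on somme d'abord sur $p\equiv1\Mod q$ en comptant ensuite les $m\le x/p$ par $\ll(x/p)(\log\log x)^{\ell-2}/\log(x/p)$ avec $\log(x/p)\gg\log x$. Dans les deux régimes la somme en $q$ fait surgir $\sum_q\varphi(q)^{-1}\ll(\log\log x)/\log x$, et l'on retrouve la même majoration. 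Il restera à soigner l'uniformité en $q$ jusqu'à $C_3=x^{1/2}\log x$ (validité de Brun--Titchmarsh) ainsi que l'estimation $\sum_{p\le Y,\,p\equiv1(q)}p^{-1}\ll(\log\log x)/\varphi(q)$, obtenue par sommation d'Abel.
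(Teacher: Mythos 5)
Votre preuve est correcte, mais elle suit, pour le point i), une route réellement différente de celle de l'article. Pour ii), vous reprenez l'argument de Hooley comme le fait l'article (les $p$ admissibles divisent $a^d-1$ avec $d=(p-1)/q$ petit, et l'on prend des logarithmes), mais en groupant selon $d$ et en minorant chaque premier admissible par $dC_3>C_3$ au lieu de $2$, vous gagnez un facteur $\log x$ et obtenez $\bo(x/\log^3 x)$ là où l'article se contente de $\bo(x/\log^2 x)$ ; seule retouche nécessaire : $|a^d-1|<|a|^d$ est faux pour $a<0$ et $d$ impair (remplacez par $a^{2d}$ comme dans l'article, ou par $|a^d-1|\leq 2|a|^d$), ce qui ne change rien à l'ordre de grandeur. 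Pour i), l'article découpe selon la taille de $m=p_2\cdots p_\ell$ en trois zones : Brun--Titchmarsh pour $m<\sqrt{x}^{1-\epsilon}$, abandon complet de la congruence dans la zone médiane $\sqrt{x}^{1-\epsilon}\leq m\leq\sqrt{x}^{1+\epsilon}$ (où la somme sur les $q\in\,]C_2,C_3]$ divisant $p_1-1$ est $\bo(1)$) --- c'est cette zone qui fournit le terme dominant $x(\log\log x)^{\ell-2}/\log x$ de l'énoncé --- et constat que la troisième zone est vide. Vous, au contraire, conservez partout le gain $1/\varphi(q)$ et exploitez l'estimation mertensienne en intervalle court $\sum_{C_2<q\leq C_3}q^{-1}\ll(\log\log x)/\log x$, possible précisément parce que $\log C_2$ et $\log C_3$ valent $\tfrac12\log x+\bo(\log\log x)$ ; la dégénérescence du dénominateur $\log(y/q)$ est bien neutralisée par votre coupure $m\lessgtr\sqrt{x/q}$, puisque dans le premier régime $x/(mq)>\sqrt{x/q}\geq x^{1/4}\log^{-1/2}x$, et que dans le second $p\leq\sqrt{xq}$ entraîne $\log(x/p)\gg\log x$, l'estimation $\sum_{p\leq Y,\ p\equiv 1 (q)}p^{-1}\ll(\log\log x)/\varphi(q)$ (Brun--Titchmarsh plus sommation d'Abel, valide ici car $Y/q=\sqrt{x/q}\rightarrow\infty$) et le théorème de Landau concluant. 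Vous obtenez ainsi $\bo\left(x(\log\log x)^{\ell}/\log^2 x\right)$, strictement plus fort que la majoration $x(\log\log x)^{\ell-2}/\log x$ de la proposition ; en contrepartie, votre argument exige ces deux estimations uniformes en $q$ jusqu'à $C_3$, tandis que le traitement de la zone médiane de l'article reste purement élémentaire. Dernier détail à corriger : dans Mertens, écrivez le reste $\bo(1/\log C_2)$ et non $o(1)$, sans quoi la conclusion $\ll(\log\log x)/\log x$ ne suit pas.
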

\begin{proof}
 \begin{enumerate}[i)]
 \item
 Nous majorons $\Ma(x,C_2,C_3)$ à l'aide des cardinaux $\Pa(x,k)$ définis par \eqref{def_Pa} :
 \[\Ma(x,C_2,C_3)\leq\sum\limits_{C_2<q\leq C_3}{\Pa(x,q)}.\]
 En ne retenant que la condition $q|\lambda(p_1\cdots p_\ell)$ dans $\RR(q,p_1 \ldots p_\ell)$, on obtient l'inégalité
\[\Pa(x,q)\leq \ell!\sum_{\substack{p_1\cdots p_\ell\leq x\\ p_1\equiv1(q)}}1.\]
Soit $\varepsilon>0$, on découpe cette somme selon la taille de $p_2\cdots p_\ell$,
\[\sum\limits_{C_2<q\leq C_3}\sum_{\substack{p_1\cdots p_\ell\leq x\\ p_1\equiv1(q)}}1=S_1+S_2+S_3,\]
avec \begin{align*}
      S_1&=\sum\limits_{C_2<q\leq C_3}\sum_{p_2\cdots p_\ell<\sqrt{x}^{1-\varepsilon}}\sum_{\substack{p_1\leq\frac{x}{p_2\cdots p_\ell}\\ p_1\equiv1(q)}}1,\\
      S_2&=\sum\limits_{C_2<q\leq C_3}\sum_{\sqrt{x}^{1-\varepsilon}\leq p_2\cdots p_\ell\leq\sqrt{x}^{1+\varepsilon}}\sum_{\substack{p_1\leq\frac{x}{p_2\cdots p_\ell}\\ p_1\equiv1(q)}}1,\\
      S_3&=\sum\limits_{C_2<q\leq C_3}\sum_{p_2\cdots p_\ell\geq\sqrt{x}^{1+\varepsilon}}\sum_{\substack{p_1\leq\frac{x}{p_2\cdots p_\ell}\\ p_1\equiv1(q)}}1.
     \end{align*}

Dans $S_1$ la somme sur $p_1$ est longue, on utilise le Théorème de Brun–Titchmarsh (\cite{Tenenbaum_fr}, Th I.4.16),

\begin{align*}
S_1&\ll\sum\limits_{C_2<q\leq C_3}\sum_{p_2\cdots p_\ell<\sqrt{x}^{1-\varepsilon}}\frac{2x}{p_2\cdots p_\ell q \log(\frac{x}{p_2\cdots p_\ell q})}\\
&\ll \frac{x}{\log^2 x}(\log\log x)^{\ell-1}\sum\limits_{C_2<q\leq C_3}\frac{\log q}{q}\\\
&\ll \frac{x(\log\log x)^\ell}{\log^2 x}.
\end{align*}
Pour $S_2$, on ne peut plus profiter de la congruence $p_1\equiv 1\Mod{q}$ car $p_1$ et $q$ sont d'un ordre de grandeur proche :
\begin{align*}
S_2&=\sum_{\sqrt{x}^{1-\varepsilon}\leq p_2\cdots p_\ell\leq\sqrt{x}^{1+\varepsilon}}\sum_{p_1\leq\frac{x}{p_2\cdots p_\ell}}\sum\limits_{\substack{C_2<q\leq C_3\\ q|p_1-1}}1\\
&\ll \sum_{\sqrt{x}^{1-\varepsilon}\leq p_2\cdots p_\ell\leq\sqrt{x}^{1+\varepsilon}}\sum_{p_1\leq\frac{x}{p_2\cdots p_\ell}}1,\\
\end{align*}

puis en appliquant le Théorème des nombres premiers, on obtient :
\begin{align*}
S_2&\ll \sum_{\sqrt{x}^{1-\varepsilon}\leq p_2\cdots p_\ell\leq\sqrt{x}^{1+\varepsilon}}\frac{x}{p_2\cdots p_\ell\log(\frac{x}{p_2\cdots p_\ell})}.
\end{align*}

Comme $p_2\cdots p_\ell\leq \sqrt{x}^{1+\varepsilon}$, $\log(\frac{x}{p_2\cdots p_\ell})\gg \log x $ :

\[S_2\ll\frac{x}{\log x}\sum_{\sqrt{x}^{1-\varepsilon}\leq p_2\cdots p_\ell\leq\sqrt{x}^{1+\varepsilon}}\frac{1}{p_2\cdots p_\ell}.\]

Si $p_2\cdots p_\ell> \sqrt{x}^{1-\varepsilon}$, alors $\max (p_2,\ldots, p_\ell)>\sqrt{x}^{\frac{1-\varepsilon}{\ell-1}}$.
Les rôles de $p_2,\ldots,p_\ell$ étant symétriques, on peut supposer que ce maximum est atteint par $p_2$ :
\[S_2\ll \frac{x}{\log x}\sum\limits_{p_3\cdots p_\ell \leq \sqrt{x}^{\ell+3\ell\varepsilon}}\frac{1}{p_3\cdots p_\ell}\sum\limits_{\sqrt{x}^{\frac{1-\varepsilon}{\ell-1}}< p_2\leq \sqrt{x}^{1+\varepsilon}}\frac{1}{p_2}.\]
La somme sur $p_2$ est un $\bo(1)$ où la constante peut dépendre de $\ell$. On en déduit que $S_2\ll \frac{x}{\log x}(\log\log x)^{\ell -2}$.
Et enfin, pour $S_3$, on a trivialement pour $x$ assez grand,
\[S_3=0,\]
car pour $x$ assez grand, comme $p_1\leq\frac{x}{p_2\cdots p_\ell}$ et $\frac{x}{p_2\cdots p_\ell}<C_2$ et donc $\sum\limits_{\substack{C_2<q\leq C_3\\ q|p_1-1}}1=0$.
Ainsi $\Ma(x,C_2,C_3)=\bo\left(\frac{x}{\log x}(\log \log x)^{\ell-2}\right)$.

\item
Montrons maintenant que $\Ma(x,C_3,x-1)=\bo\left(\frac{x}{\log^2 x}\right)$. Nous procédons ici de la même façon que Hooley \cite[p212]{Hooley}.
Observons que, comme les rôles des $p_i$ sont symétriques dans la condition $\RR(q,p_1 \ldots p_\ell)$ on peut supposer que $a^{\frac{p_1-1}{q}}\equiv 1 \Mod{p_1}$ et donc $a^{2\frac{p_1-1}{q}}\equiv 1\Mod{p_1}$.\\
Comme $q>x^{\frac{1}{2}}\log x$, pour $p_2\cdots p_\ell\leq x$ fixés, les nombres $p_1$ tels que $p_1 \cdots p_\ell$ soit compté dans $\Ma(x,C_3,x-1)$ doivent diviser le produit positif (éventuellement vide) : 
\begin{equation}\label{prod_a2m}
\prod\limits_{m<x^{\frac{1}{2}}\log^{-1} x\prod\limits_{i=2}^\ell p_i^{-1}}(a^{2m}-1)                                                                                                                                                                                                                                                                                                                                                                                                               \end{equation}

Posons $S_a(\eta_1,\eta_2)=\{p:\ \exists q\in]\eta_1,\eta_2]\ \text{tel que}\ q|p-1,\ a\ \text{est une puissance}\ q\text{-ième}\ \Mod{p}\}$. Ainsi 
\[\Ma(x,C_3,x-1)\leq2\sum\limits_{p_2\cdots p_\ell\leq x}\sum\limits_{\substack{p_1\leq\frac{x}{p_2\cdots p_\ell}\\ p_1\in S_a(C_3,x-1)}}1\]
Les éléments de $S_a(C_3,x-1)$ étant des nombres premiers, leur produit divise \eqref{prod_a2m}. En minorant dans  \eqref{prod_a2m} par 2 chaque $p\in S_a(C_3, x-1)$ on obtient :
\[2^{\#\{p_1\leq\frac{x}{p_2\cdots p_\ell}:\ p_1\in S_a(C_3,x-1)\}}<\prod\limits_{m<x^{\frac{1}{2}}\log^{-1} x\prod\limits_{i=2}^\ell p_i^{-1}}a^{2m}\]
Ainsi, en prenant le logarithme :
\[\#\Big\{p_1\leq\frac{x}{p_2\cdots p_\ell}:\ p_1\in S_a(C_3,x-1)\Big\}<\frac{2\log |a|}{\log 2}\sum\limits_{m<x^{\frac{1}{2}}\log^{-1} x\prod\limits_{i=2}^\ell p_i^{-1}}m=\bo\Bigg(\frac{x}{\log^2 x \prod\limits_{i=2}^\ell p_i^{2}}\Bigg)\]
et donc $\Ma(x,C_3,x-1)\ll\sum\limits_{p_2\cdots p_\ell\leq x}\frac{x}{\log^2 x \prod\limits_{i=2}^\ell p_i^{2}}\ll \frac{x}{\log^2 x}$.
\end{enumerate}
\end{proof}

\subsection{Majoration de $\Ma(x,C_1,C_2)$ sous GRH.}
Nous procédons à la majoration de $\Ma(x,C_1,C_2)$. Pour ce faire nous aurons besoin du résultat suivant de Hooley \cite{Hooley} conditionnel à l'hypothèse de Riemann généralisée pour certains corps de nombres.

\begin{lem}[Hooley, GRH]\label{lemme_hooley}
Soit $q$ un nombre premier, on a alors
\begin{equation*}
 \sum\limits_{\substack{p\leq x\\ q|p-1\\ a\in\qroot(q,p)}}1\ll \frac{1}{q(q-1)}\Li(x)+\bo\big(\sqrt{x}\log(qx)\big).
\end{equation*}

\end{lem}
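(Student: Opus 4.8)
On cherche à majorer
\[
\sum_{\substack{p\leq x\\ q\mid p-1\\ a\in\qroot(q,p)}}1,
\]
qui compte les premiers $p\leq x$ tels que $p\equiv 1\Mod{q}$ \emph{et} $a$ soit un résidu de puissance $q$-ième modulo $p$. L'idée centrale, remontant à Hooley, consiste à reconnaître ces deux conditions comme une \emph{condition de décomposition} dans une extension galoisienne bien choisie. En effet, $p\equiv 1\Mod q$ équivaut à ce que $p$ soit totalement décomposé dans le corps cyclotomique $\Q(\zeta_q)$, tandis que la condition $a\in\qroot(q,p)$ signifie (pour $p\equiv 1\Mod q$) que le polynôme $X^q-a$ possède une racine modulo $p$, c'est-à-dire se scinde complètement modulo $p$. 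La conjonction des deux équivaut donc à ce que $p$ soit totalement décomposé dans le corps de Kummer $k_q:=\Q(\zeta_q,a^{1/q})$.

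\textbf{Première étape : traduction galoisienne.} Je poserais $k_q=\Q(\zeta_q,a^{1/q})$ et vérifierais que, à un nombre fini de premiers ramifiés près (les diviseurs de $qa$, en nombre $\bo(\log qx)$ donc absorbables dans le terme d'erreur), les premiers $p$ comptés par la somme sont exactement ceux qui se décomposent totalement dans $k_q$. L'extension $k_q/\Q$ est galoisienne de degré $q(q-1)$ lorsque $q\nmid h$ (le cas générique), puisque $[\Q(\zeta_q):\Q]=q-1$ et $[\Q(\zeta_q,a^{1/q}):\Q(\zeta_q)]=q$ dès que $a$ n'est pas une puissance $q$-ième dans $\Q(\zeta_q)$. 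Le nombre de premiers totalement décomposés correspond alors, via Tchebotariov effectif, à la classe de conjugaison triviale $\{\id\}$, de densité $\frac{1}{[k_q:\Q]}=\frac{1}{q(q-1)}$.

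\textbf{Deuxième étape : Tchebotariov effectif sous GRH.} J'appliquerais la forme quantitative du théorème de densité de Tchebotariov, conditionnelle à l'hypothèse de Riemann généralisée pour la fonction zêta de Dedekind de $k_q$ (Lagarias--Odlyzko, ou la version de Serre). Pour une classe de conjugaison $C$ d'un corps $K$ galoisien de degré $n_K$ et de discriminant $d_K$, elle donne
\[
\pi_C(x)=\frac{|C|}{n_K}\Li(x)+\bo\!\left(\frac{|C|}{n_K}\sqrt{x}\,\bigl(\log|d_K|+n_K\log x\bigr)\right).
\]
Avec $C=\{\id\}$, $n_K=q(q-1)$, le terme principal est bien $\frac{1}{q(q-1)}\Li(x)$. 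Il reste à contrôler le terme d'erreur : je majorerais $\log|d_{k_q}|\ll q\log(q|a|)$ (la ramification n'intervient qu'aux places divisant $qa$) et simplifierais $n_K\log x=q(q-1)\log x$, de sorte que le facteur $\frac{1}{n_K}$ devant le terme d'erreur le ramène à l'ordre $\sqrt{x}\log(qx)$ annoncé, uniformément en $q$.

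\textbf{L'obstacle principal.} La difficulté n'est pas conceptuelle mais réside dans l'\emph{uniformité en $q$} du terme d'erreur, point crucial puisque ce lemme sera ensuite sommé sur $q$ jusqu'à $C_2=x^{1/2}\log^{-8}x$. Il faut s'assurer que la majoration du discriminant et le facteur de normalisation $\frac{1}{n_K}$ compensent exactement la croissance de $n_K$ dans le terme d'erreur de Tchebotariov, ce qui exige la forme précise (et non simplement asymptotique en $x$ à $q$ fixé) de l'estimation. Un second point technique mérite attention : le cas dégénéré $q\mid h$, où $a$ est une puissance $q$-ième et la condition $a\in\qroot(q,p)$ devient automatique ; dans ce cas $[k_q:\Q]=q-1$ et la somme vaut $\frac{1}{q-1}\Li(x)+\bo(\sqrt{x}\log(qx))$, ce qui reste compatible avec la majoration énoncée (car $\frac{1}{q-1}\geq\frac{1}{q(q-1)}$, mais le terme principal demandé est une majoration $\ll$). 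Je vérifierais donc que l'inégalité $\ll\frac{1}{q(q-1)}\Li(x)$ du lemme s'entend bien comme majoration valable dans tous les cas, quitte à remplacer $(h,q)$ implicitement par $1$ dans cette étape.
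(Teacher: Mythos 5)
Your reconstruction is essentially the paper's own treatment: the paper does not reprove this lemma but quotes it directly from Hooley \cite{Hooley}, and your route --- translating the pair of conditions $q\mid p-1$, $a\in\qroot(q,p)$ into total splitting in the Kummer field $\Q(\zeta_q,a^{1/q})$ of degree $q(q-1)$, then invoking a GRH-effective counting theorem for totally split primes --- is exactly Hooley's argument (he worked directly with the Dedekind zeta function of that field, since Lagarias--Odlyzko postdates him, but the content is the same, and your attention to uniformity in $q$ is precisely the point of the lemma). Two slips should be fixed. First, your discriminant bound is off by a factor of $q$: the standard estimate gives $\log|d_{k_q}|\ll n_K\log\left(q|a|\,n_K\right)\ll q^2\log(q|a|)$, not $q\log(q|a|)$; this is harmless, since the Chebotarev error term carries the factor $1/n_K$, so one still lands on $\bo\big(\sqrt{x}\log(qx)\big)$. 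Second, your justification of the degenerate case $q\mid h$ is backwards: the inequality $\frac{1}{q-1}\geq\frac{1}{q(q-1)}$ says the true count \emph{exceeds} the claimed main term, which by itself would refute the bound rather than confirm it. The correct repair is that $q\mid h$ forces $q\leq h\ll_a 1$, so the lost factor $q$ is absorbed into the implied constant (which may depend on $a$); alternatively, note that in the paper's application the lemma is only summed over $q>C_1$ with $C_1$ an arbitrarily large constant exceeding $h$, so the degenerate case never arises.
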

Nous sommes alors en mesure de fournir la majoration suivante pour $\Ma(x,C_1,C_2)$.

\begin{prop}[GRH]
 Sous l'hypothèse de Riemann généralisée, et avec $C_1$ et $C_2$ comme dans \eqref{def_bornes_Ci}, on a :
 \[\Ma(x,C_1,C_2)\ll \frac{x(\log\log x)^{\ell-1}}{C_1\log x}+\frac{x(\log\log x)^{\ell-2}}{\log x}\log\log\log x.\]
\end{prop}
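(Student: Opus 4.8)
Le plan consiste d'abord à se ramener, par une majoration par union sur $q$, aux cardinaux $\Pa$ de \eqref{def_Pa} : puisque $\Ma(x,C_1,C_2)$ compte les $p_1\cdots p_\ell\le x$ pour lesquels $a$ vérifie $\RR(q,p_1\cdots p_\ell)$ pour au moins un $q\in\,]C_1,C_2]$, on a
\[\Ma(x,C_1,C_2)\le\sum_{C_1<q\le C_2}\Pa(x,q).\]
Je distinguerais ensuite, dans chaque $\ell$-presque premier, un facteur premier responsable de la condition. D'après la définition de $\RR(q,p_1\cdots p_\ell)$ et de l'ensemble $M_q$ de \eqref{def_Mq}, si $a$ vérifie $\RR(q,p_1\cdots p_\ell)$ alors il existe un facteur premier distingué $p^*\in M_q$ tel que $q\mid p^*-1$ et $a\in\qroot(q,p^*)$. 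En écrivant $p_1\cdots p_\ell=p^*n$, où $n$ est le produit des $\ell-1$ autres facteurs premiers, et en majorant par le nombre de choix du facteur distingué, on obtient
\[\Pa(x,q)\le\ell\sum_{\substack{n\\ \Omega(n)=\ell-1}}\#\Big\{p^*\le\tfrac{x}{n}\ :\ q\mid p^*-1,\ a\in\qroot(q,p^*)\Big\}.\]

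J'appliquerais alors le Lemme \ref{lemme_hooley} (sous GRH) à la somme intérieure sur $p^*$, ce qui sépare un terme principal de densité $\tfrac{1}{q(q-1)}$ et un terme d'erreur en $\bo\!\big(\sqrt{x/n}\,\log(qx/n)\big)$. Le terme principal se traite directement : la sommation sur $q$ fait apparaître $\sum_{q>C_1}\tfrac{1}{q(q-1)}\ll\tfrac1{C_1}$, tandis que la sommation sur $n$ de $\Li(x/n)$, via le dénombrement des $(\ell-1)$-presque premiers de Landau \cite{Landau_fr}, reconstitue le facteur $\tfrac{x(\log\log x)^{\ell-1}}{\log x}$. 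On obtient ainsi la contribution $\ll\tfrac{x(\log\log x)^{\ell-1}}{C_1\log x}$, soit le premier terme annoncé.

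La difficulté principale sera le contrôle du terme d'erreur, car la sommation brute de $\sqrt{x/n}\,\log(qx/n)$ sur $q\le C_2$ et sur tous les $n$ diverge bien au-delà de $x$. Le Lemme \ref{lemme_hooley} n'est en effet exploitable que lorsque son terme d'erreur est dominé par son terme principal, c'est-à-dire, pour la somme de longueur $x/n$, lorsque $q$ est petit devant une puissance de $x/n$ (grosso modo $q\lesssim(x/n)^{1/4}$) ; par ailleurs le facteur $\tfrac1q$ supplémentaire issu de la condition $a\in\qroot(q,p^*)$ est indispensable, faute de quoi la sommation sur $q$ redonne des puissances excédentaires de $\log\log x$. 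Le plan serait donc de scinder le domaine selon la taille de $p^*$ relativement à $q$ (c'est-à-dire selon le seuil d'efficacité de GRH) : dans le régime efficace on conserve l'estimation ci-dessus, et dans le régime complémentaire, où $p^*$ est de taille comparable à $q$, on majorerait le nombre de $p^*$ par le théorème de Brun--Titchmarsh, exactement comme dans la preuve de la Proposition \ref{prop_maj_C2C3(x-1)}. C'est l'articulation soigneuse de ces deux régimes — le report des bornes le long de l'échelle logarithmique en $\log q$, qui contraint la sommation sur les facteurs premiers restants à ne fournir que $(\log\log x)^{\ell-2}$ — qui devrait faire apparaître un facteur $\log\log\log x$ et fournir le second terme $\tfrac{x(\log\log x)^{\ell-2}\log\log\log x}{\log x}$. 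Je m'attends à ce que l'obstacle central soit précisément cet ajustement des seuils, toute estimation trop grossière du régime inefficace produisant des termes de taille $x(\log\log x)^{\ell-2}$ qu'il faut impérativement éviter en tirant pleinement parti de la condition de résidu $q$-ième.
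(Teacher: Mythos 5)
Votre mise en place coïncide avec la première moitié de la preuve du papier : majoration $\Ma(x,C_1,C_2)\le\sum_{C_1<q\le C_2}\Pa(x,q)$, marquage d'un facteur distingué $p^*$ (le papier prend $p_1$ par symétrie), lemme de Hooley sous GRH, puis $\sum_{q>C_1}\frac{1}{q(q-1)}\ll\frac1{C_1}$ et le théorème de Landau pour reconstituer $\frac{x(\log\log x)^{\ell-1}}{C_1\log x}$. En revanche, votre plan pour le régime complémentaire contient une lacune réelle. Vous placez le seuil d'efficacité de GRH à $q\lesssim(x/n)^{1/4}$ (là où le terme d'erreur du lemme est dominé terme à terme par le terme principal) et proposez Brun--Titchmarsh sur tout le reste. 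Or ce reste est alors de largeur multiplicative une puissance de $x/n$ : en posant $y=x/n$ et $q=y^s$, Brun--Titchmarsh donne $\sum_{y^{1/4}<q\le\sqrt y}\frac{y}{q\log(y/q)}\asymp\frac{y}{\log y}\int_{1/4}^{1/2}\frac{ds}{s(1-s)}\asymp\frac{y}{\log y}$, et la sommation sur $n$ produit un terme d'ordre $\frac{x(\log\log x)^{\ell-1}}{\log x}$, sans facteur $1/C_1$ : c'est l'ordre de grandeur du terme principal lui-même, bien au-delà de la borne annoncée. La preuve du papier évite cela par un découpage en \emph{trois} régimes : le lemme de Hooley est poussé beaucoup plus loin, jusqu'à $q\le\sqrt{x/(n\log^6x)}$ --- le terme d'erreur y dépasse le terme principal terme à terme, mais sa somme totale reste $\ll\frac{x\log\log x}{\log^2x}$ --- de sorte que Brun--Titchmarsh n'intervient que sur la fenêtre étroite $\sqrt{x/(n\log^6x)}<q\le\sqrt{x/n}\,\log x$, de largeur polylogarithmique, où Mertens donne $\sum 1/q\ll\frac{\log\log x}{\log(x/n)}$ ; enfin, pour $q>\sqrt{x/n}\,\log x$, Brun--Titchmarsh est abandonné au profit de l'argument élémentaire qui réutilise la condition de résidu : $p^*$ divise $\prod_{m\le\sqrt{x/n}/\log x}(a^{2m}-1)$, chaque premier compté étant minoré par $C_1$ pour gagner le facteur $\frac1{\log C_1}$. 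Votre remarque que le $1/q$ issu de $a\in\qroot(q,p^*)$ est indispensable est juste, mais votre plan ne la convertit en aucun argument précisément là où le lemme GRH n'est plus disponible.

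Seconde lacune : la provenance du facteur $\log\log\log x$. Il ne naît pas du recollement des régimes le long de l'échelle en $\log q$, comme vous le supposez, mais d'une étape préalable absente de votre esquisse : l'élimination des presque-premiers dont le facteur distingué est petit, $p^*\le\log^{64}x$, dont la contribution est $\ll\frac{x(\log\log x)^{\ell-2}}{\log x}\sum_{p\le\log^{64}x}\frac1p\ll\frac{x(\log\log x)^{\ell-2}}{\log x}\log\log\log x$ par Mertens. Ce seuil (équivalent à $n\le x/\log^{64}x$) est aussi ce qui garantit $\log\left(\frac{x}{qn}\right)\gg\log\left(\frac{x}{n}\right)$ dans la fenêtre de Brun--Titchmarsh, laquelle dégénérerait sinon pour $n$ proche de $x$. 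Notez enfin que, contrairement à votre crainte finale, des termes de taille $\frac{x(\log\log x)^{\ell-2}}{\log x}$ sont inoffensifs (ils sont dominés par le second terme de l'énoncé, et le $S_2$ du papier en produit un) ; ce qu'il faut impérativement éviter, ce sont les termes en $\frac{x(\log\log x)^{\ell-1}}{\log x}$ sans $1/C_1$ ni $1/\log C_1$, comme celui que génère votre traitement du régime complémentaire.
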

\begin{proof}
Dans $\Ma$ la condition $a$ vérifie $\RR(q,p_1 \ldots p_\ell)$ implique qu'il existe $p\in\{p_1,\ldots,p_\ell\}$ tel que $a$ vérifie $\RR(q,p)$. Le rôle des $p_i$ étant symétriques, on peut supposer que $a$ vérifie $\RR(q,p_1)$.

 Écrivons
 \begin{align*} 
\Ma(x,C_1,C_2)&\leq\#\left\{p_1\cdots p_\ell\leq x\ \exists q\in ]C_1,C_2],\ a \text{ vérifie }\RR(q,p_1)\right\}\\
&\leq S+\#\left\{p_1\cdots p_\ell\leq x,\ p_1\leq \log^{64}x\right\},
 \end{align*}
 où $S:=\#\left\{p_1\cdots p_\ell\leq x,\ p_2\cdots p_\ell\leq\frac{x}{\log^{64}x},\ \exists q\in ]C_1,C_2],\ a \text{ vérifie }\RR(q,p_1)\right\}$.
 
 La contribution de $p_1\leq (\log x)^{64}$ est négligeable :
 \begin{align*}\#\left\{p_1\cdots p_\ell\leq x,\ p_1\leq \log^{64}x\right\}&\ll \sum\limits_{p_1\leq \log^{64}x}\frac{x}{p_1\log x}(\log\log x)^{\ell-2}\\
 &\ll \frac{x}{\log x}(\log\log x)^{\ell-2}\log\log\log x.\end{align*}
 
 Réécrivons maintenant $S$ :
 \[S\ll\sum\limits_{C_1<q\leq C_2}\sum\limits_{\substack{p_2\cdots p_\ell\leq\frac{x}{q}\\ p_2\cdots p_\ell\leq\frac{x}{\log^{64}x}}}\sum\limits_{\substack{q\leq p_1\leq \frac{x}{p_2\cdots p_\ell}\\ q|p_1-1\\ a\in\qroot(q,p_1)}}1.\]
 
 On découpe la somme sur $p_2\cdots p_\ell$ en trois parties comme suit :
 \begin{align*}
  S_1&:=\sum\limits_{C_1<q\leq C_2}\sum\limits_{\substack{p_2\cdots p_\ell\leq\frac{x}{q^2\log^6 x}\\ p_2\cdots p_\ell\leq\frac{x}{\log^{64}x}}}\sum\limits_{\substack{q\leq p_1\leq \frac{x}{p_2\cdots p_\ell}\\ q|p_1-1\\ a\in\qroot(q,p_1)}}1,\\
  S_2&:=\sum\limits_{C_1<q\leq C_2}\sum\limits_{\substack{\frac{x}{q^2\log^6 x}<p_2\cdots p_\ell\leq\frac{x}{q^2}\log^2 x\\ p_2\cdots p_\ell\leq\frac{x}{\log^{64}x}}}\sum\limits_{\substack{q\leq p_1\leq \frac{x}{p_2\cdots p_\ell}\\ q|p_1-1}}1,\\
  S_3&:=\sum\limits_{C_1<q\leq C_2}\sum\limits_{\frac{x}{q^2}\log^2 x<\substack{p_2\cdots p_\ell\leq\frac{x}{q}\\ p_2\cdots p_\ell\leq\frac{x}{\log^{64}x}}}\sum\limits_{\substack{q\leq p_1\leq \frac{x}{p_2\cdots p_\ell}\\ q|p_1-1\\ a\in\RR(q,p_1)}}1,\\
 \end{align*}
et on a donc $S\ll S_1+S_2+S_3$.

Pour majorer $S_1$ on utilise le lemme \ref{lemme_hooley} dû à Hooley.
\begin{align*}
 S_1&\ll \sum\limits_{C_1<q\leq C_2}\sum\limits_{\substack{p_2\cdots p_\ell\leq\frac{x}{q^2\log^6 x}\\ p_2\cdots p_\ell\leq\frac{x}{\log^{64}x}}}\left(\Li\left(\frac{x}{p_2\cdots p_\ell}\right)\frac{1}{q(q-1)}+\sqrt{\frac{x}{p_2\cdots p_\ell}}\log x\right)
\end{align*}
Par sommation d'Abel, et en utilisant le Théorème de Landau sur le nombre de  presque premiers \cite{Landau_fr}, on vérifie que le terme de reste est assez petit :
\begin{align*}
 \sum\limits_{C_1<q\leq C_2}\sum\limits_{\substack{p_2\cdots p_\ell\leq\frac{x}{q^2\log^6 x}\\ p_2\cdots p_\ell\leq\frac{x}{\log^{64}x}}}\sqrt{\frac{x}{p_2\cdots p_\ell}}\log x\ll\sum\limits_{C_1<q\leq C_2} \frac{x}{q\log^2 x}\ll \frac{x}{\log^2 x}\log\log x.
\end{align*}
Puis pour le terme principal, on ne peut que majorer trivialement les contributions de $p_1,\ldots,p_\ell$ :
\begin{align*}
 \sum\limits_{\substack{p_2\cdots p_\ell\leq\frac{x}{q^2\log^6 x}\\ p_2\cdots p_\ell\leq\frac{x}{\log^{64}x}}}\Li\left(\frac{x}{p_2\cdots p_\ell}\right)&\ll \sum\limits_{p_1\cdots p_\ell\leq x}1\ll \frac{x}{\log x}(\log\log x)^{\ell-1}.
\end{align*}
Comme la somme sur $q$ est un reste de série convergente on peut majorer $S_1$:
\begin{align*}
 S_1&\ll\sum\limits_{C_1<q\leq C_2}\frac{x(\log\log x)^{\ell-1}}{q^2 \log x}+ \frac{x}{\log^2 x}\log\log x \ll \frac{x(\log\log x)^{\ell-1}}{C_1 \log x}+ \frac{x}{\log^2 x}\log\log x .
\end{align*}
Passons à la majoration de $S_2$. 

Notons $E(x,n):=\Big\{q,\ \sqrt{\frac{x}{n \log^6 x}}<q\leq \sqrt{\frac{x}{n}\log^2 x}\Big\}$, en appliquant le Théorème de Brun-Titchmarsh on obtient :

\begin{align*}
 S_2&\ll \sum\limits_{p_2\cdots p_\ell\leq \frac{x}{\log^{64}x}}\sum\limits_{q\in E(x,p_2\cdots p_\ell)}\sum\limits_{\substack{q<p_1\leq \frac{x}{p_2\cdots p_\ell}\\ q|p_1-1}}1\\
 &\ll \sum\limits_{p_2\cdots p_\ell\leq \frac{x}{\log^{64}x}}\sum\limits_{q\in E(x,p_2\cdots p_\ell)}\frac{x}{qp_2\cdots p_\ell\log\left(\frac{x}{qp_2\cdots p_\ell}\right)}.
\end{align*}
On a $\sqrt{\frac{x}{p_2\cdots p_\ell}}\log^{-1} x\leq\frac{x}{qp_2\cdots p_\ell}\leq \sqrt{\frac{x}{p_2\cdots p_\ell}}\log^3 x$ et donc $\log\left(\frac{x}{qp_2\cdots p_\ell}\right)\sim \frac{1}{2}\log\left(\frac{x}{p_2\cdots p_\ell}\right)$. Alors en appliquant le Théorème de Mertens et des manipulations standard sur les logarithmes,

\begin{align*}\hspace*{-3.5cm}
 \sum\limits_{q\in E(x,p_2\cdots p_\ell)}\frac{1}{q}&\leq\log\log\left(\sqrt{\frac{x}{p_2\cdots p_\ell}}\log x\right)-\log\log\left(\sqrt{\frac{x}{p_2\cdots p_\ell}}\log^{-3} x\right)+\bo\left(\log^{-1}\left(\frac{x}{p_2\cdots p_\ell}\right)\right)\\
 &\leq 4\frac{\log\log x}{\log\left(\sqrt{\frac{x}{p_2 \cdots p_\ell}}\right)}+\bo\left(\left(\frac{\log\log x}{\log\left(\sqrt{\frac{x}{p_2 \cdots p_\ell}}\right)}\right)^2\right),\\
 &\ll\frac{\log\log x}{\log\left(\sqrt{\frac{x}{p_2\cdots p_\ell}}\right)}
\end{align*}
Ainsi,

\begin{align*}
 S_2&\ll x\log\log x\sum\limits_{p_2\cdots p_\ell\leq \frac{x}{\log^{64}x}}\frac{1}{p_2\cdots p_\ell\log^2\left(\frac{x}{p_2\cdots p_\ell}\right)}.\\
\end{align*}
On évalue alors cette somme par sommation d'Abel et en utilisant le théorème de Landau sur les presque premiers :
\begin{align*}
 \sum\limits_{p_2\cdots p_\ell\leq \frac{x}{\log^{64}x}}\frac{1}{p_2\cdots p_\ell\log^2\left(\frac{x}{p_2\cdots p_\ell}\right)}&\ll\int_{\ell!}^{\frac{x}{\log^{64}x}}\frac{(\log\log t)^{\ell-2}}{t\log t \log^2\frac{x}{t}}dt\ll (\log\log x)^{\ell-2}\int_{\ell!}^{\frac{x}{\log^{64}x}}\frac{1}{t\log t \log^2\frac{x}{t}}dt.
\end{align*}
On utilise le changement de variable $u=\log x$ et on effectue une décomposition en éléments simples, on obtient :
\[S_2\ll\frac{x}{\log x}(\log\log x)^{\ell-2}.\]

Passons à la majoration de $S_3$. On va procéder comme dans le $ii)$ de la proposition \ref{prop_maj_C2C3(x-1)}

\[S_3:=\sum\limits_{C_1<q\leq C_2}\sum\limits_{\frac{x}{q^2}\log^2 x<\substack{p_2\cdots p_\ell\leq\frac{x}{q}\\ p_2\cdots p_\ell\leq\frac{x}{\log^{64}x}}}\sum\limits_{\substack{q\leq p_1\leq \frac{x}{p_2\cdots p_\ell}\\ q|p_1-1\\ a\in\RR(q,p_1)}}1\]

Comme $a^{\frac{p_1-1}{q}}\equiv 1(p_1)$, $a^{2\frac{p_1-1}{q}}\equiv 1(p_1)$. De plus $\frac{p_1-1}{q}\leq \frac{x}{qp_2\cdots p_\ell}\leq \sqrt{\frac{x}{p_2\cdots p_\ell}}\log^{-1}x$. Ainsi il existe $m\leq \sqrt{\frac{x}{p_2\cdots p_\ell}}\log^{-1}x$ tel que $p_1|a^{m}-1$. On minore chaque $p_i$ par $C_1$ (au lieu de $2$ contrairement à la preuve de la proposition \ref{prop_maj_C2C3(x-1)})

\[C_1^{\#\{C_1\leq p_1\leq \frac{x}{p_2\cdots p_\ell}:\ \exists q\geq \sqrt{\frac{x}{p_2}}\log x,\ a\equiv b^q\Mod{p_1}\}}\leq\prod\limits_{m\leq\sqrt{\frac{x}{p_2\cdots p_\ell}}\frac{1}{\log x}}a^{2m}\]
et donc en prenant de nouveau les logarithmes

\begin{align*}
 \#\{C_1\leq p_1\leq \frac{x}{p_2\cdots p_\ell}:\ \exists q\geq \sqrt{\frac{x}{p_2\cdots p_\ell}}\log x,\ a\equiv b^q\Mod{p_1}\}&\leq\frac{2\log a}{\log C_1}\sum\limits_{m\leq\sqrt{\frac{x}{p_2\cdots p_\ell}}\frac{1}{\log x}}m\\
 &\ll \frac{x}{p_2\cdots p_\ell\log C_1 \log^2 x}
\end{align*}

On reporte dans $S_3$

\begin{align*}
 S_3&\ll \sum\limits_{p_2\cdots p_\ell\leq x}\frac{x}{p_2\cdots p_\ell\log C_1 \log^2 x}\\
 &\ll\frac{x(\log\log x)^{\ell-1}}{\log C_1 \log^2 x}.
\end{align*}

Ainsi

\[\Ma(x,C_1,C_2)\ll\frac{x(\log\log x)^{\ell-1}}{C_1\log x}+\frac{x(\log\log x)^{\ell-2}}{\log x}\log\log\log x.\]

\end{proof}

\subsection{Nouvelle équation fondamentale}
Maintenant en reprenant (\ref{equationfondav1}) et la proposition précédente, on obtient :
\begin{equation}\label{equationfondav2}
\Na(x)=\Na(x,C_1)+\bo\left(\frac{x(\log\log x)^{\ell-1}}{C_1\log x}+\frac{x(\log\log x)^{\ell-2}}{\log x}\log\log\log x\right).
\end{equation}

Exprimons alors $\Na(x,C_1)$ en termes de $\Pa(x,k)$. Déjà, par inclusion-exclusion, on a :
\[\Na(x,C_1)=\sum\limits_{P^+(l')\leq C_1}\mu(l')\Pa(x,l').\]
La section suivante a pour objectif de séparer le plus possible les conditions sur les $p_i$ dans $\Pa(x,k)$.

\section{Expression de $\Pa(x,k)$}\label{expressionpa}
Commençons par démontrer le lemme suivant.

\begin{lem}\label{lem_qroot}
 Soient $u$ et $v$ deux entiers premiers entre eux, $a$ un entier et $p$ un nombre premier, alors $a$ appartient à la fois à $\qroot(u,p)$ et à $\qroot(v,p)$ si et seulement si $a$ appartient à $\qroot(uv,p)$.
\end{lem}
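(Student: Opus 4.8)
The plan is to prove the two implications separately, the reverse one being immediate and the forward one carrying all the content.

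First I would treat the easy direction: assume $a\in\qroot(uv,p)$, so $a\equiv d^{uv}\Mod{p}$ for some integer $d$. Regrouping the exponent as $d^{uv}=(d^v)^u=(d^u)^v$ shows at once that $a\in\qroot(u,p)$ and $a\in\qroot(v,p)$. This implication uses nothing about the coprimality of $u$ and $v$.

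For the forward direction, assume $a\equiv b^u\Mod{p}$ and $a\equiv c^v\Mod{p}$ for some integers $b,c$. I would first dispose of the degenerate case $p\mid a$: then $a\equiv 0\equiv 0^{uv}\Mod{p}$, so trivially $a\in\qroot(uv,p)$. Otherwise $(a,p)=1$, and here I would exploit the hypothesis $(u,v)=1$ via Bézout, choosing integers $\alpha,\beta$ with $\alpha u+\beta v=1$. Since $a$ is invertible modulo $p$, I may write $a\equiv a^{\alpha u+\beta v}\equiv a^{\alpha u}\,a^{\beta v}\Mod{p}$ and handle each factor separately: using $a\equiv c^v$ gives $a^{\alpha u}\equiv(c^{\alpha})^{uv}\Mod{p}$, and using $a\equiv b^u$ gives $a^{\beta v}\equiv(b^{\beta})^{uv}\Mod{p}$. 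Multiplying the two yields $a\equiv(c^{\alpha}b^{\beta})^{uv}\Mod{p}$, hence $a\in\qroot(uv,p)$.

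The computation is routine, and the only point demanding care is that the Bézout coefficients $\alpha,\beta$ may be negative, which is precisely why the case $p\mid a$ must be isolated before inverting $a$ modulo $p$; this separation is the single genuine subtlety. An alternative route, which I would mention but not adopt as the main argument, fixes a primitive root $g$ modulo $p$ and writes $a\equiv g^k$: then $a\in\qroot(l,p)$ holds if and only if $(l,p-1)\mid k$, and the lemma reduces to the identity $(uv,p-1)=(u,p-1)(v,p-1)$, valid when $(u,v)=1$, together with the observation that coprime integers have least common multiple equal to their product. I prefer the Bézout version since it avoids selecting a generator and makes the role of $(u,v)=1$ fully explicit.
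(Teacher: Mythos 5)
Your proof is correct and takes essentially the same route as the paper's: the easy direction by regrouping exponents, and the forward direction via Bézout coefficients $\alpha u+\beta v=1$, your congruence $a\equiv(c^{\alpha}b^{\beta})^{uv}\Mod{p}$ being exactly the paper's $(\nu_1^{\lambda_2}\nu_2^{\lambda_1})^{uv}\equiv a^{\lambda_1 u+\lambda_2 v}\equiv a\Mod{p}$ rearranged. Your explicit isolation of the case $p\mid a$ is a point of care the paper silently omits (its Bézout exponents may likewise be negative, so invertibility of the roots modulo $p$ is needed), though in the paper's applications $(a,p)=1$ always holds.
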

\begin{proof}
 On a trivialement que si $a\in \qroot(uv,p)$ alors $a\in\qroot(u,p)\cap\qroot(v,p)$.
 
 Supposons alors $a\in\qroot(u,p)\cap\qroot(v,p)$. Soient $\nu_1$ et $\nu_2$ tels que $\nu_1^{u}\equiv a\Mod{p_i}$ et $\nu_2^{v}\equiv a\Mod{p_i}$.\\ Puis, comme $\left(u,v\right)=1$, il existe d'après le Théorème de Bézout $\lambda_1$ et $\lambda_2$ tels que $\lambda_1 u+\lambda_2 v=1$.\\ Alors on a\[(\nu_1^{\lambda_2}\nu_2^{\lambda_1})^{uv}\equiv(\nu_1^{u})^{\lambda_2 v}(\nu_2^{v})^{\lambda_1u}\equiv a^{\lambda_1 u+\lambda_2 v}\equiv a\Mod{p}.\]
\end{proof}

Notons $E=\{1,\ldots,\ell\}$, $\mathcal{P}^*(E)$ l'ensemble des parties non vides de $E$.

On déduit du lemme précédent et des définitions de $\Pa(x,k)$ et $h$ données par \eqref{def_Pa} et \eqref{def_h} les conditions suivantes pour les nombres $p_1\cdots p_\ell$ contribuant à $\Pa(x,k)$.
\begin{prop}\label{prop_expl_Pa_p1p2}
Soit $(p_1\ldots,p_\ell)$ un $\ell$-uplet de nombres premiers tel que $p_1\cdots p_\ell\leq x$. Alors $p_1\cdots p_\ell$ est compté dans $\Pa(x,k)$ si et seulement si  les conditions suivantes sont vérifiées :
\begin{itemize}
\item[$\bullet$]$(a,p_1\cdots p_\ell)=1$,
\item[$\bullet$] Il existe une unique factorisation de $k$ sous la forme $k=\prod\limits_{L\in\mathcal{P}^*(E)}k_L$ telle que, en notant $k_L':=\frac{k_L}{(h,k_L)}$ :
\begin{itemize}
\item Pour tout $ i\in E$, $a\in\qroot\left(\prod\limits_{\substack{L\in\mathcal{P}^*(E)\\ i\in L}}k'_L,p_i\right)$ et $p_i\equiv 1\Mod{\prod\limits_{\substack{L\in\mathcal{P}^*(E)\\ i\in L}}k_L}$.
\item Pour tout $ j\in E$, et pour tout sous-ensemble $L$ de $E$ ne contenant pas $j$, on a pour chaque diviseur premier $q$ de $k_L$, $\nu_q(p_j-1)<\nu_q(\lambda(p_1\cdots p_\ell))$.
\end{itemize}
\end{itemize}
\end{prop}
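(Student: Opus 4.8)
The plan is to unwind the definition of $\RR(q,p_1\cdots p_\ell)$ one prime $q$ at a time and then regroup the primes $q\mid k$ according to the set of indices on which $\nu_q(p_i-1)$ is maximal. By definition $p_1\cdots p_\ell$ is counted in $\Pa(x,k)$ exactly when $a$ verifies $\RR(q,p_1\cdots p_\ell)$ for every $q\mid k$, that is, when $(a,p_1\cdots p_\ell)=1$ and, for each $q\mid k$, both $q\mid\lambda(p_1\cdots p_\ell)$ and $a\in\qroot(q,p)$ for every $p\in M_q(p_1,\ldots,p_\ell)$. The coprimality condition is recorded verbatim as the first bullet, so the real task is to encode the family of per-$q$ conditions through a single factorization of $k$.

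Since $k$ is squarefree, I would partition its prime divisors by the value of $M_q$: for each $L\in\mathcal{P}^*(E)$ set $k_L:=\prod_{q\mid k,\,M_q(p_1,\ldots,p_\ell)=\{p_i:i\in L\}}q$. Each $q\mid k$ falls into exactly one class, so $k=\prod_{L}k_L$, and this is the factorization asserted to exist. For a prime $q\mid k_L$ the equality $M_q=\{p_i:i\in L\}$ says precisely that $\nu_q(p_i-1)=\nu_q(\lambda(p_1\cdots p_\ell))$ for $i\in L$ and $\nu_q(p_j-1)<\nu_q(\lambda(p_1\cdots p_\ell))$ for $j\notin L$. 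The inequalities for $j\notin L$ are exactly the second bullet. The maximality for $i\in L$ forces $q\mid p_i-1$; ranging over all $q\mid k_L$ and all $L\ni i$, and using that $k$ is squarefree, these congruences assemble into $p_i\equiv1\pmod{\prod_{L\ni i}k_L}$, which is the divisibility half of the first bullet.

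It remains to convert the residue conditions. For $i\in L$ the requirement $p_i\in M_q$ together with $\RR(q,\cdot)$ gives $a\in\qroot(q,p_i)$ for every $q\mid k_L$. Whenever $q\mid h$ this is automatic, since $a$ is then a global $q$-th power, so only the primes $q\mid k_L':=k_L/(h,k_L)$ impose a genuine constraint; the primes dividing $\prod_{L\ni i}k_L'$ are distinct, hence pairwise coprime, so repeated application of Lemma~\ref{lem_qroot} merges the individual $q$-th power conditions into the single statement $a\in\qroot(\prod_{L\ni i}k_L',p_i)$, completing the first bullet. Conversely, starting from a factorization satisfying the two bullets, the second bullet yields $M_q\subseteq\{p_i:i\in L\}$ for each $q\mid k_L$; the first bullet yields $q\mid\lambda(p_1\cdots p_\ell)$ and, reading $\qroot(\prod_{L\ni i}k_L',p_i)$ back through Lemma~\ref{lem_qroot} and restoring the automatic primes dividing $h$, it yields $a\in\qroot(q,p_i)$ for all $i\in L$, hence for all $p\in M_q$. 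Thus $\RR(q,p_1\cdots p_\ell)$ holds for every $q\mid k$ and $p_1\cdots p_\ell$ is counted.

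The delicate point is the uniqueness of the factorization. The reverse implication only uses the inclusion $M_q\subseteq\{p_i:i\in L\}$, so the bookkeeping must be organized around the canonical decomposition in which each $q\mid k$ is placed in the class $L$ with $\{p_i:i\in L\}=M_q(p_1,\ldots,p_\ell)$; since the map $q\mapsto M_q$ is single-valued, this partition of the prime divisors of $k$—and hence the factorization $k=\prod_L k_L$—is uniquely determined by $(p_1,\ldots,p_\ell)$. I would therefore prove the proposition with this canonical $k_L$: the forward direction shows it satisfies both bullets with $M_q$ equal to, not merely contained in, $\{p_i:i\in L\}$, and the reverse direction shows the bullets suffice to recover $\RR(q,\cdot)$. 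Matching the two bullets precisely to the definition of $M_q$—in particular tracking which primes of $k_L$ are absorbed into $(h,k_L)$ and so drop out of the residue condition while still contributing to the congruence $p_i\equiv1$—is the step that demands the most care.
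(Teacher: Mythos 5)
Your proposal is correct and follows essentially the same route as the paper: you build the canonical factorization $k_L=\prod_{q\mid k,\ M_q(p_1,\ldots,p_\ell)=\{p_i:\,i\in L\}}q$, translate $k\mid\lambda(p_1\cdots p_\ell)$ into the congruences $p_i\equiv 1\Mod{\prod_{L\ni i}k_L}$, and use Lemma~\ref{lem_qroot} twice, exactly as the paper does, first to discard the primes dividing $h$ (for which $a\in\qroot(q,p_i)$ is automatic) and then to merge the remaining $q$-th power conditions into $a\in\qroot\bigl(\prod_{L\ni i}k_L',p_i\bigr)$, with uniqueness coming from the single-valuedness of $q\mapsto M_q$. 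Your observation that the converse direction only recovers the inclusion $M_q\subseteq\{p_i:\,i\in L\}$ is in fact a sharper reading than the paper's one-line uniqueness argument (which asserts $M_q=L$ for any admissible decomposition), but it leads you to the same canonical bookkeeping, so there is nothing further to flag.
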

\begin{proof}
Tous les $p_1\cdots p_\ell$ comptés dans $\Pa(x,k)$ vérifient les conditions suivantes :
\begin{enumerate}[i)]
 \item $(a,p_1 \cdots p_\ell)=1$,
 \item $k|\lambda(p_1\cdots p_\ell)$,
 \item $\forall q|k,\ \forall p_i\in M_q(p_1,\ldots,p_\ell),\ a\in \qroot(q,p_i)$.
\end{enumerate}
Définissons pour tout diviseur premier $q$ de $k$ l'ensemble des indices des éléments de l'ensemble $M_q$ associé : $I_q:=\{i\in E,\ p_i\in M_q(p_1,\ldots,p_\ell)\}$.

A chaque $L\subset E$ non vide, on peut associer un diviseur $k_L$ de $k$ défini par $k_L:=\prod\limits_{\substack{q|k\\ L=I_q}}q$ et ainsi $\prod\limits_{L\in\mathcal{P}^*(E)}k_L=k$, les $k_L$ pouvant valoir $1$.

La condition $k|\lambda(p_1\cdots p_\ell)$ peut s'écrire sous la forme $\forall q|k,\ \max\limits_{1\leq i\leq\ell}\nu_q(p_i-1)>0$, ce qui revient à $p_i\equiv 1\Mod{\prod\limits_{i\in L}k_L}$.

La condition iii) est elle équivalente au fait que pour tout $i\in E$, $a\in\qroot(q,p_i)$ pour tout $q$ tel que $p_i \in M_q(p_1,\ldots,p_\ell)$. En appliquant plusieurs fois le lemme \ref{lem_qroot}, cette condition sur $p_i$ est équivalente à  \[a\in\qroot\Bigg(\prod\limits_{\substack{q|k\\ i\in I_q}}q,p_i\Bigg)=\qroot\left(\prod\limits_{\substack{i\in L}}k_L,p_i\right).\]

On note $k_L=k_L' (k_L,h)$. Comme $k$ est sans facteur carré, $\big(k_L',(k_L,h)\big)=1$. D'après le lemme \ref{lem_qroot}, $a\in\qroot(k_L,p_i)$ si et seulement si $a\in\qroot(k_L',p_i)$ et $a\in\qroot((k_L,h),p_i)$. 

Or par définition de $h$, on a toujours $a\in\qroot((k_L,h),p_i)$, et donc $a\in\qroot(k_L,p_i)\Leftrightarrow a\in\qroot(k_L',p_i)$.

Appliquons à nouveau le lemme \ref{lem_qroot}
\[
 a\in\qroot\left(\prod\limits_{\substack{i\in L}}k_L,p_i\right)\Leftrightarrow a\in\bigcap_{i\in L}\qroot(k_L,p_i)\Leftrightarrow a\in\bigcap_{i\in L}\qroot(k_L',p_i)\Leftrightarrow a\in\qroot\left(\prod\limits_{\substack{i\in L}}k_L',p_i\right).
\]
Ainsi notre factorisation de $k$ convient. Soit $\prod \tilde{k}_L$ une décomposition de $k$ qui convient, alors pour tout $q|\tilde{k}_L$, $M_q(p_1,\ldots,p_\ell)=L$ et donc $\tilde{k}_L=k_L$.

\end{proof}

Grâce à la proposition \ref{prop_expl_Pa_p1p2}, on peut exprimer $\Pa(x,k)$ comme suit :
\begin{equation}
\label{Pak1k2}
\Pa(x,k)=\sum\limits_{\prod\limits_{L\in\mathcal{P}^*(E)}k_L=k}\sum\limits_{\substack{p_1\cdots p_\ell\leq x\\ (a,p_1\cdots p_\ell)=1\\ \forall i,\ p_i\equiv 1\Mod{\prod\limits_{\substack{L\in\mathcal{P}^*(E)\\ i\in L}}k_L}\\\forall i,\ a\in\qroot\left(\prod\limits_{\substack{L\in\mathcal{P}^*(E)\\ i\in L}}k'_L,p_i\right)}}1
\end{equation}

où les $k_L$ et $k'_L$ dépendent des $p_i$ comme explicité précédemment. Dans la proposition suivante nous obtenons un découpage de $\Pa(x,k)$ dans lequel les conditions sur les $p_1,\ldots,p_\ell$ sont indépendantes (mis à part la contrainte $p_1\cdots p_\ell\leq x$).

\begin{prop}
 On a :
 \[\Pa(x,k)=\frac{1}{\ell!}\sum\limits_{\substack{\left\{k_L,\ L\in\mathcal{P}^*(E)\right\}\\ \prod\limits_{ L\in\mathcal{P}^*(E)}k_L=k}}\sum\limits_{\substack{\{g_L,\ L\in\mathcal{P}^*(E)\}\\ g_L|k_L^\infty\\ k_L|g_L}}\sum\limits_{\substack{\{c_{iL},\ L\in\mathcal{P}^*(E),\ i\notin L\}\\ c_{iL}|\frac{g_L}{k_L}}}\sum\limits_{\substack{\left\{p_i,\ i\in \{1,\ldots ,\ell\}\right\}\\ p_1\cdots p_\ell\leq x\\ (a,p_1\cdots p_\ell)=1\\ \forall i,\ p_i\equiv 1\Mod{u_i}\\ \forall i,\ (\frac{p_i-1}{u_i},k)=1\\ \forall i,\ a\in\qroot\left(k'_i,p_i\right)}}1,\]
 où pour tout $i\in\{1,\ldots ,\ell\}$, $u_i:=\prod\limits_{\substack{L\in\mathcal{P}^*(E)\\ i\in L}}g_L \prod\limits_{\substack{L\in\mathcal{P}^*(E)\\ i\notin L}}c_{iL}$ et $k'_i:=\prod\limits_{\substack{L\in\mathcal{P}^*(E)\\ i\in L}}k'_L$.
\end{prop}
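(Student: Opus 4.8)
The plan is to start from the expression~\eqref{Pak1k2} and to decouple, as far as possible, the conditions bearing on the different $p_i$. The first move is to pass from unordered products $m=p_1\cdots p_\ell$ to ordered $\ell$-tuples $(p_1,\ldots,p_\ell)$ of distinct primes: since every squarefree integer with exactly $\ell$ prime factors corresponds to exactly $\ell!$ such tuples, this accounts for the prefactor $\frac{1}{\ell!}$. On ordered tuples the subsets $L\in\mathcal{P}^*(E)$ and the induced factorization $k=\prod_L k_L$, with $k_L=\prod_{q|k,\ I_q=L}q$ and $I_q=\{i:\ p_i\in M_q(p_1,\ldots,p_\ell)\}$, are genuinely well defined, and the uniqueness clause of Proposition~\ref{prop_expl_Pa_p1p2} lets me rewrite the count as a free sum over factorizations $\prod_L k_L=k$ followed by a sum over the tuples inducing that given factorization.

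Next I would make the maximality condition explicit and then parametrize it. Since $k$ is squarefree the $k_L$ are pairwise coprime, so each prime $q|k$ divides a unique $k_L$; the condition $I_q=L$ then says that there is a common value $m_q=\nu_q(\lambda(p_1\cdots p_\ell))\geq 1$ with $\nu_q(p_i-1)=m_q$ for $i\in L$ and $\nu_q(p_j-1)<m_q$ for $j\notin L$. The key idea is to record these valuations by auxiliary integers: set $g_L:=\prod_{q|k_L}q^{m_q}$, so that $k_L|g_L$ and $g_L|k_L^\infty$, and for each $i\notin L$ set $c_{iL}:=\prod_{q|k_L}q^{\nu_q(p_i-1)}$, which divides $g_L/k_L=\prod_{q|k_L}q^{m_q-1}$ precisely because $\nu_q(p_i-1)\leq m_q-1$ there. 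Letting $\{g_L\}$ and $\{c_{iL}\}$ range over all admissible values then sweeps out exactly the valuation profiles compatible with the factorization, which introduces the two middle summations of the statement without altering the count.

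The heart of the argument is a purely local, prime-by-prime bookkeeping of $q$-adic valuations. With $u_i:=\prod_{L\ni i}g_L\prod_{L\not\ni i}c_{iL}$, I would check that for each $q|k$, writing $L$ for the unique subset with $q|k_L$, one has $\nu_q(u_i)=m_q$ when $i\in L$ and $\nu_q(u_i)=\nu_q(c_{iL})<m_q$ when $i\notin L$. Since every prime factor of $u_i$ divides $k$, the pair of conditions $p_i\equiv 1\Mod{u_i}$ and $\left(\frac{p_i-1}{u_i},k\right)=1$ is then equivalent to $\nu_q(p_i-1)=\nu_q(u_i)$ for all $q|k$, i.e.\ to the full valuation profile prescribed above; this is exactly where the coupled constraint $I_q=L$ dissolves into a separate constraint on each $p_i$ through the single modulus $u_i$. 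The power-residue condition needs no extra work, being already the condition $a\in\qroot(k'_i,p_i)$ with $k'_i=\prod_{L\ni i}k'_L$ appearing in~\eqref{Pak1k2} by the very definition of $k'_i$.

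It then remains to see that this correspondence is a bijection. Given a tuple counted by the inner sum of the final expression, the data $\{k_L\},\{g_L\},\{c_{iL}\}$ are uniquely recovered from the valuations $\nu_q(p_i-1)$; conversely, every ordered tuple inducing the factorization $\{k_L\}$ with the valuation profile encoded by $(\{g_L\},\{c_{iL}\})$ is counted exactly once, so summing over all $\{k_L\}$, $\{g_L\}$, $\{c_{iL}\}$ reproduces, up to the factor $\frac{1}{\ell!}$, the count~\eqref{Pak1k2}. The main obstacle I anticipate is not a hard estimate but the combinatorial bookkeeping of the previous paragraph: verifying that the decoupled conditions $p_i\equiv 1\Mod{u_i}$ and $\left(\frac{p_i-1}{u_i},k\right)=1$ recover the original coupled maximality condition with neither over- nor under-counting, which rests entirely on the coprimality of the $k_L$ forcing each prime $q|k$ to be governed by a single subset $L$.
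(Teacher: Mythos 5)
Your proposal is correct and takes essentially the same approach as the paper: you record the valuation profile through $g_L=(k_L^\infty,p_i-1)$ for $i\in L$ and $c_{iL}=(k_L^\infty,p_i-1)$ for $i\notin L$, exactly the paper's parametrization, and you merge the per-$L$ conditions into the single modulus $u_i$ with $(\frac{p_i-1}{u_i},k)=1$ via the pairwise coprimality of the $k_L$ coming from the squarefreeness of $k$, with the same $\frac{1}{\ell!}$ accounting for orderings. The only difference is presentational (you phrase the count as an explicit bijection between ordered tuples and the data $\{k_L\},\{g_L\},\{c_{iL}\}$, while the paper fixes these data and describes the contributing tuples), which changes nothing in substance.
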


\begin{proof}
À $k$ et $p_1,\ldots,p_\ell$ fixés notons pour $L\in\mathcal{P}^*(E)$ et $i\in L$, $g_L:=\prod\limits_{q|k_L}q^{\nu_q(p_i-1)}=(k^\infty_L,p_i-1)$ et pour $i\notin L$ notons $c_{iL}:=(k^\infty_L,p_i-1)$. Notons que la définition de $g_L$ ne dépend pas du $i\in L$ choisi et que $c_{iL}|\frac{g_L}{k_L}$. Puis posons pour tout $i$ $k_i:=\prod\limits_{\substack{L\in\mathcal{P}^*(E)\\ i\in L}}k_L$ et $k'_i:=\prod\limits_{\substack{L\in\mathcal{P}^*(E)\\ i\in L}}k'_L$.

On va sommer sur toutes les décompositions de $k$ en $\prod\limits_{L\in\mathcal{P}^*(E)}k_L$ et les $g_L$ et $c_{iL}$ possibles. C'est-à-dire sommer sur ces décompositions de $k$ et sur les $g_L|k^\infty_L$, $k_L|g_L$ et $c_{iL}|\frac{g_L}{k_L}$. 

Ainsi à $k$, $\{k_L,\ L\in\mathcal{P}^*(E)\}$ tels que $\prod\limits_{L\in\mathcal{P}^*(E)}k_L=k$, $\{g_L,\ L\in\mathcal{P}^*(E)\}$ et $\{c_{iL},\ L\in\mathcal{P}^*(E),\ i\notin L\}$ fixés les $p_1\cdots p_\ell$ qui contribuent à $\Pa(x,k)$ pour lesquels les $k_L$, $g_L$ et $c_{iL}$ correspondent sont tels que :
\begin{enumerate}
 \item $(a,p_1\cdots p_\ell)=1$;
 \item pour tout $i$, $p_i\equiv 1(k_i)$;
 \item pour tout $i$, $a\in\qroot\left(k'_i,p_i\right)$;
 \item pour tous $L\in\mathcal{P}^*(E)$ et $i\in L$, $g_L|(p_i-1)$ et $(\frac{p_i-1}{g_L},k_L)=1$;
 \item pour tous $L\in\mathcal{P}^*(E)$ et $i\notin L$, $c_{iL}|(p_i-1)$ et $(\frac{p_i-1}{c_{iL}},k_L)=1$.
\end{enumerate}

Comme pour tout $L$, $k_L|g_L$ la condition $\forall L\in\mathcal{P}^*(E)$, $i\in L$, $g_L|(p_i-1)$ et $(\frac{p_i-1}{g_L},k_L)=1$ implique $p_i\equiv 1(k_i)$. Puis, comme $k$ est sans facteur carré, les conditions :
\begin{enumerate}
 \item pour tout $L\in\mathcal{P}^*(E)$, $i\in L$, $g_L|p_i-1$ et $(\frac{p_i-1}{g_L},k_L)=1$;
 \item pour tout $L\in\mathcal{P}^*(E)$, $i\notin L$, $c_{iL}|p_i-1$ et $(\frac{p_i-1}{c_{iL}},k_L)=1$;
\end{enumerate}
sont ensembles équivalentes à :
\[\text{pour tout }i\in\{1,\ldots ,\ell\},\ \prod\limits_{\substack{L\in\mathcal{P}^*(E)\\ i\in L}}g_L \prod\limits_{\substack{L\in\mathcal{P}^*(E)\\ i\notin L}}c_{iL}|(p_i-1)\text{ et }(\frac{p_i-1}{\prod\limits_{\substack{L\in\mathcal{P}^*(E)\\ i\in L}}g_L \prod\limits_{\substack{L\in\mathcal{P}^*(E)\\ i\notin L}}c_{iL}},k)=1 .\]
Notons alors pour tout $i$, $u_i:=\prod\limits_{\substack{L\in\mathcal{P}^*(E)\\ i\in L}}g_L \prod\limits_{\substack{L\in\mathcal{P}^*(E)\\ i\notin L}}c_{iL}$ et disons, pour raccourcir les notations, que la condition $p_i\equiv 1\Mod{u_i}$ est impliquée par $(\frac{p_i-1}{u_i},k)=1$. Les $p_1\cdots p_\ell$ qui contribuent à $\Pa(x,k)$ pour lesquels les $k_L$, $g_L$ et $c_{iL}$ correspondent sont alors tels que :
\begin{enumerate}
 \item $(a,p_1\cdots p_\ell)=1$;
 \item pour tout $i$, $a\in\qroot\left(k'_i,p_i\right)$;
 \item pour tout $i\in\{1,\ldots ,\ell\}$, $(\frac{p_i-1}{u_i},k)=1$.
\end{enumerate}

On peut alors écrire $\Pa(x,k)$ comme suit :

\[\Pa(x,k)=\frac{1}{\ell!}\sum\limits_{\substack{\left\{k_L,\ L\in\mathcal{P}^*(E)\right\}\\ \prod\limits_{ L\in\mathcal{P}^*(E)}k_L=k}}\sum\limits_{\substack{\{g_L,\ L\in\mathcal{P}^*(E)\}\\ g_L|k_L^\infty\\ k_L|g_L}}\sum\limits_{\substack{\{c_{iL},\ L\in\mathcal{P}^*(E),\ i\notin L\}\\ c_{iL}|\frac{g_L}{k_L}}}\sum\limits_{\substack{\left\{p_i,\ i\in \{1,\ldots ,\ell\}\right\}\\ p_1\cdots p_\ell\leq x\\ (a,p_1\cdots p_\ell)=1\\ \forall i,\ (\frac{p_i-1}{u_i},k)=1\\ \forall i,\ a\in\qroot\left(k'_i,p_i\right)}}1.\]
Le facteur $\frac{1}{\ell!}$ provenant du fait que un même nombre $p_1\cdots p_\ell$ est compté $\ell!$ fois suivant l'ordre des facteurs premiers et comme $p_i\neq p_j$ pour $i\neq j$.

\end{proof}

Afin de pouvoir calculer la dernière somme de $\Pa(x,k)$ nous aurons besoin de contrôler la taille des $g_L$, ce que nous faisons dans la proposition suivante.

\begin{prop}\label{prop_controle_gL}
Pour $k\leq C_1$, $C_5$ une constante arbitrairement grande, $0<t<1$, on a :
\[\Pa(x,k)=\Pa'(x,k)+\bo\left(\frac{C_1 x}{C_5^t\log x}(\log\log x)^{\ell-1}\right),\]
où
\[\hspace*{-2cm}\Pa'(x,k):=\frac{1}{\ell!}\sum\limits_{\substack{\left\{k_L,\ L\in\mathcal{P}^*(E)\right\}\\ \prod\limits_{ L\in\mathcal{P}^*(E)}k_L=k}}\sum\limits_{\substack{\{g_L,\ L\in\mathcal{P}^*(E)\}\\ g_L\leq C_5\\ g_L|k_L^\infty\\ k_L|g_L}}\sum\limits_{\substack{\{c_{iL},\ L\in\mathcal{P}^*(E),\ i\notin L\}\\ c_{iL}|\frac{g_L}{k_L}}}\sum\limits_{\substack{\left\{d_i,\ i\in \{1,\ldots ,\ell\}\right\}\\ d_i|k}}\prod\limits_{i\in \{1,\ldots ,\ell\}}\mu(d_i)\sum\limits_{\substack{\left\{p_i,\ i\in \{1,\ldots ,\ell\}\right\}\\ p_1\cdots p_\ell\leq x\\ (a,p_1\cdots p_\ell)=1\\ \forall i,\ p_i\equiv 1(u_i d_i)\\ \forall i,\ a\in\qroot\left(k'_i,p_i\right)}}1.\] 
\end{prop}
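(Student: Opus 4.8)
The plan is to prove the proposition in two stages: a purely formal Möbius inversion that produces the summand appearing in $\Pa'(x,k)$, followed by a truncation argument showing that the tuples in which some $g_L$ exceeds $C_5$ contribute only the stated error term.

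\textbf{Möbius inversion.} Starting from the expression for $\Pa(x,k)$ furnished by the previous proposition, I would detect each coprimality condition by Möbius inversion: since $k$ is squarefree,
\[\mathbbm{1}\Big[\Big(\tfrac{p_i-1}{u_i},k\Big)=1\Big]=\sum_{\substack{d_i\mid k\\ d_i\mid \frac{p_i-1}{u_i}}}\mu(d_i),\]
and the inner divisibility is equivalent to $p_i\equiv 1\Mod{u_i d_i}$ (recall that writing $\frac{p_i-1}{u_i}$ already presupposes $u_i\mid p_i-1$). Taking the product over $i\in\{1,\ldots,\ell\}$, expanding, and interchanging the finitely many sums over the $d_i\mid k$ with the prime sums transforms $\Pa(x,k)$ into exactly the multiple sum of the statement, but with the family $\{g_L\}$ still unrestricted. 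This identity is exact.

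\textbf{Truncation.} I would then split the (now unrestricted) sum over families $\{g_L\}$ according to whether all $g_L\le C_5$ or not. The first piece is by definition $\Pa'(x,k)$. For the complementary piece, which I call $\mathcal E$, it is cleanest to undo the Möbius step term by term: for each fixed admissible data $\{k_L\},\{g_L\},\{c_{iL}\}$ the inner sum over the $d_i$ reconstitutes the indicator of $(\frac{p_i-1}{u_i},k)=1$, so $\mathcal E$ equals the original summand of $\Pa(x,k)$ restricted to families with some $g_{L_0}>C_5$. For an upper bound I discard the conditions $a\in\qroot(k'_i,p_i)$ and $(a,p_1\cdots p_\ell)=1$ and keep only $p_i\equiv 1\Mod{u_i}$ for each $i$; in particular every $i\in L_0$ then satisfies $p_i\equiv 1\Mod{g_{L_0}}$ with $g_{L_0}>C_5$.

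\textbf{The prime sum and the saving.} The inner multiple prime sum $\sum_{p_1\cdots p_\ell\le x,\ p_i\equiv 1(u_i)}1$ I would bound by applying the Brun–Titchmarsh theorem to the largest prime variable and Mertens-type estimates (via Landau \cite{Landau_fr}) to the remaining $\ell-1$ variables, obtaining a bound of the shape $\frac{x}{\log x}(\log\log x)^{\ell-1}\prod_i\varphi(u_i)^{-1}$ with an implied constant depending only on $\ell$. Summing the factor $\prod_i\varphi(u_i)^{-1}$ over the arithmetic data, the series $\sum_{g_L\mid k_L^\infty}\varphi(g_L)^{-1}$ and $\sum_{c_{iL}\mid g_L/k_L}\varphi(\,\cdot\,)^{-1}$ converge because the $g_L$ are supported on the few primes dividing $k_L$ (here $k\le C_1$); the restriction $g_{L_0}>C_5$ leaves the tail $\sum_{g_{L_0}>C_5,\ g_{L_0}\mid k_{L_0}^\infty}\varphi(g_{L_0})^{-1}$, which, using $\varphi(g)\gg g^{1-\varepsilon}$ and comparison with a convergent series, is $\ll C_5^{-t}$ for any fixed $t<1$. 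The finitely many remaining outer sums, over the decompositions $\prod_L k_L=k$ and the divisors $d_i\mid k$ implicit in the reconstruction, are controlled crudely in terms of $k\le C_1$ and produce the factor $C_1$.

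\textbf{Main obstacle.} The genuinely delicate point is the last step: establishing the uniform multi-prime bound $\sum_{p_1\cdots p_\ell\le x,\ p_i\equiv 1(u_i)}1\ll_\ell \frac{x}{\log x}(\log\log x)^{\ell-1}\prod_i\varphi(u_i)^{-1}$ in the moduli $u_i$, and then summing $\prod_i\varphi(u_i)^{-1}$ over the interlocking variables $\{g_L\},\{c_{iL}\}$ (each $u_i$ being a product of several of them) without destroying convergence, while cleanly isolating the single large modulus $g_{L_0}$ that carries the $C_5^{-t}$ saving. Convergence is guaranteed by $g_L\mid k_L^\infty$ together with the squarefreeness of $k$, but disentangling the variable $g_{L_0}$ from the others requires careful bookkeeping.
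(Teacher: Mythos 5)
Your formal skeleton is the right one and coincides with the paper's: the M\"obius insertion $\id\big[(\tfrac{p_i-1}{u_i},k)=1\big]=\sum_{d_i\mid k,\ u_id_i\mid p_i-1}\mu(d_i)$ is exactly how $\Pa'(x,k)$ differs from the expression of $\Pa(x,k)$ obtained in the previous proposition, the error is indeed the contribution of tuples with some $g_{L_0}>C_5$, and the saving $C_1C_5^{-t}$ does come from a Rankin argument applied to $\sum_{g_{L_0}>C_5,\ g_{L_0}\mid k_{L_0}^\infty}\varphi(g_{L_0})^{-1}$. But the paper disposes of the ``interlocking variables'' you worry about through a structural remark you do not make: for a fixed decomposition $\{k_L\}$ and fixed primes $p_1,\ldots,p_\ell$ satisfying the original conditions, the auxiliary data are \emph{forced}, namely $g_L=(k_L^\infty,p_i-1)$ for any $i\in L$ and $c_{iL}=(k_L^\infty,p_i-1)$ for $i\notin L$, so each almost prime is counted for at most one tuple $(\{g_L\},\{c_{iL}\})$. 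One may therefore discard every condition except the single congruence $p_1\equiv 1\Mod{g_{L_0}}$ (after permuting so that $1\in L_0$) and bound the error by $\sum_{\prod k_L=k}\sum_{g_{L_0}>C_5}\sum_{p_2\cdots p_\ell\le x}\#\{p_1\le x/(p_2\cdots p_\ell):\ p_1\equiv 1\Mod{g_{L_0}}\}$, leaving one modulus to control instead of your $\ell$ moduli $u_i$.

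The genuine gap is in your third step: the asserted uniform estimate $\sum_{p_1\cdots p_\ell\le x,\ p_i\equiv 1(u_i)}1\ll_\ell \frac{x(\log\log x)^{\ell-1}}{\log x}\prod_i\varphi(u_i)^{-1}$ is false without a restriction on the size of the moduli. Already for $\ell=1$, taking $u_1=p-1$ with $p$ a prime in $(x/2,x]$ gives a left side $\ge 1$ and a right side $\asymp 1/\log x$; Brun--Titchmarsh requires the modulus to be at most a fixed power $<1$ of the length of the range, and in your error term the $g_L$ with $L\neq L_0$ are unrestricted divisors of $k_L^\infty$ (and $g_{L_0}$ itself ranges to infinity), so the $u_i$ do leave the admissible range. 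This is precisely the point the paper's proof handles and that your proposal leaves open (you flag it as the ``main obstacle'' without resolving it): after removing the negligible contribution of $p_1\le(\log x)^{64}$, the paper splits at $g_{L_0}\le(\log x)^{10}$, applying Brun--Titchmarsh below that threshold (so that $\log(x/(p_2\cdots p_\ell g_{L_0}))\gg\log(x/(p_2\cdots p_\ell))$) and the trivial bound $x/(p_2\cdots p_\ell g_{L_0})$ above it, where Rankin with exponent $\tfrac12$ yields $\sum_{g_{L_0}>(\log x)^{10},\ g_{L_0}\mid k_{L_0}^\infty}g_{L_0}^{-1}\ll C_1(\log x)^{-5}$. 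Without this dichotomy (or an equivalent treatment of large moduli), your step is not merely delicate bookkeeping but an invalid estimate, so the proposal as written does not close the proof.
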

\begin{proof}

Il suffit de montrer que la contribution des $g_L$ tels que $\max g_L> C_5$ est un $\bo\left(\frac{x}{C_5^t\log x}(\log\log x)^{\ell-1}\right)$. Quitte à permuter l'ordre des $p_i$, il suffit de vérifier que la contribution d'un $g_{L_0}> C_5$, avec $L_0\in\mathcal{P}^*(E)$ et $1\in L_0$ est un $\bo\left(\frac{x}{C_5^t\log x}(\log\log x)^{\ell-1}\right)$.

Un nombre presque premier donné ne pouvant être compté dans la somme sur tous les presque premiers plus petits que $x$ que pour une valeur donnée des $g_L$ et $c_{i,L}$, on a la majoration suivante :

\begin{align*}
S:&=\sum\limits_{\substack{\left\{k_L,\ L\in\mathcal{P}^*(E)\right\}\\ \prod\limits_{ L\in\mathcal{P}^*(E)}k_L=k}}\sum\limits_{\substack{\{g_L,\ L\in\mathcal{P}^*(E)\backslash L_0\}\\ g_L|k_L^\infty\\ k_L|g_L}}\sum\limits_{\substack{g_{L_0}\geq C_5\\ g_{L_0}|k_{L_0}^\infty\\ k_{L_0}|g_{L_0}}}\sum\limits_{\substack{\{c_{iL},\ L\in\mathcal{P}^*(E),\ i\notin L\}\\ c_{iL}|\frac{g_L}{k_L}}}\sum\limits_{\substack{\left\{p_i,\ i\in \{1,\ldots ,\ell\}\right\}\\ p_1\cdots p_\ell\leq x\\ \forall i,\ p_i\equiv 1(u_i)}}1\\
&\leq \sum\limits_{\substack{\prod\limits_{L\in\mathcal{P}^*(E)}k_L=k}}\sum\limits_{\substack{g_{L_0}|k_{L_0}^\infty\\ g_{L_0}> C_5}}\sum\limits_{\substack{p_2\cdots p_\ell\leq x}}\sum\limits_{\substack{p_1\leq\frac{x}{p_2\cdots p_\ell}\\ p_1\equiv 1 \Mod{g_{L_0}}}}1.
\end{align*}
On veut montrer que $S\ll \frac{x}{C_5^t\log x}(\log\log x)^{\ell-1}$.

On a vu précédemment que la contribution d'un $p_i\leq (\log x)^{64}$ était négligeable, donc
\[S\ll\sum\limits_{\substack{\prod\limits_{L\in\mathcal{P}^*(E)}k_L=k}}\sum\limits_{\substack{g_{L_0}|k_{L_0}^\infty\\ g_{L_0}> C_5}}\sum\limits_{\substack{p_2\cdots p_\ell\leq \frac{x}{(\log x)^{64}}}}\sum\limits_{\substack{p_1\leq\frac{x}{p_2\cdots p_\ell}\\ p_1 \geq (\log x)^{64}\\ p_1\equiv 1 \Mod{g_{L_0}}}}1,\]

$p_1$ étant premier, la condition $p_1\equiv 1\Mod{g_{L_0}}$ entraîne que $p_1\geq g_{L_0}$.

On majore la somme sur $p_1$ en utilisant le théorème de Brun-Titchmarsh quand $g_{L_0}\leq (\log x)^{10}$ et par $\frac{x}{p_2\cdots p_\ell g_{L_0}}$ quand $g_{L_0}> (\log x)^{10}$. Ainsi $S\ll S_1+S_2$, avec
\begin{align*}
 S_1:=&\sum\limits_{\substack{\prod\limits_{L\in\mathcal{P}^*(E)}k_L=k}}\sum\limits_{\substack{g_{L_0}|k_{L_0}^\infty\\ C_5<g_{L_0}\leq (\log x)^{10}}}\sum\limits_{\substack{p_2\cdots p_\ell\leq \frac{x}{(\log x)^{64}}}}\frac{x}{p_2\cdots p_\ell\varphi(g_{L_0})\log\left(\frac{x}{p_2\cdots p_\ell g_{L_0}}\right)},\\
 S_2:=&\sum\limits_{\substack{\prod\limits_{L\in\mathcal{P}^*(E)}k_L=k}}\sum\limits_{\substack{g_{L_0}| k_{L_0}^\infty\\g_{L_0}> (\log x)^{10}}}\sum\limits_{\substack{p_2\cdots p_\ell\leq \frac{x}{(\log x)^{64}}}}\frac{x}{p_2\cdots p_\ell g_ {L_0}}.
\end{align*}

Pour la somme $S_2$ on utilise la méthode de Rankin pour majorer la somme sur $g_{L_0}$ :
\[\sum\limits_{\substack{g_{L_0}| k_{L_0}^\infty\\ g_{L_0}> (\log x)^{10}}}\frac{1}{g_{L_0}}\leq \frac{1}{(\log x)^5}\sum\limits_{\substack{g_{L_0}| k_{L_0}^\infty}}\frac{1}{g_{L_0}^{\frac{1}{2}}}\leq \frac{1}{(\log x)^5}\prod\limits_{p|k_{L_0}}\left(\frac{p^{\frac{1}{2}}}{p^{\frac{1}{2}}-1}\right)\ll \frac{C_1}{(\log x)^5}.\]

En reportant dans la somme $S_2$ on obtient $S_2\ll\frac{C_1x}{(\log x)^5}(\log\log x)^{\ell-1}$.

Dans la somme $S_1$, $g_{L_0}\ll (\log x )^{10}$ tandis que $\frac{x}{p_2\cdots p_\ell }\geq (\log x)^{64}$ donc $\log\left(\frac{x}{p_2\cdots p_\ell g_{L_0}}\right)\gg \log\left(\frac{x}{p_2\cdots p_\ell}\right)$.

De plus $\frac{1}{\varphi(g_{L_0})}\leq \frac{1}{g_{L_0}}\frac{k_{L_0}}{\varphi(k_{L_0})}\ll\frac{\log\log C_1}{\varphi(g_{L_0})}$.

Nous appliquons de nouveau la méthode de Rankin avec un paramètre $t\in]0,1[$. On peut majorer la somme sur $g_{L_0}$ par :
\[\sum\limits_{\substack{g_{L_0}|k_{L_0}^\infty\\ C_5<g_{L_0}\leq (\log x)^{10}}}\frac{1}{\varphi(g_{L_0})}\ll \frac{C_1}{C_5^t}.\]
Ainsi $S_2\ll \frac{C_1}{C_5^t}\frac{x}{\log x}(\log\log x)^{\ell -1}$.
 \end{proof}

Afin de calculer les dernières sommes de $\Pa'(x,k)$ nous aurons besoin de plusieurs résultats de théorie algébrique des nombres, qui seront l'objet de la section suivante.

\section{Correspondance entre les $\ell$-presque premiers recherchés et les idéaux premiers de certains corps de nombres}\label{theoriealgebrique}

Dans toute cette partie $m$ est un entier naturel, ${m'}$ un entier naturel sans facteur carré qui divise $m$, $p$ un nombre premier et $a$ un entier qui n'est ni égal à $-1$ ni un carré et tel que $(a,p)=1$. On suppose de plus que pour tout $q|{m'}$, $q$ premier, $a$ n'est pas une $q$-ième puissance. Commençons par la proposition suivante :

\begin{prop}
L’existence de solutions pour l'équation $\nu^{{m'}}\equiv a\Mod{p}$ et la condition $p\equiv 1\Mod{m'}$ sont ensembles équivalentes au fait que l'équation $\nu^{{m'}}\equiv a\Mod{p}$ a exactement ${m'}$ racines. 
\end{prop}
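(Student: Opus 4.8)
\emph{Plan de démonstration.} L'idée est de ramener l'étude de l'équation $\nu^{m'}\equiv a\Mod{p}$ à celle d'une congruence linéaire en passant par une racine primitive, afin de dénombrer précisément ses solutions. Comme $(a,p)=1$, toutes les solutions éventuelles appartiennent à $(\Z/p\Z)^*$, qui est cyclique d'ordre $p-1$. Je commencerais par fixer une racine primitive $g$ modulo $p$, puis écrire $a\equiv g^s\Mod{p}$ et $\nu\equiv g^t\Mod{p}$. L'équation $\nu^{m'}\equiv a\Mod{p}$ devient alors équivalente à la congruence linéaire $m't\equiv s\Mod{p-1}$ d'inconnue $t$.

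Le point central est ensuite le dénombrement standard des solutions d'une congruence linéaire : en posant $d:=(m',p-1)$, la congruence $m't\equiv s\Mod{p-1}$ admet une solution si et seulement si $d|s$, et dans ce cas elle en admet exactement $d$ modulo $p-1$. Autrement dit, l'équation $\nu^{m'}\equiv a\Mod{p}$ possède soit $0$ solution, soit exactement $(m',p-1)$ solutions, ces dernières existant précisément lorsque $a$ est le résidu d'une $m'$-ième puissance modulo $p$. C'est cette dichotomie « $0$ ou $(m',p-1)$ » qui relie le nombre exact de racines à la divisibilité $m'|p-1$, et qui constitue le cœur de l'argument.

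Les deux implications en découlent formellement. Pour le sens direct, si $p\equiv 1\Mod{m'}$ alors $m'|p-1$, donc $(m',p-1)=m'$ ; en supposant de plus l'existence d'une solution, le nombre de racines est $(m',p-1)=m'$, donc exactement $m'$. Pour la réciproque, si l'équation admet exactement $m'$ racines, alors elle en admet au moins une (d'où l'existence, puisque $m'\geq 1$), et comme ce nombre vaut $0$ ou $(m',p-1)$, on a nécessairement $(m',p-1)=m'$, c'est-à-dire $m'|p-1$, soit $p\equiv 1\Mod{m'}$.

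La principale difficulté — en réalité minime — réside dans la justification propre de la dichotomie sur le nombre de solutions ; le reste est purement formel. On notera par ailleurs que l'hypothèse de la section selon laquelle $a$ n'est une $q$-ième puissance pour aucun premier $q|m'$ n'intervient pas dans cette équivalence : elle garantit seulement, dans l'usage ultérieur où $m'$ joue le rôle d'un $k'_i$, que $m'$ est « réduit » relativement à $h$, l'équivalence ci-dessus restant valable pour tout $a$ premier à $p$.
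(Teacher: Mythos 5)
Votre démonstration est correcte. Le passage au logarithme discret via une racine primitive $g$ (écrire $a\equiv g^s$, $\nu\equiv g^t$, puis réduire à la congruence linéaire $m't\equiv s\Mod{p-1}$) est légitime puisque $(\Z/p\Z)^*$ est cyclique et que $\nu\mapsto t$ est une bijection entre racines modulo $p$ et solutions modulo $p-1$ ; la dichotomie « $0$ ou $(m',p-1)$ solutions » est bien le théorème standard sur les congruences linéaires, et les deux implications en découlent formellement comme vous le dites (y compris le cas $m'=1$, où tout est trivialement vérifié). Votre démarche diffère toutefois de celle du papier, qui scinde l'argument en deux : pour le sens direct, il dénombre explicitement les racines $m'$-ièmes de l'unité (les $\alpha^{\lambda(p-1)/m'}$, $0\leq\lambda<m'$), les multiplie par une solution particulière $\nu$, et conclut par la borne « un polynôme de degré $m'$ sur un corps a au plus $m'$ racines » ; pour la réciproque, il considère le morphisme $\varphi:\nu\mapsto\nu^{m'}$ de $(\Z/p\Z)^*$ dans lui-même, note que la fibre au-dessus de $a$ étant non vide de cardinal $m'$ le noyau a cardinal $m'$, puis applique le théorème de Lagrange pour obtenir $m'\mid p-1$. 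Votre approche a l'avantage d'être unifiée — un seul calcul donne le nombre exact de solutions $(m',p-1)$ dans tous les cas, ce qui traite les deux sens simultanément — tandis que celle du papier évite le dénombrement des congruences linéaires au prix de deux arguments distincts (borne polynomiale d'un côté, Lagrange de l'autre). Votre remarque finale est également juste : l'équivalence vaut pour tout $a$ premier à $p$, l'hypothèse « $a$ n'est pas une puissance $q$-ième pour $q\mid m'$ » ne servant que dans l'exploitation ultérieure (degré de $\Q(\sqrt[m']{a},\xi_m)$), pas dans cette proposition.
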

\begin{proof}
Supposons qu'il existe $\nu$ tel que $\nu^{{m'}}\equiv a\Mod{p}$ et que ${m'}|(p-1)$. Alors $x^{{m'}}\equiv 1\Mod{p}$ a ${m'}$ solutions, que nous pouvons expliciter. Soit $\alpha$ une racine primitive modulo $p$, les racines $m'$-ième de $1$ sont de la forme $x=\alpha^{\left(\frac{p-1}{{m'}}\right)\lambda}$, avec $0\leq\lambda< {m'}$.\\ Alors pour chacune de ces racines, $(\nu x)^{{m'}}\equiv a\Mod{p}$ et ainsi  le polynôme $X^{m'}-a$ a au moins ${m'}$ racines dans $(\Z/p\Z)^*$. Comme c'est un polynôme de degré ${m'}$ on déduit que ce sont les seules. \\ \ \\
Supposons maintenant que $\nu^{{m'}}\equiv a\Mod{p}$ a exactement ${m'}$ racines. Alors le morphisme de groupe $\varphi:\begin{array}{ccc}
(\Z/p\Z)^*&\rightarrow&(\Z/p\Z)^*\\
\nu&\mapsto&\nu^{{m'}}
\end{array}$ a un noyau $\Ker \varphi$ de cardinal $m'$.
 
Or $\Card(\Ker\varphi)|\Card ((\Z/p\Z)^*)$ et on a bien ${m'}|p-1$.

\end{proof}

D'après le Théorème de Kummer (\cite{Neukirch} Prop I.8.3) cette propriété est aussi équivalente au fait que $p\nmid {m'}$ et $p$ se factorise dans $\Q(\sqrt[{m'}]{a})$ comme produit de ${m'}$ idéaux premiers distincts.\\

Montrons maintenant la proposition suivante qui est un résultat classique de théorie algébrique des nombres.

\begin{prop}
Les deux assertions suivantes sont équivalentes :
\begin{itemize}
\item[$\bullet$]$p\equiv 1\Mod{m}$,
\item[$\bullet$]$p\nmid m$ et $p$ se factorise dans $\Q(\xi_m)$ en $\varphi(m)$ idéaux premiers distincts.
\end{itemize}
\end{prop}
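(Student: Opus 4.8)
The plan is to reduce the statement to the well-understood splitting behaviour in the cyclotomic field $K=\Q(\xi_m)$, exactly as the previous proposition was handled through Kummer's theorem for $\Q(\sqrt[m']{a})$. Recall that $K/\Q$ is Galois of degree $\varphi(m)$, that its ring of integers is $\bo_K=\Z[\xi_m]$, and that $\Gal(K/\Q)\cong(\Z/m\Z)^*$ via $\sigma_c\colon\xi_m\mapsto\xi_m^c$. The minimal polynomial of $\xi_m$ over $\Q$ is the cyclotomic polynomial $\Phi_m$, so the factorization of a prime $p$ in $\bo_K$ will be read off from the factorization of $\Phi_m$ modulo $p$.

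First I would treat the running condition $p\nmid m$ as the unramifiedness condition: since $\bo_K=\Z[\xi_m]$ there is no index obstruction, and Kummer's theorem (\cite{Neukirch}, Prop. I.8.3), already invoked above, applies to every $p\nmid m$ and tells us that the factorization type of $p\bo_K$ matches that of $\Phi_m$ in $\F_p[X]$. The point is then to show that, for $p\nmid m$, the polynomial $\Phi_m$ factors in $\F_p[X]$ into distinct irreducible factors all of the same degree $f=\ord_m(p)$, the multiplicative order of $p$ modulo $m$. Indeed, the roots of $\Phi_m$ in $\overline{\F_p}$ are the primitive $m$-th roots of unity; they are simple because $X^m-1$ is separable modulo $p$ (its derivative $mX^{m-1}$ is nonzero as $p\nmid m$), and the Frobenius $x\mapsto x^p$ permutes them with all orbits of length equal to the order of $p$ in $(\Z/m\Z)^*$. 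Consequently $p\bo_K$ is a product of exactly $\varphi(m)/f$ distinct prime ideals, each of residue degree $f$.

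It then remains to identify when this number of prime factors equals $\varphi(m)$. This happens precisely when $f=1$, i.e.\ when $p\equiv 1\Mod{m}$, which closes the reverse implication (together with the standing hypothesis $p\nmid m$). For the forward implication I would observe that $p\equiv 1\Mod{m}$ forces $p\nmid m$ automatically: if $p\mid m$ and $m\mid p-1$ then $p\leq m\leq p-1$, a contradiction. Hence $p\equiv 1\Mod{m}$ yields $f=\ord_m(p)=1$ and the splitting of $p$ into $\varphi(m)$ distinct prime ideals.

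The main technical points to secure are that $\Z[\xi_m]$ is the full ring of integers of $K$ (so that Kummer's theorem has no index restriction and applies uniformly to all $p\nmid m$) and the separability of $\Phi_m$ modulo such $p$; both are standard facts about cyclotomic fields, the former being the one I would cite most carefully. Once these are in place the equivalence follows formally from the orbit-counting description of the residue degree $f=\ord_m(p)$.
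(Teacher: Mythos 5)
Your proof is correct, and it reaches the statement by a genuinely different middle step than the paper, even though both arguments ultimately read the splitting of $p$ off the factorization of $\Phi_m$ in $\F_p[X]$ via Kummer's theorem. The paper works entirely inside $(\Z/p\Z)^*$: it fixes a primitive root $\alpha$ modulo $p$, sets $d=(m,p-1)$, identifies the solutions of $x^m\equiv 1\Mod{p}$ as the $\alpha^{\lambda\frac{p-1}{d}}$ with $0\leq\lambda<d$, deduces that the linear factors of $\Phi_m$ modulo $p$ are exactly those of $\Phi_d$, which are the $\varphi(d)$ distinct factors $X-\alpha^{\lambda\frac{p-1}{d}}$ with $(\lambda,d)=1$, and concludes that $p$ splits into $\varphi(m)$ distinct prime ideals if and only if $d=m$, i.e. $m\mid p-1$. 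You instead determine the full factorization type: the roots of $\Phi_m$ in $\overline{\F_p}$ are the primitive $m$-th roots of unity, simple because $p\nmid m$, and the Frobenius $x\mapsto x^p$ permutes them in orbits of common length $f=\ord_m(p)$, so that $p$ splits into $\varphi(m)/f$ distinct primes of residue degree $f$, which equals $\varphi(m)$ exactly when $f=1$, i.e. $p\equiv 1\Mod{m}$. Your route is the standard Galois-theoretic one and proves more (the complete splitting law in $\Q(\xi_m)$), at the price of invoking $\mathcal{O}_K=\Z[\xi_m]$ — which you rightly flag as the fact to cite; note that one can avoid it, since the index $[\mathcal{O}_K:\Z[\xi_m]]$ squared divides $\Disc(\Phi_m)$, which divides a power of $m$, so $p\nmid m$ already licenses Kummer's theorem (the paper's application of Kummer quietly rests on the same point). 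The paper's computation is more elementary, extracting only the count of linear factors, which is all the equivalence requires. A small merit of your write-up is the explicit check that $p\equiv 1\Mod{m}$ forces $p\nmid m$ (otherwise $p\leq m\leq p-1$), a point the paper leaves implicit.
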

\begin{proof}
Soit $\Phi_m$ le $m$-ième polynôme cyclotomique. Les racines de $\Phi_m$ modulo $p$ sont des solutions de l'équation $x^m\equiv 1 \Mod{p}$.\\
Soit $\alpha$ une racine primitive modulo $p$. Alors pour $0\leq r\leq p-1$, $(\alpha^r)^m\equiv 1 (p)$ si et seulement si $rm\equiv 0\Mod{p-1}$. Posons $d=(m,p-1)$. Alors
\begin{align*}
\alpha^{rm}\equiv 1 (p) &\Leftrightarrow r\frac{m}{d}\equiv 0\left(\frac{p-1}{d}\right)\\
&\Leftrightarrow \frac{p-1}{d}|r
\end{align*}
Ainsi les solutions de $x^m\equiv 1 (p)$ sont les $\alpha^{\lambda\frac{p-1}{d}}$ avec $0\leq \lambda< d$. Mais ce sont aussi les solutions de $x^d\equiv 1 (p)$, ainsi les facteurs irréductibles de degré 1 de $\Phi_m\ \Mod{p}$ sont ceux de $\Phi_d\ \Mod{p}$.\\
Les $\alpha^{\lambda\frac{p-1}{d}}$ avec $\lambda<d$, $(\lambda,d)=1$ sont des racines primitives $d$-ièmes de l'unité et donc $\Phi_d\ \Mod{p}=\prod\limits_{\substack{(\lambda,d)=1\\1\leq \lambda<d}}(X-\alpha^{\lambda\frac{p-1}{d}})$. Ainsi $\deg(\Phi_d)=\varphi(d)$.\\
Alors d'après le Théorème de Kummer $p$ se décompose en $\varphi(m)$ idéaux premiers distincts dans $\Q(\xi_m)$ si et seulement si $d=m$, i.e $m|p-1$.
\end{proof}

La proposition suivante, nous permettra de passer de conditions dans $\Q(\xi_m)$ et $\Q(\sqrt[{m'}]{a})$ à des conditions dans $\Q(\sqrt[{m'}]{a},\xi_m)$.

\begin{prop}
 Les trois conditions suivantes $p\nmid m$, $p$ se décompose en $\varphi(m)$ idéaux premiers distincts dans $\Q(\xi_m)$ et $p$ se décompose en ${m'}$ idéaux premiers distincts dans $\Q(\sqrt[{m'}]{a})$ ont lieu si et seulement si $p\nmid m$ et $p$ se factorise dans $K=\Q(\sqrt[{m'}]{a},\xi_m)$ comme produit d'idéaux premiers distincts et de norme $p$.
\end{prop}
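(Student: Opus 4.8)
Le plan est de reformuler les deux membres de l'équivalence comme une seule condition de décomposition totale, puis d'exploiter le comportement de cette notion vis-à-vis d'un compositum. Je remarque d'abord que « $p$ se factorise comme produit d'idéaux premiers distincts et de norme $p$ dans un corps $F$ » signifie exactement que $p$ est non ramifié dans $F$ (idéaux distincts) et que tous les degrés résiduels valent $1$ (norme $p$), c'est-à-dire que $p$ se décompose totalement dans $F$. Comme $m'$ est sans facteur carré et que $a$ n'est une $q$-ième puissance pour aucun $q|m'$, on a $[\Q(\sqrt[m']{a}):\Q]=m'$, de sorte que « $p$ se décompose en $m'$ idéaux premiers distincts » équivaut à la décomposition totale de $p$ dans $\Q(\sqrt[m']{a})$, et de même « $\varphi(m)$ idéaux distincts » équivaut à la décomposition totale dans $\Q(\xi_m)$. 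La proposition se ramène alors à ceci : sous l'hypothèse $p\nmid m$, le premier $p$ se décompose totalement dans $K=\Q(\sqrt[m']{a},\xi_m)$ si et seulement si $p$ se décompose totalement à la fois dans $K_1:=\Q(\xi_m)$ et dans $K_2:=\Q(\sqrt[m']{a})$.

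Ensuite j'observe que $K/\Q$ est galoisienne : comme $m'|m$ on a $\xi_{m'}\in\Q(\xi_m)\subseteq K$, donc $K$ contient toutes les racines $\xi_{m'}^{\,j}\sqrt[m']{a}$ de $X^{m'}-a$, et $K$ est ainsi le corps de décomposition sur $\Q$ de $(X^m-1)(X^{m'}-a)$. De plus la clôture galoisienne de $K_2$ est $\widetilde{K_2}=\Q(\xi_{m'},\sqrt[m']{a})$, qui est contenue dans $K$, et l'on a $K=K_1\widetilde{K_2}$ puisque $\xi_{m'}\in K_1$.

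Je prouve alors le principe sur le compositum. Le sens direct est immédiat : si $p$ se décompose totalement dans $K$, la multiplicativité des indices de ramification et des degrés résiduels dans les tours $\Q\subseteq K_i\subseteq K$ force $e=f=1$ pour les idéaux de $K_1$ et de $K_2$ au-dessus de $p$, d'où la décomposition totale dans $K_1$ et dans $K_2$. Pour la réciproque, je me ramène à des extensions galoisiennes en utilisant que l'ensemble des premiers se décomposant totalement dans une extension coïncide avec celui des premiers se décomposant totalement dans sa clôture galoisienne ; en particulier $p$ se décompose totalement dans $K_2$ si et seulement si dans $\widetilde{K_2}$. Comme $K_1$ et $\widetilde{K_2}$ sont galoisiennes sur $\Q$ et que $K=K_1\widetilde{K_2}$, l'application de restriction $\Gal(K/\Q)\hookrightarrow\Gal(K_1/\Q)\times\Gal(\widetilde{K_2}/\Q)$ est injective ; le premier $p$ étant non ramifié (car non ramifié dans $K_1$ et dans $\widetilde{K_2}$), son élément de Frobenius dans $\Gal(K/\Q)$ s'envoie sur le couple des Frobenius dans $K_1$ et $\widetilde{K_2}$, tous deux triviaux par hypothèse ; par injectivité le Frobenius dans $K$ est trivial, donc $p$ se décompose totalement dans $K$.

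Il ne reste qu'à traduire chaque décomposition totale au moyen des propositions précédentes : pour $\Q(\xi_m)$, « $p$ se décompose totalement » équivaut à $p\nmid m$ et $p\equiv 1\Mod{m}$ ; pour $\Q(\sqrt[m']{a})$, via le Théorème de Kummer, elle équivaut à $p\nmid m'$, $p\equiv 1\Mod{m'}$ et $a\in\qroot(m',p)$. Comme $m'|m$ ces deux familles de conditions sont compatibles et l'on retrouve exactement les trois conditions de l'énoncé, la condition commune $p\nmid m$ étant simplement transportée d'un membre à l'autre. La principale difficulté est le sens réciproque du principe sur le compositum, le corps $K_2=\Q(\sqrt[m']{a})$ n'étant pas galoisien ; on la contourne précisément en passant à sa clôture galoisienne $\widetilde{K_2}\subseteq K$, ce qui ramène l'argument à des extensions galoisiennes et à la trivialité du Frobenius.
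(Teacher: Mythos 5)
Your proof is correct, but it takes a genuinely different route from the paper's. You first reformulate both sides as \emph{total splitting}: this uses $[\Q(\xi_m):\Q]=\varphi(m)$ and, via the section's standing hypothesis that $a$ is not a $q$-th power for any prime $q\mid m'$ together with $m'$ squarefree (so the case $a\in-4\Q^4$, $4\mid m'$ of the Vahlen--Capelli criterion cannot occur), the irreducibility of $X^{m'}-a$, giving $[\Q(\sqrt[{m'}]{a}):\Q]=m'$. You then invoke the standard principle that a prime splits completely in a compositum if and only if it does in each factor, circumventing the non-Galois field $\Q(\sqrt[{m'}]{a})$ by passing to its Galois closure $\widetilde{K_2}=\Q(\xi_{m'},\sqrt[{m'}]{a})\subseteq K$, and concluding by injectivity of the restriction map $\Gal(K/\Q)\hookrightarrow\Gal(\Q(\xi_m)/\Q)\times\Gal(\widetilde{K_2}/\Q)$ applied to inertia and Frobenius. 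The paper argues by hand instead: for the forward direction it applies Kummer's theorem to the \emph{relative} extension $K/\Q(\xi_m)$, factoring the minimal polynomial of $\sqrt[{m'}]{a}$ over $\Q(\xi_m)$ modulo each prime $\mathfrak{p}_i$ above $p$ (this polynomial divides $X^{m'}-a$, which is split modulo $p$), and for the converse it uses only multiplicativity of ramification indices and residue degrees in the towers $\Q\subset\Q(\xi_m)\subset K$ and $\Q\subset\Q(\sqrt[{m'}]{a})\subset K$ — which is essentially your easy direction. What your approach buys is concision and robustness: the Frobenius formulation makes the distinctness of the primes and the norm conditions automatic, points the paper's Kummer computation treats somewhat sketchily (e.g.\ distinctness of the $\mathfrak{p}_{i_j}$ across different $i$, and applicability of Kummer at $p$); the cost is reliance on the Galois-closure lemma and on the exact degree of $\Q(\sqrt[{m'}]{a})$, whereas the paper's computation is more elementary and self-contained. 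Your closing paragraph translating total splitting back into $p\equiv 1\Mod{m}$ and $a\in\qroot(m',p)$ is not needed for this proposition (that is the content of the two preceding propositions) and could be dropped.
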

\begin{proof}
 Supposons que $p\nmid m$, $p$ se décompose en $\varphi(m)$ idéaux premiers distincts dans $\Q(\xi_m)$ et $p$ se décompose en ${m'}$ idéaux premiers distincts dans $\Q(\sqrt[{m'}]{a})$.\\
 La décomposition de $(p)$ dans $\Q(\xi_m)$ est de la forme $(p)=\mathfrak{p}_1\cdots \mathfrak{p}_{\varphi(m)}$ où pour tout $1\leq i\leq\varphi(m)$, $\mathfrak{p}_i$ est un idéal premier de norme $p$, $\mathfrak{p}_i\neq\mathfrak{p}_j$.\\
 Notons $\overline{\Phi_m}=\Phi_m\Mod{p}$, alors $\overline{\Phi_m}$ se factorise comme suit $\overline{\Phi_m}(X)=(X-u_1)\cdots(X-u_{\varphi(m)})$ et on a, quitte à modifier l'ordre des indices $\mathfrak{p}_i=(p,\xi_m-u_i)$.\\ \ \\
 De la même manière, en notant $B(X)=X^{m'}-a$, $\overline{B}(X)=B(X)\Mod{p}$, on a $\overline{B}(X)=(X-v_1)\cdots(X-v_{{m'}})$.\\
 Comme $K=\Q(\xi_m)(\sqrt[{m'}]{a})$, on peut noter $P$ le polynôme minimal de $\sqrt[{m'}]{a}$ sur $\Q(\xi_m)$ et $\overline{P}=P\Mod{p}$. Ainsi $\overline{P}|\overline{B}$ et donc $\overline{P}$ est scindé. Puis quitte à changer l'ordre des racines, $\overline{P}(X)=(X-v_1)\cdots(X-v_{s})$, où $s\leq {m'}$.\\
 A fortiori pour chaque $1\leq i\leq\varphi(m)$, $P\Mod{\mathfrak{p}_i}$ est scindé et $P\equiv(X-v_1)\cdots(X-v_{s}) \Mod{\mathfrak{p}_i}$.\\
 On applique le théorème de Kummer à l'extension $K$ de $\Q(\xi_m)$. Pour $1\leq i\leq\varphi(m)$, $P\Mod{\mathfrak{p}_i}$ étant scindé, $\mathfrak{p}_i$ se décompose sur $K$ en $\mathfrak{p}_i\mathcal{O}_K=\mathfrak{p}_{i_1}\cdots\mathfrak{p}_{i_{r_i}}$ avec $r_i= s$ et $N_{K/\Q}(\mathfrak{p}_{i_j})=N_{\Q(\xi_m)/\Q}(\mathfrak{p}_i)=p$. Puis $p\mathcal{O}_K=\prod\limits_{i=1}^{\varphi(m)}\mathfrak{p}_i\mathcal{O}_K=\prod\limits_{\substack{1\leq i\leq\varphi(m)\\ 1\leq j\leq r_i}}\mathfrak{p}_{i_j}$ où $N_{K/\Q}(\mathfrak{p}_{i_j})=p$.\\ \ \\\
 Supposons maintenant que $p\nmid m$ et $p$ se factorise dans $K=\Q(\sqrt[{m'}]{a},\xi_m)$ comme produit d'idéaux premiers distincts et de norme $p$.\\
 Alors $p\mathcal{O}_K=\mathfrak{p}_1\cdots\mathfrak{p}_s$ où $s=[K:Q]$ et pour tout $1\leq i\leq s$, $N_{K/\Q}(\mathfrak{p}_i)=p$.\\ 
 Soit $L=\Q(\xi_m)$ et $p\mathcal{O}_L=Q_1\cdots Q_r$ la décomposition de $p$ en produit d'idéaux premiers dans $L$.\\
 Soit $f_i$ pour $1\leq i\leq s$, tel que $N_{K/\Q}(Q_i)=p^{f_i}$. Alors $f_i$ est le degré de l'extension $\mathcal{O}_L/ Q_i \mathcal{O}_L$ par rapport à $\F_p$.\\
 Or pour tout $i\in\{1,\ldots,r\}$ il existe au moins un $j\in\{1,\ldots,s\}$ tel que $\mathfrak{p}_j$ intervienne dans la décomposition de $Q_i$ dans $\mathcal{O}_K$. Ainsi
 \[[\mathcal{O}_K/\mathfrak{p}_j\mathcal{O}_K:\F_p]=[\mathcal{O}_K/\mathfrak{p}_j\mathcal{O}_K:\mathcal{O}_L/Q_i\mathcal{O}_L][\mathcal{O}_L/Q_i\mathcal{O}_L:\F_p]\]
 
 Par hypothèse $[\mathcal{O}_K/\mathfrak{p}_j\mathcal{O}_K:\F_p]=1$ et donc $f_i=[\mathcal{O}_L/Q_i\mathcal{O}_L:\F_p]=1$, ainsi $N_{K/\Q}(Q_i)=p$ pour tout $1\leq i\leq r$ et donc $p$ se décompose en $[\Q(\xi_m):\Q]=\varphi(m)$ idéaux premiers distincts dans $\Q(\xi(m))$.\\
 De la même manière $p$ se décompose en $[\Q(\sqrt[{m'}]{a}):\Q]={m'}$ idéaux premiers distincts dans $\Q(\sqrt[{m'}]{a})$.
\end{proof}

On déduit des trois proposions précédentes que $p\equiv 1(m)$ et $a\in\qroot({m'},p)$ sont ensembles équivalentes au fait que $p\nmid m$ et $p$ se factorise complètement dans $K=\Q(\sqrt[{m'}]{a},\xi_m)$ comme produit d'idéaux premiers distincts de norme $p$.\\

Nous allons maintenant calculer explicitement le degré de $\Q(\sqrt[{m'}]{a},\xi_m)$.

\begin{prop}\label{propdegre}
Soient ${m'},m$ deux entiers naturels tels que ${m'}|m$, ${m'}$ sans facteur carré et $a$ un entier qui ne soit pas un carré. Notons $K=\Q(\sqrt[{m'}]{a},\xi_m)$, où $\xi_m$ est une racine $m$-ième de l'unité. Alors,
\[[K:\Q]=\frac{{m'} \varphi(m)}{\varepsilon({m'},m)}.\]
Où $\varepsilon({m'},m)$ est donné par la formule suivante, en prenant $a_1$ la partie sans facteur carré de $a$ (i.e $a=a_1 a_2^2$, avec $a_1$ sans facteur carré),
\begin{equation}
\label{13}
\varepsilon({m'},m)=\left\{\begin{array}{l}
2\ \text{si}\ 2|{m'},\ 2||m,\ a_1|m\ \text{et}\ a_1\equiv 1\Mod{4}\\
2\ \text{si}\ 2|{m'},\ 4||m,\ a_1|m\ \text{et}\ a_1\equiv 1\Mod{2}\\
2\ \text{si}\ 2|{m'},\ 8|m\ \text{et}\ a_1|m\\
1\ \text{sinon}
\end{array}\right..
\end{equation}

\end{prop}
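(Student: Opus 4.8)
The plan is to observe that $K=\Q(\sqrt[{m'}]{a},\xi_m)$ is simply $L(\sqrt[{m'}]{a})$, where $L:=\Q(\xi_m)$, and to compute its degree by Kummer theory. Since ${m'}\mid m$, the field $L$ contains the ${m'}$-th roots of unity, so $L(\sqrt[{m'}]{a})/L$ is a Kummer extension and $[K:\Q]=[L(\sqrt[{m'}]{a}):L]\,\varphi(m)$. Comparing with the claimed formula, it suffices to prove $[L(\sqrt[{m'}]{a}):L]={m'}/\epsilon({m'},m)$, i.e. $\epsilon({m'},m)={m'}/[L(\sqrt[{m'}]{a}):L]$.

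First I would reduce to prime exponents. Writing ${m'}=\prod_{p\mid {m'}}p$ and applying Bézout to the integers ${m'}/p$ (whose gcd is $1$ since ${m'}$ is squarefree), one gets $\sqrt[{m'}]{a}\in\prod_{p\mid {m'}}L(\sqrt[p]{a})$, hence $L(\sqrt[{m'}]{a})=\prod_{p\mid {m'}}L(\sqrt[p]{a})$, a compositum of extensions of pairwise coprime prime degrees. These are therefore linearly disjoint over $L$, so $[L(\sqrt[{m'}]{a}):L]=\prod_{p\mid {m'}}[L(\sqrt[p]{a}):L]$, each factor being $p$ if $a\notin(L^*)^p$ and $1$ otherwise. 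Equivalently, $\epsilon({m'},m)=\prod_{p\mid {m'},\,a\in(L^*)^p}p$.

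The crux is to show that for an odd prime $p\mid {m'}$ one always has $a\notin(L^*)^p$, so that only $p=2$ can contribute. Suppose $a=\beta^p$ with $\beta\in L$; since $\xi_p\in\Q(\xi_{m'})\subseteq L$, every $p$-th root of $a$ lies in $L$, so $\Q(\sqrt[p]{a})\subseteq L=\Q(\xi_m)$ would be abelian over $\Q$. But under the standing hypothesis of this section $a$ is not a $p$-th power in $\Q$, so $[\Q(\sqrt[p]{a}):\Q]=p$; for $p$ odd this field is not normal over $\Q$ (otherwise it would contain $\xi_p$, forcing $(p-1)\mid p$), and a non-normal field cannot lie inside the abelian extension $\Q(\xi_m)$ — a contradiction. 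Thus $\epsilon({m'},m)\in\{1,2\}$, with $\epsilon=2$ exactly when $2\mid {m'}$ and $a\in(L^*)^2$. Since $a=a_1a_2^2$, the condition $a\in(\Q(\xi_m)^*)^2$ is equivalent to $\sqrt{a_1}\in\Q(\xi_m)$, i.e. $\Q(\sqrt{a_1})\subseteq\Q(\xi_m)$.

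Finally I would translate $\Q(\sqrt{a_1})\subseteq\Q(\xi_m)$ into the explicit divisibilities of the table. By the classical description of the quadratic subfields of a cyclotomic field, this containment holds iff the conductor $f$ of $\Q(\sqrt{a_1})$ divides $m$, where $f=|a_1|$ if $a_1\equiv 1\Mod{4}$ and $f=4|a_1|$ if $a_1\equiv 2,3\Mod{4}$. Recalling that $2\mid {m'}$ forces $2\mid m$, a short case analysis on $a_1\bmod 4$ and on $\nu_2(m)$ recovers precisely the three cases giving $\epsilon=2$ (and $\epsilon=1$ otherwise): $a_1$ odd with $a_1\equiv 1\Mod{4}$ imposes no $2$-adic constraint beyond $2\mid m$, $a_1\equiv 3\Mod{4}$ requires $4\mid m$, and $a_1$ even requires $8\mid m$. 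I expect the genuinely delicate point to be the odd-prime step $a\notin(L^*)^p$; the rest is Kummer-theoretic bookkeeping together with an elementary conductor computation, and one must not forget to invoke the section's standing assumption that $a$ is not a $q$-th power for the primes $q\mid {m'}$, without which the factor ${m'}$ in the numerator would itself collapse.
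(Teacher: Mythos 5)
Your proof is correct, and its skeleton is the paper's: reduce to computing $[K:\Q(\xi_m)]$, eliminate the odd primes $q\mid m'$ by noting that $\sqrt[q]{a}\in\Q(\xi_m)$ would place the degree-$q$, non-normal field $\Q(\sqrt[q]{a})$ inside an abelian extension (this is exactly the paper's contradiction, which you phrase more directly without the auxiliary field $\Q(a^{1/q},\xi_q)$), then decide when $\sqrt{a_1}\in\Q(\xi_m)$. You deviate in two places, both defensible and arguably cleaner. For the bookkeeping you use Kummer theory frontally: the Bézout identity splits $L(\sqrt[m']{a})$ as the compositum of the $L(\sqrt[p]{a})$, whose degrees are pairwise coprime, so the defect $\epsilon=\prod p$ over the primes $p\mid m'$ with $a\in(L^*)^p$ is multiplicative from the outset; the paper instead sets $\delta:=m'/[K:\Q(\xi_m)]$, deduces $\sqrt[q]{a}\in\Q(\xi_m)$ for each prime $q\mid\delta$ from a divisibility-and-coprimality argument, and uses the same Bézout identity only afterwards to get $\delta=2$ iff $\sqrt a\in\Q(\xi_m)$ --- same content, different packaging. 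For the quadratic step the paper quotes Weintraub's explicit description of the quadratic subfields of $\Q(\xi_m)$ (the fields $\Q(\sqrt{(-1|D)D})$ with $D$ an odd divisor of $m$) and runs a quadratic-reciprocity sign analysis over the cases $2||m$, $4||m$, $8\mid m$; you substitute the conductor criterion, namely $\Q(\sqrt{a_1})\subseteq\Q(\xi_m)$ iff $f\mid m$ with $f=|a_1|$ when $a_1\equiv 1\Mod{4}$ and $f=4|a_1|$ otherwise, which reproduces the same three rows with no Jacobi-symbol computation. Finally, your closing caveat is well taken and deserves emphasis: the hypothesis ``$a$ not a square'' in the proposition is insufficient on its own (take $a=8$, $m'=m=6$: then $K=\Q(\sqrt{2},\xi_6)$ has degree $4$, while the formula predicts $12$), and both your odd-prime step and the paper's --- which silently assumes $\Q(a^{1/q})\neq\Q$ when declaring that field non-Galois --- rely on the section's standing assumption that $a$ is not a $q$-th power for every prime $q\mid m'$.
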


\begin{proof}
Remarquons que :
\[[K:\Q]=[K:\Q(\xi_m)][\Q(\xi_m):\Q]=\varphi(m)[K:\Q(\xi_m)].\]
Comme $\Q(\xi_m)/\Q$ est une extension galoisienne on a
\[[K:\Q(\xi_m)]|{m'}\]
Posons ${m'}=\delta[K:\Q(\xi_m)]$. Alors, si $q$ est un facteur premier de $\delta$, le degré $[\Q(\xi_m)(\sqrt[q]{a}):\Q(\xi_m)]$ vaut soit $1$ soit $q$. De plus,
\[[\Q(\xi_m)(\sqrt[q]{a}):\Q(\xi_m)]\text{ divise }[K:\Q(\xi_m)]=\frac{{m'}}{\delta}.\]

Or $(\frac{{m'}}{\delta},q)=1$ car ${m'}$ est sans facteur carré, donc $[\Q(\xi_m)(\sqrt[q]{a}):\Q(\xi_m)]=1$ et ainsi $\sqrt[q]{a}\in\Q(\xi_m)$. Montrons que $q<3$.\\
Déjà $\Q(a^{\frac{1}{q}},\xi_q)\subset \Q(\xi_m)$ et $\Q(\xi_m)/\Q$ est une extension abélienne car son groupe de Galois est $(\Z/m\Z)^*$. Soit $L$ une sous extension galoisienne de $\Q(\xi_m)$, alors
\[\Gal(L/\Q)\cong\frac{\Gal(\Q(\xi_m)/\Q)}{\Gal(\Q(\xi_m)/L)}\]
et donc $\Gal(L/\Q)$ est abélien.\\
Supposons  que $q>2$ et montrons que le groupe de galois de $\Q(a^{\frac{1}{q}},\xi_q)/\Q$ n'est pas abélien. Par l'absurde on suppose que ce dernier est abélien.\\
Alors l'extension $\Q(a^{\frac{1}{q}},\xi_q)/\Q(a^{\frac{1}{q}})$ est galoisienne et son groupe de Galois $H$ est un sous-groupe de $G_q:=\Gal\left(\Q(a^{\frac{1}{q}},\xi_q)/\Q\right)$. Comme $G_q$ est abélien $H$ est un sous-groupe invariant de $G_q$. D'après la correspondance de Galois, cela entraîne que $\Q(a^{\frac{1}{q}})/\Q$ est galoisienne de groupe de Galois $G_q/H$.\\
Mais $\Q(a^{\frac{1}{q}})/\Q$ n'est pas galoisienne car $\Q(a^{\frac{1}{q}})$ ne contient pas toutes les racines du polynôme $X^q-a$ si $q\geq 3$.\\
On en déduit que $q$ ne peut être impair et donc $\delta$ vaut $1$ ou $2$, car $m'$ est sans facteur carré.\\
Comme $(q,\frac{{m'}}{q})=1$ il existe deux entiers $u$ et $v$ tels que $uq+v\frac{{m'}}{q}=1$ et donc $a^{\frac{1}{{m'}}}=a^{\frac{uq+v\frac{{m'}}{q}}{{m'}}}=a^{\frac{uq}{{m'}}}a^{\frac{v}{q}}$, ainsi
\[K=\Q(\xi_m)(\sqrt[\frac{{m'}}{q}]{a},\sqrt[q]{a})\]
il vient que si $\sqrt{a}\in\Q(\xi_m)$ alors $\delta=2$ et donc $\delta=2$ si et seulement si $\sqrt{a}\in\Q(\xi_m)$.\\
 Posons
 \[a=a_1 a_2^2\] 
 où $a_1$ est sans facteur carré et éventuellement négatif. \\
 On utilise la caractérisation des sous-corps quadratiques d'un corps cyclotomique (\cite{Weintraub}, Cor 4.5.4).\\
 
 Si $2||m$, comme $a_1$ est sans facteur carré, $\Q(\sqrt{a_1})$ est dans $\Q(\xi_m)$ si et seulement si $a_1$ est de la forme $(-1|D)D$, où $(\cdot|\cdot)$ est le symbole de Jacobi et $D$ est un diviseur positif impair de $m$ différent de $1$.\\
 Ainsi $\delta$ vaut $2$ si et seulement si $a_1|m$ et $a_1$ est un entier impair de même signe que le symbole de Jacobi $(-1\left||a_1|)\right.$. Comme $a$ n'est pas un carré parfait, $a_1\neq1$ et on peut calculer $(-1\left||a_1|)\right.$ :
 \begin{align*}
 (-1\left||a_1|)\right.&=(-1)^{\frac{|a_1|-1}{2}}\\&=\left\{\begin{array}{rcl}1&\ \text{si}\ &|a_1|\equiv 1\Mod{4}\\ -1&\ \text{si}\ &|a_1|\equiv 3\Mod{4} 
 \end{array}\right.\\
 &=\left\{\begin{array}{rcrclcrcl}1&\ \text{si}\ &(a_1\equiv 1\Mod{4}&\ \text{et}\ &a_1>0)&\ \text{ou}\ &(a_1\equiv 3\Mod{4}&\ \text{et}\ &a_1<0)\\ -1&\ \text{si}\ &(a_1\equiv 3\Mod{4}&\ \text{et}\ &a_1>0)&\ \text{ou}\ &(a_1\equiv 1\Mod{4}&\ \text{et}\ &a_1<0)
 \end{array}\right.\\
 \end{align*}
 Ainsi, on a $\delta=2$ avec $2||m$ si et seulement si $2|{m'}$, $a_1|m$ et $a_1\equiv 1\Mod{4}$.\\
 Si $4||m$, alors $\Q(\sqrt{a_1})$ est dans $\Q(\xi_m)$ si et seulement si $a_1$ ou $-a_1$ est comme ci-dessus, et on a donc $\delta=2$ avec $4||m$ si et seulement si $2|{m'}$, $a_1|m$ et $a_1\equiv 1\Mod{2}$.\\
 Si $8|m$, alors $\Q(\sqrt{a_1})$ est dans $\Q(\xi_m)$ si et seulement si $a_1$, $-a_1$, $a_1/2$ ou $-a_1/2$ est comme ci-dessus, et on a donc $\delta=2$ avec $8|m$ si et seulement si $2|{m'}$ et $a_1|m$.\\
\end{proof}

\section{Méthode de Selberg-Delange}

L'objectif de cette section est de démontrer le Théorème \ref{thm_finale_Selberg_Delange}.

D'après la section précédente,  la quantité que l'on souhaite évaluer est :
\[\sum\limits_{\substack{p_1\cdots p_\ell\leq x\\ (a,p_1\cdots p_\ell)=1\\ \forall i,\ p_i\equiv 1(\upsilon_i)\\a\in \qroot\left(\kappa_i, p_i\right)}}1=\sum\limits_{\substack{m_1\cdots m_\ell\leq x\\ m_i\in P_i}}1,\]
où les $\upsilon_1,\ldots,\upsilon_\ell$ sont des entiers plus petits qu'une constante $C$, les $\kappa_1,\ldots,\kappa_\ell$ sont des entiers sans facteur carré tels que pour tout $1\leq i\leq \ell$, $\kappa_i|\upsilon_i$, $P_i$ est l'ensemble des nombres premiers $p$ tels que $p\nmid\ a \upsilon_i$ et $p$ se factorise dans $K_i:=\Q(\sqrt[\kappa_i]{a},\xi_{\upsilon_i})$ comme produits d'idéaux premiers distincts de norme $p$.

Le cas $\ell=1$ a été traité par Hooley \cite{Hooley}, en utilisant un résultat proche du théorème de densité de Tchebotarev et une majoration du discriminant de l'extension considérée. Pour $\ell=2$, il est possible de procéder de la même façon que pour $\ell=1$ en utilisant la méthode de l'hyperbole. Cependant pour $\ell\geq 3$ il devient trop compliqué de contrôler les termes d'erreurs qui émergent de la méthode de l'hyperbole, nous allons donc utiliser la méthode de Selberg-Delange. Nous suivrons la méthode de Selberg-Delange telle que décrite par Tenenbaum \cite{Tenenbaum_fr} chapitre II.5, en comparant une fonction à un produit de puissances de fonctions zêta de Dedekind associées aux corps $K_i$, similairement à un cas traité par Hanrot, Tenenbaum et Wu \cite{HTW}.

Définissons, pour $\boldsymbol{r}:=(r_1,\ldots,r_\ell)\in \N^\ell$, $n\in \N$, 
\[c_{\boldsymbol{r}}(n):=\sum\limits_{m_1\cdots m_\ell=n}\mu^2(n)\prod\limits_{i=1}^\ell\left(\id\left(\Omega(m_i)=r_i\right)\id\left(p|m_i\Rightarrow p\in P_i\right)\right),\]
et donc, en posant $\overline{1}=\left(1,\ldots,1\right)$, $\sum\limits_{\substack{m_1\cdots m_\ell\leq x\\p|m_i\Rightarrow  p\in P_i\\ m_i \text{ premier}}}1=\sum\limits_{n\leq x}c_{\overline{1}}(n)$.\\
 
Notons alors $'\boldsymbol{r}\geq 0'$ lorsque pour tout $1\leq i\leq \ell$, $r_i\geq0$, puis pour $\boldsymbol{z}:=(z_1,\ldots,z_\ell)\in \C^\ell$ :
\[\alpha_{\boldsymbol{z}}(n):=\sum\limits_{\boldsymbol{r}\geq 0}c_{\boldsymbol{r}}(n)\prod\limits_{i=1}^\ell z_i^{r_i}=\sum\limits_{\substack{m_1\cdots m_\ell =n\\
p|m_i\Rightarrow p\in P_i}}\mu^2(n)\prod\limits_{i=1}^\ell z_i^{\omega(m_i)}.\]
Remarquons que $\alpha_{\boldsymbol{z}}(n)$ est multiplicative, vu que c'est un produit de convolutions de fonctions multiplicatives.

Enfin, pour $\Re(s)>1$, la série de Dirichlet associée à $\alpha_{\boldsymbol{z}}$ est :
\[
 F(s,{\boldsymbol{z}}):=\sum\limits_{n\geq 0}\frac{\alpha_{\boldsymbol{z}}(n)}{n^s}=\prod\limits_p\left(1+\sum\limits_{\nu\geq 1}\frac{\alpha_{\boldsymbol{z}}(p^\nu)}{p^{\nu s}}\right)=\prod\limits_{i=1}^\ell\left(\prod\limits_{p\in P_i}\left(1+\frac{z_i}{p^s}\right)\right).
\]
$F$ admet alors un prolongement analytique en une fonction méromorphe sur $\C\backslash\{1\}$ en tant que fonction de $s$. L'étude de $F$ par la méthode de Selberg-Delange nous donnera une expression de $\sum\limits_{n\leq x}\alpha_{\boldsymbol{z}}(n)$, dont nous extrairons la quantité qui nous intéresse par la formule de Cauchy.

Définissons les objets dont nous aurons besoin pour appliquer la méthode de Selberg-Delange. Pour $i\in\{1,\ldots,\ell\}$, on notera $\zeta_{K_i}$ la fonction zêta de Dedekind associée à $K_i$ et $n_i$ le degré de $K_i$ sur $\Q$. On définit $\zeta_{K_i,f}(s)$ comme étant la partie de $\zeta_{K_i}$ portant sur les idéaux premiers de norme une puissance $f$-ième : $\zeta_{K_i,f}(s):= \prod\limits_{\substack{\mathfrak{p}\\ \exists p,\ N(\mathfrak{p})=p^f}}\left(1-\frac{1}{p^{fs}}\right)^{-1}$.

De plus on décompose $F(s,{\boldsymbol{z}})$ en un produit de fonction $F_i$ : $F_i(s,z_i):=\prod\limits_{p\in P_i}\left(1+\frac{z_i}{p^s}\right)$. Nous allons voir que les fonctions $F_i$ se comportent similairement à des puissances des fonctions zêta de Dedekind associées à $K_i$, pour ce faire on pose
\begin{align}
 G_i(s,z_i):&=F_i(s,z_i)\zeta_{K_i}^{-\frac{z_i}{n_i}}(s)\label{def_G}\\
 &=\prod\limits_{p\in P_i}\left(\left(1+\frac{z_i}{p^s}\right)\left(1-\frac{1}{p^s}\right)^{z_i}\right)\prod\limits_{\substack{\mathfrak{p}\\ N(\mathfrak{p})|a\upsilon_i\nonumber\\ N(\mathfrak{p})=p}}\left(1-\frac{1}{p^s}\right)^{\frac{z_i}{n_i}}\prod\limits_{\substack{f|n_i\\ f\geq 2}}\zeta_{K_i,f}(s)^{-\frac{z_i}{n_i}},\nonumber
\end{align}
et $G(s,{\boldsymbol{z}}):=\prod\limits_{i}G_i(s,z_i)$. Nous définissons aussi les fonctions $Z_i(s,z):=\left((s-1)\zeta_{K_i}(s)\right)^{\frac{z_i}{n_i}}$ et la fonction $Z(s,{\boldsymbol{z}}):=\prod\limits_{i}Z_i(s,z_i)$, qui joueront un rôle important dans la suite.

 Les fonction zêta de Dedekind disposent de régions sans zéro semblables à celle de la fonction zêta de Riemann (\cite{Lee21}), comme nous travaillons avec des extensions dont le degré est borné on peut trouver une région sans zéro commune à toutes nos fonctions zêta de Dedekind, ce que l'on résume dans la proposition suivante.
 
 \begin{prop}\label{prop_common_zero_free_region}
  Il existe des constantes $C_6$, $C_7$ et $C_8$ ne dépendant que de $C$ telles que $\zeta_{K_i}$ ne s'annule pas pour
  
  \[\sigma\geq 1-\frac{1}{C_6\log t+C_7},\ \ \ \text{et}\ |t|\geq 1,\]
  \[\sigma\geq 1-C_8,\ \ \ \text{et}\ |t|\leq 1,\]
  et cela pour tout $i\in\{1,\ldots,\ell\}$.
 \end{prop}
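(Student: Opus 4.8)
L'idée directrice est d'exploiter le fait que les corps $K_i$ susceptibles d'apparaître sont en nombre fini. En effet, comme $C_1$ et $C_5$ sont des constantes, on a déjà remarqué que $u_i d_i$ est borné pour tout $i$ ; de même $k_i' | k$ est borné, ces bornes ne dépendant que de $C_1$ et $C_5$. Par la Proposition \ref{propdegre}, les degrés $n_i = [K_i:\Q] = k_i'\varphi(u_i d_i)/\epsilon(k_i',u_i d_i)$ sont donc eux aussi bornés, et, l'entier $a$ étant fixé, il n'existe qu'un nombre fini de corps $K_i = \Q(\sqrt[k_i']{a},\xi_{u_i d_i})$ possibles. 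En particulier leurs discriminants $|d_{K_i}|$ sont majorés par une constante ne dépendant que de $C_1$ et $C_5$. Toute la preuve consiste alors à rendre uniforme sur cet ensemble fini une région sans zéro valable pour chaque corps individuellement.

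Pour la région $|t|\geq 1$, le plan est d'appliquer à chaque $\zeta_{K_i}$ le résultat de type de la Vallée-Poussin de \cite{Lee21}, qui fournit une région sans zéro de la forme $\sigma \geq 1 - c/(\ldots)$ dans laquelle les constantes ne dépendent que du degré $n_i$ et du discriminant $|d_{K_i}|$ de $K_i$. Comme ces deux quantités sont bornées uniformément, on pourra choisir des constantes $C_6$ et $C_7$ convenant simultanément à tous les indices, par exemple en retenant le pire cas sur l'ensemble fini des corps possibles.

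Pour la région $|t|\leq 1$, j'utiliserais que chaque $\zeta_{K_i}$ est une fonction méromorphe fixée, admettant un unique pôle simple en $s=1$ et ne s'annulant pas sur la droite $\Re(s)=1$ (non-annulation classique, analogue à celle de $\zeta$). Dans la boîte compacte $\{1-\delta \leq \sigma \leq 2,\ |t|\leq 1\}$, la fonction $\zeta_{K_i}$ ne possède qu'un nombre fini de zéros, tous de partie réelle strictement inférieure à $1$ ; par discrétude il existe donc $C_8^{(i)}>0$ tel que $\zeta_{K_i}$ ne s'annule pas pour $\sigma \geq 1 - C_8^{(i)}$ et $|t|\leq 1$. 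On posera alors $C_8 = \min_i C_8^{(i)}$, le minimum portant sur l'ensemble fini des corps, puis on prendra l'intersection des régions obtenues pour chaque $i$.

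Le point délicat est le traitement d'un éventuel zéro exceptionnel réel (zéro de Siegel) $\beta_i<1$ de $\zeta_{K_i}$ : c'est la seule obstruction possible, dans le cadre inconditionnel, à une vraie région sans zéro au voisinage de $s=1$. Mais c'est précisément ici qu'intervient la finitude : chaque $K_i$ étant un corps fixé, un tel $\beta_i$, s'il existe, est un réel fixé strictement inférieur à $1$, et il n'y a qu'un nombre fini de valeurs possibles. On peut donc choisir $C_8$ strictement inférieur à $\min_i(1-\beta_i)$, ce qui écarte tous ces zéros exceptionnels et garantit que la boîte $\{\sigma \geq 1 - C_8,\ |t|\leq 1\}$ est sans zéro pour chaque $i$. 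La combinaison des deux régions fournit alors la région commune annoncée, avec des constantes $C_6$, $C_7$, $C_8$ ne dépendant que de $C_1$ et $C_5$.
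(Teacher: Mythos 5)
Votre preuve est correcte et suit exactement la même idée que l'article, qui ne donne d'ailleurs aucune démonstration formelle mais justifie la proposition en une phrase : les régions sans zéro des fonctions zêta de Dedekind (\cite{Lee21}) deviennent uniformes parce que les degrés (et donc, $a$ étant fixé, l'ensemble des corps $K_i$ possibles) sont bornés en fonction de $C_1$ et $C_5$. Votre rédaction explicite proprement cet argument de finitude, le traitement du voisinage de $s=1$ par compacité et non-annulation sur $\mathfrak{Re}\, s=1$, ainsi que le cas d'un éventuel zéro exceptionnel réel (qui est de toute façon déjà couvert par l'argument de compacité, chaque $\beta_i<1$ étant fixé).
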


Afin d'appliquer la méthode de Selberg-Delange nous devons nous assurer que la fonction $G$ se comporte convenablement dans la région sans zéro décrite ci-dessus. On note $|{\boldsymbol{z}}|:=\max\limits_i\{|z_i|\}$.
\begin{prop}\label{prop_maj_G}
 Soit $A$ un réel strictement positif. Pour $|{\boldsymbol{z}}|\leq A$ et $s$ dans la région sans zéro décrite dans la proposition \ref{prop_common_zero_free_region}, $\mathfrak{Re}\ s\leq2$ on a :
 \[G(s,{\boldsymbol{z}})\ll_{A,\varepsilon} 1.\]
\end{prop}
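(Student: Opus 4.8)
The plan is to exploit the factorization $G(s,z)=\prod_{i=1}^\ell G_i(s,z_i)$. Since $\ell$ is fixed and each field $K_i=\Q(\sqrt[k_i']{a},\xi_{u_id_i})$ is built from the quantities $u_i,d_i,k_i'$, all of which are bounded in terms of $C_1$ and $C_5$, there are only finitely many possibilities for $K_i$ and its degree $n_i$; consequently it suffices to prove $G_i(s,z_i)\ll_{A,\varepsilon}1$ for a single $i$, uniformly in $|z_i|\leq A$. Throughout I use that on the region of Proposition \ref{prop_common_zero_free_region} one has $\sigma:=\mathfrak{Re}\,s\geq\sigma_0$ for some fixed $\sigma_0>\tfrac12$ (coming from $C_6,C_7,C_8$, and, if needed, from restricting to $\sigma\geq\tfrac12+\varepsilon$) together with $\sigma\leq2$. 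The strict inequality $\sigma_0>\tfrac12$ is the single feature that makes every product below converge.

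I would treat the three factors of $G_i$ in \eqref{def_G} separately. The middle factor $\prod_{N(\mathfrak{p})\mid au_id_i,\,N(\mathfrak{p})=p}(1-p^{-s})^{z_i/n_i}$ runs over the finitely (and boundedly) many rational primes dividing $au_id_i$; for each such $p$ one has $0<1-2^{-\sigma_0}\leq|1-p^{-s}|\leq2$ and $|z_i/n_i|\leq A$, so each factor $(1-p^{-s})^{z_i/n_i}=\exp(\tfrac{z_i}{n_i}\log(1-p^{-s}))$ is $\ll_A1$ and the finite product is $\ll_A1$. For the factor $\prod_{f\mid n_i,\,f\geq2}\zeta_{K_i,f}(s)^{-z_i/n_i}$, note that for $f\geq2$ one has $f\sigma\geq2\sigma_0>1$, so the Euler product $\zeta_{K_i,f}(s)=\prod_{N(\mathfrak{p})=p^f}(1-p^{-fs})^{-1}$ converges absolutely, with $\log\zeta_{K_i,f}(s)=\bo_A\!\big(\sum_p p^{-2\sigma_0}\big)=\bo_A(1)$; hence $\zeta_{K_i,f}(s)^{-z_i/n_i}\ll_A1$, and the product over the boundedly many $f\mid n_i$ is again $\ll_A1$.

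The heart of the matter is the first factor $\prod_{p\in P_i}(1+z_ip^{-s})(1-p^{-s})^{z_i}$, where the whole point of normalising $F_i$ by $\zeta_{K_i}^{-z_i/n_i}$ becomes visible. Because $K_i/\Q$ is Galois (it contains $\xi_{k_i'}$, since $k_i'\mid u_id_i$, hence is the splitting field of $(X^{k_i'}-a)\Phi_{u_id_i}$), every $p\in P_i$ splits into exactly $n_i$ prime ideals of norm $p$, so its local factor in $\zeta_{K_i}^{-z_i/n_i}$ is precisely $(1-p^{-s})^{z_i}$, matching the single squarefree contribution $(1+z_ip^{-s})$ of $F_i$; moreover, Galois-ness forces every unramified non-split prime to carry only degrees $f\geq2$, so such primes live only in the harmless factors $\zeta_{K_i,f}$. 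For $p$ large enough that $Ap^{-\sigma_0}<\tfrac12$ I would take the principal logarithm and expand
\[
\log\!\big[(1+z_ip^{-s})(1-p^{-s})^{z_i}\big]=\Big(z_ip^{-s}-\tfrac12z_i^2p^{-2s}+\bo_A(p^{-3\sigma})\Big)+\Big(-z_ip^{-s}-\tfrac12z_ip^{-2s}+\bo_A(p^{-3\sigma})\Big),
\]
so that the first-order terms cancel and the summand is $\bo_A(p^{-2\sigma})$. As $2\sigma\geq2\sigma_0>1$, the series $\sum_{p\in P_i}$ of these logarithms converges absolutely and is $\bo_A\!\big(\sum_p p^{-2\sigma_0}\big)=\bo_A(1)$. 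The finitely many remaining small primes contribute a finite product in which $|1+z_ip^{-s}|\leq1+Ap^{-\sigma_0}$ and $|(1-p^{-s})^{z_i}|\ll_A1$ are both bounded above, so this part is also $\ll_A1$.

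Putting the three estimates together gives $G_i(s,z_i)\ll_{A,\varepsilon}1$ uniformly, and multiplying over the $\ell$ values of $i$ yields $G(s,z)\ll_{A,\varepsilon}1$. The only genuine subtlety—and the step I would check most carefully—is the exact cancellation of the $p^{-s}$ terms above: it hinges on the correct matching of local Euler factors, which itself relies on $K_i$ being Galois with all residue degrees equal. Everything else is routine once $\sigma$ is known to stay strictly above $\tfrac12$.
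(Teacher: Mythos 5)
Your proof is correct and takes essentially the same route as the paper: the same splitting of $G_i$ into the finite factor over $p\mid au_id_i$, the $\zeta_{K_i,f}$ factors for $f\geq 2$, and the main product over $P_i$, with the decisive first-order cancellation of the $p^{-s}$ terms that you obtain by logarithmic expansion and the paper encodes equivalently as $b_{i,z}(p)=0$ for the Dirichlet coefficients of $\tilde{G}_i(s,z):=\prod_{p\in P_i}(1+z_ip^{-s})(1-p^{-s})^{z_i}$, bounding the higher coefficients via Cauchy's inequality on $|\xi|=\tfrac{1}{\sqrt{2}}$ instead of your direct $\bo_A(p^{-2\sigma})$ estimate, in both cases yielding absolute convergence and uniform boundedness for $\sigma\geq\tfrac12+\varepsilon$. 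Your additional verification that $K_i$ is Galois, so that the local Euler factors of $\zeta_{K_i}^{-z_i/n_i}$ at $p\in P_i$ match $(1-p^{-s})^{z_i}$ exactly, is a point the paper leaves implicit in the definition \eqref{def_G} and is a welcome precision rather than a divergence.
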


\begin{proof}
 Déjà pour tout $i$, comme tous les $\upsilon_i$ sont bornés,
 \[\prod\limits_{\substack{\mathfrak{p}\\ N(\mathfrak{p})|a\upsilon_i\\ N(\mathfrak{p})=p}}\left(1-\frac{1}{p^s}\right)^{\frac{z_i}{n_i}}\prod\limits_{\substack{f|n_i\\ f\geq 2}}\zeta_{K_i,f}(s)^{-\frac{z_i}{n_i}}\ll_A 1.\]
Définissons alors pour tout $i$, $\tilde{G}_i(s,z_i):=\prod\limits_{p\in P_i}\left(\left(1+\frac{z_i}{p^s}\right)\left(1-\frac{1}{p^s}\right)^{z_i}\right)=\sum\limits_{n\geq 1}\frac{b_{i,z_i}(n)}{n^s}$, où $b_{i,z_i}$ est la fonction multiplicative dont les valeurs sur les puissances de nombres premiers sont déterminées par l'identité :
\[1+\sum\limits_{\nu\geq 1}b_{i,z_i}(p^\nu)\xi^\nu=\left(1+\xi z_i\right)\left(1-\xi\right)^{z_i},\ |\xi|<1,\ p\in P_i,\]
et $b_{i,z_i}(p^\nu)=0$ pour $p\notin P_i$.\\
Ensuite pour $p\in P_i$, on a
\[b_{i,z_i}(p)=\left.\frac{\partial (1+\xi z_i)(1-\xi)^{z_i}}{\partial \xi}\right\vert_{\xi=0}=0.\]
Puis l'inégalité de Cauchy prise sur le cercle $|\xi|=\frac{1}{\sqrt{2}}$ implique, pour $|z_i|\leq A$, $|b_{i,z_i}(p^\nu)|\leq M 2^{\frac{\nu}{2}}$, avec $M:=\underset{|z_i|\leq A,\ |\xi|\leq \frac{1}{\sqrt{2}}}{\sup}\left[(1+\xi z_i)(1-\xi)^{z_i}\right]$.
 Ainsi, pour $\sigma>\frac{1}{2}$, comme $b_{i,z_i}(p)=0$,
 \[\sum\limits_p \sum\limits_{\nu\geq 1}\frac{|b_{i,z_i}(p^{\nu})|}{p^{\nu\sigma}}\leq 2M\sum\limits_{p}\frac{1}{p^\sigma(p^\sigma-\sqrt{2})}\leq \frac{cM}{\sigma-\frac{1}{2}},\]
 où $c$ est une constante absolue. Ainsi $G_i(s,z_i)$ est absolument convergent pour $\sigma >\frac{1}{2}$, et pour $\sigma\geq \frac{1}{2}+\varepsilon$,
 \[G_i(s,z_i)\ll_{A,\varepsilon} 1.\]
 \end{proof}

 Nous aurons aussi besoin de contrôler $\zeta_{K_i}$ à droite de la droite critique. Pour ce faire on va utiliser un lemme de Wang \cite{Wang} qui découle d'une majoration de Heath-Brown \cite{Heath-Brown88} pour $\zeta_{K_i}$ sur la droite critique.
 
 \begin{lem}[Wang]\label{lem_Wang}
  Soit $K$ une extension algébrique de degré $n$ et $\eta>0$. Alors par le principe de Phragmén-Lindelöf dans la bande $\frac{1}{2}\leq \sigma \leq 1+\varepsilon$ :
  \[\zeta_K(\sigma+it)\ll_\eta (1+|t|)^{\frac{n}{3}(1-\sigma)+\varepsilon},\ \text{pour}\ |t|\geq \eta.\]
 \end{lem}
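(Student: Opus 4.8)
The statement is a standard convexity estimate, and as the phrasing already indicates, it will follow from the Phragmén--Lindelöf principle once $\zeta_K$ is controlled on the two edges of the strip $\frac12\le\sigma\le 1+\varepsilon$. The plan is to interpolate between a trivial bound on the right edge and Heath-Brown's subconvexity bound on the critical line: the exponent $\frac{n}{3}(1-\sigma)$ is exactly the affine function of $\sigma$ taking the value $\frac{n}{6}$ at $\sigma=\frac12$ and $0$ at $\sigma=1$, so the result is the linear interpolation of these two endpoint exponents.

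First I would record the two boundary estimates. On the line $\sigma=1+\varepsilon$ the Euler product $\zeta_K(s)=\prod_{\mathfrak p}\bigl(1-N(\mathfrak p)^{-s}\bigr)^{-1}$ converges absolutely, whence $\zeta_K(1+\varepsilon+it)\ll_\varepsilon 1$, i.e. the exponent of $(1+|t|)$ equals $0$ there. On the critical line I would invoke Heath-Brown's bound \cite{Heath-Brown88}, namely $\zeta_K(\tfrac12+it)\ll_\varepsilon(1+|t|)^{n/6+\varepsilon}$; this is the genuine arithmetic input, and everything else is soft.

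Next I would dispose of the pole of $\zeta_K$ at $s=1$. Since the statement only concerns $|t|\ge\eta$, we stay uniformly away from the pole, and the clean device is to work with $g(s):=(s-1)\zeta_K(s)$, which is holomorphic in the closed strip. Because $|t|\ge\eta$, one has $|s-1|\asymp_\eta(1+|t|)$ on both edges, so the boundary bounds for $g$ read $g(\tfrac12+it)\ll(1+|t|)^{n/6+1+\varepsilon}$ and $g(1+\varepsilon+it)\ll(1+|t|)^{1}$. To apply Phragmén--Lindelöf I must also verify that $g$ is of finite order in the strip, i.e. $g(\sigma+it)\ll\exp(|t|^{A})$ for some $A$; this is classical and follows from the functional equation satisfied by $\zeta_K$ together with Stirling's estimate for the archimedean gamma factors.

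Finally I would apply the Phragmén--Lindelöf principle to $g$ on $\frac12\le\sigma\le 1+\varepsilon$: the exponent of $(1+|t|)$ in the resulting bound is the affine function $M(\sigma)$ with $M(\tfrac12)=\tfrac{n}{6}+1$ and $M(1)=1$ (absorbing the $\varepsilon$), namely $M(\sigma)=1+\frac{n}{3}(1-\sigma)$. Dividing back by $s-1$ and using $|s-1|\asymp(1+|t|)$ for $|t|\ge\eta$ lowers every exponent by $1$, turning $M(\sigma)$ into $M(\sigma)-1=\frac{n}{3}(1-\sigma)$, which is precisely the claimed bound (with the extra $+\varepsilon$ inherited from the critical-line input). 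The only points needing care are the finite-order hypothesis and the uniform separation from the pole; both are routine, so the heart of the lemma is really just Heath-Brown's estimate fed into a convexity argument.
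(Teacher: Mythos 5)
Your proposal is correct and follows precisely the route the paper intends: the paper does not reprove the lemma but cites Wang, remarking that it results from Heath-Brown's critical-line bound $\zeta_K(\frac{1}{2}+it)\ll_\varepsilon (1+|t|)^{\frac{n}{6}+\varepsilon}$ together with the Phragmén--Lindelöf principle in the strip $\frac{1}{2}\leq\sigma\leq 1+\varepsilon$, which is exactly your interpolation between that bound and the trivial bound on $\sigma=1+\varepsilon$. Your supporting details --- removing the pole via $g(s)=(s-1)\zeta_K(s)$ with $|s-1|\asymp(1+|t|)$ on the boundary lines, verifying finite order from the functional equation and Stirling, and the exponent bookkeeping $M(\sigma)-1=\frac{n}{3}(1-\sigma)$ after dividing back by $s-1$ for $|t|\geq\eta$ --- are the standard steps and are all sound.
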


 Enfin nous avons besoin du développement de Taylor de $Z$ autour de $1$.
 
 \begin{prop}\label{prop_taylor_Z}
 La fonction $Z(s,{\boldsymbol{z}})$ est holomorphe dans le disque $|s-1|<C_8$, et y admet la représentation en série de Taylor suivante :
 
\[Z(s,{\boldsymbol{z}})=\sum\limits_{j\geq 0}\frac{1}{j!}\gamma_j(\boldsymbol{z})(s-1)^j,\]
où les $\gamma_j(\boldsymbol{z})$ sont des fonctions entières de $\boldsymbol{z}$, satisfaisant, pour tout $\frac{1}{\ell}>A>0$, $\frac{1-C_8}{C_8}\geq\varepsilon>0$,
\[\frac{1}{j!}\gamma_j(\boldsymbol{z})\ll_{A,\varepsilon}(1+\varepsilon)^j\quad \quad (|{\boldsymbol{z}}|\leq A).\]
\end{prop}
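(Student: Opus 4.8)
Je procéderais en trois temps : établir l'holomorphie de $Z$ et l'existence du développement de Taylor, vérifier que les coefficients $\gamma_j(z)$ sont entiers en $z$, puis majorer ces coefficients par la formule de Cauchy. \emph{Holomorphie et développement.} Pour chaque $i$, la fonction $\zeta_{K_i}$ n'a qu'un pôle simple en $s=1$, de résidu non nul (formule du nombre de classes); ainsi $(s-1)\zeta_{K_i}(s)$ se prolonge holomorphiquement au voisinage de $s=1$ et y prend une valeur non nulle. Sous GRH, tous les zéros de $\zeta_{K_i}$ dans le demi-plan $\mathfrak{Re}\,s>0$ sont sur la droite $\mathfrak{Re}\,s=\tfrac12$; comme $C_8$ est choisi pour que le disque $|s-1|<C_8$ soit contenu dans $\{\mathfrak{Re}\,s>\tfrac12\}$, la fonction $(s-1)\zeta_{K_i}(s)$ y est holomorphe et ne s'annule pas. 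Sur ce disque simplement connexe on fixe alors une détermination holomorphe du logarithme et on pose
\[Z_i(s,z_i)=\exp\!\left(\frac{z_i}{n_i}\log\big((s-1)\zeta_{K_i}(s)\big)\right),\]
qui est holomorphe en $s$ et entière en $z_i$. Le produit $Z=\prod_i Z_i$ hérite de ces propriétés, d'où le développement $Z(s,z)=\sum_{j\ge0}\frac{1}{j!}\gamma_j(z)(s-1)^j$, de rayon de convergence au moins $C_8$.

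\emph{Caractère entier des $\gamma_j$.} Pour tout $0<\rho<C_8$, la formule intégrale de Cauchy donne
\[\frac{1}{j!}\gamma_j(z)=\frac{1}{2i\pi}\oint_{|s-1|=\rho}\frac{Z(s,z)}{(s-1)^{j+1}}\,ds.\]
L'application $(s,z)\mapsto Z(s,z)$ étant continue et, à $s$ fixé, entière en $z$, un argument de Morera appliqué à cette intégrale sur un contour compact montre que $z\mapsto\gamma_j(z)$ est entière.

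\emph{Majoration.} Comme les $u_id_i$ et les $k_i'$ restent bornés (les constantes $C_1$ et $C_5$ étant fixées), les corps $K_i$ parcourent une famille finie de degrés bornés. Sur un cercle $|s-1|=\rho<C_8$ — compact, évitant le pôle et, sous GRH, les zéros — la fonction $(s-1)\zeta_{K_i}(s)$ est continue et ne s'annule pas, donc son module est encadré par deux constantes strictement positives; en contrôlant $\mathfrak{Re}\,z_i$, $\mathfrak{Im}\,z_i$ et l'argument de $(s-1)\zeta_{K_i}(s)$, on obtient $|Z(s,z)|\ll_{A,\rho}1$ pour $|z|\le A$. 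L'inégalité de Cauchy fournit alors $\frac{1}{j!}|\gamma_j(z)|\le\rho^{-j}\sup_{|s-1|=\rho}|Z(s,z)|\ll_{A,\rho}\rho^{-j}$. En prenant $\rho$ aussi proche de $C_8$ que la région d'holomorphie le permet, la relation entre $\varepsilon$ et $C_8$ figurant dans l'énoncé assure $\rho^{-1}\le 1+\varepsilon$, d'où $\frac{1}{j!}\gamma_j(z)\ll_{A,\varepsilon}(1+\varepsilon)^j$.

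\emph{Principal obstacle.} Le point crucial est l'étape d'holomorphie : c'est là qu'intervient GRH de manière essentielle, en confinant les zéros de $\zeta_{K_i}$ à la droite critique afin que le disque $|s-1|<C_8$ soit sans zéro et que la puissance fractionnaire $((s-1)\zeta_{K_i})^{z_i/n_i}$ y admette une détermination univoque. Sans GRH, un éventuel zéro réel proche de $s=1$ romprait cette holomorphie. La difficulté technique restante, importante pour la suite de la méthode de Selberg-Delange, est de garantir l'uniformité des constantes implicites à la fois sur la famille finie des $K_i$ et sur le disque $|z|\le A$.
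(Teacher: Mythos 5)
Votre démonstration suit exactement la voie du papier, qui expédie l'énoncé en une ligne : les $\zeta_{K_i}$ ne s'annulant pas dans le disque $|s-1|<C_8$, on applique le théorème de Cauchy comme dans le Théorème 5.1 de \cite{Tenenbaum}. Vos compléments (détermination holomorphe du logarithme sur le disque simplement connexe et sans zéro, définition $Z_i=\exp\bigl(\frac{z_i}{n_i}\log((s-1)\zeta_{K_i}(s))\bigr)$, caractère entier des $\gamma_j$ par Cauchy--Morera, uniformité des constantes grâce au fait que les $u_id_i$ et donc les degrés $n_i$ restent bornés) sont précisément les détails que le papier sous-entend. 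Notez seulement que la non-annulation sur ce disque découle déjà, sans GRH, de la région sans zéro de la proposition \ref{prop_common_zero_free_region} (le disque est contenu dans $\{\sigma>1-C_8,\ |t|<1\}$), de sorte que votre détour par la droite critique — qui exige de surcroît $C_8\le\frac{1}{2}$, hypothèse que vous faites implicitement — n'est pas nécessaire pour cette étape.

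En revanche, votre dernière ligne inverse le sens de la contrainte telle qu'elle est écrite. La condition $\varepsilon\le\frac{1-C_8}{C_8}$ équivaut à $1+\varepsilon\le C_8^{-1}$ ; tout rayon admissible $\rho<C_8$ vérifie donc $\rho^{-1}>C_8^{-1}\ge 1+\varepsilon$, c'est-à-dire l'opposé de votre affirmation \lquote $\rho^{-1}\le 1+\varepsilon$\rquote. L'inégalité de Cauchy fournit $\frac{1}{j!}|\gamma_j(z)|\ll\rho^{-j}$ avec $\rho^{-j}\ge(1+\varepsilon)^j$, ce qui ne livre pas la majoration annoncée pour $\varepsilon$ petit. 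Le choix $\rho=(1+\varepsilon)^{-1}$ n'entre dans le disque d'holomorphie que si $\varepsilon\ge\frac{1-C_8}{C_8}$ : c'est la plage correcte, celle qui correspond au Théorème 5.1 de Tenenbaum où le rayon vaut $1$ (d'où, là-bas, tout $\varepsilon>0$), et c'est du reste tout ce qu'exploite la suite de l'argument — par exemple \eqref{maj_F_Hankel} ne requiert qu'une croissance géométrique quelconque des coefficients. L'inégalité figurant dans l'énoncé semble donc être une coquille du papier que vous avez corrigée tacitement ; votre preuve est valable telle quelle sur la plage $\varepsilon\ge\frac{1-C_8}{C_8}$, mais la déduction finale, telle qu'écrite, est à rebours.
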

\begin{proof}
 Le résultat est immédiat comme les $\zeta_{K_i}$ n'ont pas de zéro dans ce cercle, il suffit alors d'appliquer le théorème de Cauchy (Théorème II.5.1, \cite{Tenenbaum_fr}).
\end{proof}
 
 On peut alors appliquer la méthode de Selberg-Delange et obtenir l'estimation escomptée.
 
 \begin{prop}\label{prop_resultat_Selberg_Delange}
  Par la méthode de Selberg-Delange, on a
  \[\sum\limits_{n\leq x}\alpha_{\boldsymbol{z}}(n)=x(\log x)^{\sum\limits_{i}\frac{z_i}{n_i}-1}\left(\frac{G(1,{\boldsymbol{z}})\gamma_0(\boldsymbol{z})}{\Gamma(\sum\limits_{i}\frac{z_i}{n_i})}+\bo_{C}\left(\frac{1}{\log x}\right)\right)+\bo\left(x e^{-c_6\sqrt{\log x}}\right).\]
 \end{prop}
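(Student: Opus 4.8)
The plan is to read $\sum_{n\le x}\alpha_z(n)$ as a direct instance of the Selberg--Delange method in the form presented by Tenenbaum \cite{Tenenbaum}, supplying as input the analytic facts assembled in the preceding propositions. Set $\kappa:=\sum_i z_i/n_i$. Everything hinges on the factorisation
\[F(s,z)=G(s,z)\prod_i\zeta_{K_i}(s)^{z_i/n_i}=G(s,z)\,Z(s,z)\,(s-1)^{-\kappa},\]
read off from the definition \eqref{def_G} of $G$ together with $Z(s,z)=\prod_i\bigl((s-1)\zeta_{K_i}(s)\bigr)^{z_i/n_i}$. Writing $\mathcal{G}(s,z):=G(s,z)Z(s,z)$ exhibits the Dirichlet series of $\alpha_z$ as $\mathcal{G}(s,z)(s-1)^{-\kappa}$, a $(-\kappa)$-th power of $(s-1)$ times a factor that is holomorphic near $s=1$: indeed Proposition \ref{prop_maj_G} gives $G(s,z)\ll_{A,\varepsilon}1$ throughout the region, and Proposition \ref{prop_taylor_Z} (where GRH is used) supplies the Taylor expansion of $Z$ about $s=1$ with $Z(1,z)=\gamma_0(z)$, so $\mathcal{G}(1,z)=G(1,z)\gamma_0(z)$.

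First I would invoke a truncated Perron formula to write $\sum_{n\le x}\alpha_z(n)=\frac{1}{2\pi i}\int_{c-iT}^{c+iT}F(s,z)\frac{x^s}{s}\,ds+(\text{error})$ for a suitable $c>1$ and a height $T$ equal to a small power of $x$, and then deform the segment onto a truncated Hankel contour hugging the cut that terminates at $s=1$, kept inside the common zero-free region of Proposition \ref{prop_common_zero_free_region}. On the two returning vertical sides, where $1-\sigma\asymp 1/\log|t|$, Wang's bound (Lemma \ref{lem_Wang}) gives $\zeta_{K_i}(\sigma+it)\ll(1+|t|)^{\bo(1/\log|t|)}=\bo(1)$, whence $Z$, and with it $\mathcal{G}=GZ$, remains bounded there; together with the factor $x^{\sigma}$ this pushes the contribution of these sides, and of the horizontal pieces, into the error term $\bo(xe^{-c_6\sqrt{\log x}})$.

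The main term comes from the immediate vicinity of $s=1$. There I would expand $\mathcal{G}(s,z)=\sum_{j\ge0}\frac{1}{j!}\mu_j(z)(s-1)^j$, the Cauchy product of $G$ with the Taylor series of $Z$ from Proposition \ref{prop_taylor_Z}, and integrate termwise. The classical Hankel identity $\frac{1}{2\pi i}\int_{\mathcal{H}}(s-1)^{j-\kappa}\frac{x^s}{s}\,ds=\frac{x(\log x)^{\kappa-1-j}}{\Gamma(\kappa-j)}\bigl(1+\bo(1/\log x)\bigr)$ turns the $j$-th term into a multiple of $x(\log x)^{\kappa-1-j}$. Keeping only $j=0$ yields the announced leading term $x(\log x)^{\kappa-1}\frac{G(1,z)\gamma_0(z)}{\Gamma(\kappa)}$, while the terms with $j\ge1$ are $\bo\bigl(x(\log x)^{\kappa-2}\bigr)$ and account for the factor $(\log x)^{\kappa-1}\,\bo_{C_1,C_5}(1/\log x)$ in the statement.

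The hard part, and the only genuinely technical step, will be the uniformity: every estimate must hold uniformly for $|z|\le A$ with implied constants depending only on $C_1$ and $C_5$, equivalently only on the bounded degrees $n_i$ and conductors $u_id_i$ of the fields $K_i$. This is exactly what the preceding results were tailored to guarantee --- a zero-free region common to all the $\zeta_{K_i}$ (Proposition \ref{prop_common_zero_free_region}), the uniform bound on $G$ (Proposition \ref{prop_maj_G}), the coefficient estimates $\frac{1}{j!}\gamma_j(z)\ll_{A,\varepsilon}(1+\varepsilon)^j$ (Proposition \ref{prop_taylor_Z}), and the explicit degree dependence in Wang's lemma. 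With these in hand one tunes the radius of the contour at $s=1$, the truncation height $T$, and the opening of the Hankel contour so that the polynomial growth of $\mathcal{G}$ permitted by Lemma \ref{lem_Wang} is beaten by $x^{\sigma-1}$ on the returning sides; this is the standard Selberg--Delange optimisation of \cite[Ch.~II.5]{Tenenbaum}, and carrying it out delivers the stated estimate.
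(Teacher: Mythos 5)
Your proposal follows the paper's analytic skeleton faithfully --- the factorisation $F(s,z)=G(s,z)Z(s,z)(s-1)^{-\sum_i z_i/n_i}$, the common zero-free region of Proposition \ref{prop_common_zero_free_region}, Wang's bound (Lemma \ref{lem_Wang}) on the returning sides, and the truncated Hankel contour with the Taylor data of Proposition \ref{prop_taylor_Z} --- but it diverges at the Perron step, and this is a genuine structural difference. The paper does \emph{not} apply a truncated Perron formula to $A(x)=\sum_{n\le x}\alpha_z(n)$: it applies the smoothed formula $\int_0^xA(t)\,dt=\frac{1}{2i\pi}\int_{\Re s=\kappa}F(s,z)\,x^{s+1}\frac{ds}{s(s+1)}$ with $\kappa=1+1/\log x$, whose kernel converges absolutely, evaluates the Hankel piece $\Phi(x)$, and then recovers $A(x)$ by numerical differentiation: $\int_x^{x+h}A(t)\,dt=\Phi(x+h)-\Phi(x)+\bo\bigl(x^2e^{-c_3\sqrt{\log x}}\bigr)$, combined with $\Phi''(x)\ll(\log x)^A$ (coming from the bound $F(s,z)\ll|s-1|^{-A}$ on $\Gamma$) and the choice $h=xe^{-c_5\sqrt{\log x}}$. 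The smoothing is precisely what allows the paper to avoid any control of $\alpha_z(n)$ near $n=x$. Your direct route is also viable and standard, but the \emph{error} you leave unnamed in the truncated Perron formula is then a real obligation: it is of the shape $\sum_{x/2<n<2x}|\alpha_z(n)|\min\bigl(1,\tfrac{x}{T|x-n|}\bigr)+x^{\kappa}T^{-1}\sum_n|\alpha_z(n)|n^{-\kappa}$, and since $\alpha_z$ is complex-valued you must majorise it (e.g.\ by $\mu^2(n)(\ell A)^{\omega(n)}$-type divisor bounds) to check that this fits inside $\bo\bigl(xe^{-c\sqrt{\log x}}\bigr)$; this step should be stated, not elided.

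There is also one concrete quantitative slip: the choice of $T$ \emph{equal to a small power of $x$} is not admissible here. Proposition \ref{prop_common_zero_free_region} provides only a classical zero-free region $\sigma\ge1-1/(C_6\log t+C_7)$, so on the deformed contour at height $t$ the saving over the trivial bound is $x^{-1/(C_6\log t+C_7)}$; with $T=x^{\theta}$ this degenerates near the top of the contour to the constant $e^{-1/(C_6\theta)}$, and the contribution of the curved part is then of order $x$, which swamps not only the error term but the main term $x(\log x)^{\sum_i z_i/n_i-1}$ itself. The admissible choice is $T$ of size about $e^{c\sqrt{\log x}}$, balancing the truncation loss $x(\log x)^{\bo(1)}/T$ against the contour loss $xe^{-c\log x/\log T}$; this balance is exactly the origin of the exponent $\sqrt{\log x}$ in the statement (and the paper's displayed $T=x^{c_1\sqrt{\log x}}$ is best read as a typographical slip for this choice). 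You do remark at the end that the parameters get tuned, so the error is repairable, but as written the stated choice of $T$ would break the proof. A last minor point: rather than integrating the full Taylor series of $GZ$ termwise over $\Gamma$ --- which needs care where $|s-1|$ reaches the boundary $C_8$ of the disc in Proposition \ref{prop_taylor_Z} --- the paper uses only $G(s,z)Z(s,z)=\mu_0(z)+\bo(|s-1|)$, which already yields the stated relative error $\bo(1/\log x)$ and is the cleaner way to finish.
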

\begin{proof}
 
Posons $A(x,\boldsymbol{z}):=\sum\limits_{n\leq x}\alpha_{\boldsymbol{z}}(n)$. Alors par la formule de Perron on a (Tenenbaum \cite{Tenenbaum_fr}, Théorème II.2.5) :
\[\int_0^x A(t,\boldsymbol{z})dt=\frac{1}{2i\pi}\int_{\kappa -i\infty}^{\kappa +i\infty}F(s,\boldsymbol{z})x^{s+1}\frac{ds}{s(s+1)},\]
avec $\kappa:= 1+\frac{1}{\log x}$.

Définissons alors le domaine $\mathcal{D}$ comme la région sans zéro commune à nos fonctions zêta de Dedekind et $\tilde{\mathcal{D}}:=\mathcal{D}\backslash [\frac{1}{2}+\varepsilon,1]$.

Ainsi, pour $s=\sigma+it\in\mathcal{D}$, $|s-1|\gg 1$, $|{\boldsymbol{z}}|\leq A$, $A<\frac{1}{\ell}$, par la proposition \ref{prop_maj_G} et le lemme \ref{lem_Wang},
\[F(s,{\boldsymbol{z}})\ll\prod\limits_{i=1}^\ell \left((1+|t|)^{\frac{n_i}{3}(1-\sigma)+\frac{1}{\log x}}\right)^{\frac{A}{n_i}}\ll(1+|t|)^{\frac{\ell A}{3}(1-\sigma)+\frac{\ell A}{\log x}}.\]
 Alors, avec $T>1$ un paramètre à définir, les contributions des demi-droites verticales $[\kappa \pm iT,\kappa\pm i\infty[$ sont :
 \begin{align*}
  \int_{\kappa +iT}^{\kappa +i\infty}F(s,{\boldsymbol{z}})x^{s+1}\frac{ds}{s(s+1)}&\ll x^2\int_{T}^{+\infty}t^{\frac{4\ell A}{3\log x}-2}dt
 \end{align*}
et donc,
\[ \int_{\kappa +iT}^{\kappa +i\infty}F(s,{\boldsymbol{z}})x^{s+1}\frac{ds}{s(s+1)}\ll x^2 T^{\frac{4\ell A}{3\log x}-1},\]
et il en va de même pour l'autre demi-droite.

Posons $C_9(T):=\frac{1}{C_6\log T+C_7}$.

Il reste à évaluer  $\int_{\kappa -iT}^{\kappa +iT}F(s,{\boldsymbol{z}})x^{s+1}\frac{ds}{s(s+1)}$, pour ce faire on va déformer le chemin d'intégration comme suit : 
\begin{itemize}
 \item [$\bullet$] $\mathcal{L}_1$ le segment $[\kappa -iT,1-C_9(T) -iT]$;
 \item [$\bullet$] $\mathcal{L}_2$ la courbe $\sigma(t)=1-\frac{1}{C_6\log t+C_7}$, pour $t\in[1-C_9(T) -iT,1-C_8]$;
 \item [$\bullet$] un contour de Hankel tronqué $\Gamma$, entourant $1$ de rayon $\frac{1}{\log x}$ et de partie rectiligne joignant $1-\frac{1}{\log x}$ à $1-C_8$;
 \item [$\bullet$] on complète le contour par symétrie par rapport à l'axe réel.  
\end{itemize}

\begin{tikzpicture}
\usetikzlibrary{decorations.markings}
\def\rayonpoint{0.04}
\def\longx{8}
\def\longy{11}
\def\decalflecheshor{0.4}
\def\radun{1}
\def\angleun{170}
\def\decaly{0.5}
\def\poshorun{4}
\def\poshorleftintpath{1.5}
\def\poshorrightintpath{6}
\def\posvertopintpath{10}

\def\posvertdebutzerofree{13/2}
\def\spacel{0.2}

\draw[semithick,->] ({0},{\longy/2}) -- ({\longx},{\longy/2});
\draw[semithick,->] ({\decaly},0) -- ({\decaly},\longy);
\draw [fill] ({\poshorun},{\longy/2}) circle [radius=\rayonpoint] node[scale=0.7,anchor=north] {$1$};
\draw [fill] ({\decaly},{\posvertopintpath}) circle [radius=\rayonpoint] node[scale=0.7,anchor=east] {$iT$};
\draw [fill] ({\decaly},{\longy-\posvertopintpath}) circle [radius=\rayonpoint] node[scale=0.7,anchor=east] {$-iT$};
\draw [thick,domain=\angleun:-\angleun] plot ({\poshorun+\radun*cos(\x)}, {\longy/2+\radun*sin(\x)});
\draw ({\decaly},{\longy/2}) node[anchor=north east] {0};
\draw [fill] ({\poshorleftintpath},{\longy/2}) circle [radius=\rayonpoint];
\draw [fill] ({\poshorrightintpath},{\longy/2}) circle [radius=\rayonpoint] node[scale=0.7,anchor=north west] {$\kappa=1+\frac{1}{\log x}$};
\draw ({\poshorun+\radun*cos(\angleun)},{\longy/2+\radun*sin(\angleun)}) node[scale=0.8,anchor=south east] {$\Gamma$};
\draw ({\poshorleftintpath},{\longy/2-\radun*sin(\angleun)}) node[scale=0.7,anchor=north west] {$1-C_8$};
\draw[thick,->|] ({\poshorleftintpath-\decalflecheshor},{(\posvertopintpath+\longy/2+\radun*sin(\angleun))/2+\spacel})--({\poshorleftintpath-\decalflecheshor},\posvertopintpath);
\draw[thick,|<-] ({\poshorleftintpath-\decalflecheshor},{\longy/2+\radun*sin(\angleun)})--({\poshorleftintpath-\decalflecheshor},{(\posvertopintpath+\longy/2+\radun*sin(\angleun))/2-\spacel});
\draw ({\poshorleftintpath-\decalflecheshor},{(\posvertopintpath+\longy/2+\radun*sin(\angleun))/2}) node[scale=0.8] {$\mathcal{L}_{3}$};
\draw[thick,|<-] ({\poshorleftintpath-\decalflecheshor},{\longy/2-\radun*sin(\angleun)})--({\poshorleftintpath-\decalflecheshor},{(\longy-\posvertopintpath+\longy/2-\radun*sin(\angleun))/2+\spacel});
\draw[thick,|<-] ({\poshorleftintpath-\decalflecheshor},{\longy-\posvertopintpath})--({\poshorleftintpath-\decalflecheshor},{(\longy-\posvertopintpath+\longy/2-\radun*sin(\angleun))/2-\spacel});
\draw ({\poshorleftintpath-\decalflecheshor},{(\longy-\posvertopintpath+\longy/2-\radun*sin(\angleun))/2}) node[scale=0.8] {$\mathcal{L}_{2}$};
\draw ({(4*(1.7*(1-1/(ln(10-\posvertdebutzerofree+exp(1/(1-0.5/1.7))))))-4*0.5+\poshorleftintpath+\poshorrightintpath)/2},{\posvertopintpath}) node[scale=0.8, anchor=south] {$\mathcal{L}_{4}$};
\draw ({(4*(1.7*(1-1/(ln(10-\posvertdebutzerofree+exp(1/(1-0.5/1.7))))))-4*0.5+\poshorleftintpath+\poshorrightintpath)/2},{\longy-\posvertopintpath}) node[scale=0.8, anchor=north] {$\mathcal{L}_{1}$};

\begin{scope}[thick,decoration={
    markings,
    mark=at position 0.5 with {\arrow{>}}}
    ] 
    \draw[postaction={decorate}] ({\poshorun+\radun*cos(\angleun)},{\longy/2+\radun*sin(\angleun)})--({\poshorleftintpath},{\longy/2+\radun*sin(\angleun)});
    
    \draw[postaction={decorate}] ({\poshorleftintpath},{\longy/2+\radun*sin(\angleun)})--({\poshorleftintpath},\posvertdebutzerofree);
    
    \draw[postaction={decorate}] ({4*(1.7*(1-1/(ln(10-\posvertdebutzerofree+exp(1/(1-0.5/1.7))))))-4*0.5+\poshorleftintpath},\posvertopintpath)--({\poshorrightintpath},\posvertopintpath);
    \draw[postaction={decorate}] ({\poshorrightintpath},{\longy/2})--({\poshorrightintpath},\posvertopintpath);
    \draw[postaction={decorate}] ({\poshorrightintpath},{\longy-\posvertopintpath})--({\poshorrightintpath},{\longy/2});
    \draw[postaction={decorate}] ({\poshorrightintpath},{\longy-\posvertopintpath})--({4*(1.7*(1-1/(ln(10-\posvertdebutzerofree+exp(1/(1-0.5/1.7))))))-4*0.5+\poshorleftintpath},{\longy-\posvertopintpath});
    \draw[postaction={decorate}] ({\poshorleftintpath},{\longy-\posvertdebutzerofree})--({\poshorleftintpath},{\longy/2-\radun*sin(\angleun)});
    \draw[postaction={decorate}] ({\poshorleftintpath},{\longy/2-\radun*sin(\angleun)})--({\poshorun+\radun*cos(\angleun)},{\longy/2-\radun*sin(\angleun)});
    \draw[postaction={decorate}] [thick,domain=0.5:{1.7*(1-1/(ln(10-\posvertdebutzerofree+exp(1/(1-0.5/1.7)))))}] plot ({4*\x-4*0.5+\poshorleftintpath}, {\posvertdebutzerofree+exp(1/(1-\x/1.7))-exp(1/(1-0.5/1.7))});
    \draw[postaction={decorate}] [thick,domain={1.7*(1-1/(ln(10-\posvertdebutzerofree+exp(1/(1-0.5/1.7)))))}:0.5] plot ({4*\x-4*0.5+\poshorleftintpath}, {-(\posvertdebutzerofree+exp(1/(1-\x/1.7))-exp(1/(1-0.5/1.7)))+11});
\end{scope}

\end{tikzpicture}
 
Pour le segment $\mathcal{L}_1$ on a la majoration :
 \begin{align*}
  \int_{\kappa -iT}^{1-C_9(T) -iT}F(s,{\boldsymbol{z}})x^{s+1}\frac{ds}{s(s+1)}&\ll  x^2T^{\ell A\left(\frac{C_9(T)}{3}+\frac{1}{\log x}\right)-2},
 \end{align*}
 et il vient la même majoration pour $\mathcal{L}_4$.
 
Puis sur la courbe $\mathcal{L}_2$ :
 \begin{align*}
  \int_{1-C_9(T) -iT}^{1-C_8 }F(s,{\boldsymbol{z}})x^{s+1}\frac{ds}{s(s+1)}&\ll x^{2-C_9(T)}\left(\int_{T}^{+\infty}t^{\ell A\left(\frac{C_9(T)}{3}+\frac{1}{\log x}\right)-2}dt+\bo(1)\right)\ll  x^{2-C_9(T)},
 \end{align*}
car $A<\frac{1}{\ell}$, et il vient la même majoration pour $\mathcal{L}_3$.

Posons alors 
\[\Phi(x):=\frac{1}{2\pi i}\int_\Gamma F(s,{\boldsymbol{z}}) x^{s+1}\frac{ds}{s(s+1)}.\]
On a donc, en prenant $T=x^{c_1\sqrt{\log x}}$,
\begin{equation}\label{int_A_0_x}\int_0^x A(t,\boldsymbol{z})dt=\Phi(x)+\bo\left( x^2 e^{-c_2\sqrt{\log x}}\right).\end{equation}

Pour $s\in\mathcal{D}$, on a
\[F(s,{\boldsymbol{z}})=s G(s,{\boldsymbol{z}})Z(s,{\boldsymbol{z}})(s-1)^{-\sum\limits_i \frac{z_i}{n_i}},\]
et donc par la proposition \ref{prop_taylor_Z}, et comme $|s-1|<1$ pour $s$ dans $\Gamma$,
\begin{equation}\label{maj_F_Hankel}F(s,{\boldsymbol{z}})\ll|s-1|^{-A}, \quad (s\in\Gamma).\end{equation}

Notons, dans le domaine d'holomorphie de $G(s,{\boldsymbol{z}})$,
\[G^{(k)}(s,\boldsymbol{z}):=\frac{\partial^k}{\partial s^k}G(s,{\boldsymbol{z}}),\]
on a alors, pour $s\in\Gamma$,
\[G(s,{\boldsymbol{z}})Z(s,{\boldsymbol{z}})=\sum\limits_{k\geq 0} \mu_k(\boldsymbol{z})(s-1)^k,\]
avec $\mu_k(\boldsymbol{z}):=\sum\limits_{h+j=k}\frac{1}{h!j!}G^{(h)}(1,\boldsymbol{z})\gamma_j(\boldsymbol{z}),$ et donc
\[G(s,{\boldsymbol{z}})Z(s,{\boldsymbol{z}})= \mu_0(\boldsymbol{z})+\bo\left(|s-1|\right).\]

Alors on a, (pour le détail voir \cite{Tenenbaum_fr}, p241-242),
\[\Phi'(x)=x(\log x)^{\sum\limits_{i}\frac{z_i}{n_i}-1}\left(\frac{\mu_0(\boldsymbol{z})}{\Gamma(\sum\limits_{i}\frac{z_i}{n_i})}+\bo\left(\frac{1}{\log x}\right)\right).\]

Remarquons également que l'on a :
\[\Phi'(x)=\frac{1}{2\pi i}\int_\Gamma F(s,\boldsymbol{z}) x^{s}\frac{ds}{s},\quad \quad\quad \quad \Phi''(x)=\frac{1}{2\pi i}\int_\Gamma F(s,\boldsymbol{z}) x^{s-1}ds.\]

Il reste à montrer que $\Phi'(x)$ est une bonne approximation de $A(x,\boldsymbol{z})$. Pour ce faire on procède comme dans \cite{Tenenbaum_fr} p243. Soit $0<h<\frac{x}{2}$, on a alors, par \eqref{int_A_0_x},
\[\int_x^{x+h}A(t,\boldsymbol{z})dt=\Phi(x+h)-\Phi(x)+\bo\left( x^2 e^{-c_3\sqrt{\log x}}\right).\]
Puis, en utilisant la formule de Taylor,
\begin{align*}
 \Phi(x+h)-\Phi(x)&=h\Phi'(x+h)-h^2\int_0^1 t\Phi''(x+th)dt\\
 &=h \Phi'(x)+h^2\int_0^1 (1-t)\Phi''(x+th)dt.
\end{align*}
Puis par \eqref{maj_F_Hankel}, on a $\Phi''(x)\ll (\log x)^A$, ainsi $\Phi(x+h)-\Phi(x)=h \Phi'(x)+\bo\left(h^2(\log x)^A\right)$, et donc
\begin{align*} 
A(x,\boldsymbol{z})&=\frac{1}{h}\int_{x}^{x+h}A(t,\boldsymbol{z})dt+\frac{1}{h}\int_{x}^{x+h}(A(x,\boldsymbol{z})-A(t,\boldsymbol{z}))dt\\
&=\Phi'(x)+\bo\left(x^2 e^{-c_3\sqrt{\log x}}+h(\log x)^A+\frac{1}{h}\int_x^{x+h}(A(t,\boldsymbol{z})-A(x,\boldsymbol{z}))dt\right).
\end{align*}
Or
\begin{align*}
 \int_x^{x+h}(A(t,\boldsymbol{z})-A(x,\boldsymbol{z}))dt\leq \int_x^{x+h}A(t,\boldsymbol{z})dt-\int_{x-h}^{x}A(t,\boldsymbol{z})dt=\bo\left(hx^2 e^{-c_4\sqrt{\log x}}+h^2(\log x)^A\right).
\end{align*}
Ainsi on a finalement, en prenant $h=xe^{-c_5\sqrt{\log x}}$

\begin{equation}\label{A_fin_Selberg_Delange}
 A(x,\boldsymbol{z})=x(\log x)^{\sum\limits_{i}\frac{z_i}{n_i}-1}\left(\frac{\mu_0(\boldsymbol{z})}{\Gamma(\sum\limits_{i}\frac{z_i}{n_i})}+\bo\left(\frac{1}{\log x}\right)\right)+\bo\left(x e^{-c_6\sqrt{\log x}}\right).
\end{equation}
\end{proof}

Il ne reste plus qu'à appliquer le théorème de Cauchy à ce résultat pour obtenir l'estimation attendue, similairement au Théorème II.6.3 de \cite{Tenenbaum_fr}.

\begin{proof}[Démonstration du Théorème \ref{thm_finale_Selberg_Delange}]

On a

\begin{align*}
 G(1,{\boldsymbol{z}})\gamma_0(\boldsymbol{z})=\prod\limits_i \left(G_i(1,z_i)Z_i(1,z_i)\right)\\
\end{align*}
Notons alors $\Lambda_i(z_i):=G_i(1,z_i)Z_i(1,z_i)$, i.e,
\begin{equation}\label{def_Lambda_i}
 \Lambda_i(z_i)=\prod\limits_{p\in P_i}\left(\left(1+\frac{z_i}{p}\right)\left(1-\frac{1}{p}\right)^{z_i}\right)\prod\limits_{\substack{\mathfrak{p}\in O_{K_i}\\ N(\mathfrak{p})|a\upsilon_i\\ N(\mathfrak{p})\ \text{premier}}}\left(1-\frac{1}{N(\mathfrak{p})}\right)^{\frac{z_i}{n_i}}\beta_{K_i}^{\frac{z_i}{n_i}},
\end{equation}
où $\beta_{K_i}:=\lim\limits_{s\rightarrow 1}(s-1)\prod\limits_{\substack{\mathfrak{p}\in O_{K_i}\\ N(\mathfrak{p})\ \text{premier}}}\left(1-\frac{1}{N(\mathfrak{p})}\right)$, qui est bien défini et non nul car $\zeta_{K_i}$ a un pôle simple en $1$.

Soit $\Omega$ le produit de $\ell$ cercles autour de l'origine de rayon $r$, $\overline{1}:=(1,\ldots,1)$. On a donc par la proposition \ref{prop_resultat_Selberg_Delange},
\begin{align*}
\sum\limits_{n\leq x}c_{\overline{1}}(n)&=\sum\limits_{n\leq x}\frac{1}{(2i\pi)^\ell}\oiint_\Omega \frac{\alpha_{\boldsymbol{z}}(n)}{\prod\limits_i z_i^2}\prod\limits_i dz_i\\
&=\frac{1}{(2i\pi)^\ell}\oiint_\Omega A(x,\boldsymbol{z})\prod\limits_i \frac{dz_i}{z_i^2}\\
&=S+\bo(E).
\end{align*}
où $S:=\frac{x}{(2i\pi)^\ell\log x}\oiint_\Omega \frac{(\log x)^{\sum\limits_i\frac{z_i}{n_i}}}{\Gamma\left(\sum\limits_i\frac{z_i}{n_i}\right)}\prod\limits_i \Lambda_i(z_i) \frac{dz_i}{z_i^2}$,\\ et $E:=\frac{1}{(2\pi)^\ell}\oiint_\Omega \left(x(\log x)^{\ell r-2}+x e^{-c_6\sqrt{\log x}}\right)\prod\limits_i \frac{dz_i}{|z_i|^2}$.

On a alors, en prenant $r=\frac{1}{\log\log x}$,
\begin{align*}
 E\ll \frac{x (\log\log x)^\ell}{ \log^2 x}.
\end{align*}

Ensuite, on utilise la formule $\Gamma(u+1)=u\Gamma(u)$ avec $u=\sum\limits_j\frac{z_j}{n_j}$ afin de s'écarter de la singularité de $\Gamma$ en $0$. De plus $\Lambda_i(0)=1$, et ainsi en appliquant la formule de Cauchy :

\begin{align*}
 S&=\frac{x}{(2i\pi)^\ell \log x}\sum\limits_i \frac{1}{n_i}\oiint_\Omega \frac{(\log x)^{\sum\limits_j\frac{z_j}{n_j}}}{\Gamma\left(\sum\limits_j\frac{z_j}{n_j}+1\right)} \Lambda_i(z_i)\frac{dz_i}{z_i}\prod\limits_{j\neq i} \Lambda_j(z_j)\frac{dz_j}{z_j^2}\\
 &=\frac{x}{(2i\pi)^{\ell-1}\log x}\sum\limits_i \frac{1}{n_i}\oiint_{\Omega'} \frac{(\log x)^{\sum\limits_{j\neq i}\frac{z_j}{n_j}}}{\Gamma\left(\sum\limits_{j\neq i}\frac{z_j}{n_j}+1\right)} \prod\limits_{j\neq i} \Lambda_j(z_j)\frac{dz_j}{z_j^2},\\
\end{align*}

où $\Omega'$ est le produit de $\ell-1$ cercles autour de l'origine de rayon $r$.

On utilise la représentation de $\frac{1}{\Gamma}$ avec le contour de Hankel $\mathscr{H}$ (\cite{Tenenbaum_fr}, II.0.17), 
\[\frac{1}{\Gamma(z)}=\frac{1}{2i\pi}\int_\mathscr{H}s^{-z}e^sds,\]
où $\mathscr{H}$ est composé d'un cercle centré en $0$ de rayon $r$ privé du point $-r$ et de la demi-droite $]-\infty,-r]$ parcourue dans les deux sens.

On inverse les intégrales sur les $z_j$ et sur $\xi$ en notant que $\Lambda_j(z_j)$ est borné pour $z_j$ borné, puis en appliquant la formule de Cauchy on a :

\begin{align*}
 S&=\frac{x}{2i\pi\log x}\sum\limits_i \frac{1}{n_i}\int_\mathscr{H}\prod\limits_{j\neq i}\left(\frac{1}{2i\pi}\int_{|z_j|=r} \left(\frac{\log x}{\xi}\right)^{\frac{z_j}{n_j}} \Lambda_j(z_j)\frac{dz_j}{z_j^2}\right)\frac{e^\xi}{\xi}d\xi\\
 &=\frac{x}{2i\pi\log x}\sum\limits_i \frac{1}{n_i}\int_\mathscr{H}\prod\limits_{j\neq i}\frac{\partial}{\partial z_j}\left( \left(\frac{\log x}{\xi}\right)^{\frac{z_j}{n_j}} \Lambda_j(z_j)\right)_{|z_j=0}\frac{e^\xi}{\xi}d\xi.
\end{align*}

Puis, comme $\Lambda_i(0)=1$, 
\[\frac{\partial}{\partial z_j}\left( \left(\frac{\log x}{\xi}\right)^{\frac{z_j}{n_j}} \Lambda_j(z_j)\right)_{|z_j=0}=\frac{\log\log x -\log\xi}{n_j}+\frac{\partial}{\partial z_j}\Lambda_j(0).\]

Pour la dérivée en zéro de $\Lambda_j(z_j)$ on calcule sa dérivée logarithmique :
\begin{align*}
 \frac{\partial}{\partial z_j}\Lambda_j(0)&=\frac{\Lambda'_j(0)}{\Lambda_j(0)}=\sum\limits_{p\in P_j}\left(\frac{1}{p}+\log\left(1-\frac{1}{p}\right)\right)+\sum\limits_{\substack{\mathfrak{p}\in O_{K_i}\\ N(\mathfrak{p})|a\upsilon_i\\ N(\mathfrak{p})\ \text{premier}}}\frac{1}{n_j}\log\left(1-\frac{1}{N(\mathfrak{p})}\right)+\frac{1}{n_j}\log(\beta_{K_j})\\
 &=\bo(\upsilon_i).
\end{align*}

Ainsi

\begin{align*}
S&=\frac{x}{2i\pi\log x}\sum\limits_i \frac{1}{n_i}\int_\mathscr{H}\prod\limits_{j\neq i}\left(\frac{\log\log x-\log\xi}{n_j}+\bo(1)\right)\frac{e^\xi}{\xi}d\xi\\
&=\frac{\ell x}{\log x \prod\limits_i n_i}(\log\log x)^{\ell-1}+\bo\left(\frac{x}{\log x}(\log\log x)^{\ell-2}\int_\mathscr{H}\frac{e^{\Re\xi}}{|\xi|^{1-\varepsilon}}d\xi\right).
\end{align*}
 
 On prend $1$ pour le rayon du cercle autour de l'origine de $\mathscr{H}$, on obtient $\int_\mathscr{H}\frac{e^{\Re\xi}}{|\xi|^{1-\varepsilon}}d\xi\ll 1$, et donc
 
 \begin{equation}\label{resultat_final_Selberg_Delange}
  \sum\limits_{\substack{p_1\cdots p_\ell\leq x\\ (a,p_1\cdots p_\ell)=1\\ \forall i,\ p_i\equiv 1(\upsilon_i)\\a\in \qroot\left(\kappa_i, p_i\right)}}1=\sum\limits_{n\leq x}c_1(n)=\frac{\ell x}{\log x \prod\limits_i n_i}(\log\log x)^{\ell-1}+\bo\left(\frac{x}{\log x}(\log\log x)^{\ell-2}\right).
 \end{equation}

\end{proof}

En appliquant le Théorème \ref{thm_finale_Selberg_Delange} à la dernière somme de l'expression de $\Pa'(x,k)$ de la Proposition \ref{prop_controle_gL} on obtient le résultat suivant (en prenant $\upsilon_i=u_i d_i$ et $\kappa_i=k'_i$):
\begin{equation}\label{equation_finale_Selberg_Delange}
 \sum\limits_{\substack{p_1\cdots p_\ell\leq x\\ (a,p_1\cdots p_\ell)=1\\ \forall i,\ p_i\equiv 1(u_i d_i)\\a\in \qroot\left(k'_i, p_i\right)}}1=\frac{\ell x}{\log x \prod\limits_i n_i}(\log\log x)^{\ell-1}+\bo_{C_1,C_5}\left(\frac{x}{\log x}(\log\log x)^{\ell-2}\right),
\end{equation}
où $n_i:=[\Q(\sqrt[k'_i]{a};\xi_{u_i d_i}):\Q]$.

\section{Contrôle des termes d'erreurs}

Dans cette section nous traitons tous les termes d'erreurs accumulés jusqu'ici, ce que nous regroupons dans la proposition suivante.

\begin{prop}\label{prop_majoration_termes_derreurs}
 On a avec $C_1$ et $C_5$ des constantes arbitrairement grandes, $\varepsilon >0$,
 \begin{align*}
 \Na(x,C_1)=&\frac{x(\log\log x)^{\ell -1}}{(\ell -1)!\log x}\sum\limits_{k}\mu(k)\mathcal{P}'_{a,0}(k)+\bo\left(\frac{e^{(1+\varepsilon)C_1}x}{C_5^{1-\varepsilon}\log x}(\log\log x)^{\ell -1}\right)\\
 &+\bo_{C_1,C_5}\left(\frac{x(\log\log x)^{\ell -2}}{\log x}\left(C_5\log C_5\right)^{2^\ell }\right)+\bo\left(\frac{x(\log\log x)^{\ell -1}}{C_5(\ell -1)!\log x}e^{\frac{C_1}{\log C_1}}\left(\log\log C_1\right)^\ell \right)\\
 &+\bo_\ell\left(\frac{x(\log\log x)^{\ell -1}}{C_1^{1-\varepsilon}\log x}\right),\end{align*}
 où
 \[\mathcal{P}'_{a,0}(k)=\sum\limits_{\substack{\left\{k_L,\ L\in\mathcal{P}^*(E)\right\}\\ \prod\limits_{ L\in\mathcal{P}^*(E)}k_L=k}}\sum\limits_{\substack{\{g_L,\ L\in\mathcal{P}^*(E)\}\\ g_L|k_L^\infty\\ k_L|g_L}}\sum\limits_{\substack{\{c_{iL},\ L\in\mathcal{P}^*(E),\ i\notin L\}\\ c_{iL}|\frac{g_L}{k_L}}}\sum\limits_{\substack{\left\{d_i,\ i\in \{1,\ldots ,\ell \}\right\}\\ d_i|k}}\prod\limits_{i\in \{1,\ldots,\ell \}}\frac{\mu(d_i)}{n_i},\]
 et $n_i:=[\Q(\sqrt[k_i']{a},\xi_{u_i d_i}):\Q]$.
\end{prop}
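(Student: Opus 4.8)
The plan is to assemble the estimate from the three ingredients already in place: the inclusion–exclusion identity $\Na(x,C_1)=\sum_{P^+(l')\leq C_1}\mu(l')\Pa(x,l')$, the reduction $\Pa(x,k)=\Pa'(x,k)+\bo(\cdots)$ of Proposition \ref{prop_controle_gL}, and the evaluation of the innermost prime sum of $\Pa'(x,k)$ given by Proposition \ref{prop_finale_Selberg_Delange}. First I would insert Proposition \ref{prop_controle_gL} into the inclusion–exclusion, replacing every $\Pa(x,l')$ by $\Pa'(x,l')$ at the cost of $\bo\!\left(\frac{C_1 x}{C_5^{1-\varepsilon}\log x}(\log\log x)^{\ell-1}\right)$ per $l'$ (taking $t=1-\varepsilon$). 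Propagating this cost through the sum over the squarefree $l'$ composed of primes $\leq C_1$ — a sum governed by products of the shape $\prod_{p\leq C_1}(1+p)\asymp e^{(1+o(1))C_1}$ — produces the first error term $\bo\!\left(\frac{e^{(1+\varepsilon)C_1}x}{C_5^{1-\varepsilon}\log x}(\log\log x)^{\ell-1}\right)$.

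Next I would extract the main term. Inside each $\Pa'(x,l')$ I replace the innermost sum over $p_1\cdots p_\ell$ by the right-hand side of \eqref{resultat_final_Selberg_Delange}, namely $\frac{\ell x}{\log x\prod_i n_i}(\log\log x)^{\ell-1}+\bo_{C_1,C_5}\!\left(\frac{x}{\log x}(\log\log x)^{\ell-2}\right)$. The leading constants combine as $\frac{1}{\ell!}\cdot\ell=\frac{1}{(\ell-1)!}$, and after pulling out $\frac{x(\log\log x)^{\ell-1}}{(\ell-1)!\log x}$ the surviving coefficient is precisely $\mathcal{P}'_{a,0}(l')$, but with the summation over each $g_L$ still truncated at $g_L\leq C_5$. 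Two extensions then complete the identification: extending each $g_L$-sum from $g_L\leq C_5$ to all $g_L$ with $k_L\mid g_L\mid k_L^\infty$, and extending the outer sum from $P^+(l')\leq C_1$ to all squarefree $k$. A Rankin-type tail bound for $\sum_{g_L>C_5}\tfrac{1}{n_i}$, together with the elementary estimate $\tfrac{k_{L}}{\varphi(k_{L})}\ll\log\log C_1$, yields the tail error $\bo\!\left(\frac{x(\log\log x)^{\ell-1}}{C_5(\ell-1)!\log x}e^{C_1/\log C_1}(\log\log C_1)^\ell\right)$, while bounding the contribution of $k$ with $P^+(k)>C_1$ by a convergent tail gives $\bo\!\left(\frac{x(\log\log x)^{\ell-1}}{C_1^{1-\varepsilon}(\ell-1)!\log x}\right)$.

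The remaining error term, $\bo_{C_1,C_5}\!\left(\frac{x(\log\log x)^{\ell-2}}{\log x}(C_5\log C_5)^{2^\ell}\right)$, comes from accumulating the Selberg–Delange remainder $\bo_{C_1,C_5}\!\left(\frac{x}{\log x}(\log\log x)^{\ell-2}\right)$ over all admissible tuples $(\{k_L\},\{g_L\},\{c_{iL}\},\{d_i\})$ with $g_L\leq C_5$. Here I would count these tuples: there are $2^\ell-1$ indices $L\in\mathcal{P}^*(E)$, for each of which $g_L$ ranges over at most $\bo(C_5)$ values while the auxiliary $c_{iL}\mid g_L/k_L$ and $d_i\mid k$ range over $\bo(\log C_5)$ and $\bo(1)$ values respectively; taking the product over all $L$ bounds the number of tuples by $(C_5\log C_5)^{2^\ell}$, each carrying the above remainder.

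The main obstacle will be the uniform bookkeeping of these five error terms rather than any single deep estimate. In particular one must check that the per-tuple Selberg–Delange remainder, once multiplied by the count $(C_5\log C_5)^{2^\ell}$ of tuples, still remains of smaller order than the main term, and that the two tail extensions are genuinely summable. The delicate point is the interplay between the three scales: the cost $e^{(1+\varepsilon)C_1}/C_5^{1-\varepsilon}$ of the $\Pa\to\Pa'$ reduction forces $C_5$ to grow faster than $e^{C_1}$, whereas the factor $(C_5\log C_5)^{2^\ell}$ is only absorbed against a power of $\log\log x$. This dictates the order of the limits — first $x\to\infty$ (killing the $(C_5\log C_5)^{2^\ell}/\log\log x$ error), then $C_5\to\infty$ (killing the reduction and $g_L$-tail errors), then $C_1\to\infty$ (killing the $C_1^{-(1-\varepsilon)}$ tail) — and all bounds must be tracked uniformly in the bounded degrees $n_i=[\Q(\sqrt[k_i']{a},\xi_{u_id_i}):\Q]$.
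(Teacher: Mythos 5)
Your proposal is correct and takes essentially the same route as the paper: inclusion--exclusion over the squarefree $C_1$-friables, insertion of Proposition \ref{prop_controle_gL} summed over the at most $2^{\pi(C_1)}\leq e^{(1+\varepsilon)C_1}$ admissible $k$, extraction of the main term from \eqref{resultat_final_Selberg_Delange} with the constant $\frac{1}{\ell!}\cdot\ell=\frac{1}{(\ell-1)!}$, then the two tail extensions (removing the truncation $g_L\leq C_5$, and completing the sum over $k$ with $P^+(k)>C_1$) via Rankin-type bounds on $\frac{1}{n_i}\ll\prod_{L,\,i\in L} g_L^{-(1-\varepsilon_1)}$, exactly as in the paper's estimates of $E_{L_0}$ and $E_{C_1}$. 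Your one slip --- counting the choices of $c_{iL}\mid g_L/k_L$ as $\bo(\log C_5)$ rather than $\tau(g_L/k_L)\ll_{\varepsilon}C_5^{\varepsilon}$ --- is immaterial, since that error term carries an implied constant depending on $C_1$ and $C_5$, so any finite tuple count suffices.
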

\begin{proof}
Rappelons que $\Na(x,C_1)=\sum\limits_{P^+(k)\leq C_1}\mu(k)\Pa(x,k)$.

Commençons par majorer le terme d'erreur apporté par la proposition \ref{prop_controle_gL}, il faut alors sommer sur les $k$ sans facteur carré $C_1$-friables. Ainsi,
\[\sum\limits_{P^+(k)\leq C_1}\mu^2(k)\frac{x}{C_5^{1-\varepsilon}\log x}(\log\log x)^{\ell-1}\leq \frac{2^{\pi(C_1)}x}{C_5^{1-\varepsilon}\log x}(\log\log x)^{\ell-1}.\]

$C_1$ et $C_5$ étant des constantes, on a que la contribution des termes d'erreurs issus de l'équation \eqref{equation_finale_Selberg_Delange}, sont $\bo_{C_1,C_5}\left(\frac{x(\log\log x)^{\ell -2}}{\log x}\left(C_5\log C_5\right)^{2^\ell }\right)$. 

Il reste à estimer la somme associée aux termes principaux :
\[T:=\sum\limits_{P^+(k)\leq C_1}\mu(k)\sum\limits_{\substack{\left\{k_L,\ L\in\mathcal{P}^*(E)\right\}\\ \prod\limits_{ L\in\mathcal{P}^*(E)}k_L=k}}\sum\limits_{\substack{\{g_L,\ L\in\mathcal{P}^*(E)\}\\ g_L\leq C_5\\ g_L|k_L^\infty\\ k_L|g_L}}\sum\limits_{\substack{\{c_{iL},\ L\in\mathcal{P}^*(E),\ i\notin L\}\\ c_{iL}|\frac{g_L}{k_L}}}\sum\limits_{\substack{\left\{d_i,\ i\in \{1,\ldots ,\ell \}\right\}\\ d_i|k}}\prod\limits_{i\in \{1,\ldots,\ell \}}\frac{\mu(d_i)}{n_i}.\]
On souhaite enlever la condition $g\leq C_5$. Commençons par majorer 
 \[
  E_{L_0}:=\frac{x(\log\log x)^{\ell-1}}{(\ell-1)!\log x}\sum\limits_{\substack{\left\{k_L,\ L\in\mathcal{P}^*(E)\right\}\\ \prod\limits_{ L\in\mathcal{P}^*(E)}k_L=k}}\sum\limits_{\substack{\{g_L,\ L\in\mathcal{P}^*(E)\backslash L_0\}\\ g_L|k_L^\infty\\ k_L|g_L}}\sum\limits_{\substack{g_{L_0}> C_5\\ g_{L_0}|k_{L_0}^\infty\\ k_{L_0}|g_{L_0}}}\sum\limits_{\substack{\{c_{iL},\ L\in\mathcal{P}^*(E),\ i\notin L\}\\ c_{iL}|\frac{g_L}{k_L}}}\sum\limits_{\substack{\left\{d_i,\ i\in \{1,\ldots ,\ell \}\right\}\\ d_i|k}}\prod\limits_{i\in \{1,\ldots,\ell \}}\frac{1}{n_i}.\]
Remarquons que pour $\varepsilon_1>0$, $\frac{1}{n_i}\ll \frac{1}{k_i \varphi(u_i d_i)}\ll\prod\limits_{\substack{L\in\mathcal{P}^*(E)\\ i\in L}}\frac{1}{{g_L}^{1-\varepsilon_1}}$. En reportant dans $E_{L_0}$ on obtient :
 \[
  E_{L_0}=\frac{x(\log\log x)^{\ell-1}}{(\ell-1)!\log x}\sum\limits_{\substack{\left\{k_L,\ L\in\mathcal{P}^*(E)\right\}\\ \prod\limits_{ L\in\mathcal{P}^*(E)}k_L=k}}\sum\limits_{\substack{\{g_L,\ L\in\mathcal{P}^*(E)\backslash L_0\}\\ g_L|k_L^\infty\\ k_L|g_L}}\sum\limits_{\substack{g_{L_0}> C_5\\ g_{L_0}|k_{L_0}^\infty\\ k_{L_0}|g_{L_0}}}\tau(k)^\ell\prod\limits_{\substack{L\in\mathcal{P}^*(E)}}\Bigg(\tau(g_L)^{|E\backslash L|}\bigg(\frac{1}{ {g_L}^{1-\varepsilon_1}}\bigg)^{|L|}\Bigg),\]
où $\tau$ est la fonction nombre de diviseurs. Soit $\varepsilon_2>0$, on a le résultat classique pour tout entier $\mathbf{n}$, $\tau(\mathbf{n})\ll\mathbf{n}^{\varepsilon_2}$. De plus comme $k$ est sans facteur carré et $C_1$-friable on a la majoration suivante : $k\ll e^{C_1}$. En reportant cela dans l'expression de $E_{L_0}$ on obtient :
\[
  E_{L_0}\ll\frac{x(\log\log x)^{\ell-1}}{(\ell-1)!\log x}e^{\varepsilon_2 \ell C_1}\sum\limits_{\substack{\left\{k_L,\ L\in\mathcal{P}^*(E)\right\}\\ \prod\limits_{ L\in\mathcal{P}^*(E)}k_L=k}}\sum\limits_{\substack{\{g_L,\ L\in\mathcal{P}^*(E)\backslash L_0\}\\ g_L|k_L^\infty\\ k_L|g_L}}\sum\limits_{\substack{g_{L_0}> C_5\\ g_{L_0}|k_{L_0}^\infty\\ k_{L_0}|g_{L_0}}}\prod\limits_{\substack{L\in\mathcal{P}^*(E)}}\Bigg(\frac{1}{ {g_L}^{1-\varepsilon_1-\varepsilon_2}}\Bigg).\]

  Soient $L\neq L_0$, $\varepsilon_3>0$,
\[\sum\limits_{\substack{g_L\\ g_L|k_L^\infty\\ k_L|g_L}}\frac{1}{ {g_L}^{1-\varepsilon_1-\varepsilon_2}}\ll\prod\limits_{p|k_L}\Bigg(1+\frac{1}{p^{1-\varepsilon_1-\varepsilon_2}-1}\Bigg)\ll(1+\varepsilon_3)^{\omega(k_L)}\ll e^{\varepsilon_3 C_1}.\]
  
  Occupons nous de la somme sur $g_{L_0}$, soit $\varepsilon_4>0$ tel que $ \varepsilon_1+\varepsilon_2+\varepsilon_4<1$, on a
 \[\sum\limits_{\substack{g_{L_0}> C_5\\ g_{L_0}|k_{L_0}^\infty\\ k_{L_0}|g_{L_0}}}\frac{1}{ {g_{L_0}}^{1-\varepsilon_1-\varepsilon_2}}\ll\frac{1}{C_5^{\varepsilon_4}}\sum\limits_{g_{L_0}|k_{L_0}^\infty\\ k_{L_0}|g_{L_0}}\frac{1}{ {g_{L_0}}^{1-\varepsilon_1-\varepsilon_2-\varepsilon_4}}\ll\frac{ e^{\varepsilon_3 C_1}}{C_5^{\varepsilon_4}}.\]
 
Soit $\varepsilon_5>\varepsilon_2+\varepsilon_3$, en reportant dans $E_{L_0}$,
\[
  E_{L_0}\ll\frac{x(\log\log x)^{\ell-1}}{(\ell-1)!\log x}\frac{ 2^{C_1}e^{(\varepsilon_2+\varepsilon_3) \ell C_1}}{C_5^{\varepsilon_4}}\ll\frac{x(\log\log x)^{\ell-1}}{(\ell-1)!\log x}\frac{ e^{\varepsilon_5 \ell C_1}}{C_5^{\varepsilon_4}}.\]

  Puis comme le nombre de $C_1$-friable sans facteur carré est majoré par $e^{C_1}$ on a la majoration souhaitée.

Il ne reste qu'à compléter la somme sur les $k$.

La somme que l'on souhaite majorer est :
\[\hspace*{-2.5cm} E_{C_1}:=\frac{x(\log\log x)^{\ell-1}}{(\ell-1)!\log x}\sum\limits_{\substack{P^+(k)>C_1\\ \mu^2(k)=1}}\sum\limits_{\substack{\left\{k_L,\ L\in\mathcal{P}^*(E)\right\}\\ \prod\limits_{ L\in\mathcal{P}^*(E)}k_L=k}}\sum\limits_{\substack{\{g_L,\ L\in\mathcal{P}^*(E)\}\\ g_L|k_L^\infty\\ k_L|g_L}}\sum\limits_{\substack{\{c_{iL},\ L\in\mathcal{P}^*(E),\ i\notin L\}\\ c_{iL}|\frac{g_L}{k_L}}}\sum\limits_{\substack{\left\{d_i,\ i\in \{1,\ldots ,\ell \}\right\}\\ d_i|k}}\prod\limits_{i\in \{1,\ldots,\ell \}}\frac{1}{n_i}.\]

Soit $\varepsilon_1 >0$, on a la majoration suivante pour tout $i$ : $\frac{1}{n_i}\ll\frac{1}{d_i^{1-\varepsilon_1}\prod\limits_{L,\ i\in L}k_L g_L^{1-\varepsilon_1}}$.

On majore simplement les sommes sur le $d_i$ :

\[\sum\limits_{d_i|k}\frac{1}{d_i^{1-\varepsilon_1}}=\prod\limits_{p|k}\left(1+\frac{1}{p^{1-\varepsilon_1}}\right)\ll\left(\frac{k}{\varphi(k)}\right)^{1-\varepsilon_1}\ll (\log\log k)^{1-\varepsilon_1}.\]

Soit $\varepsilon_2>0$, pour tous $i$ et $L$ la somme sur $c_{i,L}$ est petite devant $\left(\frac{g_L}{k_L}\right)^{\varepsilon_2}$.

Notons $\varepsilon_3=\ell(\varepsilon_1+\varepsilon_2)$, et choisissons $\varepsilon_1$ et $\varepsilon_2$ de sorte que $0<\varepsilon_3<1$. Les sommes sur les $g_L$ sont alors :
\[
 \sum\limits_{\substack{g_L|k_L^\infty\\ k_L|g_L}}\frac{1}{g_L^{|L|(1-\varepsilon_1)-|E\backslash L|\varepsilon_2}}\ll\frac{1}{k_L^{|L|-\varepsilon_3}} \sum\limits_{\substack{g_L|k_L^\infty}}\frac{1}{g_L^{|L|-\varepsilon_3}}\ll\prod\limits_{p|k_L}\frac{1}{p^{|L|-\varepsilon_3}-1}.
\]
Soit $\varepsilon_4>0$ tel quel $\varepsilon_4>\varepsilon_3$, en reportant dans $E_{C_1}$ on obtient :
\begin{align*} 
E_{C_1}&\ll\frac{x(\log\log x)^{\ell-1}}{(\ell-1)!\log x}\sum\limits_{\substack{P^+(k)>C_1\\ \mu^2(k)=1}}(\log\log k)^{\ell(1-\varepsilon_1)}\sum\limits_{\substack{\left\{k_L,\ L\in\mathcal{P}^*(E)\right\}\\ \prod\limits_{ L\in\mathcal{P}^*(E)}k_L=k}}\prod\limits_{p|k_L}\frac{1}{p^{|L|}(p^{|L|-\varepsilon_3}-1)}\\
&\ll\frac{x(\log\log x)^{\ell-1}}{(\ell-1)!\log x}\sum\limits_{\substack{P^+(k)>C_1\\ \mu^2(k)=1}}(\log\log k)^{\ell(1-\varepsilon_1)}\sum\limits_{\substack{\left\{k_L,\ L\in\mathcal{P}^*(E)\right\}\\ \prod\limits_{ L\in\mathcal{P}^*(E)}k_L=k}}\prod\limits_{L\in\mathscr{P}^*(E)}\left(\frac{1}{k_L^{2|L|-\varepsilon_4}}\right).
\end{align*}

Posons $f(k):=\mu^2(k)\sum\limits_{\substack{\left\{k_L,\ L\in\mathcal{P}^*(E)\right\}\\ \prod\limits_{ L\in\mathcal{P}^*(E)}k_L=k}}\prod\limits_{L\in\mathscr{P}^*(E)}\left(\frac{1}{k_L^{2|L|-\varepsilon_4}}\right)$, alors $f$ est multiplicative et pour $p$ premier,
\[f(p)=\sum\limits_{m=1}^\ell \binom{\ell}{m} \frac{1}{p^{2m-\varepsilon_4}}=p^{\varepsilon_4}\sum\limits_{m=0}^{\ell-1} \binom{\ell}{m}p^{2(m-\ell)}\ll_\ell p^{-2+\varepsilon_4}.\]
Ainsi $f(k)\ll \frac{1}{k^{2-\varepsilon_4}}$.

Soit $\varepsilon_5>\varepsilon_4$, en reportant le résultat précédent dans $E_{C_1}$ on a : 
\[E_{C_1}\ll_\ell \frac{x(\log\log x)^{\ell-1}}{C_1^{1-\varepsilon_5}\log x}.\]
Or on peut prendre $\varepsilon_1$ et $\varepsilon_2$ arbitrairement petits et donc par extension $\varepsilon_5$ aussi, ce qui permet de conclure.

\end{proof}

Il ne nous reste alors plus qu'à évaluer le terme principal.

\section{Calcul du terme principal}

Dans cette section on notera pour tout nombre premier $p\geq 3$ et tout entier $i\geq1$, $R_p(i):=\sum\limits_{j=0}^{\ell -i}\binom{\ell -i}{j}\frac{(-1)^j(p-1)^{\ell -i-j}}{(p^{i+j}-1)(p-2)^{\ell -i}}$ et pour tout entier $i\geq1$, $R_2(i):=\sum\limits_{j=0}^{\ell -i}\binom{\ell -i}{j}\frac{(-1)^j2^{-j}}{2^{i+j}-1}$.

On veut calculer : $\sum\limits_{\substack{k}}\mu(k)\mathcal{P}_{a,0}'(k)$, avec
\[\mathcal{P}_{a,0}'(k)=\sum\limits_{\substack{\prod\limits_{\substack{L\in \mathscr{P}^*(E)}}k_L=k}}\sum\limits_{\substack{\{g_L\}_{L\in \mathscr{P}^*(E)}\\ g_L|k_L^\infty\\ k_L|g_L}}\sum\limits_{\substack{\{c_{bL}\}_{\substack{L\in \mathscr{P}^*(E)\\ b\notin L}}\\c_{bL}|\frac{g_L}{k_L}}}\sum\limits_{\substack{\{d_i\}_{i\in E}\\ d_i|k}}\prod\limits_{i\in E}\frac{\mu(d_i)}{n_i}\]
où $\mathscr{P}^*(E):=\{L\subset E,\ L\neq \emptyset\}$ et $n_i:=n(\prod\limits_{\substack{L\in \mathscr{P}^*(E)\\i\in L}}k_L',u_i d_i)$ le degré de l'extension $K_i$, où $k_L'=\frac{k_L}{(h,k_L)}$, avec $h$ impair, dépendant de $a$.\\
D'après la proposition \ref{propdegre}, $n(\ell,m):=[\Q(\sqrt[\ell]{a},\xi_m):\Q]=\frac{\ell\varphi(m)}{\varepsilon(\ell,m)}$ où $\ell$ est sans facteur carré, $\ell|m$ et
\begin{equation*}
\varepsilon(\ell,m)=\left\{\begin{array}{l}
2\ \text{si}\ 2|\ell ,\ 2|m,\ a_1|m\ \text{et}\ a_1\equiv 1\mod 4\\
2\ \text{si}\ 2|\ell ,\ 4|m,\ a_1|m\ \text{et}\ a_1\equiv 3\mod 4\\
2\ \text{si}\ 2|\ell ,\ 8|m,\ a_1|m\ \text{et}\ a_1\equiv 0\mod 2\\
1\ \text{sinon}
\end{array}\right..
\end{equation*}

Notons alors $u_i:=\prod\limits_{\substack{L\in \mathscr{P}^*(E)\\ i\in L}}g_L \prod\limits_{\substack{L\in \mathscr{P}^*(E)\\ i\notin L}}c_{iL}$.\\

Le résultat principal sera la proposition suivante, en sommant les $\mathcal{P}_{a,0}'(k)$ sur les $k$. 

\begin{prop}\label{prop_terme_principal}
On obtient :
 \[\sum\limits_{k}\mu(k)\mathcal{P}_{a,0}'(k)=M(E)\left(1+V_\ell(a_1)\right)
\]
où
\begin{itemize}
 \item [$\bullet$]$M(E):=\left(1-H_1(E)\right)\prod\limits_{\substack{p\\ p\geq 3}}\left(1-\left(\frac{p-2}{p-1}\right)^{\ell }\sum\limits_{i=1}^{\ell }\binom{\ell }{i}F_i(p)\right)$, est la contribution ne dépendant que de $h$,
 \item [$\bullet$]$V_\ell(a_1):=\mu(2\tilde{a_1})\frac{H_2(\ell,a_1)}{1-H_1(E)}\prod\limits_{\substack{p\\ p|a_1\\ p\geq 3}}\left(1-\left(\frac{p-2}{p-1}\right)^{\ell }\sum\limits_{i=1}^{\ell }\binom{\ell }{i}F_i(p)\right)^{-1}$, est la contribution spécifique dépendant de $a$,
 \item [$\bullet$]$\tilde{a_1}:=\frac{a_1}{(2,a_1)}$,
 \item [$\bullet$]Les $F_i$ sont des fonctions multiplicatives définies pour les nombres premiers impairs par :
 \[F_i(p):=\left(\frac{(h,p)(p-1)}{p^2(p-2)}\right)^i\left(1+R_p(i)\right)\]
 \item [$\bullet$]$H_1(E):=2^{-\ell }\left(\sum\limits_{i=1}^{\ell }\binom{\ell }{i}2^{-2i}\left(1+2^{\ell -i}R_2(i)\right)+2^{-\ell }\right)$, la contribution à $M(E)$ de $p=2$,
 \item [$\bullet$]
 \begin{equation*}\hspace*{-2cm}
 H_2(\ell,a_1):=\sum\limits_{L_0\in\mathscr{P}^*(E)}2^{-\ell -|L_0|}\sum\limits_{L_1\in \mathscr{P}^*(L_0)}\delta_5(L_0,L_1)\mu(\tilde{a_1})^{|L_1|}\prod\limits_{\substack{p|\tilde{a_1}\\ p\geq 3}}\left(\frac{(p-2)^{\ell -|L_1|}}{(p-1)^{\ell }}\right)\underset{L\in \mathscr{P}^*(E)}{\mathlarger{\mathlarger{\mathlarger{\mathlarger{\ast}}}}}\left(G_L^{L_1}\right)(\tilde{a_1})\end{equation*} la contribution de $a$ à $V_\ell(a_1)$, avec $\delta_5(L_0,L_1)=\widehat{\delta}(L_0,L_1)+\tilde{\delta}(L_0,L_1)$,
 
\begin{equation*}
 \widehat{\delta}(L_0,L_1)=\left\{\begin{array}{l}
                        2^{-|L_0|}\left((-1)^{|L_1|}+2^{\ell -|L_0|}R_2(|L_0|)\right) \text{ si }a_1\equiv 0\mod 2,\\
                        2^{-|L_0|}\left(1+2^{\ell -|L_0|}R_2(|L_0|)\right)  \text{ sinon, }
                      \end{array}
\right.
\end{equation*}

\begin{equation*}\hspace*{-3cm}
 \tilde{\delta}(L_0,L_1)=\left\{\begin{array}{l}
                        1\text{ si } L_0=E \text{ et } a_1\equiv 1\mod 4,\\
                        (-1)^{|L_1|}\text{ si } L_0=E \text{ et }a_1\equiv 3\mod 4,\\
                        0\text{ sinon, }\\
                      \end{array}
\right.
\end{equation*}
 
et les $G_L^{L_1}$ sont des fonctions multiplicatives définies pour les nombres premiers impairs par :
 \[\hspace{-2cm}G_L^{L_1}(p):=\left(\frac{(p-1)(h,p)}{p^2(p-2)}\right)^{|L|}(2-p)^{|L_1\cap L|}\left(1+R_p(|L \cup L_1|)\right).\]
\end{itemize}

\end{prop}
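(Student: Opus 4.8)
Le plan consiste à injecter la formule du degré de la proposition~\ref{propdegre} dans $\mathcal{P}_{a,0}'(k)$, puis à tirer parti de la multiplicativité pour factoriser $\sum_k\mu(k)\mathcal{P}_{a,0}'(k)$ en un produit eulérien. Pour chaque $i$ on écrit
\[\frac{1}{n_i}=\frac{\varepsilon(k_i',u_i d_i)}{k_i'\,\varphi(u_i d_i)},\qquad k_i':=\prod_{\substack{L\in\mathscr{P}^*(E)\\ i\in L}}k_L',\]
et, comme $\varepsilon(k_i',u_i d_i)\in\{1,2\}$, on pose $\varepsilon=1+(\varepsilon-1)$ de sorte que $\prod_i\frac{1}{n_i}$ se scinde en une partie « propre » $\prod_i\frac{1}{k_i'\varphi(u_i d_i)}$, ne dépendant de $a$ qu'à travers $h$ (via $k_L'=k_L/(h,k_L)$, et $h$ impair car $a$ n'est pas un carré), et un terme correctif porté par les $\varepsilon_i-1=\id(\varepsilon_i=2)$, seul lieu où interviennent $a_1$ et ses congruences modulo $4$ et $8$. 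La partie propre produira $M(E)$ et le terme correctif $M(E)\,V(E,a_1)$.

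Le point clef est que $k$ étant sans facteur carré, chaque premier $p\mid k$ appartient à un unique bloc $k_L$, et les données $g_L$, $c_{iL}$, $d_i$ ainsi que $\varphi(u_i d_i)$ et $k_i'$ se localisent premier par premier. Joint à $\mu(k)=\prod_{p\mid k}(-1)$ et à la sommation sur tous les $k$ sans facteur carré, ceci donne
\[\sum_k\mu(k)\prod_i\frac{1}{k_i'\varphi(u_i d_i)}=\prod_p\bigl(1-w(p)\bigr),\]
où $w(p)$ est la contribution locale de l'affectation de $p$ à un bloc. Pour un premier impair on affecte $p$ à un bloc $L$ (avec $\sum_{|L|=i}=\binom{\ell}{i}$), puis on somme sur $\nu_p(g_L)=e\geq1$, sur les $\nu_p(c_{jL})\in\{0,\dots,e-1\}$ pour $j\notin L$ et sur les $\nu_p(d_j)\in\{0,1\}$ avec le signe de Möbius ; le facteur $(h,p)$ provient de $k_L'=k_L/(h,k_L)$, la somme de Möbius sur chaque $d_j$ fournit les facteurs $\frac{p-2}{p-1}$, et la sommation géométrique sur $e$ jointe à celle sur les $c_{jL}$ reconstitue $R_p$, d'où $w(p)=\bigl(\frac{p-2}{p-1}\bigr)^\ell\sum_{i=1}^\ell\binom{\ell}{i}F_i(p)$. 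Le cas $p=2$ se traite séparément (car $p-2=0$ et $\varphi(2)=1$) et fournit $1-H_1(E)$. On obtient ainsi $M(E)$.

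Le terme correctif s'obtient en développant $\prod_i\varepsilon_i-1=\sum_{\emptyset\neq S\subseteq E}\prod_{i\in S}\id(\varepsilon_i=2)$. La condition $\varepsilon_i=2$ exige $2\mid k_i'$, $a_1\mid u_i d_i$ et une compatibilité entre $a_1\bmod 4$ (ou $\bmod 8$) et $\nu_2(u_i d_i)$. Comme $h$ est impair et que $2$ n'appartient qu'à un seul bloc $L_0$, avoir $2\mid k_i'$ équivaut à $i\in L_0$ : l'ensemble $S$ des indices concernés est donc un $L_1\in\mathscr{P}^*(L_0)$. Par ailleurs $a_1\mid u_i d_i$ force chaque premier impair $q\mid\tilde{a_1}$ à diviser $u_i d_i$ ; la répartition de ces $q$ entre les blocs $k_L$ est précisément ce qu'encode la convolution des $G_L^{L_1}$ évaluée en $\tilde{a_1}$, chaque $G_L^{L_1}(q)$ étant le facteur local en $q$, le terme $(2-q)^{|L_1\cap L|}$ traduisant l'appartenance à $L_1$. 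Les signes $\mu(\tilde{a_1})^{|L_1|}$ et $\mu(2\tilde{a_1})$ proviennent des facteurs $\mu(k)$ et $\mu(d_i)$ restreints aux premiers de $a_1$, tandis que la sommation sur $\nu_2(g_{L_0})$ et le comportement en $p=2$ donnent les facteurs $2^{-\ell-|L_0|}$ et $\delta_5(L_0,L_1)=\widehat{\delta}+\widetilde{\delta}$. En factorisant $M(E)$ on aboutit à $M(E)\bigl(1+V(E,a_1)\bigr)$.

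La principale difficulté sera le traitement simultané du premier $2$ et des premiers divisant $a_1$ : la condition $a_1\mid u_i d_i$ qui déclenche $\varepsilon=2$ couple tous ces premiers, de sorte que le terme correctif n'est pas un simple produit eulérien mais requiert l'introduction des ensembles auxiliaires $L_0$ et $L_1$ et de la convolution. Il faudra en outre mener une analyse de cas minutieuse sur $\nu_2(u_i d_i)$ confrontée aux conditions $2\|m$, $4\mid m$, $8\mid m$ de $\varepsilon$ — c'est l'origine de la scission $\delta_5=\widehat{\delta}+\widetilde{\delta}$ et de la restriction $L_0=E$ dans $\widetilde{\delta}$ — puis vérifier que les sommes géométriques en $g_{L_0}$ redonnent bien les $R_2$ attendus.
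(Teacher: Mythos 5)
Your proposal is correct and follows essentially the same route as the paper: your splitting $\varepsilon_i=1+\id(\varepsilon_i=2)$ is precisely the paper's decomposition into $\mathcal{P}'_\alpha+\mathcal{P}'_\beta$ (after the odd/even split of $k$ and Lemme \ref{lemme_contribution_degre}), with the same auxiliary sets $L_0$ (the block containing $2$) and $L_1\subseteq L_0$, the same convolution of the $G_L^{L_1}$ over the odd primes of $\tilde{a_1}$, and the same $2$-adic case analysis yielding $\delta_5=\widehat{\delta}+\tilde{\delta}$. The only difference is organizational: you factor $\sum_k\mu(k)\mathcal{P}'_{a,0}(k)$ prime by prime directly, whereas the paper first evaluates $\mathcal{P}'_{a,0}(k)$ at fixed $k$ as convolutions of the multiplicative functions $F_i$ (Propositions \ref{prop_k_impair}, \ref{prop_Palpha} and \ref{prop_Pbeta}) and only then sums over $k$ — the underlying computations, including the geometric sums producing $R_p$ and the factors $\frac{p-2}{p-1}$ from the Möbius sums over the $d_i$, are identical.
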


Afin de démontrer cette proposition nous allons expliciter $\mathcal{P}_{a,0}(k)$ en fonction de la parité de $k$.

\subsection{$\mathcal{P}_{a,0}(k)$ pour $k$ impair}
Nous commençons par démontrer un lemme sur la somme sur les diviseurs d'un entier sans facteur carré d'une certaine fonction.
\begin{lem}\label{lemme_somme_multiplicative}
 Soit $\alpha$ un entier, $\beta$ un entier sans facteur carré, alors
 \[\sum\limits_{d|\beta}\frac{\mu(d)}{\varphi(\alpha d)}=\frac{1}{\varphi(\alpha)}\prod\limits_{\substack{p|\beta\\ p|\alpha}}\left(1-\frac{1}{p}\right)\prod\limits_{\substack{p|\beta\\ p\nmid \alpha}}\left(1-\frac{1}{p-1}\right).\]
\end{lem}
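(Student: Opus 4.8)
Le plan est d'exploiter que $\beta$ est sans facteur carré, ce qui met ses diviseurs $d$ en bijection avec les sous-ensembles $S$ de l'ensemble des facteurs premiers de $\beta$ via $d=\prod_{p\in S}p$, avec $\mu(d)=\prod_{p\in S}(-1)$. La somme se ramène alors à une somme sur ces sous-ensembles, que l'on factorise en un produit sur les premiers $p\mid\beta$ par distributivité. Il faut prendre garde au fait que $d\mapsto 1/\varphi(\alpha d)$ n'est \emph{pas} multiplicative en $d$, puisque $\alpha$ et $d$ peuvent partager des facteurs premiers ; c'est la raison pour laquelle on ne peut pas invoquer directement la multiplicativité et qu'il faut contrôler à la main le comportement de $\varphi(\alpha d)$.

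L'étape principale est d'expliciter $\varphi(\alpha d)$ pour un tel $d$. Je commencerais par le cas d'un seul premier : si $p\mid\alpha$ alors $\varphi(\alpha p)=p\,\varphi(\alpha)$, tandis que si $p\nmid\alpha$ alors $\varphi(\alpha p)=(p-1)\varphi(\alpha)$. Comme les premiers de $S$ sont distincts, le statut ($p\mid\alpha$ ou $p\nmid\alpha$) de chacun ne change pas lorsqu'on multiplie $\alpha$ par les autres, de sorte qu'en itérant on obtient
\[\varphi(\alpha d)=\varphi(\alpha)\prod_{\substack{p\in S\\ p\mid\alpha}}p\prod_{\substack{p\in S\\ p\nmid\alpha}}(p-1).\]
C'est précisément ici que naît l'asymétrie entre les facteurs $1-\frac1p$ et $1-\frac1{p-1}$ de l'énoncé, et la vérification soigneuse de cette formule est le seul point demandant un peu d'attention.

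Il resterait alors à reporter cette expression et $\mu(d)$ dans la somme, puis à factoriser par distributivité :
\[\sum_{d\mid\beta}\frac{\mu(d)}{\varphi(\alpha d)}=\frac{1}{\varphi(\alpha)}\sum_{S}\prod_{\substack{p\in S\\ p\mid\alpha}}\frac{-1}{p}\prod_{\substack{p\in S\\ p\nmid\alpha}}\frac{-1}{p-1}=\frac{1}{\varphi(\alpha)}\prod_{p\mid\beta}\left(1+c_p\right),\]
où $c_p=-\frac1p$ si $p\mid\alpha$ et $c_p=-\frac1{p-1}$ sinon, ce qui donne exactement la formule annoncée. L'argument étant entièrement élémentaire, je ne prévois aucune difficulté réelle hormis le soin à apporter au calcul de $\varphi(\alpha d)$ selon le partage éventuel des facteurs premiers entre $\alpha$ et $d$.
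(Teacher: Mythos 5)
Votre démonstration est correcte et suit essentiellement la même voie que celle de l'article : l'article normalise par $\varphi(\alpha)$, observe que $d\mapsto \frac{\mu(d)\varphi(\alpha)}{\varphi(\alpha d)}$ est multiplicative avec les valeurs $-\frac{1}{p}$ si $p\mid\alpha$ et $-\frac{1}{p-1}$ sinon, puis écrit la somme sur les diviseurs de $\beta$ comme un produit eulérien. Votre développement explicite sur les sous-ensembles de facteurs premiers, avec le calcul à la main de $\varphi(\alpha d)$, n'est qu'une version déroulée de cet argument de multiplicativité.
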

\begin{proof}
 On a trivialement,
 \[\sum\limits_{d|\beta}\frac{\mu(d)}{\varphi(\alpha d)}=\frac{1}{\varphi(\alpha)}\sum\limits_{d|\beta}\frac{\mu(d)\varphi(\alpha)}{\varphi(\alpha d)}.\]
 La fonction $d\rightarrow \frac{\mu(d)\varphi(\alpha)}{\varphi(\alpha d)}$ est multiplicative et on a pour $p|\alpha$, $\frac{\mu(p)\varphi(\alpha)}{\varphi(\alpha p)}=-\frac{1}{p}$ et pour $p\nmid\alpha$, $\frac{\mu(p)\varphi(\alpha)}{\varphi(\alpha p)}=-\frac{1}{p-1}$. On obtient alors la formule en exprimant la somme sur les diviseurs de $\beta$ sous forme de produit eulérien.
\end{proof}

\begin{prop}\label{prop_k_impair}
 Pour $k$ impair, sans facteur carré, on a :
 \begin{align*}
\mathcal{P}_{a,0}'(k)&=\prod\limits_{\substack{p|k\\p\geq 3}}\left(1-\frac{1}{p-1}\right)^{\ell } \overset{\ell }{\underset{i=1}{\mathlarger{\mathlarger{\mathlarger{\mathlarger{\ast}}}}}}\left(F_i^{\ast \binom{\ell }{i}}\right)(k),
\end{align*}

où $F^{*n}:=\underbrace{F\ast\cdots\ast F}_{n\ fois}$.
\end{prop}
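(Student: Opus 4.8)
The plan is to exploit multiplicativity and reduce the identity to a single odd prime. First I would record that for $k$ odd the Galois correction $\varepsilon$ of Proposition \ref{propdegre} is trivial: since $\varepsilon(k_i',u_id_i)=2$ forces $2\mid k_i'$, which is impossible when $k$—and hence every $k_L$ and every $k_i'=\prod_{L\ni i}k_L'$—is odd, one has $n_i=k_i'\,\varphi(u_id_i)$ throughout (note $k_i'\mid u_i$, so the degree formula applies). With this formula the summand $\prod_i \mu(d_i)/n_i$ splits over coprime parts of $k$, and all the divisibility constraints ($\prod_L k_L=k$, $g_L\mid k_L^\infty$ with $k_L\mid g_L$, $c_{iL}\mid g_L/k_L$, $d_i\mid k$) respect the prime factorisation; hence $\mathcal{P}_{a,0}'$ is multiplicative on odd squarefree integers. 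The right-hand side is multiplicative as well, and since the $F_i$ are multiplicative with $F_i(1)=1$, the Dirichlet convolution evaluated at a prime collapses to $\bigl(\mathop{\ast}_{i=1}^{\ell}F_i^{*\binom{\ell}{i}}\bigr)(p)=\sum_{L\in\mathscr{P}^*(E)}F_{|L|}(p)=\sum_{i=1}^\ell\binom{\ell}{i}F_i(p)$, because among the $2^\ell-1$ convolution factors exactly one receives the prime $p$ and there are $\binom{\ell}{i}$ subsets $L$ of size $i$. Thus it suffices to prove $\mathcal{P}_{a,0}'(p)=\bigl(\tfrac{p-2}{p-1}\bigr)^\ell\sum_{i=1}^\ell\binom{\ell}{i}F_i(p)$ for each odd prime $p$.

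Next I would compute $\mathcal{P}_{a,0}'(p)$ directly. A factorisation $\prod_L k_L=p$ is determined by the single set $L_0\in\mathscr{P}^*(E)$ with $k_{L_0}=p$ (all other $k_L=1$), so the outer sum becomes $\sum_{L_0}$. For this $L_0$ the constraints force $g_{L_0}=p^{a}$ with $a\ge 1$ and $g_L=1$ for $L\ne L_0$; the coefficients $c_{iL_0}=p^{j_i}$ with $0\le j_i\le a-1$ for $i\notin L_0$, all other $c_{iL}=1$; and $d_i\in\{1,p\}$. Consequently $u_i=p^{a}$ and $k_i'=p/(h,p)$ for $i\in L_0$, while $u_i=p^{j_i}$ and $k_i'=1$ for $i\notin L_0$. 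The $d_i$-sums are then handled by Lemma \ref{lemme_somme_multiplicative} with $\beta=p$: for $i\in L_0$ one gets $\tfrac{(h,p)}{p}\sum_{d_i\mid p}\tfrac{\mu(d_i)}{\varphi(p^a d_i)}=\tfrac{(h,p)}{p^{a+1}}$, and for $i\notin L_0$ one gets $p^{-j_i}$ when $j_i\ge1$ and $\tfrac{p-2}{p-1}$ when $j_i=0$. Summing the latter over $j_i\in\{0,\dots,a-1\}$ collapses the geometric tail to $1-\tfrac{1}{p^{a-1}(p-1)}$, so that
\[
\mathcal{P}_{a,0}'(p)=\sum_{i=1}^\ell\binom{\ell}{i}\sum_{a\ge1}\Bigl(\tfrac{(h,p)}{p^{a+1}}\Bigr)^{i}\Bigl(1-\tfrac{1}{p^{a-1}(p-1)}\Bigr)^{\ell-i}.
\]

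Finally I would match this with the target term by term in $i$. Expanding $\bigl(1-p^{-(a-1)}(p-1)^{-1}\bigr)^{\ell-i}$ by the binomial theorem and summing the geometric series $\sum_{a\ge1}p^{-(a+1)i-(a-1)j}=\tfrac{p^{j-i}}{p^{i+j}-1}$, the $i$-th inner sum becomes $(h,p)^i\sum_{j=0}^{\ell-i}\binom{\ell-i}{j}\tfrac{(-1)^j p^{j-i}}{(p-1)^j(p^{i+j}-1)}$; on the other side, unwinding $R_p(i)$ shows $(\tfrac{p-2}{p-1})^\ell F_i(p)=(h,p)^i p^{-2i}\bigl[(\tfrac{p-2}{p-1})^{\ell-i}+\sum_{j=0}^{\ell-i}\binom{\ell-i}{j}\tfrac{(-1)^j}{(p-1)^j(p^{i+j}-1)}\bigr]$. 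Equality of the two expressions reduces, after multiplying by $p^{2i}(h,p)^{-i}$, to
\[
\sum_{j=0}^{\ell-i}\binom{\ell-i}{j}\frac{(-1)^j\,p^{i+j}}{(p-1)^j(p^{i+j}-1)}=\Bigl(\tfrac{p-2}{p-1}\Bigr)^{\ell-i}+\sum_{j=0}^{\ell-i}\binom{\ell-i}{j}\frac{(-1)^j}{(p-1)^j(p^{i+j}-1)},
\]
which is immediate from $\tfrac{p^{i+j}}{p^{i+j}-1}=1+\tfrac{1}{p^{i+j}-1}$ and $\sum_{j}\binom{\ell-i}{j}(-1)^j(p-1)^{-j}=(\tfrac{p-2}{p-1})^{\ell-i}$. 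I expect the main obstacle to lie not in this closing algebraic identity, which is routine, but in the bookkeeping of the middle step: correctly disentangling $u_i$ and $k_i'$ according to whether $i\in L_0$ or $i\notin L_0$, and carrying the $j_i$-summation uniformly over the $\ell-|L_0|$ indices outside $L_0$.
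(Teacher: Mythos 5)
Your proposal is correct, and every computation checks out (the $d_i$-sums via Lemma \ref{lemme_somme_multiplicative}, the geometric collapse of the $j_i$-sums to $1-\tfrac{1}{p^{a-1}(p-1)}$, the series $\sum_{a\ge1}p^{-(a+1)i-(a-1)j}=\tfrac{p^{j-i}}{p^{i+j}-1}$, and the closing binomial identity), but it is organized quite differently from the paper's proof. The paper keeps $k$ general throughout: it evaluates the nested sums over the $d_i$, then the $c_{iL}$, then the $g_L$ symbolically, introducing the auxiliary function $f_p(n)=1+\tfrac{1-p^{-n}}{p-2}$ and obtaining $1+R_p(|L|)$ as the closed form of the $g$-sum, and only at the very end recognizes the remaining sum over the factorizations $\prod_L k_L=k$ as the Euler product defining the convolution $\mathop{\ast}_{i=1}^{\ell}\bigl(F_i^{\ast\binom{\ell}{i}}\bigr)$. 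You instead prove multiplicativity of $\mathcal{P}_{a,0}'$ on odd squarefree integers up front (legitimate, precisely because oddness kills the correction $\varepsilon$, which is what obstructs multiplicativity in general) and reduce to a single odd prime, where the $g$- and $c$-sums become explicit geometric series in one variable. Your route is shorter and more transparent; it even has the pleasant feature that your intermediate expression $\sum_i\binom{\ell}{i}\sum_{a\ge1}\bigl(\tfrac{(h,p)}{p^{a+1}}\bigr)^i\bigl(1-\tfrac{1}{p^{a-1}(p-1)}\bigr)^{\ell-i}$ is exactly the heuristic probability $W(p)$ sliced by the valuation $a=\nu_p(\lambda)$, so your final matching step essentially re-proves the paper's Proposition \ref{prop_correspondance_terme_principal}. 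What the paper's heavier general computation buys is reusability: the same $d_i$/$c_{iL}$/$g_L$ machinery (the $f_p$ manipulations, $\delta_2$, $\delta_3$, the $R_p$ closed forms) is deployed again verbatim for even $k$ in the evaluation of $\mathcal{P}_\alpha'$ and $\mathcal{P}_\beta'$, where the $\varepsilon$-term makes the summand non-multiplicative and a pure localize-at-a-prime argument like yours would no longer suffice.
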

\begin{proof}
 Comme $k$ est impair, les $k_L$ le sont aussi et donc $\forall i$, le terme correctif de $n_i$ vérifie : \[\varepsilon(\prod\limits_{\substack{L\in \mathscr{P}^*(E)\\i\in L}}k_L',u_i d_i)=1.\] 
 
 Nous allons calculer successivement les sommes sur les $d_i$, $c_{i,L}$ et $g_L$. En utilisant le lemme \ref{lemme_somme_multiplicative}, celles sur les $d_i$ sont de la forme :
\[\sum\limits_{d|k}\frac{\mu(d)}{\varphi(u_i d)}=\frac{1}{\varphi(u_i)}\prod\limits_{\substack{p|k\\p|u_i\\p\geq 3}}\left(1-\frac{1}{p}\right)\prod\limits_{\substack{p|k\\p\nmid u_i\\ p\geq 3}}\left(1-\frac{1}{p-1}\right)=\frac{1}{\varphi(u_i)}\prod\limits_{\substack{p|k\\p|u_i\\ p\geq 3}}\left(\frac{(p-1)^2}{p(p-2)}\right)\prod\limits_{\substack{p|k\\p\geq 3}}\left(1-\frac{1}{p-1}\right).\]
On décompose $u_i$ en $\prod\limits_{\substack{L\in \mathscr{P}^*(E)\\ i\in L}}g_L \prod\limits_{\substack{L\in \mathscr{P}^*(E)\\ i\notin L}}c_{iL}$.

Rappelons que $g_L$ et $k_L$ ont les mêmes facteurs premiers et que ceux de $c_{i,L}$ divisent $k_L$.

La somme sur les $d_i$ devient pour $1\leq i \leq \ell$ :

\[\sum\limits_{d_i|k}\frac{\mu(d)}{\varphi(u_i d_i)}=\prod\limits_{\substack{p|k\\p\geq 3}}\left(1-\frac{1}{p-1}\right)\prod\limits_{\substack{L\in\mathcal{P}^*(E)\\ i\in L}}\left(\frac{1}{\varphi(g_L)}\prod\limits_{\substack{p|k_L\\ p\geq 3}}\left(\frac{(p-1)^2}{p(p-2)}\right)\right)\prod\limits_{\substack{L\in\mathcal{P}^*(E)\\ i\notin L}}\left(\frac{1}{\varphi(c_{i,L})}\prod\limits_{\substack{p|c_{i,L}\\ p\geq 3}}\left(\frac{(p-1)^2}{p(p-2)}\right)\right).\]
On calcule ensuite le produit pour $i=1,\ldots,\ell$ de cette expression.

\[\hspace*{-2.5cm}
\prod\limits_{i=1}^\ell \left(\sum\limits_{d_i|k}\frac{\mu(d)}{\varphi(u_i d_i)}\right)=\prod\limits_{\substack{p|k\\p\geq 3}}\left(1-\frac{1}{p-1}\right)^\ell \prod\limits_{\substack{L\in\mathcal{P}^*(E)}}\left(\frac{1}{\varphi(g_L)}\prod\limits_{\substack{p|k_L\\ p\geq 3}}\left(\frac{(p-1)^2}{p(p-2)}\right)\right)^{|L|}\prod\limits_{\substack{L\in\mathcal{P}^*(E)}}\prod\limits_{\substack{i=1\\ i\notin L}}^\ell \left(\frac{1}{\varphi(c_{i,L})}\prod\limits_{\substack{p|c_{i,L}\\ p\geq 3}}\left(\frac{(p-1)^2}{p(p-2)}\right)\right).\]

En reportant cela dans $\mathcal{P}_{a,0}'(k)$ on obtient :

\begin{align*}
\mathcal{P}_{a,0}'(k)&=\prod\limits_{\substack{p|k\\p\geq 3}}\left(1-\frac{1}{p-1}\right)^{\ell }\sum\limits_{\substack{\prod\limits_{\substack{L\in \mathscr{P}^*(E)}}k_L=k}}\prod\limits_{L\in \mathscr{P}^*(E)}\left(\frac{1}{{k'_L}^{|L|}}\prod\limits_{\substack{p|k_L\\ p\geq 3}}\left(\frac{(p-1)^2}{p(p-2)}\right)^{|L|}\right)\\
&\sum\limits_{\substack{\{g_L\}_{L\in \mathscr{P}^*(E)}\\ g_L|k_L^\infty\\ k_L|g_L}}\prod\limits_{L\in \mathscr{P}^*(E)}\left(\frac{1}{\varphi(g_L)^{|L|}}\prod\limits_{\substack{i\in E\\ i\notin L}}\left(\sum\limits_{c|\frac{g_L}{k_L}}\frac{1}{\varphi(c)}\prod\limits_{\substack{p|c\\ p\geq 3}}\left(\frac{(p-1)^2}{p(p-2)}\right)\right)\right).
\end{align*}

Calculons la somme sur $c$ à part en l'exprimant sous forme de produit eulérien, et en posant pour $p$ un nombre premier $\geq 2$ et $n$ un entier $f_p(n):=1+\frac{1-p^{-n}}{p-2}=\frac{p-1-p^{-n}}{p-2}$,

\begin{align*}
\sum\limits_{c|\frac{g_L}{k_L}}\frac{1}{\varphi(c)}\prod\limits_{\substack{p|c\\ p\geq 3}}\left(\frac{(p-1)^2}{p(p-2)}\right)&=\prod\limits_{\substack{p|\frac{g_L}{k_L}\\p\geq 3}}\left(1+\frac{p-1}{p-2}\sum\limits_{n=1}^{\nu_p(g_L)-1}\frac{1}{p^n}\right)\\
&=\prod\limits_{\substack{p|k_L\\p\geq 3}}f_p(\nu_p(g_L)-1).\\
\end{align*}

Alors en reprenant $\mathcal{P}_{a,0}'(k)$ :

\begin{align}
\mathcal{P}_{a,0}'(k)&=\prod\limits_{\substack{p|k\\p\geq 3}}\left(1-\frac{1}{p-1}\right)^{\ell }\sum\limits_{\substack{\prod\limits_{\substack{L\in \mathscr{P}^*(E)}}k_L=k}}\prod\limits_{L\in \mathscr{P}^*(E)}\left(\frac{1}{{k'_L}^{|L|}}\prod\limits_{\substack{p|k_L\\ p\geq 3}}\left(\frac{(p-1)^2}{p(p-2)}\right)^{|L|}\right)\nonumber\\
&\prod\limits_{L\in \mathscr{P}^*(E)}\left(\sum\limits_{\substack{g|k_L^\infty\\ k_L|g}}\frac{1}{\varphi(g)^{|L|}}\prod\limits_{\substack{p|k_L\\p\geq 3}}\left(f_p(\nu_p(g)-1)\right)^{\ell -|L|}\right).\label{Pa_kimpair_jusqua_g}
\end{align}

Puis, comme pour $c$, calculons la somme sur $g$ en commençant par se débarrasser de la condition $k_L|g$ :

\begin{align*}
 \sum\limits_{\substack{g|k_L^\infty\\ k_L|g}}\frac{1}{\varphi(g)^{|L|}}\prod\limits_{\substack{p|k_L\\p\geq 3}}\left(f_p(\nu_p(g)-1)\right)^{\ell -|L|}&=\frac{1}{\varphi(k_L)^{|L|}}\sum\limits_{\substack{g|k_L^\infty}}\frac{1}{g^{|L|}}\prod\limits_{\substack{p|k_L\\p\geq 3}}\left(f_p(\nu_p(g))\right)^{\ell -|L|}\\
 &=\frac{1}{\varphi(k_L)^{|L|}}\prod\limits_{\substack{p|k_L\\ p\geq 3}}\left(1+\sum\limits_{n=1}^\infty\left(\frac{1}{p^{n|L|}}\left(f_p(n)\right)^{\ell -|L|}\right)\right).
\end{align*}

Pour exprimer la somme à l'intérieur du produit on applique la formule du binôme puis celle des sommes géométriques, et en posant pour $n$ un entier strictement positif $R_p(n):=\sum\limits_{j=0}^{\ell -n}\binom{\ell -n}{j}\frac{(-1)^j(p-1)^{\ell -n-j}}{(p^{j+n}-1)(p-2)^{\ell -n}}$ on obtient :
\begin{align}
 \sum\limits_{\substack{g|k_L^\infty\\ k_L|g}}\frac{1}{\varphi(g)^{|L|}}&\prod\limits_{\substack{p|k_L\\p\geq 3}}\left(1+\frac{1-p^{-\nu_p(g)+1}}{p-2}\right)^{\ell -|L|}\nonumber\\
 &=\frac{1}{\varphi(k_L)^{|L|}}\prod\limits_{\substack{p|k_L\\ p\geq 3}}\left(1+R_p(|L|)\right).\label{somme_g_kimpair}
\end{align}  
\goodbreak
Alors en reportant \eqref{somme_g_kimpair} dans \eqref{Pa_kimpair_jusqua_g} :

\begin{equation}\hspace*{-1.5cm}
\mathcal{P}_{a,0}'(k)=\prod\limits_{\substack{p|k\\p\geq 3}}\left(1-\frac{1}{p-1}\right)^{\ell }\sum\limits_{\substack{\prod\limits_{\substack{L\in \mathscr{P}^*(E)}}k_L=k}}\prod\limits_{L\in \mathscr{P}^*(E)}\left(\frac{1}{{k'_L}^{|L|}\varphi(k_L)^{|L|}}\prod\limits_{\substack{p|k_L\\ p\geq 3}}\left(\frac{(p-1)^2}{p(p-2)}\right)^{|L|}\left(1+R_p(|L|)\right)\right).\label{Pa_kimpair_jusqua_kL}
\end{equation}

Définissons pour tout $1\leq i\leq \ell $ les fonctions multiplicatives $F_i$ par :

\[F_i(p):=\left(\frac{(h,p)(p-1)}{p^2(p-2)}\right)^i\left(1+R_p(i)\right),\]
pour $p\geq 3$.

Enfin en exprimant la somme sur les $k_L$ de \eqref{Pa_kimpair_jusqua_kL} sous forme de produit eulérien, on obtient bien :
\begin{equation}
\mathcal{P}_{a,0}'(k)=\prod\limits_{\substack{p|k\\p\geq 3}}\left(1-\frac{1}{p-1}\right)^{\ell }\overset{\ell }{\underset{i=1}{\mathlarger{\mathlarger{\mathlarger{\mathlarger{\ast}}}}}}\left(F_i^{\ast \binom{\ell }{i}}\right)\left(k\right).\label{fin_prop_impair}
\end{equation}

\end{proof}

\subsection{Découpage selon le diviseur pair de $k$ parmi les $k_L$.}

Pour le cas pair nous allons introduire une somme sur les sous-ensembles non-vides de $E$ afin de distinguer le $L_0\subset E$ tel que $2|k_{L_0}$. Par souci de lisibilité nous noterons $k'_i:=\prod\limits_{\substack{L\in \mathscr{P}^*(E)\\i\in L}}k_L'$. Ainsi,
\begin{align}
&\mathcal{P}_{a,0}'(k)=\sum\limits_{L_0\in\mathscr{P}^*(E)}\sum\limits_{\substack{\prod\limits_{\substack{L\in \mathscr{P}^*(E)}}k_L=k}}\id(2|k_{L_0})\sum\limits_{\substack{\{g_L\}_{L\in \mathscr{P}^*(E)}\\ g_L|k_L^\infty\\ k_L|g_L}}\sum\limits_{\substack{\{c_{bL}\}_{\substack{L\in \mathscr{P}^*(E)\\ b\notin L}}\\c_{bL}|\frac{g_L}{k_L}}}\prod\limits_{i\in E}\left(\sum\limits_{\substack{d|k}}\frac{\mu(d)\varepsilon\left(k'_i,u_i d\right)}{\varphi(u_i d)k'_i}\right)\nonumber\\
&=\sum\limits_{L_0\in\mathscr{P}^*(E)}\sum\limits_{\substack{\prod\limits_{\substack{L\in \mathscr{P}^*(E)}}k_L=k}}\id(2|k_{L_0})\prod\limits_{L\in \mathscr{P}^*(E)}\left(\frac{1}{{k_L'}^{|L|}}\right)\sum\limits_{\substack{\{g_L\}_{L\in \mathscr{P}^*(E)}\\ g_L|k_L^\infty\\ k_L|g_L}}\sum\limits_{\substack{\{c_{bL}\}_{\substack{L\in \mathscr{P}^*(E)\\ b\notin L}}\\c_{bL}|\frac{g_L}{k_L}}}\prod\limits_{i\in E}\left(\sum\limits_{\substack{d|k}}\frac{\mu(d)\varepsilon\left(k'_i,u_i d\right)}{\varphi(u_i d)}\right).\label{Pa_pair_jusqua_d}
\end{align}
Afin de calculer les dernières sommes sur les diviseurs de $k$ nous utilisons le lemme suivant.
\begin{lem}\label{lemme_contribution_degre}
 On a :
 \[\sum\limits_{\substack{d|k}}\frac{\mu(d)\varepsilon\left(k'_i,u_i d\right)}{\varphi(u_i d)}=\frac{1}{2\varphi(u_i)}\prod\limits_{\substack{p|k\\p|u_i\\ p\geq 3}}\left(\frac{(p-1)^2}{p(p-2)}\right)\prod\limits_{\substack{p|k\\p\geq 3}}\left(1-\frac{1}{p-1}\right)B(i,L_0)
\]

où 
\[B(i,L_0):=\left\{\begin{array}{l l}
  \id(2|c_{i,L_0}) & \text{ si } i\in E\backslash L_0,\\
  1+\id(2\tilde{a_1}|k)\delta_2(g_{L_0})\mu(\tilde{a_1})\mu\left((\tilde{a_1},u_i)\right)\prod\limits_{\substack{p|\tilde{a_1}\\p\nmid u_i\\ p\geq 3}}\left(\frac{1}{p-2}\right) & \text{ si } i\in  L_0.\end{array}\right.,\]
  \[\delta_2(g_{L_0})=\left\{\begin{array}{l}
1\ \text{si}\ \nu_2(g_{L_0})\geq s(a_1)\\
-1\ \text{si}\ \nu_2(g_{L_0})=s(a_1)-1\\
0\ \text{sinon}
\end{array}\right.,\]
avec $\tilde{a_1}:=a_1/(2,a_1)$ et $s(a_1):=\left\{\begin{array}{l}
1\ \text{si}\ a_1\equiv 1\mod 4,\\
2\ \text{si}\ a_1\equiv 3\mod 4,\\
3\ \text{si}\ a_1\equiv 0\mod 2.
\end{array}\right.$.
\end{lem}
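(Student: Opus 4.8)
The plan is to write $\varepsilon = 1 + \id(\varepsilon = 2)$ and split the sum accordingly. Setting
\[
S_i := \sum_{d\mid k}\frac{\mu(d)\,\varepsilon(k'_i,u_id)}{\varphi(u_id)} = S_i^{(1)} + S_i^{(2)},\qquad S_i^{(1)} := \sum_{d\mid k}\frac{\mu(d)}{\varphi(u_id)},\quad S_i^{(2)} := \sum_{\substack{d\mid k\\ \varepsilon(k'_i,u_id)=2}}\frac{\mu(d)}{\varphi(u_id)},
\]
I would first dispatch $S_i^{(1)}$ by Lemme \ref{lemme_somme_multiplicative} with $\alpha = u_i$ and $\beta = k$, isolating the prime $2$ in the resulting Euler product since $k$ is even. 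Using $1-\tfrac1p = \tfrac{(p-1)^2}{p(p-2)}(1-\tfrac1{p-1})$ for odd $p$, the product over odd primes becomes exactly the main factor $\tfrac{1}{2\varphi(u_i)}\prod_{p\mid k,\,p\mid u_i,\,p\geq3}\tfrac{(p-1)^2}{p(p-2)}\prod_{p\mid k,\,p\geq3}(1-\tfrac1{p-1})$, the prefactor $\tfrac12$ being the local factor at $2$ when $2\mid u_i$; if instead $2\mid k$ but $2\nmid u_i$, the local factor at $2$ equals $1-\tfrac{1}{2-1}=0$ and $S_i^{(1)}$ vanishes.

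Next I would separate the two parities of $k'_i$. If $i\in E\setminus L_0$, then $L_0$ being the unique block with $2\mid k_{L_0}$ forces $k'_i=\prod_{L\ni i}k'_L$ to be odd, so $\varepsilon(k'_i,u_id)=1$ identically and $S_i^{(2)}=0$. Since for $i\notin L_0$ the only possible even divisor of $u_i$ is $c_{i,L_0}$, the value $S_i=S_i^{(1)}$ equals the main factor when $2\mid c_{i,L_0}$ and vanishes otherwise, i.e. $B(i,L_0)=\id(2\mid c_{i,L_0})$. If $i\in L_0$, then $2\mid k_{L_0}\mid g_{L_0}\mid u_i$ and (as $h$ is odd) $2\mid k'_i$, so $2\mid u_i$ automatically and $S_i^{(1)}$ is genuinely the main factor; it remains to compute $S_i^{(2)}$.

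For $i\in L_0$ I would first rephrase Proposition \ref{propdegre}: since $2\mid k'_i$, one has $\varepsilon(k'_i,u_id)=2$ if and only if $a_1\mid u_id$ and $\nu_2(u_id)\geq s(a_1)$. Because $d\mid k$ is squarefree and $\nu_2(u_i)=\nu_2(g_{L_0})$, we have $\nu_2(u_id)=\nu_2(g_{L_0})+\nu_2(d)$ with $\nu_2(d)\in\{0,1\}$, while the odd part of the divisibility condition is $\tilde{a_1}\mid u_id$. The sum $S_i^{(2)}$ then factors as an Euler product over $p\mid k$, and I would read off its local factors relative to those of $S_i^{(1)}$: at $p=2$, summing $(-1)^{\nu_2(d)}\tfrac{\varphi(u_i)}{\varphi(u_id)}$ over admissible $\nu_2(d)$ gives $\tfrac12\delta_2(g_{L_0})$ (the three threshold cases $\nu_2(g_{L_0})\geq s(a_1)$, $=s(a_1)-1$, $\leq s(a_1)-2$ producing $\tfrac12,-\tfrac12,0$), hence ratio $\delta_2(g_{L_0})$ to the factor $\tfrac12$ of $S_i^{(1)}$; at an odd $p\mid\tilde{a_1}$ with $p\mid u_i$ the divisibility is automatic, giving ratio $1$; and at an odd $p\mid\tilde{a_1}$ with $p\nmid u_i$ the condition forces $p\mid d$, so the local factor is $-\tfrac1{p-1}$ against $\tfrac{p-2}{p-1}$, a ratio of $-\tfrac1{p-2}$. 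If some such $p$ fails to divide $k$ the constrained sum is empty, whence the global indicator $\id(2\tilde{a_1}\mid k)$, equivalent to $\tilde{a_1}\mid k$ as $k$ is even.

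Collecting the local ratios yields $S_i^{(2)} = S_i^{(1)}\,\id(2\tilde{a_1}\mid k)\,\delta_2(g_{L_0})\prod_{p\mid\tilde{a_1},\,p\nmid u_i,\,p\geq3}\tfrac{-1}{p-2}$, and the identity $\mu(\tilde{a_1})\mu((\tilde{a_1},u_i)) = (-1)^{\#\{p\mid\tilde{a_1}:\,p\nmid u_i\}}$ rewrites the product as $\mu(\tilde{a_1})\mu((\tilde{a_1},u_i))\prod_{p\mid\tilde{a_1},\,p\nmid u_i,\,p\geq3}\tfrac1{p-2}$. Adding $S_i^{(1)}+S_i^{(2)}$ factors out the main factor and leaves precisely the bracket defining $B(i,L_0)$ in the case $i\in L_0$, which finishes the proof. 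The main delicacy I anticipate is the bookkeeping that collapses the three nested $2$-adic cases of $\varepsilon$ into the single threshold $\nu_2(u_id)\geq s(a_1)$, together with the sign tracking carried by $\delta_2(g_{L_0})$ and by $\mu(\tilde{a_1})\mu((\tilde{a_1},u_i))$.
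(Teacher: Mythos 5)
Your proof is correct and follows essentially the same route as the paper: you split $\varepsilon=1+\id(\lquote\varepsilon=2\rquote)$, evaluate the unconstrained sum by Lemme \ref{lemme_somme_multiplicative} (the local factor $1-\tfrac{1}{2-1}=0$ at $p=2$ producing $\id(2|c_{i,L_0})$ for $i\notin L_0$), and collapse the three cases of Proposition \ref{propdegre} into the single threshold condition $\tilde{a_1}\mid u_id$ et $\nu_2(u_id)\geq s(a_1)$, which is exactly the paper's condition $b\mid u_id$ with $b=2^{s(a_1)}\tilde{a_1}$. The only difference is mechanical: for the constrained sum $S_i^{(2)}$ the paper substitutes $d=d'\,\frac{b}{(u_i,b)}$ and reapplies Lemme \ref{lemme_somme_multiplicative}, then decomposes the resulting indicator, $\mu$- and $\varphi$-factors as in \eqref{prodid}--\eqref{prodnodiv}, whereas you compare the local Euler factors of $S_i^{(2)}$ to those of $S_i^{(1)}$ prime by prime (ratio $\delta_2(g_{L_0})$ at $2$, ratio $1$ at $p\mid(\tilde{a_1},u_i)$, ratio $-\tfrac{1}{p-2}$ at $p\mid\tilde{a_1}$, $p\nmid u_i$) — both yield \eqref{somme_d_buid} with the same sign bookkeeping via $\mu(\tilde{a_1})\mu\left((\tilde{a_1},u_i)\right)$.
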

\begin{proof}
Notons $b:=2^{s(a_1)}\tilde{a_1}$, on a
\begin{equation*}
\varepsilon(\ell,m)=\left\{\begin{array}{l}
2\ \text{si}\ 2|\ell , \text{et}\ b|m\\
1\ \text{sinon}.
\end{array}\right.
\end{equation*}

Alors si $i\in E\backslash L_0$, $k'_i$ est impair et donc $\varepsilon(k'_i,u_i d_i)=1$. Ce qui nous donne :
\begin{align*}
\sum\limits_{\substack{d|k}}\frac{\mu(d)\varepsilon\left(k'_i,u_i d\right)}{\varphi(u_i d)}=\sum\limits_{\substack{d|k}}\frac{\mu(d)}{\varphi(u_i d)}&=\frac{1}{\varphi(u_i)}\prod\limits_{\substack{p|k\\p|u_i}}\left(1-\frac{1}{p}\right)\prod\limits_{\substack{p|k\\p\nmid u_i}}\left(1-\frac{1}{p-1}\right).\\
\end{align*}
Puis en traitant à part la contribution donnée par $p=2$ :
\begin{equation}
\sum\limits_{\substack{d|k}}\frac{\mu(d)\varepsilon\left(k'_i,u_i d\right)}{\varphi(u_i d)}=\id(2|c_{i,L_0})\frac{1}{2\varphi(u_i)}\prod\limits_{\substack{p|k\\p|u_i\\ p\geq 3}}\left(\frac{(p-1)^2}{p(p-2)}\right)\prod\limits_{\substack{p|k\\p\geq 3}}\left(1-\frac{1}{p-1}\right),\label{somme_d_pas_L0}
\end{equation}
car $\prod\limits_{\substack{p|k\\ p\nmid u_i}}\left(1-\frac{1}{p-1}\right)=0$ si $2\nmid u_i$, c'est-à-dire si $2\nmid c_{i,L_0}$.\\ 

Puis, lorsque $i\in L_0$, $\varepsilon(k'_i,u_i d)=1+\id(b|u_i d)$, ainsi, en utilisant le lemme \ref{lemme_somme_multiplicative} :
\begin{align}
\sum\limits_{\substack{d|k}}\frac{\mu(d)\varepsilon\left(k'_i,u_i d\right)}{\varphi(u_i d)}&=\sum\limits_{\substack{d|k}}\frac{\mu(d)}{\varphi(u_i d)}+\sum\limits_{\substack{d|k\\ b|u_i d}}\frac{\mu(d)}{\varphi(u_i d)}\nonumber\\
&=\frac{1}{2\varphi(u_i)}\prod\limits_{\substack{p|k\\p|u_i\\ p\geq 3}}\left(\frac{(p-1)^2}{p(p-2)}\right)\prod\limits_{\substack{p|k\\p\geq 3}}\left(1-\frac{1}{p-1}\right) +\sum\limits_{\substack{d|k\\ b|u_i d}}\frac{\mu(d)}{\varphi(u_i d)}.\label{somme_d_L0}
\end{align}

Calculons cette dernière somme, en notant que si $b$ divise $u_i d$ alors $\frac{b}{(b,u_i)}$ divise $d$ et a fortiori $k$.
\begin{align*}
 \sum\limits_{\substack{d|k\\ b|u_i d}}\frac{\mu(d)}{\varphi(u_i d)}&=\id\left(\frac{b}{(u_i,b)}|k\right)\sum\limits_{\substack{d'|k/\frac{b}{(u_i,b)}}}\frac{\mu(d'\frac{b}{(u_i,b)})}{\varphi(u_i d'\frac{b}{(u_i,b)})}.\\
 \end{align*}
 Puis comme $k$ est sans facteur carré, les diviseurs $d'$ de la somme précédente vérifient $\left(\frac{b}{(b,u_i)},d'\right)=1$, ainsi, en appliquant le lemme \ref{lemme_somme_multiplicative} :
 \begin{align}
 \sum\limits_{\substack{d|k\\ b|u_i d}}\frac{\mu(d)}{\varphi(u_i d)}&=\id\left(\frac{b}{(u_i,b)}|k\right)\frac{\mu\left(\frac{b}{(u_i,b)}\right)}{\varphi\left(u_i \frac{b}{(u_i,b)}\right)}\sum\limits_{\substack{d'|k/\frac{b}{(u_i,b)}}}\frac{\mu(d')\varphi\left(u_i \frac{b}{(u_i,b)}\right)}{\varphi(u_i d'\frac{b}{(u_i,b)})}\nonumber\\
 &=\id\left(\frac{b}{(u_i,b)}|k\right)\frac{\mu\left(\frac{b}{(u_i,b)}\right)}{\varphi\left(u_i \frac{b}{(u_i,b)}\right)}\prod\limits_{\substack{p|\frac{k(u_i,b)}{b}\\ p|u_i }}\left(1-\frac{1}{p}\right)\prod\limits_{\substack{p|\frac{k(u_i,b)}{b}\\ p\nmid u_i }}\left(1-\frac{1}{p-1}\right).\label{somme_dpair}
\end{align}

Décomposons chaque élément de cette dernière égalité :

\begin{equation}\id\left(\frac{b}{(u_i,b)}|k\right)=\id(2\tilde{a_1}|k)\id(\nu_2(g_{L_0})\geq s(a_1)-1),\label{prodid}\end{equation}
car $\nu_2\left(\frac{b}{(b,u_i)}\right)=s(a_1)-\min \left(\nu_2(g_{L_0}),s(a_1)\right)$ et $\nu_2(k)=1$.\\

Alors, comme on a $\nu_2(g_{L_0})\geq s(a_1)-1$ cela implique que :
\begin{equation}\mu\left(\frac{b}{(u_i,b)}\right)=\mu(\tilde{a_1})\mu\left((\tilde{a_1},u_i)\right)\mu\left(2^{\id\left(\nu_2(g_{L_0})=s(a_1)-1\right)}\right).\label{prodmu}\end{equation}

Puis, comme $2||\frac{b}{(u_i,b)}$ ou $2\nmid\frac{b}{(u_i,b)}$,
\begin{align*}
 \frac{1}{\varphi\left(u_i \frac{b}{(u_i,b)}\right)}&=\frac{1}{\varphi(u_i)\varphi\left(\frac{b}{(u_i,b)}\right)}\left(1-\frac{1}{2}\id\left(\nu_2(g_{L_0})=s(a_1)-1\right)\right)\\
 &=\frac{1}{\varphi(u_i)\varphi\left(\frac{\tilde{a_1}}{(u_i,\tilde{a_1})}\right)}\left(1-\frac{1}{2}\id\left(\nu_2(g_{L_0})=s(a_1)-1\right)\right),\\
 \end{align*}
 et en rappelant que $\tilde{a_1}$ est sans facteur carré on obtient :
 \begin{align}
 \frac{1}{\varphi\left(u_i \frac{b}{(u_i,b)}\right)}&=\frac{1}{\varphi(u_i)}\prod\limits_{\substack{p|\tilde{a_1}\\p\nmid u_i}}\left(\frac{1}{p-1}\right)\left(1-\frac{1}{2}\id\left(\nu_2(g_{L_0})=s(a_1)-1\right)\right).\label{prodphi}
\end{align}

Décomposons ensuite les deux produits de \eqref{somme_dpair} :

\begin{align*}
 \prod\limits_{\substack{p|\frac{k(u_i,b)}{b}\\ p|u_i }}\left(1-\frac{1}{p}\right)&=\prod\limits_{\substack{p|k\\ p|u_i }}\left(1-\frac{1}{p}\right)\prod\limits_{\substack{p|\frac{b}{(u_i,b)}\\ p|u_i }}\left(1-\frac{1}{p}\right)^{-1},\\
 \end{align*}
 or pour $p\geq 3$, $\nu_p(b)\leq 1$ et ainsi $\nu_p\left(u_i,\frac{b}{(u_i,b)}\right)=0$, donc
\begin{align}
 \prod\limits_{\substack{p|\frac{k(u_i,b)}{b}\\ p|u_i }}\left(1-\frac{1}{p}\right)&=\frac{1}{2}\prod\limits_{\substack{p|k\\ p|u_i \\p\geq 3}}\left(1-\frac{1}{p}\right)\left(1+\id\left(\nu_2(g_{L_0})=s(a_1)-1\right)\right).\label{proddiv}
\end{align}

et, comme $2|u_i$,

\begin{align}
 \prod\limits_{\substack{p|\frac{k(u_i,b)}{b}\\ p\nmid u_i }}\left(1-\frac{1}{p-1}\right)&=\prod\limits_{\substack{p|k\\ p\nmid u_i }}\left(1-\frac{1}{p-1}\right)\prod\limits_{\substack{p|\frac{b}{(u_i,b)}\\ p\nmid u_i }}\left(1-\frac{1}{p-1}\right)^{-1}\nonumber\\
 &=\prod\limits_{\substack{p|k\\ p\nmid u_i }}\left(1-\frac{1}{p-1}\right)\prod\limits_{\substack{p|\tilde{a_1}\\ p\nmid u_i }}\left(1-\frac{1}{p-1}\right)^{-1}.\label{prodnodiv}
\end{align}

Notons :

\begin{equation*}
\delta_2(g_{L_0})=\left\{\begin{array}{l}
1\ \text{si}\ \nu_2(g_{L_0})\geq s(a_1)\\
-1\ \text{si}\ \nu_2(g_{L_0})=s(a_1)-1\\
0\ \text{sinon}
\end{array}\right.
\end{equation*}

Ainsi en reprenant \eqref{prodid}, \eqref{prodmu}, \eqref{prodphi}, \eqref{proddiv} et \eqref{prodnodiv} dans \eqref{somme_dpair} :

\begin{equation}\sum\limits_{\substack{d|k\\ b|u_i d}}\frac{\mu(d)}{\varphi(u_i d)}=\id(2\tilde{a_1}|k)\delta_2(g_{L_0})\frac{\mu(\tilde{a_1})\mu\left((\tilde{a_1},u_i)\right)}{2\varphi(u_i)}\prod\limits_{\substack{p|\tilde{a_1}\\p\nmid u_i\\ p\geq 3}}\left(\frac{1}{p-2}\right)\prod\limits_{\substack{p|k\\p|u_i\\ p\geq 3}}\left(\frac{(p-1)^2}{p(p-2)}\right)\prod\limits_{\substack{p|k\\p\geq 3}}\left(1-\frac{1}{p-1}\right).\label{somme_d_buid}\end{equation}

Ce qui, en reprenant \eqref{somme_d_pas_L0} et \eqref{somme_d_L0}, permet de conclure.

\end{proof}

On a donc dans l'expression \eqref{Pa_pair_jusqua_d} de $\mathcal{P}_{a,0}'(k)$ un terme $\prod\limits_{i\in E}B(i,L_0)$, intéressons-nous dans ce dernier au produit sur les $i\in L_0$. Nous développons le produit $\prod\limits_{i\in E}B(i,L_0)$ ce qui nous amène à introduire une somme sur les sous-ensembles $L_1\subset L_0$ non vides.
\goodbreak
\begin{align}
 \prod\limits_{i\in L_0}&\left(1+\id(2\tilde{a_1}|k)\delta_2(g_{L_0})\mu(\tilde{a_1})\mu\left((\tilde{a_1},u_i)\right)\prod\limits_{\substack{p|\tilde{a_1}\\p\nmid u_i\\ p\geq 3}}\left(\frac{1}{p-2}\right)\right)\nonumber\\
 &=1+\sum\limits_{L_1\in \mathscr{P}^*(L_0)}\id(2\tilde{a_1}|k)\delta_2(g_{L_0})^{|L_1|}\mu(\tilde{a_1})^{|L_1|}\prod\limits_{\substack{p|\tilde{a_1}\\ p\geq 3}}\left(\frac{1}{p-2}\right)^{|L_1|}\prod\limits_{i\in L_1}\left(\prod\limits_{\substack{p|\tilde{a_1}\\ p|u_i\\ p\geq 3}}\left(2-p\right)\right),\label{separation_alpha_beta}
\end{align}

car $\mu\left((\tilde{a_1},u_i)\right)=\prod\limits_{\substack{p|\tilde{a_1}\\ p|u_i}}(-1)$. Notons $\mathcal{P}_\alpha'(k)$ la contribution à $\mathcal{P}_{a,0}'(k)$ du terme $1$ dans l'expression qui précède, et  $\mathcal{P}_\beta'(k)$ la contribution à $\mathcal{P}_{a,0}'(k)$ du terme $\sum\limits_{L_1\in \mathscr{P}^*(L_0)}\id(2\tilde{a_1}|k)\delta_2(g_{L_0})^{|L_1|}\mu(\tilde{a_1})^{|L_1|}\prod\limits_{\substack{p|\tilde{a_1}\\ p\geq 3}}\left(\frac{1}{p-2}\right)^{|L_1|}\prod\limits_{i\in L_1}\left(\prod\limits_{\substack{p|\tilde{a_1}\\ p|u_i\\ p\geq 3}}\left(2-p\right)\right)$. Les deux prochains paragraphes ont pour objectif d'évaluer ces deux quantités.

\subsection{Évaluation de $\mathcal{P}_\alpha'(k)$.}
L'évaluation de $\mathcal{P}_\alpha'(k)$ est semblable à celle de $\mathcal{P}_{a,0}'(k)$ dans le cas $k$ impair, nous allons exprimer successivement les sommes sur les $c_{i,L}$, $g_L$ et $k_L$ en produits eulériens.
\begin{prop}\label{prop_Palpha}
On a, en reprenant les définitions de $H_1$ et $F_i$ de la proposition \ref{prop_terme_principal},
\begin{align*}
 \mathcal{P}_\alpha'(k)&=H_1(E)\prod\limits_{\substack{p|k\\p\geq 3}}\left(1-\frac{1}{p-1}\right)^{\ell }\overset{\ell }{\underset{i=1}{\mathlarger{\mathlarger{\mathlarger{\mathlarger{\ast}}}}}}\left(F_i^{\ast \binom{\ell }{i}}\right)\left(\frac{k}{2}\right).
\end{align*}

\end{prop}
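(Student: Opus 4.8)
La preuve repose sur la séparation, dans $\mathcal{P}_\alpha'(k)$, de la contribution du facteur premier $2$ et de celle des facteurs premiers impairs. Rappelons que $\mathcal{P}_\alpha'(k)$ est la contribution du terme $1$ du développement \eqref{separation_alpha_beta}~; en reportant le Lemme \ref{lemme_contribution_degre} dans \eqref{Pa_pair_jusqua_d}, elle s'écrit comme la somme sur $L_0\in\mathscr{P}^*(E)$, sur les décompositions $\prod_L k_L=k$ vérifiant $2|k_{L_0}$, et sur les $g_L$ et $c_{iL}$, du produit pour $i\in E$ des facteurs communs $\tfrac{1}{2\varphi(u_i)}\prod_{p|k,\,p|u_i,\,p\geq3}\tfrac{(p-1)^2}{p(p-2)}\prod_{p|k,\,p\geq3}(1-\tfrac1{p-1})$, assorti de la contrainte $\id(2|c_{i,L_0})$ pour chaque $i\in E\backslash L_0$ (le facteur correctif $B(i,L_0)$ ayant été remplacé par $1$ pour $i\in L_0$).

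Toutes ces sommes étant multiplicatives en les facteurs premiers de $k$, la première étape consiste à factoriser $\mathcal{P}_\alpha'(k)$ en une partie locale en $2$ et une partie impaire. Comme $k$ est sans facteur carré et pair, $2$ divise exactement un des $k_L$, à savoir $k_{L_0}$, et comme $h$ est impair le facteur $2$ subsiste dans $k_{L_0}'$ (de sorte que $\nu_2(k_{L_0}')=1$). Pour les premiers $p\geq3$, les sommes sur les parties impaires des $g_L$ et $c_{iL}$ se calculent exactement comme dans le cas $k$ impair (Proposition \ref{prop_k_impair}), la contrainte $\id(2|c_{i,L_0})$ ne portant que sur les valuations $2$-adiques et n'intervenant donc pas~; cette partie impaire est indépendante du choix de $L_0$ et vaut
\[\prod_{\substack{p|k\\ p\geq3}}\Bigl(1-\tfrac1{p-1}\Bigr)^{\ell}\;\overset{\ell }{\underset{i=1}{\mathlarger{\mathlarger{\mathlarger{\mathlarger{\ast}}}}}}\bigl(F_i^{\ast \binom{\ell }{i}}\bigr)(k/2).\]

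Il reste à évaluer le facteur local en $2$ obtenu en sommant sur $L_0$, et à l'identifier à $H_1(E)$. Pour $L_0$ fixé de cardinal $s$, je rassemblerais les contributions $2$-adiques~: le facteur $k_{L_0}'^{-|L_0|}$ fournit $2^{-s}$, et le produit des $\tfrac1{2\varphi(u_i)}$ fournit $\prod_{i\in E}2^{-\nu_2(u_i)}$, le $\tfrac12$ explicite se combinant avec $\tfrac{1}{\varphi(2^{\nu_2(u_i)})}=2^{-(\nu_2(u_i)-1)}$. En posant $n=\nu_2(g_{L_0})\geq1$, on a $\nu_2(u_i)=n$ pour $i\in L_0$, tandis que pour $i\in E\backslash L_0$ la contrainte $\id(2|c_{i,L_0})$ et l'appartenance $c_{i,L_0}\mid g_{L_0}/k_{L_0}$ imposent $\nu_2(u_i)=\nu_2(c_{i,L_0})=m_i\in\{1,\dots,n-1\}$. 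La sommation sur les $m_i$ via $\sum_{m=1}^{n-1}2^{-m}=1-2^{1-n}$, puis la sommation sur les $L_0$ de cardinal $s$ (apportant un facteur $\binom{\ell}{s}$), conduit à
\[\sum_{s=1}^{\ell}\binom{\ell}{s}2^{-s}\sum_{n\geq1}2^{-sn}\bigl(1-2^{1-n}\bigr)^{\ell-s}.\]

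La dernière étape, qui est le principal point technique, est la simplification de cette expression en $H_1(E)$. Je développerais $(1-2^{1-n})^{\ell-s}$ par la formule du binôme et sommerais la série géométrique $\sum_{n\geq1}2^{-n(s+j)}=(2^{s+j}-1)^{-1}$, ce qui ramène le terme d'indice $s$ à $I_s:=\sum_{j=0}^{\ell-s}\binom{\ell-s}{j}\tfrac{(-1)^j2^{j-s}}{2^{s+j}-1}$. La factorisation $2^{2(s+j)}-1=(2^{s+j}-1)(2^{s+j}+1)$ donne alors $\tfrac{2^{j-s}-2^{-3s-j}}{2^{s+j}-1}=2^{-2s}+2^{-3s-j}$, d'où, en reconnaissant $R_2(s)$ et en utilisant $\sum_j\binom{\ell-s}{j}(-1)^j=\id(s=\ell)$ ainsi que $\sum_j\binom{\ell-s}{j}(-1)^j2^{-j}=2^{s-\ell}$,
\[I_s=2^{-3s}R_2(s)+2^{-2s-\ell}+2^{-2s}\id(s=\ell).\]
En sommant $\binom{\ell}{s}I_s$ sur $s$, le terme de bord en $s=\ell$ fournit précisément le $2^{-2\ell}$ isolé dans la définition de $H_1(E)$, et l'on retrouve $H_1(E)=2^{-\ell}\bigl(\sum_{s=1}^{\ell}\binom{\ell}{s}2^{-2s}(1+2^{\ell-s}R_2(s))+2^{-\ell}\bigr)$. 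La difficulté réside donc entièrement dans ce calcul combinatoire final et dans le suivi rigoureux des valuations $2$-adiques de chaque facteur, la séparation partie paire/partie impaire et le transfert du cas impair étant, eux, essentiellement formels.
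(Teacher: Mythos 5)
Votre démonstration est correcte et suit pour l'essentiel la même démarche que celle de l'article : expression de $\mathcal{P}_\alpha'(k)$ via le lemme \ref{lemme_contribution_degre}, séparation multiplicative entre la partie impaire (qui se ramène à la proposition \ref{prop_k_impair} évaluée en $k/2$, indépendamment de $L_0$) et le facteur local en $2$, sommé sur les $L_0$ selon leur cardinal. Votre suivi direct des valuations $n=\nu_2(g_{L_0})$ et $m_i=\nu_2(c_{i,L_0})$ reproduit exactement les fonctions $\delta_3$ et $\delta_4$ de l'article (votre formule unifiée $\sum_{s=1}^{\ell}\binom{\ell}{s}2^{-s}\sum_{n\geq1}2^{-sn}(1-2^{1-n})^{\ell-s}$ englobant le cas $L_0=E$ que l'article traite à part), et votre identité $I_s=2^{-3s}R_2(s)+2^{-2s-\ell}+2^{-2s}\id(s=\ell)$ redonne bien $H_1(E)$, y compris le terme de bord $2^{-2\ell}$.
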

\begin{proof}
Commençons par expliciter $\mathcal{P}_\alpha'(k)$ en utilisant le lemme \ref{lemme_contribution_degre}, on obtient

 \begin{equation}\hspace*{-2cm}
 \mathcal{P}_\alpha'(k):=\alpha_1(k)\sum\limits_{L_0\in\mathscr{P}^*(E)}\sum\limits_{\substack{\prod\limits_{\substack{L\in \mathscr{P}^*(E)}}k_L=k}}\sum\limits_{\substack{\{g_L\}_{L\in \mathscr{P}^*(E)}\\ g_L|k_L^\infty\\ k_L|g_L}}\prod\limits_{L\in \mathscr{P}^*(E)}\bigg(\alpha_2(k_L)\alpha_3(g_L)\bigg)\sum\limits_{\substack{\{c_{bL}\}_{\substack{L\in \mathscr{P}^*(E)\\ b\notin L}}\\c_{bL}|\frac{g_L}{k_L}}}\prod\limits_{\substack{\{i,L\}\\ L\in \mathscr{P}^*(E)\\ i\notin L}}\alpha_4(c_{i,L})\label{def_Palpha},
\end{equation}
où 

\begin{itemize}
 \item [$\bullet$]$\alpha_1(k):=2^{-\ell }\prod\limits_{\substack{p|k\\p\geq 3}}\left(1-\frac{1}{p-1}\right)^{\ell }$,
 \item [$\bullet$]$\alpha_2(k_L):=\left\{\begin{array}{ll}\frac{\id(2|k_{L_0})}{{k_{L_0}'}^{|{L_0}|}}\prod\limits_{\substack{p|k_{L_0}\\ p\geq 3}}\left(\frac{(p-1)^2}{p(p-2)}\right)^{|{L_0}|} & \text{ si }L=L_0,\\ \frac{1}{{k_L'}^{|L|}}\prod\limits_{\substack{p|k_L\\ p\geq 3}}\left(\frac{(p-1)^2}{p(p-2)}\right)^{|L|} & \text{ sinon}, \end{array}\right.$,

 \item [$\bullet$]$\alpha_3(g_L):=\frac{1}{\varphi(g_L)^{|L|}}$,
 \item [$\bullet$]$\alpha_4(c_{i,L}):=\left\{\begin{array}{ll}\frac{\id(2|c_{i,L_0})}{\varphi(c_{i,L})}\prod\limits_{\substack{p|k\\p|c_{i,L}\\ p\geq 3}}\left(\frac{(p-1)^2}{p(p-2)}\right) & \text{ si }L=L_0,\\\frac{1}{\varphi(c_{i,L})}\prod\limits_{\substack{p|k\\p|c_{i,L}\\ p\geq 3}}\left(\frac{(p-1)^2}{p(p-2)}\right)  & \text{ sinon}, \end{array}\right.$.
 \end{itemize}
 
Occupons-nous d'abord des somme sur les $c$, pour ce faire on introduit une autre fonction indicatrice pour tenir compte des $\id(2|c_{i,L_0})$ :
\begin{equation*}
\delta_3(g_{L_0})=\left\{\begin{array}{l}
\left(2\left(1-2^{-\nu_2(g_{L_0})+1}\right)\right)^{\ell -|L_0|}\ \text{si}\ 4|g_{L_0}\ \text{et} L_0\neq E,\\
1\ \text{si}\ 2|g_{L_0}\ \text{et}\ L_0=E,\\
0\ \text{sinon}
\end{array}\right.
\end{equation*}

En effet, si $L_0\neq E$ alors il existe $i\in E\backslash L_0$ avec $2|c_{i,L_0}|\frac{g_{L_0}}{k_{L_0}}$ et donc $4|g_{L_0}$et dans ce cas

\[\sum\limits_{\substack{c_{i,L_0}|\frac{g_{L_0}}{k_{L_0}}\\ 2|c_{i,L_0}}}\frac{1}{\varphi(c_{i,L_0})}\prod\limits_{\substack{p|k\\p|c_{i,L_0}\\ p\geq 3}}\left(\frac{(p-1)^2}{p(p-2)}\right)=\left(2\left(1-2^{-\nu_2(g_{L_0})+1                                                                                                                                                                                                                                                                                                                                                                                                                                                                                                                                                                                                                                                                                                                                                                                                                                                                                                                                                                                                                                                                                                                                                                                                                                                                                                                                                                                                                                                                                                                                                                                                                     }\right)\right)\sum\limits_{\substack{c_{i,L_0}|\frac{g_{L_0}}{k_{L_0}}\\ 2\nmid c_{i,L_0}}}\frac{1}{\varphi(c_{i,L_0})}\prod\limits_{\substack{p|k\\p|c_{i,L_0}\\ p\geq 3}}\left(\frac{(p-1)^2}{p(p-2)}\right).\]
Ainsi
\begin{equation*}
 \sum\limits_{\substack{\{c_{bL}\}_{\substack{L\in \mathscr{P}^*(E)\\ b\notin L}}\\c_{bL}|\frac{g_L}{k_L}}}\prod\limits_{\substack{\{i,L\}\\ L\in \mathscr{P}^*(E)\\ i\notin L}}\alpha_4(c_{i,L})=\delta_3(g_{L_0})\prod\limits_{i\in E}\left(\prod\limits_{\substack{L\in \mathscr{P}^*(E)\\ i\notin L}}\left(\sum\limits_{\substack{c|\frac{g_L}{k_L}\\ 2\nmid c}}\frac{1}{\varphi(c)}\prod\limits_{\substack{p|k\\p|c\\ p\geq 3}}\left(\frac{(p-1)^2}{p(p-2)}\right)\right)\right)
 \end{equation*}
 Puis en exprimant la somme sur $c$ comme un produit eulérien :
 \begin{align}
 \sum\limits_{\substack{\{c_{bL}\}_{\substack{L\in \mathscr{P}^*(E)\\ b\notin L}}\\c_{bL}|\frac{g_L}{k_L}}}\prod\limits_{\substack{\{i,L\}\\ L\in \mathscr{P}^*(E)\\ i\notin L}}\alpha_4(c_{i,L}&)=\delta_3(g_{L_0})\prod\limits_{\substack{L\in \mathscr{P}^*(E)}}\left(\prod\limits_{\substack{p|k_L\\p\geq 3}}\left(1+\frac{p-1}{p-2}\sum\limits_{n=1}^{\nu_p(g_L)-1}\frac{1}{p^n}\right)^{\ell -|L|}\right)\nonumber\\
  &=\delta_3(g_{L_0})\prod\limits_{\substack{L\in \mathscr{P}^*(E)}}\left(\prod\limits_{\substack{p|k_L\\p\geq 3}}\left(1+\frac{1-p^{-\nu_p(g_L)+1}}{p-2}\right)^{\ell -|L|}\right)\label{somme_cil_alpha}
\end{align}

Puis exprimons séparément la somme sur $g_{L_0}$.
\begin{align}
 \sum\limits_{\substack{\{g_{L_0}\}_{{L_0}\in \mathscr{P}^*(E)}\\ g_{L_0}|k_{L_0}^\infty\\ k_{L_0}|g_{L_0}}}&\delta_3(g_{L_0})\frac{1}{\varphi(g_{L_0})^{|L_0|}}\prod\limits_{\substack{p|k_{L_0}\\p\geq 3}}\left(1+\frac{1-p^{-\nu_p(g_L)+1}}{p-2}\right)^{\ell -|L_0|}\nonumber\\
 &=\frac{1}{\varphi(k_{L_0})^{|L_0|}}\sum\limits_{\substack{\{g_{L_0}\}_{{L_0}\in \mathscr{P}^*(E)}\\ g_{L_0}|k_{L_0}^\infty}}\delta_3(2g_{L_0})\frac{1}{g_{L_0}^{|L_0|}}\prod\limits_{\substack{p|k_{L_0}\\p\geq 3}}\left(1+\frac{1-p^{-\nu_p(g_L)}}{p-2}\right)^{\ell -|L_0|}.\label{somme_gl0_Palpha}
\end{align}
Alors en posant :
\begin{equation*}
\delta_4(L_0)=\left\{\begin{array}{l}
2^{-|L_0|}\left(1+2^{\ell -|L_0|}R_2(|L_0|)\right)\ \text{si}\ L_0\neq E,\\
\frac{2^{\ell }}{2^{\ell }-1}\ \text{si}\ L_0=E,
\end{array}\right.
\end{equation*}

on obtient en reprenant \eqref{somme_gl0_Palpha} dans \eqref{def_Palpha} et en en procédant de la même manière que pour \eqref{fin_prop_impair} :

\begin{align*}
 \mathcal{P}_\alpha'(k)&=2^{-\ell }\sum\limits_{L_0\in\mathscr{P}^*(E)}\frac{\delta_4(L_0)}{2^{|L_0|}}\prod\limits_{\substack{p|k\\p\geq 3}}\left(1-\frac{1}{p-1}\right)^{\ell }\overset{\ell }{\underset{i=1}{\mathlarger{\mathlarger{\mathlarger{\mathlarger{\ast}}}}}}\left(F_i^{\ast \binom{\ell }{i}}\right)\left(\frac{k}{2}\right).
\end{align*}
Reste à évaluer $2^{-\ell }\sum\limits_{L_0\in\mathscr{P}^*(E)}\frac{\delta_4(L_0)}{2^{|L_0|}}$, on a
\begin{align*}
 2^{-\ell }\sum\limits_{L_0\in\mathscr{P}^*(E)}\frac{\delta_4(L_0)}{2^{|L_0|}}&=2^{-\ell }\left(\sum\limits_{i=1}^{\ell -1}\binom{\ell }{i}2^{-2i}\left(1+2^{\ell -i}R_2(i)\right)+\frac{1}{2^{\ell }-1}\right)\\
 &=2^{-\ell }\left(\sum\limits_{i=1}^{\ell }\binom{\ell }{i}2^{-2i}\left(1+2^{\ell -i}R_2(i)\right)+2^{-\ell }\right).\\
\end{align*}

\end{proof}

\subsection{Évaluation de $\mathcal{P}_\beta'(k)$.}
L'évaluation de $\mathcal{P}_\beta'(k)$ va se dérouler dans le même esprit que celle de $\mathcal{P}_\alpha'(k)$, mais avec quelques subtilités importantes. En effet la présence de termes dépendants de $\tilde{a_1}$, et notamment $\id(2\tilde{a_1}|k)$, va nous imposer d'introduire une somme sur toutes les décompositions possibles de $\tilde{a_1}$ en produits de $|\mathscr{P}^*(E)|$ facteurs.

\begin{prop}\label{prop_Pbeta}
On a, en reprenant les définitions de $H_2$ et $F_i$ de la proposition \ref{prop_terme_principal},

 \[\mathcal{P}_\beta'(k)=H_2(\ell,a_1)\id(2\tilde{a_1}|k)\prod\limits_{\substack{p|\frac{k}{2\tilde{a_1}}\\p\geq 3}}\left(1-\frac{1}{p-1}\right)^{\ell }\overset{\ell }{\underset{i=1}{\mathlarger{\mathlarger{\mathlarger{\mathlarger{\ast}}}}}}\left(F_i^{\ast \binom{\ell }{i}}\right)(\frac{k}{2\tilde{a_1}}).\]
\end{prop}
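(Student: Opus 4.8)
The plan is to evaluate $\mathcal{P}_\beta'(k)$ by the same Euler-product strategy used for $\mathcal{P}_\alpha'(k)$ in Proposition \ref{prop_Palpha}, but now carrying along the extra arithmetic weight attached to the primes dividing $\tilde{a_1}$. Since the indicator $\id(2\tilde{a_1}|k)$ appearing in \eqref{separation_alpha_beta} forces $2\tilde{a_1}|k$ and $k$ is squarefree, I would first record the coprime factorisation $k=2\cdot\tilde{a_1}\cdot m$ with $m:=k/(2\tilde{a_1})$ prime to $2\tilde{a_1}$, and then organise the whole computation according to the three prime classes $p=2$, $p|\tilde{a_1}$ and $p|m$. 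The prime $2$ is forced to lie in the factor $k_{L_0}$, which is the role of the outer sum over $L_0$; each odd prime of $k$ is assigned to exactly one of the $k_L$, so I would insert a sum over all decompositions $\tilde{a_1}=\prod_{L}a_L$ (and $m=\prod_L m_L$) recording this assignment, the former being exactly what produces the convolution $\underset{L}{\ast}(G_L^{L_1})$.

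The primes $p|m$ form the easy part. As they do not divide $\tilde{a_1}$, the weight $\prod_{i\in L_1}\prod_{p|\tilde{a_1},\,p|u_i}(2-p)$ equals $1$ on them, so their joint contribution is identical to the one obtained in the odd case (Proposition \ref{prop_k_impair}): summing the $c_{i,L}$, $g_L$ and $k_L$ supported on $m$ reproduces the factor $\prod_{p|m,\,p\geq 3}\left(1-\tfrac{1}{p-1}\right)^{\ell}\underset{i}{\ast}(F_i^{\ast\binom{\ell}{i}})(m)$, which is precisely the generic tail of the claimed formula evaluated at $k/(2\tilde{a_1})$.

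The contribution of the primes $p|\tilde{a_1}$ is the technical heart, and I expect the main obstacle to lie here. Fix such a prime $p$ assigned to $k_L$. Since $g_L$ and $k_L$ share their prime support, $p|g_L|u_i$ automatically for every $i\in L$, so the indices $i\in L\cap L_1$ contribute a factor $(2-p)^{|L\cap L_1|}$, which I would pull out in front. For $i\in L_1\setminus L$ the factor $(2-p)$ appears exactly when $p|c_{i,L}$, so the sum over $c_{i,L}$ becomes a $(2-p)$-weighted sum; the key simplification is that, because $\tfrac{p-1}{p-2}(2-p)=-(p-1)$, this weighted sum telescopes to the pure power $p^{-(\nu_p(g_L)-1)}$. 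Each such index therefore merges with the $g_L^{-|L|}$ weight and raises the effective exponent from $|L|$ to $|L\cup L_1|$, while the remaining $\ell-|L\cup L_1|$ indices in $E\setminus(L\cup L_1)$ supply the ordinary $f_p$ factors; summing over $g_L$ then yields $1+R_p(|L\cup L_1|)$ by the same computation as in the odd case. Combining this with the base $\left(\tfrac{(p-1)(h,p)}{p^2(p-2)}\right)^{|L|}$ (coming, exactly as in the odd case, from $k_L'=k_L/(h,k_L)$ and $\varphi(k_L)$) and with $(2-p)^{|L\cap L_1|}$ reconstructs $G_L^{L_1}(p)$, and the sum over the assignment of the primes of $\tilde{a_1}$ to the sets $L$ turns the product over $p|\tilde{a_1}$ into $\underset{L}{\ast}(G_L^{L_1})(\tilde{a_1})$, together with the prefactor $\mu(\tilde{a_1})^{|L_1|}\prod_{p|\tilde{a_1},\,p\geq 3}\tfrac{(p-2)^{\ell-|L_1|}}{(p-1)^{\ell}}$ obtained after absorbing the generic $\left(1-\tfrac{1}{p-1}\right)^{\ell}$ at these primes.

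It remains to treat $p=2$ and to assemble. Here I would sum the weight $\delta_2(g_{L_0})^{|L_1|}$ against $g_{L_0}$ subject to $2|k_{L_0}$, $k_{L_0}|g_{L_0}$, $g_{L_0}|k_{L_0}^{\infty}$, while respecting the constraints $\id(2|c_{i,L_0})$ for $i\in E\setminus L_0$ inherited from $B(i,L_0)$ (which for $L_0\neq E$ forces $\nu_2(g_{L_0})\geq 2$). The three-case definition of $\delta_2$, governed by $\nu_2(g_{L_0})$ relative to $s(a_1)$ and hence by $a_1\bmod 4$ and the parity of $a_1$, splits the resulting local factor at $2$ as $\delta_5(L_0,L_1)=\widehat{\delta}(L_0,L_1)+\tilde{\delta}(L_0,L_1)$: the geometric part of the sum over $\nu_2(g_{L_0})$ gives $\widehat{\delta}$, whereas the boundary term $\nu_2(g_{L_0})=s(a_1)-1$ survives only in the unforced case $L_0=E$ and contributes $\tilde{\delta}$. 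Finally, collecting the outer sums over $L_0\in\mathscr{P}^*(E)$ and $L_1\in\mathscr{P}^*(L_0)$ with the $2^{-\ell-|L_0|}$ normalisation (precisely as the $2^{-\ell}\delta_4(L_0)2^{-|L_0|}$ of the $\alpha$ case), the $p=2$ factor $\delta_5$, the $\tilde{a_1}$-factor $\underset{L}{\ast}(G_L^{L_1})(\tilde{a_1})$ with its prefactor, and the generic $m$-tail assembles exactly into $H_2(E,a_1)\,\id(2\tilde{a_1}|k)\prod_{p|k/(2\tilde{a_1}),\,p\geq 3}\left(1-\tfrac{1}{p-1}\right)^{\ell}\underset{i}{\ast}(F_i^{\ast\binom{\ell}{i}})(k/(2\tilde{a_1}))$, which is the claim.
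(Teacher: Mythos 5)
Your proposal follows essentially the same route as the paper's proof: the same development of $\prod_{i}B(i,L_0)$ into the sum over $L_1\in\mathscr{P}^*(L_0)$, the same telescoping of the $(2-p)$-weighted $c_{i,L}$-sums to $p^{-\nu_p(g_L)+1}$ at the primes $p\mid\tilde{a_1}$ (raising the effective exponent from $|L|$ to $|L\cup L_1|$ and producing $1+R_p(|L\cup L_1|)$ in the $g_L$-sum), the same $2$-adic bookkeeping of $\delta_2,\delta_3$ yielding the local factor $\delta_5(L_0,L_1)$, and the same decomposition $\tilde{a_1}=\prod_{L}a_L'$ turning the $\tilde{a_1}$-part into the convolution $\underset{L\in \mathscr{P}^*(E)}{\mathlarger{\mathlarger{\ast}}}\bigl(G_L^{L_1}\bigr)(\tilde{a_1})$. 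The only cosmetic inaccuracy is your gloss on the split $\delta_5=\widehat{\delta}+\tilde{\delta}$ — for $a_1$ even the surviving boundary term $\nu_2(g_{L_0})=s(a_1)-1$ is in fact folded into $\widehat{\delta}$ (whence its $(-1)^{|L_1|}$ factors), and for $a_1\equiv 1\Mod{4}$ the contribution $\tilde{\delta}=1$ at $L_0=E$ comes from the minimal term $\nu_2(g_{L_0})=1$ rather than from $\nu_2(g_{L_0})=s(a_1)-1=0$, which is impossible — but since you compute the full local sum and only the total $\delta_5$ enters $H_2(E,a_1)$, this does not affect the argument.
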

\begin{proof}

Commençons par expliciter $\mathcal{P}_\alpha'(k)$, en remarquant que
\[\prod\limits_{\substack{p|\tilde{a_1}\\ p| u_i\\ p\geq 3}}(2-p)=\prod\limits_{\substack{L\in\mathscr(P)^*(E)\\ i\in L}}\left(\prod\limits_{\substack{p|\tilde{a_1}\\p|g_L\\p\geq 3}}(2-p)\right)\prod\limits_{\substack{L\in\mathscr(P)^*(E)\\ i\notin L}}\left(\prod\limits_{\substack{p|\tilde{a_1}\\ p|c_{i,L}\\ p\geq 3}}(2-p)\right)\]

et en utilisant le lemme \ref{lemme_contribution_degre}, on obtient

\begin{align}\label{def_Pbeta}
 \mathcal{P}_\beta'(k):=&\beta_1(k)\sum\limits_{L_0\in\mathscr{P}^*(E)}\sum\limits_{L_1\in \mathscr{P}^*(L_0)}\beta_2(L_1)\sum\limits_{\substack{\prod\limits_{\substack{L\in \mathscr{P}^*(E)}}k_L=k}}\prod\limits_{L\in \mathscr{P}^*(E)}\beta_3(k_L)\\
 &\sum\limits_{\substack{\{g_L\}_{L\in \mathscr{P}^*(E)}\\ g_L|k_L^\infty\\ k_L|g_L}}\prod\limits_{L\in \mathscr{P}^*(E)}\beta_4(g_L)\sum\limits_{\substack{\{c_{bL}\}_{\substack{L\in \mathscr{P}^*(E)\\ b\notin L}}\\c_{bL}|\frac{g_L}{k_L}}}\prod\limits_{\substack{\{i,L\}\\ L\in \mathscr{P}^*(E)\\ i\notin L}}\beta_5(c_{i,L})\nonumber
\end{align}
où
\begin{itemize}
\item[$\bullet$] $\beta_1(k):=2^{-\ell }\id(2\tilde{a_1}|k)\prod\limits_{\substack{p|k\\p\geq 3}}\left(1-\frac{1}{p-1}\right)^{\ell }$,
\item[$\bullet$]$\beta_2(L_1):=\mu(\tilde{a_1})^{|L_1|}\prod\limits_{\substack{p|\tilde{a_1}\\ p\geq 3}}\left(\frac{1}{p-2}\right)^{|L_1|}$,
\item[$\bullet$]$\beta_3(k_L):=\left\{\begin{array}{ll}\frac{\id(2|k_{L_0})}{{k_{L_0}'}^{|{L_0}|}}\prod\limits_{\substack{p|k_{L_0}\\ p\geq 3}}\left(\frac{(p-1)^2}{p(p-2)}\right)^{|{L_0}|}\prod\limits_{\substack{p|\tilde{a_1}\\ p|k_{L_0}\\ p\geq 3}}\left(2-p\right)^{|{L_0}\cap L_1|} & \text{ si }L=L_0,\\\frac{1}{{k_L'}^{|L|}}\prod\limits_{\substack{p|k_L\\ p\geq 3}}\left(\frac{(p-1)^2}{p(p-2)}\right)^{|L|}\prod\limits_{\substack{p|\tilde{a_1}\\ p|k_L\\ p\geq 3}}\left(2-p\right)^{|L\cap L_1|} & \text{ sinon,}\end{array}\right.$,
\item[$\bullet$]$\beta_4(g_L):=\left\{\begin{array}{ll}\frac{\delta_2(g_{L_0})^{|L_1|}}{\varphi(g_{L_0})^{|{L_0}|}}  & \text{ si }L=L_0,\\\frac{1}{\varphi(g_L)^{|L|}} & \text{ sinon,}\end{array}\right.$, avec $\delta_2(g_{L_0})=\left\{\begin{array}{l}
1\ \text{si}\ \nu_2(g_{L_0})\geq s(a_1)\\
-1\ \text{si}\ \nu_2(g_{L_0})=s(a_1)-1\\
0\ \text{sinon}
\end{array}\right.
$,
\item[$\bullet$]$\beta_5(c_{i,L}):=\left\{\begin{array}{ll}\frac{1}{\varphi(c_{i,L})}\prod\limits_{\substack{p|k\\p|c_{i,L}\\ p\geq 3}}\left(\frac{(p-1)^2}{p(p-2)}\right)\prod\limits_{\substack{L\in \mathscr{P}^*(E)\\ i\notin L}}\left(\prod\limits_{\substack{p|\tilde{a_1}\\ p|c_{i,L}\\ p\geq 3}}\left(2-p\right)\right) & \text{ si }i\in L_1,\\
\frac{\id(2|c_{i,L_0})}{\varphi(c_{i,L})}\prod\limits_{\substack{p|k\\p|c_{i,L}\\ p\geq 3}}\left(\frac{(p-1)^2}{p(p-2)}\right)& \text{ si }i\in E\backslash L_0,\\
\frac{1}{\varphi(c_{i,L})}\prod\limits_{\substack{p|k\\p|c_{i,L}\\ p\geq 3}}\left(\frac{(p-1)^2}{p(p-2)}\right)& \text{ sinon,}\end{array}\right.$.
\end{itemize}

Les sommes sur les $c_{i,L}$ pour $i\notin L_1$ sont exactement les mêmes que pour \eqref{somme_cil_alpha}, intéressons-nous au cas $i\in L_1$ et $i\notin L$ (et donc $2\nmid k_L$) : 
\begin{align*}
 \sum\limits_{c_{i,L}| \frac{g_L}{k_L}}\frac{1}{\varphi(c_{i,L})}\prod\limits_{\substack{p|k\\p|c_{i,L}\\ p\geq 3}}\left(\frac{(p-1)^2}{p(p-2)}\right)\prod\limits_{\substack{p|\tilde{a_1}\\ p|c_{i,L}\\ p\geq 3}}\left(2-p\right)&=\prod\limits_{\substack{p|k_L\\ p\nmid \tilde{a_1}\\p\geq 3}}\left(f_p(\nu_p(g_L)-1)\right)\prod\limits_{\substack{p|k_L\\ p| \tilde{a_1}\\p\geq 3}}p^{-\nu_p(g_L)+1},
\end{align*}
où on a posé pour $n\geq 1$, et $p\geq3$, $f_p(n):=1+\frac{1-p^{-n}}{p-2}$.

et ainsi en reprenant le calcul de \eqref{somme_cil_alpha},

\begin{align*}
 &\sum\limits_{\substack{\{c_{bL}\}_{\substack{L\in \mathscr{P}^*(E)\\ b\notin L}}\\c_{bL}|\frac{g_L}{k_L}}}\prod\limits_{\substack{\{i,L\}\\ L\in \mathscr{P}^*(E)\\ i\notin L}}\left(\frac{1}{\varphi(c_{i,L})}\prod\limits_{\substack{p|k\\p|c_{i,L}\\ p\geq 3}}\left(\frac{(p-1)^2}{p(p-2)}\right)\right)\prod\limits_{i\in L_1}\left(\prod\limits_{\substack{L\in \mathscr{P}^*(E)\\ i\notin L}}\left(\prod\limits_{\substack{p|\tilde{a_1}\\ p|c_{i,L}\\ p\geq 3}}\left(2-p\right)\right)\right)\prod\limits_{i\in E\backslash L_0}\left(\id(2|c_{i,L_0})\right)\\
 &=\delta_3(g_{L_0})\prod\limits_{\substack{L\in \mathscr{P}^*(E)}}\left(\prod\limits_{\substack{p|k_L\\p\geq 3}}\left(f_p(\nu_p(g_L)-1)\right)^{\ell -|L\cup L_1|}\prod\limits_{\substack{p|k_L\\ p\nmid \tilde{a_1}\\p\geq 3}}\left(f_p(\nu_p(g_L)-1)\right)^{|L_1\backslash L|}\prod\limits_{\substack{p|k_L\\ p| \tilde{a_1}\\p\geq 3}}\left(p^{-\nu_p(g_L)+1}\right)^{|L_1 \backslash L|}\right)
\end{align*}

Passons aux sommes sur les $g$, en commençant par $g_{L_0}$ :

\begin{align*}
\sum\limits_{\substack{g_{L_0}|k_{L_0}^\infty\\ k_{L_0}|g_{L_0}}}&\delta_2(g_{L_0})^{|L_1|}\delta_3(g_{L_0})\frac{1}{\varphi(g_{L_0})^{|L|}}\prod\limits_{\substack{p|k_{L_0}\\p\geq 3}}\left(f_p(\nu_p(g_{L_0})-1)\right)^{\ell -|L_0|}\\
&= \frac{1}{\varphi(k_{L_0})^{|L_0|}}\sum\limits_{\substack{g_{L_0}|k_{L_0}^\infty}}\delta_2(2g_{L_0})^{|L_1|}\delta_3(2g_{L_0})\frac{1}{g_{L_0}^{|L_0|}}\prod\limits_{\substack{p|k_{L_0}\\p\geq 3}}\left(f_p(\nu_p(g_{L_0}))\right)^{\ell -|L_0|}.
\end{align*}
Exprimons les valeurs prises par $\delta_2(2g_{L_0})$ suivant les valeurs de $s(a_1)$ et celles prises par $\delta_3(2g_{L_0})$ suivant si $E=L_0$ ou $E\neq 0$.
\[
\begin{array}{|c|c|c|c|c|}\hline
 s(a_1) & \multicolumn{2}{|c|}{\delta_2(2g_{L_0})} & \multicolumn{2}{|c|}{\delta_3(2g_{L_0})}\\ \hline
 &\multicolumn{2}{c|}{}&L_0=E & L_0\neq E\\ \hline
 1 &  \multicolumn{2}{c|}{1} & 1 & \left\{\begin{array}{l}
\left(2\left(1-2^{-\nu_2(g_{L_0})}\right)\right)^{\ell -|L_0|}\ \text{si}\ 2|g_{L_0},\\
0\ \text{sinon}
\end{array}\right. \\ \hline
 2 &  \multicolumn{2}{c|}{\left\{\begin{array}{l}
1\ \text{si}\ \nu_2(g_{L_0})\geq 1\\
-1\ \text{si}\ \nu_2(g_{L_0})=0
\end{array}\right.} & 1 & \left\{\begin{array}{l}
\left(2\left(1-2^{-\nu_2(g_{L_0})}\right)\right)^{\ell -|L_0|}\ \text{si}\ 2|g_{L_0},\\
0\ \text{sinon}
\end{array}\right. \\ \hline
 3 &  \multicolumn{2}{c|}{\left\{\begin{array}{l}
1\ \text{si}\ \nu_2(g_{L_0})\geq 2\\
-1\ \text{si}\ \nu_2(g_{L_0})=1\\
0\ \text{sinon}
\end{array}\right.} & 1 & \left\{\begin{array}{l}
\left(2\left(1-2^{-\nu_2(g_{L_0})}\right)\right)^{\ell -|L_0|}\ \text{si}\ 2|g_{L_0},\\
0\ \text{sinon}
\end{array}\right. \\ \hline
\end{array}
\]

Définissons alors une fonction $\delta_5(L_0,L_1)$ comme suit :

\[
\begin{array}{|c|c|c|}\hline
s(a_1) & \multicolumn{2}{|c|}{\delta_5(L_0,L_1)}\\ \hline
& E=L_0 & E\neq L_0 \\ \hline
1 & \frac{2^{\ell }}{2^{\ell }-1} & 2^{-|L_0|}\left(1+2^{\ell -|L_0|}R_2(|L_0|)\right) \\ \hline
2 & (-1)^{|L_1|}\left(\frac{2^{\ell }-1+(-1)^{|L_1|}}{2^{\ell }-1}\right) & 2^{-|L_0|}\left(1+2^{\ell -|L_0|}R_2(|L_0|)\right) \\ \hline
3 & \frac{(-1)^{|L_1|}}{2^{\ell }}\left(\frac{2^{\ell }-1+(-1)^{|L_1|}}{2^{\ell }-1}\right) & (-1)^{|L_1|}2^{-|L_0|}\left(1+(-1)^{|L_1|}2^{\ell -|L_0|}R_2(|L_0|)\right) \\ \hline
\end{array}
\]
%
On obtient, en exprimant la somme sur $g_{L_0}$ sous forme de produit eulérien :

\begin{align}
\sum\limits_{\substack{g_{L_0}|k_{L_0}^\infty\\ k_{L_0}|g_{L_0}}}&\delta_2(g_{L_0})^{|L_1|}\delta_3(g_{L_0})\frac{1}{\varphi(g_{L_0})^{|L|}}\prod\limits_{\substack{p|k_{L_0}\\p\geq 3}}\left(f_p(\nu_p(g_{L_0})-1)\right)^{\ell -|L_0|}\nonumber\\
&= \delta_5(L_0,L_1)\frac{1}{\varphi(k_{L_0})^{|L_0|}}\prod\limits_{\substack{p|k_{L_0}\\p\geq 3}}\left(1+R_p(|{L_0}|)\right).\label{somme_gL0_beta}
\end{align}

Définissons
\begin{equation*}
 \widehat{\delta}(L_0,L_1)=\left\{\begin{array}{l}
                        (-1)^{|L_1|}2^{-|L_0|}\left(1+(-1)^{|L_1|}2^{\ell -|L_0|}R_2(|L_0|)\right) \text{ si }s(a_1)=3,\\
                        2^{-|L_0|}\left(1+2^{\ell -|L_0|}R_2(|L_0|)\right)  \text{ sinon. }
                      \end{array}
\right.
\end{equation*}
Et une fonction $\tilde{\delta}(L_0,L_1)$,
\begin{equation*}\hspace*{-3cm}
 \tilde{\delta}(L_0,L_1)=\left\{\begin{array}{l}
                        1\text{ si } L_0=E \text{ et } s(a_1)=1,\\
                        (-1)^{|L_1|}\text{ si } L_0=E \text{ et }s(a_1)=2,\\
                        0\text{ sinon. }\\
                      \end{array}
\right.
\end{equation*}

On a alors $\delta_5(L_0,L_1)=\widehat{\delta}(L_0,L_1)+\tilde{\delta}(L_0,L_1)$.

Puis pour $L\neq L_0$ :
\begin{align}
 \sum\limits_{\substack{g|k_{L}^\infty\\ k_{L}|g}}&\frac{1}{\varphi(g)^{|L|}}
 \prod\limits_{\substack{p|k_L\\p\geq 3}}\left(f_p(\nu_p(g)-1)\right)^{\ell -|L\cup L_1|}\prod\limits_{\substack{p|k_L\\ p\nmid \tilde{a_1}\\p\geq 3}}\left(f_p(\nu_p(g)-1)\right)^{|L_1\backslash L|}\prod\limits_{\substack{p|k_L\\ p| \tilde{a_1}\\p\geq 3}}\left(p^{-\nu_p(g)+1}\right)^{|L_1 \backslash L|}\nonumber\\
 &=\frac{1}{\varphi(k_L)^{|L|}}\prod\limits_{\substack{p|k_L\\ p\nmid \tilde{a_1}\\p\geq 3}}\left(1+R_p(|L|)\right)\prod\limits_{\substack{p|k_L\\ p| \tilde{a_1}\\p\geq 3}}\left(1+R_p(|L\cup L_1|)\right).\label{somme_g_pasL0_beta}
\end{align}
On va ensuite exprimer $\id(2\tilde{a_1}|k)$ en décomposant $\tilde{a_1}$ en $\prod\limits_{L\in \mathscr{P}^*(E)}a_L'$ où $a_L'|\frac{k_L}{(2,k_L)}$, et en reprenant \eqref{somme_gL0_beta} et \eqref{somme_g_pasL0_beta} dans \eqref{def_Pbeta} cela donne :

\begin{equation*}\hspace*{-2cm}
 \mathcal{P}_\beta'(k)=\beta_1(k)\sum\limits_{L_0\in\mathscr{P}^*(E)}2^{-|L_0|}\sum\limits_{L_1\in \mathscr{P}^*(L_0)}\beta_6(L_1)\sum\limits_{\prod\limits_{L\in \mathscr{P}^*(E)}a_L'=\tilde{a_1}}\prod\limits_{L\in \mathscr{P}^*(E)}\beta_7(a_L')\sum\limits_{\substack{\prod\limits_{\substack{L\in \mathscr{P}^*(E)}}k_L=\frac{k}{2\tilde{a_1}}}}\prod\limits_{L\in \mathscr{P}^*(E)}\beta_8(k_L),
\end{equation*}
où

\begin{itemize}
 \item [$\bullet$]$\beta_6(L_1):=\delta_5(L_0,L_1)\mu(\tilde{a_1})^{|L_1|}\prod\limits_{\substack{p|\tilde{a_1}\\ p\geq 3}}\left(\frac{1}{p-2}\right)^{|L_1|}$,
 \item [$\bullet$]$\beta_7(a_L'):=\left(\frac{(h,a_L')}{{a_L'}{\varphi(a_L')}}\right)^{|L|}\prod\limits_{\substack{p|a_L'}}\left(\left(\frac{(p-1)^2}{p(p-2)}\right)^{|L|}\left(2-p\right)^{|L\cap L_1|}\right)\prod\limits_{\substack{p| a_L'}}\left(1+R_p(|L\cup L_1|)\right)$,
 \item [$\bullet$]$\beta_8(k_L):=\frac{1}{{k_L'}^{|L|}{\varphi(k_L)}^{|L|}}\prod\limits_{\substack{p|k_L\\ p\geq 3}}\left(\frac{(p-1)^2}{p(p-2)}\right)^{|L|}\prod\limits_{\substack{p|k_L\\p\geq 3}}\left(1+R_p(|L|)\right)$.
\end{itemize}

Définissons pour tout $L\in \mathscr{P}^*(E)$ les fonctions multiplicatives $G_L^{L_1}$ par :

\[\hspace{-2cm}G_L^{L_1}(p):=\left(\frac{(p-1)(h,p)}{p^2(p-2)}\right)^{|L|}(2-p)^{|L_1\cap L|}\left(1+R_p(|L \cup L_1|)\right),\]

et la fonction 

\begin{equation*}\hspace*{-2cm}
H_2(\ell,a_1):=2^{-\ell }\sum\limits_{L_0\in\mathscr{P}^*(E)}2^{-|L_0|}\sum\limits_{L_1\in \mathscr{P}^*(L_0)}\left(\widehat{\delta}(L_0,L_1)+\tilde{\delta}(L_0,L_1)\right)\mu(\tilde{a_1})^{|L_1|}\prod\limits_{\substack{p|\tilde{a_1}\\ p\geq 3}}\left(\frac{(p-2)^{\ell -|L_1|}}{(p-1)^{\ell }}\right)\underset{L\in \mathscr{P}^*(E)}{\mathlarger{\mathlarger{\mathlarger{\mathlarger{\ast}}}}}\left(G_L^{L_1}\right)(\tilde{a_1}).\end{equation*}

On a alors
\[\mathcal{P}_\beta'(k)=H_2(\ell,a_1)\id(2\tilde{a_1}|k)\prod\limits_{\substack{p|\frac{k}{2\tilde{a_1}}\\p\geq 3}}\left(1-\frac{1}{p-1}\right)^{\ell }\overset{\ell }{\underset{i=1}{\mathlarger{\mathlarger{\mathlarger{\mathlarger{\ast}}}}}}\left(F_i^{\ast \binom{\ell }{i}}\right)(\frac{k}{2\tilde{a_1}}).\]
\end{proof}

\subsection{Preuve de la proposition \ref{prop_terme_principal}.} Nous sommes maintenant en mesure de démontrer la proposition \ref{prop_terme_principal}.

\begin{proof}[Preuve de la proposition \ref{prop_terme_principal}]
 En utilisant les propositions \ref{prop_k_impair}, \ref{prop_Palpha}  et \ref{prop_Pbeta} on a,
 \begin{align*}
  \sum\limits_{k}\mu(k)\mathcal{P}_{a,0}'(k)&=\sum\limits_{\substack{k\\ 2\nmid k}}\mu(k)\mathcal{P}_{a,0}'(k)+\sum\limits_{\substack{k\\ 2| k}}\mu(k)\left(\mathcal{P}_{\alpha}'(k)+\mathcal{P}_{\beta}'(k)\right)\\
  &=\sum\limits_{\substack{k\\ 2\nmid k}}\mu(k)\left(\mathcal{P}_{a,0}'(k)-\mathcal{P}_{\alpha}'(x,2k)\right)+\mu(2\tilde{a_1})\sum\limits_{\substack{k\\ 2\tilde{a_1}\nmid k}}\mu(k)\mathcal{P}_{\beta}'(x,2\tilde{a_1}k)\\
 &= \prod\limits_{\substack{p\\ p\geq 3}}\left(1-\left(\frac{p-2}{p-1}\right)^{\ell }\sum\limits_{i=1}^{\ell }\binom{\ell }{i}F_i(p)\right)\\
 &\left(1-H_1(E)+\mu(2\tilde{a_1})H_2(\ell,a_1)\prod\limits_{\substack{p\\ p|a_1\\ p\geq 3}}\left(1-\left(\frac{p-2}{p-1}\right)^{\ell }\sum\limits_{i=1}^{\ell }\binom{\ell }{i}F_i(p)\right)^{-1}\right),
 \end{align*}
 ce qui donne la proposition \ref{prop_terme_principal}.
 
\end{proof}

\section{Lien entre le terme principal de la proposition \ref{prop_terme_principal} et les théorèmes \ref{thm_principal} et \ref{thm_inconditionnel}}
\subsection{Équivalence entre le terme principal de la proposition \ref{prop_terme_principal} et le résultat heuristique}
Nous allons commencer par montrer que le coefficient correspondant à un nombre premier impair $p$ dans le produit $M(E)$ de la proposition \ref{prop_terme_principal} coïncide avec celui obtenu par l'approche heuristique.
\begin{prop}\label{prop_correspondance_terme_principal}
 En reprenant les notations de la proposition \ref{prop_terme_principal}, on a
 \[\left(\frac{p-2}{p-1}\right)^\ell\sum\limits_{i=1}^\ell \binom{\ell}{i}F_i(p)=\sum\limits_{i=1}^\ell \binom{\ell}{i}\sum\limits_{j=0}^{\ell-i} \binom{\ell-i}{j}\frac{(-1)^jp^{i+j}(h,p)^i}{p^{2i}(p-1)^j(p^{i+j}-1)}=W_\ell(p),\]
 et $H_1(E)=W_\ell(2)$.
\end{prop}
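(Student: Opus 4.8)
The statement really packages two independent algebraic identities, and it is worth noting at the outset that the middle double sum \emph{is} $W(p)$ by definition — it is character-for-character the formula defining $W(p)$ in Theorem~\ref{thm_principal} — so the rightmost equality requires no argument. The genuine content is the first equality together with the relation $H_1(E)=W(2)$, both of which I would establish as pure computations at a fixed prime.

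For the first equality my plan is to prove it termwise in $i$, i.e. to show for each fixed $i\in\{1,\dots,\ell\}$ that
\[
\left(\frac{p-2}{p-1}\right)^\ell F_i(p)=\sum_{j=0}^{\ell-i}\binom{\ell-i}{j}\frac{(-1)^j p^{i+j}(h,p)^i}{p^{2i}(p-1)^j(p^{i+j}-1)},
\]
after which the full identity follows by multiplying by $\binom{\ell}{i}$ and summing over $i$. First I would substitute the definition of $F_i(p)$ and factor using
\[
\left(\frac{p-2}{p-1}\right)^\ell\left(\frac{(h,p)(p-1)}{p^2(p-2)}\right)^i=\frac{(h,p)^i}{p^{2i}}\left(\frac{p-2}{p-1}\right)^{\ell-i},
\]
so that it suffices to prove $\left(\frac{p-2}{p-1}\right)^{\ell-i}(1+R_p(i))=\sum_{j}\binom{\ell-i}{j}\frac{(-1)^j p^{i+j}}{(p-1)^j(p^{i+j}-1)}$. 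The key observation is that multiplying $R_p(i)$ by $\left(\frac{p-2}{p-1}\right)^{\ell-i}$ cancels the factor $(p-2)^{\ell-i}$ in its denominator, reducing the $R_p(i)$ contribution to $\sum_j\binom{\ell-i}{j}\frac{(-1)^j}{(p-1)^j(p^{i+j}-1)}$. Writing $p^{i+j}=(p^{i+j}-1)+1$ in the target sum splits it into exactly this contribution plus $\sum_j\binom{\ell-i}{j}\frac{(-1)^j}{(p-1)^j}$, and the binomial theorem identifies the latter with $\left(1-\frac{1}{p-1}\right)^{\ell-i}=\left(\frac{p-2}{p-1}\right)^{\ell-i}$, which is precisely the ``$1$'' coming from $(1+R_p(i))$. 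This closes the identity.

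For $H_1(E)=W(2)$ I would first simplify $W(2)$: since $h$ is odd, $(h,2)=1$, and $(2-1)^j=1$, so $W(2)=\sum_{i=1}^\ell\binom{\ell}{i}\sum_{j=0}^{\ell-i}\binom{\ell-i}{j}\frac{(-1)^j 2^{i+j}}{2^{2i}(2^{i+j}-1)}$. Using $\frac{2^{i+j}}{2^{i+j}-1}=1+\frac{1}{2^{i+j}-1}$ and the vanishing of $\sum_j\binom{\ell-i}{j}(-1)^j$ unless $i=\ell$, this becomes $W(2)=2^{-2\ell}+\sum_{i=1}^\ell\binom{\ell}{i}2^{-2i}\sum_{j=0}^{\ell-i}\binom{\ell-i}{j}\frac{(-1)^j}{2^{i+j}-1}$. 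Expanding the definition of $H_1(E)$ produces a matching $2^{-2\ell}$ together with the terms $2^{-\ell}\sum_i\binom{\ell}{i}2^{-2i}$ and $\sum_i\binom{\ell}{i}2^{-3i}R_2(i)$, so the whole claim reduces to the identity $\sum_{i}\binom{\ell}{i}2^{-2i}\sum_j\binom{\ell-i}{j}\frac{(-1)^j}{2^{i+j}-1}-\sum_i\binom{\ell}{i}2^{-3i}R_2(i)=2^{-\ell}\sum_i\binom{\ell}{i}2^{-2i}$. To prove it I would combine the two double sums over the common index set and use $2^{-2i}-2^{-3i-j}=2^{-2i}\frac{2^{i+j}-1}{2^{i+j}}$, which cancels the denominator $2^{i+j}-1$ and collapses the summand to $\binom{\ell}{i}\binom{\ell-i}{j}(-1)^j 2^{-3i-j}$; summing over $j$ by the binomial theorem gives $\sum_j\binom{\ell-i}{j}(-1)^j 2^{-j}=2^{i-\ell}$, hence the required $2^{-\ell}\sum_i\binom{\ell}{i}2^{-2i}$.

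The first identity is a routine per-prime computation, so I expect the main obstacle to be the second: the expressions $H_1(E)$ and $W(2)$ come from structurally different sums — note the factors $2^{-j}$ inside $R_2(i)$ versus the $2^{+j}$ produced by simplifying $W(2)$ — and reconciling them hinges on the non-obvious cancellation $\frac{2^{-2i}-2^{-3i-j}}{2^{i+j}-1}=2^{-3i-j}$ that makes the awkward denominators disappear before the final binomial summation. Some care is also needed to check that the stray constants $2^{-2\ell}$ and $2^{-\ell}\sum_i\binom{\ell}{i}2^{-2i}$ match on the two sides, as these arise respectively from the $i=\ell$ boundary term and from the extra $2^{-\ell}$ summand in the definition of $H_1(E)$.
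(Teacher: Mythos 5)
Your proposal is correct and follows essentially the same route as the paper: your first identity is the paper's computation (binomial expansion of $\left(\frac{p-2}{p-1}\right)^{\ell-i}$ combined with $\frac{p^{i+j}}{p^{i+j}-1}=1+\frac{1}{p^{i+j}-1}$) read in the opposite direction, and your verification of $H_1(E)=W(2)$ carries out the same cancellation as the paper's evaluation of $Q(2):=2^{\ell}W(2)-\sum_{i=1}^{\ell}\binom{\ell}{i}2^{-2i}\left(1+2^{\ell-i}R_2(i)\right)=2^{-\ell}$, merely peeling off the boundary contribution $2^{-2\ell}$ at the start rather than at the end. All the key steps — using $h$ odd so that $(h,2)=1$, the collapse of the denominator $2^{i+j}-1$ via your identity $\frac{2^{-2i}-2^{-3i-j}}{2^{i+j}-1}=2^{-3i-j}$ (the paper's $2^{i+j}-2^{-i-j}$ factorisation in disguise), and the binomial evaluations isolating the $i=\ell$ term — correspond one-to-one with the paper's proof.
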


\begin{proof}
On a trivialement,
\begin{align*}
 \left(\frac{p-2}{p-1}\right)^\ell\sum\limits_{i=1}^\ell \binom{\ell}{i}F_i(p)&=\sum\limits_{i=1}^\ell \binom{\ell}{i}\left(\frac{p-2}{p-1}\right)^{\ell-i} \frac{(h,p)^i}{p^{2i}}+\sum\limits_{i=1}^\ell \binom{\ell}{i}\sum\limits_{j=0}^{\ell-i} \binom{\ell-i}{j}\frac{(-1)^j(p-1)^{-j}(h,p)^i}{p^{2i}(p^{i+j}-1)}
\end{align*}

On écrit dans le premier terme, $\left(\frac{p-2}{p-1}\right)^{\ell-i}=\left(1-\frac{1}{p-1}\right)^{\ell-i}=\sum\limits_{j=0}^{\ell-i}\binom{\ell-i}{j}\frac{(-1)^j}{(p-1)^j}$. Ainsi 

\begin{align*}
 \left(\frac{p-2}{p-1}\right)^\ell\sum\limits_{i=1}^\ell \binom{\ell}{i}F_i(p)&=\sum\limits_{i=1}^\ell \binom{\ell}{i}\sum\limits_{j=0}^{\ell-i} \binom{\ell-i}{j}\frac{(-1)^j(h,p)^i}{p^{2i}(p-1)^j}\left(1+\frac{1}{p^{i+j}-1}\right)\\
 &=\sum\limits_{i=1}^\ell \binom{\ell}{i}\sum\limits_{j=0}^{\ell-i} \binom{\ell-i}{j}\frac{(-1)^jp^{i+j}(h,p)^i}{p^{2i}(p-1)^j(p^{i+j}-1)}=W_\ell(p).
\end{align*}

Montrons maintenant que $H_1(E)=W_\ell(2)$. Pour ce faire on va montrer que $Q(2):=2^\ell W_\ell(2)-\sum\limits_{i=1}^\ell \binom{\ell}{i}2^{-2i}\left(1+2^{\ell-i}\sum\limits_{j=0}^{\ell-i} \binom{\ell-i}{j}\frac{(-1)^j2^{-j}}{2^{i+j}-1}\right)=2^{-\ell}$. On a
\begin{align*}
 Q(2)&=\sum\limits_{i=1}^\ell \binom{\ell}{i}\left(\sum\limits_{j=0}^{\ell-i} \binom{\ell-i}{j}\left(\frac{(-1)^j2^\ell\left(2^{i+j}-2^{-i-j}\right)}{2^{2i}(2^{i+j}-1)}\right)-2^{-2i}\right)\\
 &=\sum\limits_{i=1}^\ell \binom{\ell}{i}\left(\sum\limits_{j=0}^{\ell-i} \binom{\ell-i}{j}\left((-1)^j\left(2^{\ell-2i}+2^{\ell-3i-j}\right)\right)-2^{-2i}\right)\\
\end{align*}
Or $\sum\limits_{j=0}^{\ell-i} \binom{\ell-i}{j}(-1)^j2^{\ell-2i}=\left\{\begin{array}{rl}
                                                                   2^{-\ell} & \text{ si } i=\ell,\\
                                                                   0&\text{ sinon.}
                                                                  \end{array}\right.
$ et $\sum\limits_{j=0}^{\ell-i} \binom{\ell-i}{j}(-1)^j2^{\ell-3i-j}=2^{-2i}$, et ainsi $Q(2)=2^{-\ell}$.
\end{proof}

\subsection{Égalité entre le terme $H_2(\ell,a_1)$ de la proposition \ref{prop_terme_principal} et de celui des théorèmes \ref{thm_principal} et \ref{thm_inconditionnel}}

Les contributions dans $H_2(\ell,a_1)$ des sous-ensembles $L_0$ et $L_1$ ne dépendant que de leurs tailles il est naturel de préférer exprimer $H_2(\ell,a_1)$ sans faire intervenir des sous-ensembles de $\mathcal{P}^*(E)$. Ce que nous faisons dans la proposition suivante.

\begin{prop}\label{prop_plus_densembles}
 Soit $H_2(\ell,a_1)$ tel que définit dans la proposition \ref{prop_terme_principal}, 
 \[H_2(\ell,a_1)=\sum\limits_{k=1}^\ell  \binom{\ell}{k} 2^{-\ell-k}\delta_\ell(k)\mu(\tilde{a_1})^{k}\prod\limits_{\substack{p|\tilde{a_1}\\ p\geq 3}}\left(\frac{(p-2)^{\ell -k}}{(p-1)^{\ell }}\right)\sum\limits_{\prod\limits_{\{i,j\}\in \mathcal{D}}a_{i,j}=\tilde{a_1}}\prod\limits_{\{i,j\}\in \mathcal{D}}G_{i,j}^{k}({a_{i,j}}),\]
 où $\mathcal{D}=[0,k]\times[0,\ell-k]\backslash\{0,0\}$, pour $(i,j)\in\mathcal{D}$, $G_{i,j}^k$ est la fonction multiplicative définie pour les nombres premiers impairs par :
 \[G_{i,j}^k(p)=\binom{k}{i}\binom{\ell-k}{j}\left(\frac{(p-1)(h,p)}{p^2(p-2)}\right)^{i+j}(2-p)^{i}\left(1+R_p(k+j)\right),\]
 et
 \[\delta_\ell(k)=\sigma(a_1,k)+2^{\ell-2k}\sum\limits_{m=0}^{\ell-k}\binom{\ell-k}{m}2^{-3m}\sum\limits_{r=0}^{\ell -k-m}\binom{\ell -k-m}{r}\frac{(-1)^r2^{-r}}{2^{k+m+r}-1},\]
 avec $\sigma(a_1,k):=\left\{\begin{array}{cl}
                          2^{-\ell+k}+2^{-2\ell+k}5^{\ell-k}& \text{ si } a_1\equiv 1\Mod{4},\\
                          (-1)^k2^{-\ell+k}+2^{-2\ell+k}5^{\ell-k}& \text{ si } a_1\equiv 3\Mod{4},\\
                          (-1)^k2^{-2\ell+k}5^{\ell-k}& \text{ sinon. }
                         \end{array}\right.
$.
\end{prop}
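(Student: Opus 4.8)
The plan is to reduce both the definition of $H_2(E,a_1)$ given in Proposition \ref{prop_terme_principal} and the claimed expression to sums indexed only by the cardinalities $|L_0|$, $|L_1|$ and $|L|$, and then to identify them through one elementary combinatorial identity. The starting observation is that every factor in the original sum depends on the subsets $L_0,L_1,L$ only through cardinalities: $\mu(\tilde{a_1})^{|L_1|}$ and the product over $p\mid\tilde{a_1}$ depend on $|L_1|$ alone; $\delta_5(L_0,L_1)=\widehat{\delta}(L_0,L_1)+\tilde{\delta}(L_0,L_1)$ depends only on $|L_0|$, $|L_1|$ and the residue class of $a_1$, by inspection of its two pieces; and $G_L^{L_1}(p)$ depends only on $|L|$, $|L_1\cap L|$ and $|L\cup L_1|$.

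The first key step is to recast the Dirichlet convolution over $L\in\mathscr{P}^*(E)$. Since $\tilde{a_1}$ is squarefree and odd, the convolution $\underset{L\in\mathscr{P}^*(E)}{\ast}\!\left(G_L^{L_1}\right)(\tilde{a_1})$ factorizes over the primes $p\mid\tilde{a_1}$, each prime being sent to exactly one factor, so it equals $\prod_{p\mid\tilde{a_1}}\sum_{L\in\mathscr{P}^*(E)}G_L^{L_1}(p)$. Writing $k=|L_1|$ and grouping the sets $L$ by $i=|L\cap L_1|\in[0,k]$ and $j=|L\setminus L_1|\in[0,\ell-k]$ — there being $\binom{k}{i}\binom{\ell-k}{j}$ of them, with $(0,0)$ excluded because $L\neq\emptyset$ — and using $|L|=i+j$, $|L_1\cap L|=i$, $|L\cup L_1|=k+j$, this inner sum becomes $\sum_{(i,j)\in\mathcal{D}}G_{i,j}^{k}(p)$. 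Hence the convolution equals $\sum_{\prod_{(i,j)\in\mathcal{D}}a_{i,j}=\tilde{a_1}}\prod_{(i,j)\in\mathcal{D}}G_{i,j}^{k}(a_{i,j})$, which is exactly the object in the target and which depends on $L_1$ only through $k=|L_1|$.

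Next I would collapse the double sum over $L_0$ and $L_1\in\mathscr{P}^*(L_0)$. As the summand now depends only on $n=|L_0|$ and $k=|L_1|$, and there are $\binom{\ell}{n}\binom{n}{k}$ pairs with these cardinalities, the absorption identity $\binom{\ell}{n}\binom{n}{k}=\binom{\ell}{k}\binom{\ell-k}{n-k}$ lets me factor out $\binom{\ell}{k}2^{-\ell-k}$ (matching the target's prefactor) and reduces the claim, term by term in $k$, to
\[
\delta_\ell(k)=\sum_{m=0}^{\ell-k}\binom{\ell-k}{m}2^{-m}\,\delta_5(k+m,k),
\]
where $\delta_5(k+m,k)$ denotes $\delta_5(L_0,L_1)$ for $|L_0|=k+m$ and $|L_1|=k$. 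To finish, I would split $\delta_5=\widehat{\delta}+\tilde{\delta}$. The piece $\tilde{\delta}$ is supported on $L_0=E$, i.e. $m=\ell-k$, and contributes the single term $2^{-\ell+k}\tilde{\delta}(E,L_1)$, equal to $2^{-\ell+k}$, to $(-1)^{k}2^{-\ell+k}$, or to $0$ according as $a_1\equiv1\Mod 4$, $a_1\equiv3\Mod 4$, or $a_1$ is even. The piece $\widehat{\delta}(n,k)=2^{-n}\bigl(\varepsilon+2^{\ell-n}R_2(n)\bigr)$ (with $\varepsilon=1$ for $a_1$ odd and $\varepsilon=(-1)^{k}$ for $a_1$ even) splits into a constant part, which after $\sum_{m}\binom{\ell-k}{m}4^{-m}=(5/4)^{\ell-k}$ yields the $2^{-2\ell+k}5^{\ell-k}$ contributions of $\sigma(a_1,k)$, and an $R_2$ part, which directly produces $2^{\ell-2k}\sum_{m}\binom{\ell-k}{m}2^{-3m}R_2(k+m)$. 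Adding these recovers $\sigma(a_1,k)$ plus the $R_2$ sum, i.e. $\delta_\ell(k)$.

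The main obstacle will be the bookkeeping in this last step: matching the case distinctions of $\delta_5$ (which depend both on $a_1$ modulo $4$ and on whether $L_0=E$) against the three cases of $\sigma(a_1,k)$, while controlling the signs $(-1)^{k}$ and the powers of $2$. In particular I expect to need the identity $(-1)^{k}\bigl(1+(-1)^{k}X\bigr)=(-1)^{k}+X$ to reconcile the form of $\widehat{\delta}$ used in the proof of Proposition \ref{prop_Pbeta} with that stated in Proposition \ref{prop_terme_principal}, and to check that the support of $\tilde{\delta}$ at $L_0=E$ is precisely what isolates the $2^{-\ell+k}$ pieces of $\sigma(a_1,k)$ from the $5^{\ell-k}$ pieces coming from $\widehat{\delta}$.
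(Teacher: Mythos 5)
Your proposal is correct and follows essentially the same route as the paper's proof: you regroup the convolution over $L\in\mathscr{P}^*(E)$ by the pair $\left(|L\cap L_1|,|L\setminus L_1|\right)$ (you via Euler products over the squarefree odd $\tilde{a_1}$, the paper by counting the $\binom{k}{i}\binom{\ell-k}{j}$ sets per prime — the same computation), then replace $(L_0,L_1)$ by their cardinalities, invert the sums with $\binom{\ell}{m}\binom{m}{k}=\binom{\ell}{k}\binom{\ell-k}{m-k}$, and evaluate $\delta_\ell(k)=\sum_{m}\binom{\ell-k}{m}2^{-m}\delta_5(m+k,k)$ by splitting $\delta_5=\widehat{\delta}+\tilde{\delta}$. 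Your final bookkeeping — including $\sum_{m}\binom{\ell-k}{m}4^{-m}=(5/4)^{\ell-k}$, the support of $\tilde{\delta}$ at $m=\ell-k$, and the sign reconciliation $(-1)^{k}\left(1+(-1)^{k}X\right)=(-1)^{k}+X$ — matches the paper's computation exactly and reproduces the stated $\sigma(a_1,k)$ (indeed correcting the paper's own typo $(-1)^k2^{\ell-k}$ for $(-1)^k2^{-\ell+k}$ in the case $a_1\equiv 3\Mod{4}$ inside the proof).
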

\begin{proof}
 Fixons $L_1$. Soient $L_\alpha$ et $L_\beta$ deux sous-ensembles non-vides de $E$. Alors $G_{L_\alpha}^{L_1}=G_{L_\beta}^{L_1}$ si et seulement si $|L_1\cap L_\alpha|=|L_1\cap L_\beta|$ et $|L_\alpha\backslash L_1|=|L_\beta\backslash L_1|$. En effet si $|L_1\cap L_\alpha|=|L_1\cap L_\beta|$ et $|L_\alpha\backslash L_1|=|L_\beta\backslash L_1|$ alors $|L_\alpha|=|L_1\cap L_\alpha|+|L_\alpha\backslash L_1|=|L_\beta|$ et $|L_1\cup L_\alpha|=|L_1|+|L_\alpha\backslash L_1|=|L_1\cup L_\beta|$.
 
 Soit $\mathcal{D}=[0,|L_1|]\times[0,\ell-|L_1|]\backslash\{0,0\}$,
 
 \[\underset{L\in \mathscr{P}^*(E)}{\mathlarger{\mathlarger{\mathlarger{\mathlarger{\ast}}}}}\left(G_L^{L_1}\right)(\tilde{a_1})=\sum\limits_{\prod\limits_{L\in \mathcal{P}^*(E)}a_{L}=\tilde{a_1}}\prod\limits_{\{i,j\}\in \mathcal{D}}\prod\limits_{\substack{L\in\mathcal{P}^*(E)\\|L_1\cap L|=i\\ |L_\backslash L_1|=j }}G_L^{L_1}(a_L).\]
 
 Soit $(i,j)\in\mathcal{D}$, notons $a_{i,j}=\prod\limits_{\substack{L\in\mathcal{P}^*(E)\\ |L\cap L_1|=i\\ |L\backslash L_1|=j}}a_L$ et $\tilde{G}_{i,j}^{L_1}={G}_{L}^{L_1}$ où $L$ est tel que $|L\cap L_1|=i$ et $ |L\backslash L_1|=j$.
 
  \begin{equation}\label{eqn_premiere_etape_convolutions_G}
\underset{L\in \mathscr{P}^*(E)}{\mathlarger{\mathlarger{\mathlarger{\mathlarger{\ast}}}}}\left(G_L^{L_1}\right)(\tilde{a_1})=\sum\limits_{\prod\limits_{L\in \mathcal{P}^*(E)}a_{L}=\tilde{a_1}}\prod\limits_{\{i,j\}\in \mathcal{D}}\tilde{G}_{i,j}^{L_1}(a_{i,j}).\end{equation}
 
 Nous allons remplacer la somme sur les décompositions de $a_1$ en $\prod\limits_{L\in \mathcal{P}^*(E)}a_{L}$ en une somme sur les décompositions de $a_1$ en $\prod\limits_{\{i,j\}\in \mathcal{D}}a_{i,j}$. Il faut donc calculer le nombre de décompositions en $\prod\limits_{L\in \mathcal{P}^*(E)}a_{L}$ qui aboutissent à une même décomposition en $\prod\limits_{\{i,j\}\in \mathcal{D}}a_{i,j}$.
 
 Soit $(i,j)\in\mathcal{D}$, $\mathcal{E}=\{L\in \mathcal{P}^*(E), |L\cap L_1|=i,\ |L\backslash L_1|=j\}$ et $\gamma=|\mathcal{E}|$. Soit $p|a_{i,j}$ alors il existe $L\in\mathcal{E}$ tel que $p|a_L$, et il y a $\gamma$ choix possible pour $L$. Ainsi le nombre décomposition en $\prod\limits_{L\in \mathcal{P}^*(E)}a_{L}$ qui correspondent à une même décomposition $\prod\limits_{\{i,j\}\in \mathcal{D}}a_{i,j}$ est $\gamma^{\omega(a_{i,j})}$.
De plus $L_1$ a $\binom{|L_1|}{i}$ sous-ensembles de taille $i$ et $E\backslash L_1$ a $\binom{|E\backslash L_1|}{j}$ sous-ensembles de taille $j$ et ainsi
\[\gamma=\binom{|L_1|}{i}\binom{|E\backslash L_1|}{j}.\]
En reportant cela dans \eqref{eqn_premiere_etape_convolutions_G}, on a :
 \[\underset{L\in \mathscr{P}^*(E)}{\mathlarger{\mathlarger{\mathlarger{\mathlarger{\ast}}}}}\left(G_L^{L_1}\right)(\tilde{a_1})=\sum\limits_{\prod\limits_{\{i,j\}\in \mathcal{D}}a_{i,j}=\tilde{a_1}}\prod\limits_{\{i,j\}\in \mathcal{D}}G_{i,j}^{|L_1|}({a_{i,j}}),\]
 avec pour $k\leq \ell$, $i\leq k$ et $j\leq \ell-k$, $G_{i,j}^k$ est la fonction multiplicative telle que pour $p$ un nombre premier impair :
 \[G_{i,j}^k(p)=\binom{k}{i}\binom{\ell-k}{j}\left(\frac{(p-1)(h,p)}{p^2(p-2)}\right)^{i+j}(2-p)^{i}\left(1+R_p(k+j)\right).\]
 
 Nous pouvons maintenant remplacer les sommes sur $L_0$ et $L_1$ dans $H_2(\ell,a_1)$ par des sommes sur $1\leq m\leq \ell$ et $1\leq k\leq m$. $L_0$ a $\binom{|L_0|}{k}$ sous-ensembles de taille $k$ et $E$ a $\binom{\ell}{m}$ sous-ensembles de taille $m$. Ainsi en notant de manière transparente $\widehat{\delta}(m,k):=\widehat{\delta}(L_0,L_1)$ et $\tilde{\delta}(m,k):=\tilde{\delta}(L_0,L_1)$ avec $|L_0|=m$ et $|L_1|=k$, on obtient :
 
 \begin{equation}\label{eqn_H2_sans_ensembles}\hspace*{-2.5cm}
  H_2(\ell,a_1):=\sum\limits_{m=1}^\ell \binom{l}{m} 2^{-\ell-m}\sum\limits_{k=1}^m \binom{m}{k}\left(\widehat{\delta}(m,k)+\tilde{\delta}(m,k)\right)\mu(\tilde{a_1})^{k}\prod\limits_{\substack{p|\tilde{a_1}\\ p\geq 3}}\left(\frac{(p-2)^{\ell -k}}{(p-1)^{\ell }}\right)\sum\limits_{\prod\limits_{\{i,j\}\in \mathcal{D}}a_{i,j}=\tilde{a_1}}\prod\limits_{\{i,j\}\in \mathcal{D}}G_{i,j}^{k}({a_{i,j}}).
 \end{equation}
Inversons alors les sommes sur $m$ et $k$,
\begin{equation}\label{eqn_H2_sans_m}
 H_2(\ell,a_1):= \sum\limits_{k=1}^\ell  \binom{\ell}{k} 2^{-\ell-k}\delta_\ell(k)\mu(\tilde{a_1})^{k}\prod\limits_{\substack{p|\tilde{a_1}\\ p\geq 3}}\left(\frac{(p-2)^{\ell -k}}{(p-1)^{\ell }}\right)\sum\limits_{\prod\limits_{\{i,j\}\in \mathcal{D}}a_{i,j}=\tilde{a_1}}\prod\limits_{\{i,j\}\in \mathcal{D}}G_{i,j}^{k}({a_{i,j}}).
\end{equation}
avec $\delta_\ell(k):=\sum\limits_{m=0}^{\ell-k}\binom{\ell-k}{m} 2^{-m}\left(\widehat{\delta}(m+k,k)+\tilde{\delta}(m+k,k)\right)$.

Simplifions maintenant l'écriture de $\delta_\ell(k)$. Notons $c:=\left\{\begin{array}{cl}
                                                                   (-1)^k & \text{ si } a_1 \equiv 0\Mod{2},\\
                                                                   1 & \text{ sinon.}
                                                                  \end{array}\right.$.
\begin{align*}\hspace*{-2cm}
 \sum\limits_{m=0}^{\ell-k}\binom{\ell-k}{m} 2^{-m}\widehat{\delta}(m+k,k)&= \sum\limits_{m=0}^{\ell-k}\binom{\ell-k}{m}2^{-2m-k}\left(c+2^{\ell -k-m}\sum\limits_{r=0}^{\ell -k-m}\binom{\ell -k-m}{r}\frac{(-1)^r2^{-r}}{2^{k+m+r}-1}\right)\\
 &=c2^{-2\ell+k}5^{\ell-k}+2^{\ell-2k}\sum\limits_{m=0}^{\ell-k}\binom{\ell-k}{m}2^{-3m}\sum\limits_{r=0}^{\ell -k-m}\binom{\ell -k-m}{r}\frac{(-1)^r2^{-r}}{2^{k+m+r}-1}.
\end{align*}
De plus
\[\sum\limits_{m=0}^{\ell-k}\binom{\ell-k}{m} 2^{-m}\tilde{\delta}(m+k,k)=\left\{\begin{array}{cl}
                                                                           2^{-\ell+k}& \text{ si } a_1\equiv 1\Mod{4},\\
                                                                           (-1)^k2^{-\ell+k}& \text{ si } a_1\equiv 3\Mod{4},\\
                                                                           0& \text{ sinon. }
                                                                          \end{array}\right.
\]

Ainsi, en notant $\sigma(a_1,k):=\left\{\begin{array}{cl}
                          2^{-\ell+k}+2^{-2\ell+k}5^{\ell-k}& \text{ si } a_1\equiv 1\Mod{4},\\
                          (-1)^k2^{\ell-k}+2^{-2\ell+k}5^{\ell-k}& \text{ si } a_1\equiv 3\Mod{4},\\
                          (-1)^k2^{-2\ell+k}5^{\ell-k}& \text{ sinon. }
                         \end{array}\right.
$,
\[\delta_\ell(k)=\sigma(a_1,k)+2^{\ell-2k}\sum\limits_{m=0}^{\ell-k}\binom{\ell-k}{m}2^{-3m}\sum\limits_{r=0}^{\ell -k-m}\binom{\ell -k-m}{r}\frac{(-1)^r2^{-r}}{2^{k+m+r}-1}.\]
Ce qui permet de conclure.
\end{proof}

\subsection{Preuve des théorèmes \ref{thm_principal} et \ref{thm_inconditionnel}}
En regroupant l'équation \eqref{equationfondav2} et les propositions \ref{prop_majoration_termes_derreurs}, \ref{prop_terme_principal}, \ref{prop_correspondance_terme_principal}, \ref{prop_plus_densembles} et en prenant, par exemple, $C_5=e^{2C_1}$, on obtient le théorème \ref{thm_principal}.

De plus en reportant les résultats des propositions \ref{prop_majoration_termes_derreurs}, \ref{prop_terme_principal}, \ref{prop_correspondance_terme_principal}, \ref{prop_plus_densembles} dans \eqref{majoration_inconditionnelle} on obtient la majoration inconditionnelle du théorème \ref{thm_inconditionnel}.

\bibliographystyle{abbrv}
\bibliography{biblio}

\end{document}